\DeclareFontFamily{U}{mathx}{\hyphenchar\font45}
\DeclareFontShape{U}{mathx}{m}{n}{
<5> <6> <7> <8> <9> <10>
<10.95> <12> <14.4> <17.28> <20.74> <24.88>
mathx10
}{}
\DeclareSymbolFont{mathx}{U}{mathx}{m}{n}
\DeclareMathAccent{\widecheck}{0}{mathx}{"71}
\DeclareMathAccent{\wideparen}{0}{mathx}{"75}
\definecolor{gray}{gray}{0.5}
\newcommand{\trapez}{
\begin{tikzpicture} [scale=.23]
\draw (0,0) -- (.5,1);
\draw (0,0) -- (2,0);
\draw (1.5,1) -- (2,0);
\draw (.5,1) -- (1.5,1);
\end{tikzpicture}
}
\newcommand{\vertiii}[1]{{\left\vert\kern-0.25ex\left\vert\kern-0.25ex\left\vert #1
\right\vert\kern-0.25ex\right\vert\kern-0.25ex\right\vert}}
\def\emph#1{{\it #1 }}
\newcommand{\dilq}[2]{{#1}{#2}}
\newcommand{\tr}[1]{\dilq{3}{#1}}
\def\inn#1#2{\langle#1,#2\rangle}
\def\biginn#1#2{\big\langle#1,#2\big\rangle}
\def\Biginn#1#2{\Big\langle#1,#2\Big\rangle}
\def\jp#1{{\langle#1\rangle}}
\def\emph#1{{\it #1}}
\def\textbf#1{{\bf #1}}
\theoremstyle{plain}
\newtheorem{thm}{Theorem}[section]
\newtheorem{prop}[thm]{Proposition}
\newtheorem{lem}[thm]{Lemma}
\newtheorem{lemma}[thm]{Lemma}
\newtheorem{cor}[thm]{Corollary}
\newtheorem{definition}[thm]{Definition}
\newtheorem*{thm*}{Theorem}
\newtheorem*{conj*}{Conjecture}
\newtheorem*{openproblem*}{Open Problem}
\newtheorem{observation}[thm]{Observation}
\theoremstyle{remark}
\newtheorem*{remarksa}{Remarks}
\numberwithin{equation}{section}
\titleformat{\section}{\centering\normalfont\scshape}{\thesection.}{.5em}{#1}
\titleformat{\subsection}[runin]{\normalfont\itshape}{\textnormal{\thesubsection.}}{.5em}{#1.}
\titleformat{\subsubsection}[runin]{\normalfont\itshape}{\thesubsubsection.}{.5em}{#1.}
\titlespacing{\section}{0em}{1em}{0.5em}
\titlespacing{\subsection}{0em}{.5em}{0.5em}
\subjclass[2020]{42B15, 42B20, 42B25}
\keywords{Sparse domination, Fourier multipliers, Endpoint estimates}
\begin{document}

\title[Endpoint sparse domination]
{Endpoint sparse domination for classes of multiplier transformations}
\author[D. Beltran \ \ \ \ \ \ \ J. Roos \ \ \ \ \ \ \ A. Seeger ] {David Beltran \ \ \ \ Joris Roos \ \ \ \ Andreas Seeger }

\address{David Beltran: Departament d'Anàlisi Matemàtica, Universitat de València, Dr. Moliner 50, 46100 Burjassot, Spain}

\email{david.beltran@uv.es}

\address{Joris Roos: Department of Mathematics and Statistics, University of Massachusetts Lowell, Lowell, MA 01854, USA}
\email{joris\_roos@uml.edu}

\address{Andreas Seeger: Department of Mathematics, University of Wisconsin-Madison, 480 Lincoln Dr, Madison, WI-53706, USA}
\email{seeger@math.wisc.edu}

\begin{abstract}
We prove endpoint results for sparse domination of translation invariant multiscale operators. The results are formulated in terms of dilation invariant classes of Fourier multipliers based on natural
localized $M^{p\to q}$ norms which express appropriate endpoint regularity hypotheses. The
applications include new and optimal sparse bounds for classical oscillatory multipliers and multi-scale versions of radial bump multipliers.
\end{abstract}


\maketitle
\section{Introduction}
The purpose of this paper is to prove new endpoint bounds in multiscale sparse domination for certain scale invariant classes of translation invariant operators.
Interesting partial endpoint sparse bounds are known in some cases (see for example \cite{conde-alonso-etal,KeslerLacey}), but they seem to be generally missing in situations where the sharp $L^p$-bounds rely on Hardy-space or BMO techniques.
Model cases for these situations are given by oscillatory Fourier multipliers, for which we obtain optimal endpoint sparse bounds in Theorem \ref{thm:oscmult} below, and multi-scale extensions of radial $\delta$-bumps (see Theorem \ref{cor:BR}).

\subsection{Background and definitions} We begin by reviewing some definitions (see the introduction of \cite{BRS} for more details). Fix a lattice ${\mathfrak {Q}}$ of dyadic cubes in the sense of Lerner and Nazarov \cite[\S 2]{lerner-nazarov}; this implies, in particular, that the dyadic cubes at a fixed scale are half-open pairwise disjoint cubes, and that every compact set is contained in some $Q\in {\mathfrak {Q}}$. For $f\in L^1_{\mathrm{loc}}$, $Q\in {\mathfrak {Q}}$ and $1 \leq p < \infty$, we set
$\jp{f}_{Q,p}=(|Q|^{-1}\int_Q|f(y)|^p\, \mathrm{d} y)^{1/p}$.
Given $0<\gamma<1$ a collection ${\mathfrak {S}}\in {\mathfrak {Q}}$ is called $\gamma$-sparse if for every $Q\in {\mathfrak {S}}$ there is a measurable subset $E_Q\subset Q$ so that $|E_Q|\geq \gamma|Q|$ and the sets $E_Q$ with $Q\in {\mathfrak {S}}$ are pairwise disjoint. Given a $\gamma$-sparse family ${\mathfrak {S}}$ of dyadic cubes, and $1\le p_1,p_2<\infty$ the corresponding sparse form is defined by
\[\Lambda_{p_1,p_2}^{\mathfrak {S}} (f_1,f_2) = \sum_{Q\in {\mathfrak {S}}}|Q| \jp{f_1}_{Q,p_1}
\jp{f_2}_{Q,p_2};\]
this is interesting in the range $p_2<p_1'$.
The maximal $(p_1,p_2)$-form $\Lambda^*_{p_1,p_2} $ is given by
\begin{equation} \label{eq:maxLambda} \Lambda^*_{p_1,p_2} (f_1,f_2)=\sup_{{\mathfrak {S}}:\gamma\text{-sparse}}\Lambda^{\mathfrak {S}}_{p_1,p_2} (f_1,f_2),\end{equation}
where the supremum is taken over all $\gamma$-sparse families of dyadic cubes in ${\mathfrak {Q}}$. We say that a linear operator $T: C^\infty_c({\mathbb {R}}^d) \to {\mathcal {D}}'({\mathbb {R}}^d)$ belongs to the space $\text{Sp}_\gamma(p_1,p_2)$ (or satisfies a $(p_1,p_2)$ sparse bound) if for all $f_1,\,f_2\in C^\infty_c$ the inequality
\begin{equation}\label{eq:sparse-dom} |\inn{Tf_1}{f_2}|\le C \Lambda^*_{p_1,p_2}(f_1,f_2) \end{equation} holds with some constant $C$ independent of $f_1$ and $f_2$, and we denote by $\|T\|_{{\mathrm{Sp}}_\gamma(p_1,p_2)}$ the best constant in this inequality.
The norm $\|T\|_{{\mathrm{Sp}}_\gamma(p_1,p_2)}$ depends on $\gamma$, but the space ${\mathrm{Sp}}_\gamma(p_1,p_2)$ does not. As we keep $\gamma$ fixed throughout this paper we will drop the subscript $\gamma$ when using the ${\mathrm{Sp}}_\gamma(p_1,p_2)$ norm.
As mentioned above,
the relevant case for applications is $p_2<p_1'$
(and indeed if $T$ is a convolution operator with compactly supported kernel, a $(p_1,p_1')$ sparse bound follows immediately from the $L^{p_1}$ boundedness of $T$). We remark that when
$p_2<p_1'$ we can change the a priori assumption of $f_1, f_2\in C^\infty_c$ to $f_1,f_2\in {\mathbb {V}}$ where ${\mathbb {V}}$ is any subspace dense in $L^p$ for some $p\in (p_1,p_2')$; for example, it is natural to choose ${\mathbb {V}}=L^{p_1}\cap L^{p_2'}$
(see \cite[Lemma A.1]{BRS}).

The interest in (a pointwise/normed version of) sparse domination started because of its important consequences in weighted inequalities for Calder\'on--Zygmund operators \cite{LeCZ, LeA2, CAR2014,lerner-nazarov,Lac2015,LeNew,benea-bernicot2018,DiPlinioHytonenLi}.
For consequences of the bilinear sparse domination \eqref{eq:sparse-dom} in weighted theory we refer to the paper by Bernicot, Frey and Petermichl \cite{bernicot-frey-petermichl}.
A
detailed exposition of the importance of sparse domination in harmonic analysis can be found in the introduction of \cite{Pereyra2019}; for many further examples beyond Calderón--Zygmund theory see \cite{laceyJdA19, CDPV,BRS}
and references therein.

In this paper we shall consider operators that commute with translations. They are defined as a Fourier multiplier transformation $m(D)$ where $\widehat {m(D) f}(\xi) =m(\xi) \widehat f(\xi)$. Here we work with
$\widehat f(\xi)=\int f(y) e^{i\inn y\xi} \, \mathrm{d} y$ as the definition of the Fourier transform of $f\in{\mathcal {S}}({\mathbb {R}}^d)$ and denote by $\widecheck f\equiv {\mathcal {F}}^{-1}[f]$ the inverse Fourier transform.
For $1\le p \leq q <\infty$ we denote by $M^{p\to q}$ the class of Fourier multipliers for which
$m(D)$ is bounded as an operator from $L^p$ to $L^q$; the norm in $M^{p\to q}$ is just given by the $L^p\to L^q$ operator norm of $m(D)$. A modification is needed for $p=\infty$; then $L^\infty$ is replaced by $C_0$.

\subsection{Scale invariant classes of multipliers: the main results} We shall now formulate our three main theorems on sparse domination involving scale invariant classes of multipliers and subsequently discuss new sharp results for oscillatory multipliers and multiscale radial bump multipliers.
Our conditions are motivated by $p$-sensitive endpoint multiplier theorems in \cite{SeegerMonatshefte, SeegerL12, SeegerStudia90} (see also an earlier result by Baernstein and Sawyer \cite{Baernstein-Sawyer} on $H^p$ multipliers for $p<1$).
The $M^p$ multiplier hypotheses
(in particular the one in \cite{SeegerStudia90})
can be seen as certain localized Besov-conditions where the Besov spaces are built on suitable Fourier multiplier spaces. Here we will formulate similar conditions which will be relevant for endpoint sparse bounds.

Let $\Phi_0\in C^\infty({\mathbb {R}}^d)$ be supported in $\{x \in {\mathbb {R}}^d:|x|<1/2\}$ such that $\Phi_0(x)=1$ for $|x|\le 1/4$. For $\ell\in {\mathbb {Z}}$ define
\begin{equation}
\label{eqn:defofPsiell}
\Psi_\ell(x)= \Phi_0(2^{-\ell} x)-\Phi_0(2^{-\ell+1}x)
\end{equation}
which is supported in $\{x \in {\mathbb {R}}^d:2^{\ell-3}\le |x|\le 2^{\ell-1}\}.$
For a Banach space ${\mathfrak {X}}$ of distributions (here suitable classes of multipliers)
let $B^\alpha_1({\mathfrak {X}})$ be the $B^\alpha_1$-Besov space built on ${\mathfrak {X}}$, with norm
\[\|h\|_{B^\alpha_1({\mathfrak {X}})} = \|h*\widehat \Phi_0\|_{\mathfrak {X}}+ \sum_{\ell> 0} 2^{\ell \alpha}
\|h*\widehat \Psi_\ell\|_{{\mathfrak {X}}}.
\]
The standard Besov-classes $B^\alpha_{u,1}$ can be recovered by taking ${\mathfrak {X}}=L^u({\mathbb {R}}^d)$; however for our results it is most appropriate to take for ${\mathfrak {X}}$ a multiplier space such as $M^{r\to q}$ for $r$ between $p$ and $q$.
Let $\phi$ be a radial $C^\infty$ function supported in $\{\xi\in \widehat {{\mathbb {R}}}^d: 1/2<|\xi|<2\}$ which is not identically zero.\footnote{The assumption that $\phi$ is radial is convenient but not crucial; one can show that one just needs to assume that for all rays emanating from the origin, the restriction of $\phi$ to the ray is not identically zero.}

It was proved
in \cite{SeegerStudia90} that
\begin{equation}\label{studia-estimate}
\|m\|_{M^{p\to p} } \le C_{p,r} \sup_{t>0} \|\phi m(t\cdot)\|_{B_1^{d(\frac 1p-\frac 1r)}(M^{r\to r} )}, \quad 1<p<r\le 2.
\end{equation}
Inequality \eqref{studia-estimate} is related to results in \cite{carbery-revista, See88} but the latter are not applicable to endpoint estimates in many situations; indeed, they do not give satisfactory results for the oscillatory
multipliers in \eqref{def:mab}
below.

We state now versions of these results for $(p,q')$ sparse domination, with three cases, depending on whether $q <2$, $q=2$ or $q>2$.
Multipliers for which the right-hand side of \eqref{studia-estimate} is finite belong at least to the class ${\mathrm{Sp}} (p,r') $; this follows from the special case $r=q$ in our first theorem.
\begin{thm}\label{thm:qle2}
Let $1 < p \leq q \leq 2$.
Then
for $p<r\le q$,
\[\|m(D)\|_{\mathrm{Sp}(p,q') } \le C_{p,r}
\sup_{t>0} \|\phi m(t\cdot) \|_{B^{d(\frac 1p-\frac 1q)}_1(M^{r\to q}) } \,.\]
\end{thm}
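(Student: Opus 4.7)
The plan is to combine a frequency-side Littlewood--Paley decomposition with a spatial decomposition of each annular piece produced by the Besov hypothesis, and then appeal to the multi-scale sparse domination framework of \cite{BRS}. Throughout set $\alpha=d(\tfrac1p-\tfrac1q)$ and $A=\sup_{t>0}\|\phi m(t\cdot)\|_{B^\alpha_1(M^{r\to q})}$.

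Choose a smooth partition $\sum_{j\in\mathbb Z}\eta(2^{-j}\cdot)\equiv 1$ on $\hat{\mathbb R}^d\setminus\{0\}$ with $\eta$ supported in $\{|\xi|\sim 1\}$ and arranged so that $\phi\eta=\eta$ (possible after a harmless dilation, since $\phi$ is only assumed radial and not identically zero on rays), and set $m_j=\eta(2^{-j}\cdot)m$. The hypothesis then controls $\|\phi\cdot m(2^j\cdot)\|_{B^\alpha_1(M^{r\to q})}\le A$ uniformly in $j$. The essential point is that convolving a Fourier multiplier by $\widehat\Phi_0$ or $\widehat\Psi_\ell$ in the frequency variable equals multiplying the corresponding convolution kernel by $\Phi_0$ or $\Psi_\ell$, so the Besov expansion gives $\phi\cdot m(2^j\cdot)=\sum_{\ell\ge 0}\nu_{j,\ell}$ with
\[ \sum_\ell 2^{\ell\alpha}\|\nu_{j,\ell}\|_{M^{r\to q}}\le 2A \]
and with the convolution kernel of $\nu_{j,\ell}(D)$ supported in $\{|x|\sim 2^\ell\}$ for $\ell\ge 1$ (and in $\{|x|\le 1/2\}$ for $\ell=0$). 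Undoing the rescaling by $2^j$ and using the scaling identity $\|n(\lambda\cdot)\|_{M^{r\to q}}=\lambda^{-d(1/r-1/q)}\|n\|_{M^{r\to q}}$, one obtains the decomposition $m_j=\sum_\ell m_{j,\ell}$ in which $T_{j,\ell}:=m_{j,\ell}(D)$ has kernel supported in the shell $\{|x|\sim 2^{\ell-j}\}$ and $L^r\to L^q$ norm at most $2^{jd(1/r-1/q)}\|\nu_{j,\ell}\|_{M^{r\to q}}$.

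The main analytic input is a single-scale endpoint sparse bound from \cite{BRS}: if $S$ is a convolution operator whose kernel is supported in a ball of radius $\rho$ and $\|S\|_{L^r\to L^q}\le B$, then under $1<p<r\le q\le 2$ one has
\[ \|S\|_{\mathrm{Sp}(p,q')}\lesssim B\cdot\rho^{d(1/r-1/q)} \]
with no logarithmic loss. Applied to each $T_{j,\ell}$ with $\rho=2^{\ell-j}$ and $B=2^{jd(1/r-1/q)}\|\nu_{j,\ell}\|_{M^{r\to q}}$, the $j$-dependent factors cancel exactly and leave
\[ \|T_{j,\ell}\|_{\mathrm{Sp}(p,q')}\lesssim 2^{-\ell\beta}\cdot\bigl(2^{\ell\alpha}\|\nu_{j,\ell}\|_{M^{r\to q}}\bigr),\qquad \beta:=\alpha-d(\tfrac1r-\tfrac1q)=d(\tfrac1p-\tfrac1r)>0. \]
Summing in $\ell$ uses the Besov control together with $\beta>0$ (gained precisely from $r>p$). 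Aggregating across the frequency scales $j$ is then carried out by the multi-scale mechanism in \cite{BRS}, which organizes the $T_{j,\ell}$ by their common spatial scale $2^{\ell-j}$ so that the individual sparse forms combine into a single $(p,q')$ sparse bound with total constant $\lesssim A$.

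The main obstacle is the single-scale endpoint bound used in the penultimate step: proving it with no logarithmic loss requires a Calder\'on--Zygmund decomposition of $f_1$ at level comparable to $\langle f_1\rangle_{Q,p}$ (legitimate because $r>p$) in order to replace $\langle f_1\rangle_{\tilde Q,r}$ by $\langle f_1\rangle_{Q,p}$ on the good part, combined with a symmetric dual argument on $f_2$ that is available thanks to $q\le 2\le q'$. This is where the restriction $q\le 2$ enters crucially, and any proof mimicking the $L^p$ argument of \cite{SeegerStudia90} at the sparse level must navigate this issue and, in aggregation, avoid producing spurious logarithmic losses from the endpoint character of the $(p,q')$ pair.
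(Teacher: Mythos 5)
Your proposal correctly identifies some of the ingredients (the spatial/frequency decomposition, the gain $\beta=d(1/p-1/r)>0$ coming from $r>p$, the fact that CZ at the endpoint level needs care), but there are two genuine gaps, and the overall architecture is quite different from what the paper does.

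First, the ``single-scale endpoint sparse bound from \cite{BRS}'' you invoke is not a theorem in that paper. The BRS framework (as used here in \S\ref{app:Kk-k}) produces $(p,q')$ sparse bounds from \emph{$L^p\to L^q$} estimates for rescaled localized pieces --- that is, the same exponent pair as the target sparse bound --- not from $L^r\to L^q$ estimates with $r>p$. Deriving a $(p,q')$ sparse bound for a single-scale piece from an $L^r\to L^q$ bound with $r>p$ is precisely the kind of endpoint gain the present paper is after, and it is not free: the paper's analogue (Lemma \ref{lem:mult-single-scale-pq} for the full frequency-summed operator $\mathcal{T}_{j_0}=\sum_k T_k^{(j_0)}P_k^2$) is proved via the Chang--Fefferman-type atomic decomposition of \S\ref{sec:decompositions}, not by a standalone CZ decomposition of $f_1$ plus a dual one on $f_2$. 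Indeed, Remark (iii) after Theorem \ref{thm:multq>2} states that the atomic decomposition is ``crucial'' for this particular theorem (unlike Theorems \ref{thm:qge2} and \ref{thm:multq>2}, where Littlewood--Paley suffices). Your proposal never uses the atomic decomposition, so it cannot be doing what the paper does.

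Second, even granting a single-scale bound for each individual $T_{j,\ell}$, the aggregation step is where the argument actually breaks. Summing $\|T_{j,\ell}\|_{\mathrm{Sp}(p,q')}$ over $\ell$ is fine (it is controlled by the Besov norm and $\beta>0$), but you are left with the sum over the frequency parameter $j$, and your estimate is \emph{uniform} in $j$, so $\sum_j$ diverges. The phrase ``the individual sparse forms combine into a single sparse bound'' cannot literally mean adding the sparse constants: one needs orthogonality in $j$. The paper handles this by working at each spatial scale with the \emph{entire} frequency-summed operator $\mathcal{T}_s=\sum_k T_k^{(s)}P_k^2$ and proving an $L^p\to L^q$ estimate for it (Lemma \ref{lem:mult-single-scale-pq}), in which the sum over $k$ is controlled through the almost-orthogonality of the $P_k$ \emph{combined with} the fine structure of the atomic decomposition (Lemmata \ref{lem:bkn}, \ref{lemma:key atomic}). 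Similarly, the ``multi-scale mechanism'' that aggregates across spatial scales is not a black box from \cite{BRS}: it is the induction scheme of \S\ref{sec:induction scheme}--\S\ref{sec:badpart}, whose inductive step (Proposition \ref{inductiveclaim}) requires the new combined atomic/CZ decomposition of \S\ref{sec:CZ-dec} and the good/bad-part estimates of \S\ref{sec:goodpart}--\S\ref{sec:badpart}. Your outline delegates exactly this to a prior paper where it does not exist.
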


For $q=2$ we can also let $r=p$ to get a ${\mathrm{Sp}}(p,2)$ bound.
\begin{thm}\label{thm:qge2}
Let $1 < p< 2$.
Then
\[\|m(D)\|_{\mathrm{Sp}(p,2) } \le C_p
\sup_{t>0} \|\phi m(t\cdot) \|_{B^{d(\frac1p-\frac12)}_1(M^{p\to 2}) }
\,.\]
\end{thm}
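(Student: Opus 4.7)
The strategy is a dyadic frequency decomposition combined with a Besov physical-space decomposition at each scale, followed by a single-scale sparse-domination principle; the endpoint choice $r=p$ is enabled by Plancherel orthogonality on the dual $L^{p'}$-side. I start from a smooth dyadic partition $\sum_{k\in \mathbb{Z}}\phi(2^{-k}\xi)\equiv 1$ on $\mathbb{R}^d\setminus\{0\}$ and write $m=\sum_k m_k$ with $m_k := m\,\phi(2^{-k}\cdot)$. The rescaling $M_k(\xi):=m(2^k\xi)\phi(\xi)$ coincides with $\phi\,m(t\cdot)|_{t=2^k}$, so the hypothesis gives $\|M_k\|_{B_1^\alpha(M^{p\to 2})}\le B$ uniformly in $k$, with $\alpha:=d(\tfrac{1}{p}-\tfrac{1}{2})$ and $B$ the right-hand side of the theorem. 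The Besov expansion produces pieces $M_{k,\ell}:=M_k*\widehat{\Psi_\ell}$ for $\ell>0$ (and $M_{k,0}:=M_k*\widehat{\Phi_0}$) whose inverse Fourier transforms are physically localized at scale $2^\ell$, satisfying $\|M_{k,\ell}\|_{M^{p\to 2}}\le B\,2^{-\ell\alpha}$. Unscaling via $m_{k,\ell}(\xi):=M_{k,\ell}(\xi/2^k)$ and using the dilation identity $\|m(2^k\cdot)\|_{M^{p\to 2}}=2^{-k\alpha}\|m\|_{M^{p\to 2}}$ yields the pointwise estimate
\[
\|m_{k,\ell}\|_{M^{p\to 2}} \le B\,2^{-(k+\ell)\alpha},
\]
with each $m_{k,\ell}(D)$ having kernel supported in $\{|x|\lesssim 2^{\ell-k}\}$.

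Next, I regroup by spatial scale $j:=\ell-k$, setting $T_j:=\sum_{k\ge \max(0,-j)} m_{k,k+j}(D)$ so that $m(D)=\sum_{j\in \mathbb{Z}} T_j$, with $T_j$ having kernel at scale $2^j$. The crux is the bound $\|T_j\|_{M^{p\to 2}}\lesssim B\,2^{-|j|\alpha}$. Since the $m_{k,k+j}$ have almost disjoint dyadic-frequency supports, Littlewood--Paley (valid for $1<p'<\infty$) together with Minkowski's inequality $\|\cdot\|_{L^{p'}(\ell^2)}\le\|\cdot\|_{\ell^2(L^{p'})}$ (which requires $p'\ge 2$) give, on the dual side,
\[
\|T_j^*g\|_{L^{p'}} \lesssim \Big(\sum_k \|m_{k,k+j}(D)^*\widetilde{P}_k g\|_{L^{p'}}^2\Big)^{1/2} \lesssim \sup_k\|m_{k,k+j}\|_{M^{p\to 2}}\cdot\Big(\sum_k \|\widetilde{P}_k g\|_{L^2}^2\Big)^{1/2},
\]
where $\widetilde{P}_k$ is a Littlewood--Paley projection onto $|\xi|\sim 2^k$. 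Inserting $\|m_{k,k+j}\|_{M^{p\to 2}}\le B\,2^{-(2k+j)\alpha}$ and evaluating $\sup_{k\ge \max(0,-j)}2^{-(2k+j)\alpha}=2^{-|j|\alpha}$, together with Plancherel $\sum_k \|\widetilde{P}_k g\|_{L^2}^2\lesssim \|g\|_{L^2}^2$, one obtains $\|T_j\|_{L^p\to L^2}\lesssim B\,2^{-|j|\alpha}$.

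Finally, by the single-scale sparse-domination principle from \cite{BRS} — for an operator $S$ with kernel supported in $\{|x|\lesssim 2^j\}$, $\|S\|_{\mathrm{Sp}(p,2)}\lesssim \|S\|_{L^p\to L^2}$ with constant independent of $j$ — one concludes $\|T_j\|_{\mathrm{Sp}(p,2)}\lesssim B\,2^{-|j|\alpha}$, and summing the geometric series over $j\in \mathbb{Z}$ yields
\[
\|m(D)\|_{\mathrm{Sp}(p,2)} \le \sum_{j}\|T_j\|_{\mathrm{Sp}(p,2)} \lesssim B\sum_{j\in \mathbb{Z}}2^{-|j|\alpha} \lesssim B,
\]
as claimed. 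The main obstacle is the dual-side orthogonality bound for $T_j$: it depends crucially on the Littlewood--Paley/Minkowski chain being available at the exponent $p'\ge 2$, that is, on having $q=2$. This use of Plancherel at $q=2$ is precisely what affords the endpoint choice $r=p$, which is not permitted in Theorem \ref{thm:qle2} (where one requires $r>p$).
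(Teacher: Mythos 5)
Your approach---summing single-scale sparse bounds over $j$---cannot work at this endpoint; two compensating errors hide the obstruction. First, the rescaling has the wrong sign. Writing $\alpha:=d(\tfrac1p-\tfrac12)$ and noting $M_{k,\ell}=m_{k,\ell}(2^k\cdot)$, the dilation identity gives $\|m_{k,\ell}\|_{M^{p\to 2}}=2^{k\alpha}\|M_{k,\ell}\|_{M^{p\to 2}}\le B\,2^{(k-\ell)\alpha}$, not $B\,2^{-(k+\ell)\alpha}$; with $\ell=k+j$ this equals $B\,2^{-j\alpha}$ independently of $k$, so $\sup_k\|m_{k,k+j}\|_{M^{p\to 2}}\le B\,2^{-j\alpha}$, not $B\,2^{-|j|\alpha}$. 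Second, the single-scale sparse principle you cite carries a $j$-dependent normalization that you dropped: if $S$ has convolution kernel supported where $|x|\lesssim 2^j$, tiling by cubes $Q$ of side $2^j$ and applying H\"older gives
\[|\inn{Sf_1}{f_2}|\le\|S\|_{L^p\to L^2}\sum_Q|Q|^{\frac1p+\frac12}\jp{f_1}_{Q,p}\jp{f_2}_{3Q,2}= 2^{j\alpha}\|S\|_{L^p\to L^2}\sum_Q|Q|\,\jp{f_1}_{Q,p}\jp{f_2}_{3Q,2},\]
so $\|S\|_{\mathrm{Sp}(p,2)}\lesssim 2^{jd(\frac1p-\frac12)}\|S\|_{L^p\to L^2}$ --- compare \eqref{eq:sparse-singlescale0red}. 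Correcting both points, your dual-side Littlewood--Paley argument (which is sound, and is essentially Lemma \ref{lem:mult-single-spatial-scale-qge2}) yields $\|T_j\|_{L^p\to L^2}\lesssim B\,2^{-j\alpha}$, and then $\|T_j\|_{\mathrm{Sp}(p,2)}\lesssim 2^{j\alpha}\cdot B\,2^{-j\alpha}=B$ uniformly in $j$ with \emph{no decay}; the series $\sum_j\|T_j\|_{\mathrm{Sp}(p,2)}$ diverges.

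This exact cancellation is the endpoint obstruction: the $B^\alpha_1$ hypothesis sits precisely at the threshold where a naive sum over spatial scales fails, which is what makes Theorem \ref{thm:qge2} nontrivial. The paper's proof does something structurally different: it does not sum sparse bounds over scales, but establishes a single bound for $\sum_{j=N_1}^{N_2}\mathcal{T}_j$ against the modified local form $\Lambda^{**}_{S_0,p,q'}$ with a constant uniform in $N_2-N_1$, via induction on the number of scales (Theorem \ref{thm:main}, Proposition \ref{inductiveclaim}). The induction step combines a Calder\'on--Zygmund decomposition of $f_2$ with an atomic decomposition of $f_1$ adapted to a local martingale square function (\S\ref{sec:decompositions}--\S\ref{sec:CZ-dec}), and builds one sparse collection recursively across all scales instead of stacking separate scale-by-scale collections. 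Your single-scale estimate is only the base case of this induction (Corollary \ref{induction-hypothesis}); the substance of the proof is the uniform inductive step, which your argument does not address.
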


For $q> 2$ we have the following version.

\begin{thm} \label{thm:multq>2}
Let $1 < p < 2<q\le p'$. Then for $q'<r\le 2$,
\begin{multline}\label{condmultq>2}
\|m(D)\|_{\mathrm{Sp}(p,q') } \le C_{p,q,r} \times\\
(\sup_{t>0} \|\phi m(t\cdot) \|_{B^{d(\frac1p-\frac1q)}_1(M^{p\to q}) }
+\sup_{t>0} \|\phi m(t\cdot) \|_{B^{d(\frac1{q'}-\frac1r)}_1(M^{r\to r}) })
\,.\end{multline}
\end{thm}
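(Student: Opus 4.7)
The plan is to mirror the proof strategy of the earlier theorems: combine a dyadic frequency decomposition of $m$ with a spatial (Besov) decomposition of each frequency block, then run a stopping-time Calderón--Zygmund argument to extract a sparse family. The distinctive feature for $q>2$ is that two different multiplier norms will be needed, to handle separately the contributions of kernels at scales smaller than, and larger than, the current sparse cube.

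I would first fix a smooth dyadic partition of unity $1 = \sum_{k\in{\mathbb {Z}}}\beta(2^{-k}\xi)$ for $\xi\neq 0$, with $\beta$ supported in $\{|\xi|\sim 1\}$ and $\beta\phi=\beta$. Writing $m=\sum_k m_k$ with $m_k=m\,\beta(2^{-k}\cdot)$, apply the definition of the Besov space to the rescaled multiplier $m_k^\natural(\xi):=\phi(\xi)m(2^{-k}\xi)$ to produce a further decomposition $m_k^\natural=\sum_{\ell\ge 0}m_{k,\ell}^\natural$, where $m_{k,\ell}^\natural(D)$ has kernel supported (up to rapidly decaying tails) in $\{|x|\lesssim 2^\ell\}$. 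The hypotheses in \eqref{condmultq>2} then give, uniformly in $k$,
\[
\sum_{\ell\ge 0}2^{\ell d(\frac1p-\frac1q)}\|m_{k,\ell}^\natural\|_{M^{p\to q}}\lesssim A_1,\qquad
\sum_{\ell\ge 0}2^{\ell d(\frac1{q'}-\frac1r)}\|m_{k,\ell}^\natural\|_{M^{r\to r}}\lesssim A_2,
\]
where $A_1,A_2$ denote the first and second suprema in \eqref{condmultq>2}. Undoing the rescaling produces pieces $m_{k,\ell}(\xi)=m_{k,\ell}^\natural(2^{-k}\xi)$ with kernels essentially supported in $\{|x|\lesssim 2^{\ell-k}\}$.

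Next, I would establish the sparse bound by a stopping-time construction in the template of \cite{BRS}, analogous to the proofs of Theorems \ref{thm:qle2}--\ref{thm:qge2}. For each cube $Q$ produced by the stopping time, split the contributions of $m_{k,\ell}(D)$ into the \emph{near-scale} regime $2^{\ell-k}\le\ell(Q)$ and the \emph{far-scale} regime $2^{\ell-k}>\ell(Q)$. In the near-scale regime, pair $m_{k,\ell}(D)(f_1\mathbf{1}_{cQ})$ with $f_2\mathbf{1}_Q$ and apply the $M^{p\to q}$ bound together with Hölder; the resulting factor $|Q|^{1/p-1/q}$ combines with the first Besov exponent to produce a summable series. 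In the far-scale regime, the $M^{p\to q}$ bound would be wasteful (since only $f_2\mathbf{1}_Q$ is tested), and I would instead invoke the $M^{r\to r}$ bound, with Hölder converting $\jp{f_2}_{Q,r'}$ to $\jp{f_2}_{Q,q'}$ at the cost $|Q|^{1/q'-1/r}$, precisely matching the second Besov exponent; the constraint $q'<r\le 2$ is what makes this Hölder step go in the right direction.

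The principal technical obstacle is the summation in the frequency index $k$: the Besov hypotheses are uniform (rather than summable) in $k$, so a naive term-by-term sum diverges. To handle this, I would group the pieces $m_{k,\ell}$ of equal spatial scale $j=\ell-k$ into operators $T_j=\sum_{k}m_{k,k+j}(D)$ and exploit the disjoint frequency supports of $\{m_k\}$ via a Littlewood--Paley/almost-orthogonality argument in the spirit of \cite{SeegerStudia90}, adapted to the sparse-form setting. The assumption $p<2<q$ is what enables the use of square-function inequalities to recover the $k$-summation without loss. Combining all of these ingredients within the stopping-time construction then yields the desired sparse bound $\|m(D)\|_{\mathrm{Sp}(p,q')}\lesssim A_1+A_2$.
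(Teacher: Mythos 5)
Your overall scaffolding is right (decompose $m$ in frequency and space, group pieces by spatial scale $j=\ell-k$ into operators $\mathcal T_j$, run a BRS-style stopping-time induction, and use the two Besov hypotheses for two different parts of the analysis), but the mechanism you propose for allocating the two hypotheses does not work, and the Hölder step at the heart of your far-scale argument goes in the wrong direction.

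Concretely: under the standing assumptions $1<p<2<q<p'$ and $q'<r\le 2$, one has $p<q'<r$ and $q'<2\le r'$, hence $r>p$ and $r'>q'$. If you pair a kernel piece with $f_1\mathbf 1_{cQ}$ and $f_2\mathbf 1_Q$ using the $M^{r\to r}$ bound, you obtain $\jp{f_1}_{Q,r}\jp{f_2}_{Q,r'}$, and \emph{neither} factor can be reduced to the target quantities $\jp{f_1}_{Q,p}$, $\jp{f_2}_{Q,q'}$ by Hölder: both comparisons $\jp{f_1}_{Q,r}\le\jp{f_1}_{Q,p}$ and $\jp{f_2}_{Q,r'}\le\jp{f_2}_{Q,q'}$ are false in general, because in both cases you would be decreasing the Lebesgue exponent. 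So the ``far-scale $\Rightarrow M^{r\to r}$'' allocation, as you describe it, cannot close. The near-/far-scale dichotomy is also not where the two hypotheses naturally separate.

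What the paper actually does is split $f_1$, not the kernel scales: it constructs (Section \ref{sec:decompositions}--\ref{sec:CZ-dec}) an atomic/Calderón--Zygmund decomposition $f_1=g_1+b_1$ based on a local martingale square function, with a crucial pointwise bound (Lemma \ref{lemma:Linfty good}) saying that the square function of $g_1$ is $\lesssim \jp{f_1}_{S_0,p}$ uniformly. For the good part, one can therefore control $g_1$ in $L^q$ with the constant $\jp{f_1}_{S_0,p}|Q|^{1/q}$, dualize against $f_2$ in $L^{q'}$, and invoke the $M^{r\to r}$ Besov hypothesis through a Hörmander-type multiplier bound (Lemma \ref{lem:studia-verification}, which hinges on \eqref{studia-estimate} with $p$ replaced by $q'$) — this is where $q'<r\le 2$ enters. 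The bad part is then handled using the $M^{p\to q}$ Besov hypothesis together with the $\ell^2$-summability of the atomic coefficients coming from the square-function bound (\eqref{eq:l2 sum betakQ}). Your last paragraph correctly flags the $k$-summation issue, but the more fundamental issue you have missed is the Lebesgue-exponent mismatch, which is precisely what the atomic decomposition of $f_1$ and the uniform pointwise control on the good part are designed to overcome.
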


\begin{remarksa}
(i) The spaces of multipliers defined by the conditions in the above three theorems are independent of the choice of the radial non-trivial function $\phi$ and independent of the specific spatial cutoff
function.
This can be shown by routine but somewhat lengthy calculations. We omit the proof but point out that our choice for $\phi$ can always be taken as the cutoff function $\varphi$ used in the Calder\'on reproducing formula \eqref{eq:Caldrepr}.

(ii)
As noted in \cite[p.152]{SeegerMonatshefte} the multipliers in Theorem \ref{thm:qge2} satisfy for $1<p<2$ an inequality involving the Lorentz space $L^{p,2}$,
\begin{equation}\label{eq:Lp2} \notag \|m(D) f\|_{L^{p,2} } \lesssim_p A \|f\|_p, \quad \text{ with $A=\sup_{t>0} \|\phi m(t\cdot) \|_{B_1^{d(\frac 1p-\frac 12)}(M^{p\to 2})}$.}\end{equation}
This is shown to be a consequence of the weighted norm inequality
\begin{equation}\label{eq:Gfunctions} \notag \int {\mathcal {G}}[m(D) f]^2 w \, \mathrm{d} x \lesssim A^2
\int \widetilde {\mathcal {G}}[f] ^2 (M[|w|^s] )^{1/s} \, \mathrm{d} x,\quad s=(p'/2)',
\end{equation} where
$M$ is the Hardy-Littlewood maximal operator, and ${\mathcal {G}}$, $\widetilde {\mathcal {G}}$
are suitable Littlewood--Paley--Stein operators. For earlier closely related variants in non-endpoint cases see \cite[Ch.IV]{Ste70}, \cite{ChristProc}.

(iii) The proofs of Theorems \ref{thm:qle2}, \ref{thm:qge2}, and \ref{thm:multq>2} have a similar structure and, in order to avoid repetitions, we shall present them together. They rely on an iteration argument common in sparse domination; one main novelty in this paper is that at every step of the iteration Calder\'on--Zygmund arguments are combined with atomic decompositions in $L^p$-spaces of functions on certain dyadic cubes. This use of the iterated atomic decompositions is crucial for the proof of Theorem \ref{thm:qle2}, but can be replaced by applications of Littlewood--Paley theory in the proofs of Theorem \ref{thm:qge2} and \ref{thm:multq>2}.

\end{remarksa}

We can use the embedding $L^u\subset M^{p\to 2} $ for $1\le p\le 2$, $1/p-1/2=1/u$ to derive a corollary of Theorem \ref{thm:qge2} which uses standard Besov spaces
$B^{d(1/p-1/2)}_{u,1}=B^{d(1/p-1/2)}_1(L^u)$.

\begin{cor} \label{cor:generalBesov} Suppose $1< p< 2$, $1/p-1/2\ge 1/u$ and $m\in L^\infty$ satisfies
\[\sup_{t>0} \|\phi m(t\cdot) \|_{B^{d(\frac 1p-\frac 12)}_{u,1} }<\infty.\]
Then $m(D)\in {\mathrm{Sp}}(p,2)$.
\end{cor}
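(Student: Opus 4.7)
By Theorem~\ref{thm:qge2}, it suffices to establish
\[\sup_{t>0}\|\phi m(t\cdot)\|_{B^\alpha_1(M^{p\to 2})}\lesssim \sup_{t>0}\|\phi m(t\cdot)\|_{B^\alpha_{u,1}},\]
with $\alpha=d(1/p-1/2)$. By the definition of the Besov norms this reduces, with $h=\phi m(t\cdot)$, to proving the termwise estimates $\|h*\widehat\Psi_\ell\|_{M^{p\to 2}}\lesssim \|h*\widehat\Psi_\ell\|_{L^u}$ (and the analog for $h*\widehat\Phi_0$), uniformly in $\ell\ge 0$.

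The core classical ingredient is the embedding $L^{u_0}\subset M^{p\to 2}$ with $1/u_0=1/p-1/2$: it follows from Plancherel's identity, H\"older's inequality in frequency with the conjugate pair $(u_0,\,2u_0/(u_0-2))$, and Hausdorff--Young (noting that $2u_0/(u_0-2)=p'$). Applied to each Littlewood--Paley piece this immediately settles the endpoint case $u=u_0$ of the corollary.

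For general $u\ge u_0$ I would use that $h$ is supported in the fixed compact set $\{1/2\le|\xi|\le 2\}$, while $\widehat\Psi_\ell$ and $\widehat\Phi_0$ are Schwartz functions with $L^1$-norm bounded uniformly in $\ell\ge 0$ and concentration scale $\le 1$. Consequently each piece $g_\ell:=h*\widehat\Psi_\ell$ is essentially localized to a fixed bounded neighborhood $E$ of $\text{supp}(h)$, with rapidly decaying Schwartz tail. Splitting $g_\ell=\chi g_\ell+(1-\chi)g_\ell$ with $\chi$ a smooth cutoff to $E$: H\"older on the bounded support gives $\|\chi g_\ell\|_{L^{u_0}}\lesssim \|g_\ell\|_{L^u}$ (valid since $u\ge u_0$ and $|E|=O(1)$), and the embedding from the previous paragraph then controls $\|\chi g_\ell\|_{M^{p\to 2}}$. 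Using the rescaling identity $\widehat\Psi_\ell(\eta)=2^{\ell d}\widehat\Phi_0(2^\ell\eta)-2^{(\ell-1)d}\widehat\Phi_0(2^{\ell-1}\eta)$ and the Schwartz decay of $\widehat\Phi_0$, one checks $\|(1-\chi)g_\ell\|_{L^r}\lesssim_{N,r}2^{-N\ell}\|m\|_\infty$ for any $r$ and $N$, so this tail contribution is absolutely summable even after multiplication by $2^{\ell\alpha}$.

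The principal technical point is thus the uniformity in $\ell$ of the H\"older constant and of the tail decay, both of which follow routinely from the rescaling structure and Schwartz decay recalled above. Summing the termwise bounds against the weights $2^{\ell\alpha}$, taking the supremum over $t>0$, and invoking Theorem~\ref{thm:qge2} then yields $m(D)\in\mathrm{Sp}(p,2)$.
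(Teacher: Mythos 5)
Your proof is correct and follows the same route the paper intends: apply Theorem~\ref{thm:qge2} after converting the $B^{d(\frac1p-\frac12)}_{u,1}$ condition into the $B^{d(\frac1p-\frac12)}_1(M^{p\to 2})$ condition via the classical Hausdorff--Young/H\"older embedding $L^{u_0}\subset M^{p\to 2}$ with $1/u_0 = 1/p - 1/2$. The paper states this only in a one-line remark (and only for the endpoint $u=u_0$), so your handling of the general case $u\ge u_0$ --- localizing each piece $h*\widehat\Psi_\ell$ to a fixed neighborhood of $\mathrm{supp}(\phi)$, applying H\"older there to pass from $L^u$ to $L^{u_0}$, and showing the Schwartz tail contributes an $O(2^{-N\ell}\|m\|_\infty)$ error that is summable against $2^{\ell\alpha}$ --- is precisely the detail the paper leaves implicit. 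Your tail estimate is of the same flavor as Lemma~\ref{lem:r1r2} and Corollary~\ref{cor:Aembeddings} in \S\ref{sec:multiplier-embeddings}, which the paper proves for a closely related purpose.
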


Another corollary of Theorem \ref{thm:qge2} involves radial multipliers where $L^u$ is replaced by $L^2$, as a consequence of the Stein--Tomas Fourier restriction theorem.
\begin{cor} \label{cor:studiaradial}
Let $d\ge 2$, $1< p\le \frac{2(d+1)}{d+3}$.
Suppose that \[\sup_{t>0} \|\phi h(t\cdot)
\|_{B_{2,1}^{ d(\frac 1p-\frac 12)} ({\mathbb {R}}) } <\infty.\]
Then $h(|D|)\in \mathrm{Sp}(p,2)$.
\end{cor}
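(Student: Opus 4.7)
My plan is to deduce the corollary from Theorem \ref{thm:qge2} by showing that, for a radial multiplier $m(\xi)=h(|\xi|)$, the $d$-dimensional Besov norm appearing in the hypothesis of that theorem is majorized, uniformly in $t$, by the $1$-dimensional Besov norm appearing in the corollary. With $\alpha:=d(1/p-1/2)$, the target inequality is
\[
\|\phi(\cdot)\,h(t|\cdot|)\|_{B^\alpha_1(M^{p\to 2})} \;\lesssim\; \|\phi\,h(t\cdot)\|_{B^\alpha_{2,1}(\mathbb{R})}
\qquad\text{uniformly in }t>0.
\]
Since $B^\alpha_1(M^{p\to 2})$ is independent of the choice of cutoff (the remark following Theorem \ref{thm:multq>2}), we may take the radial bump on $\mathbb{R}^d$ of the form $\phi(\xi)=\varphi_0(|\xi|)$ with $\varphi_0\in C^\infty_c((1/2,2))$, so that $\phi(\xi)\,h(t|\xi|)$ is a radial function supported in a fixed annulus.

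The key analytic input is the radial form of the dual Stein--Tomas restriction theorem: for $1<p\le 2(d+1)/(d+3)$ and any $G\in L^2(\mathbb{R})$ supported in $[1/4,4]$,
\[
\|G(|\cdot|)\|_{M^{p\to 2}(\mathbb{R}^d)} \;\lesssim\; \|G\|_{L^2(\mathbb{R})}.
\]
This is immediate from Plancherel and polar coordinates combined with the Stein--Tomas bound $\|\widehat f\|_{L^2(rS^{d-1})}\lesssim\|f\|_{L^p(\mathbb{R}^d)}$, the latter holding uniformly in $r\in[1/4,4]$ by scaling.

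To exploit this estimate, apply a standard one-dimensional Littlewood--Paley decomposition to the radial profile, writing $\varphi_0(r)\,h(tr)=\sum_{\ell\ge 0}\eta_{t,\ell}(r)$ with each $\eta_{t,\ell}$ essentially supported in $[1/2,2]$ and $1$D-Fourier-localized at scale $2^\ell$, and with $\sum_{\ell\ge 0}2^{\ell\alpha}\|\eta_{t,\ell}\|_{L^2(\mathbb{R})}\lesssim\|\varphi_0\,h(t\cdot)\|_{B^\alpha_{2,1}(\mathbb{R})}$. Setting $g_{t,\ell}(\xi):=\tilde\varphi_0(|\xi|)\,\eta_{t,\ell}(|\xi|)$ for a radial cutoff $\tilde\varphi_0$ equal to $1$ on $[1/2,2]$, one obtains the decomposition $\phi(\cdot)\,h(t|\cdot|)=\sum_\ell g_{t,\ell}$, and the radial Stein--Tomas estimate above gives $\|g_{t,\ell}\|_{M^{p\to 2}}\lesssim\|\eta_{t,\ell}\|_{L^2(\mathbb{R})}$.

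The main technical obstacle I expect is the remaining step: verifying that this one-dimensional radial decomposition is admissible for the $d$-dimensional Besov norm, i.e.
\[
\|\phi(\cdot)\,h(t|\cdot|)\|_{B^\alpha_1(M^{p\to 2})} \;\lesssim\; \sum_\ell 2^{\ell\alpha}\|g_{t,\ell}\|_{M^{p\to 2}}.
\]
Because the convolution kernel of $(g_{t,\ell}*\widehat{\Psi_k})(D)$ equals the kernel of $g_{t,\ell}(D)$ multiplied by $\Psi_k$, which is supported in $\{|x|\sim 2^k\}$, what is needed is that $\mathcal{F}^{-1}[g_{t,\ell}]$ is essentially concentrated in the shell $\{|x|\sim 2^\ell\}$ with rapid decay elsewhere, so that only $k\approx\ell$ contributes. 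For a radial function supported in a fixed compact annulus, this spatial localization follows from the Hankel representation of the radial Fourier transform together with the large-argument asymptotics of $J_{(d-2)/2}$: modulo a factor $|x|^{-(d-1)/2}$ and a non-oscillatory remainder, the radial profile of $\mathcal{F}^{-1}[g_{t,\ell}]$ at radius $|x|$ is the $1$D Fourier transform of $r\mapsto\eta_{t,\ell}(r)\,\tilde\varphi_0(r)\,r^{(d-1)/2}$ evaluated at $|x|$, which is $1$D-Littlewood--Paley at scale $2^\ell$ by construction. Standard almost-orthogonality summation in $k$ and $\ell$ then yields the desired Besov estimate, and invoking Theorem \ref{thm:qge2} completes the proof.
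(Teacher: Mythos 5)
Your proposal is correct in outline but takes a genuinely different route from the paper's proof, so it is worth comparing the two. Both arguments start from the same reduction: the Stein--Tomas theorem gives $\|G(|\cdot|)\|_{M^{p\to 2}(\mathbb R^d)}\lesssim\|G\|_{L^2}$ for a radial profile $G$ supported in a fixed compact annulus, and the task becomes to control the $d$-dimensional Besov norm $\sup_t\|\phi(\cdot)h(t|\cdot|)\|_{B^\alpha_1(M^{p\to 2})}$ by the $1$-dimensional norm $\sup_t\|\phi\,h(t\cdot)\|_{B^\alpha_{2,1}(\mathbb R)}$.

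From here the two proofs diverge. The paper keeps the \emph{$d$-dimensional} Littlewood--Paley decomposition $\sum_\ell 2^{\ell\alpha}\|(\phi(|\cdot|)h(t|\cdot|))*\widehat{\Psi_\ell}\|_{L^2(\mathrm d\mu_p)}$, splits the integral into the compact annulus (where the weight $|\xi|^{2d(1/p-1)}$ is $\approx 1$) and its complement, and on the annulus invokes the dimension-transfer inequality $\|\chi g(|\cdot|)\|_{B^\alpha_{2,1}(\mathbb R^d)}\lesssim\|g\|_{B^\alpha_{2,1}(\mathbb R)}$. That inequality is proved by integer-order Sobolev estimates (product and chain rule) together with real interpolation. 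The complementary region is handled directly by the rapid decay of $\widehat{\Psi_\ell}$ against the integrable weight, and no oscillatory or asymptotic analysis is needed.

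You instead perform the Littlewood--Paley decomposition on the $1$-dimensional radial profile from the start, getting pieces $g_{t,\ell}(\xi)=\tilde\varphi_0(|\xi|)\eta_{t,\ell}(|\xi|)$ that are $1$D-frequency-localized at scale $2^\ell$, and then argue that the $d$-dimensional Besov norm sees essentially only the matching scale because $\mathcal F^{-1}[g_{t,\ell}]$ concentrates in $\{|x|\sim 2^\ell\}$. This requires the large-argument asymptotics of the Bessel kernel $J_{(d-2)/2}$ to identify the radial profile of $\mathcal F^{-1}[g_{t,\ell}]$ with a $1$D Fourier transform, and then an almost-orthogonality summation over $k$ and $\ell$. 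The idea is sound and well-established in radial Fourier analysis, but be aware that a careful write-up needs more bookkeeping than the paper's argument: you must quantify the error term in the Bessel expansion uniformly in $\ell$, track the factor $|x|^{-(d-1)/2}$ against the volume $2^{kd}$ of the annulus $\{|x|\sim 2^k\}$ when converting pointwise kernel bounds into $M^{p\to 2}$ bounds (one convenient path is to bound the $L^2(\mathrm{annulus})$ norm of $g_{t,\ell}*\widehat{\Psi_k}$ via Plancherel and then reapply Stein--Tomas to the truncated piece, and control the tail outside a fixed annulus by a rapid-decay argument of the type in Lemma \ref{lem:r1r2}), and handle the $\ell=0$ low-frequency term separately. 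In exchange, your route makes the $1$D-vs-$d$D interplay more transparent. Net: both approaches prove the same dimension-transfer phenomenon for Besov norms of compactly supported radial functions — the paper via Sobolev interpolation and a weight trick, you via kernel localization and Bessel asymptotics — with the paper's version being the more elementary and technically lighter of the two.
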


\subsection{Oscillatory Fourier multipliers and Miyachi classes}

For $a>0$, $a\neq 1$ and $b\ge 0$ consider the Fourier multipliers
\begin{equation} \label{def:mab}
m_{a,b} (\xi) = e^{i|\xi|^a} |\xi|^{-b} \chi_\infty(\xi)
\end{equation} where $\chi_\infty \in C^\infty({\mathbb {R}}^d)$, $\chi_\infty(\xi)=1$ for $|\xi|\ge 1$ and $\chi_\infty$ vanishes in a neighborhood of the origin. It is well-known that the operator $m_{a,b} (D)$ is bounded on $L^p({\mathbb {R}}^d) $ for all $p\in (1,\infty)$ if and only if $b\ge ad/2$. Moreover if $0<b<ad/2$, $L^p$ boundedness holds if and only if $\frac{ 2ad}{ad+2b}\leq p \leq \frac{2ad}{ad-2b}$, i.e., equivalently, if $b\ge ad|\frac 1p-\frac 12|$ (see \cite{Stein-bull1971}, \cite{FeffermanStein1972}, \cite {Miyachi}).
Miyachi \cite{Miyachi}
considered classes generalizing the oscillatory multipliers $m_{a,b}$; for $0<a<1$ these correspond to
translation invariant versions of the pseudo-differential operators with $S^{-b}_{1-a,\delta}$ symbols for which Fefferman \cite{Fefferman-IsrealPs} had already proved sharp $L^p$ bounds. We say that $m\in \mathrm{FM}(a,b)$ if $m$ is supported in $\{\xi\in{\mathbb {R}}^d: |\xi|\ge 1\}$ and satisfies the derivative estimates
\begin{equation}\label{Miyachi-deriv-cond} |m^{(\beta)} (\xi) |\le C_\beta |\xi|^{(a-1)|\beta|-b} \end{equation} for all multiindices $\beta \in {\mathbb {N}}_0^d$. It is proved in \cite{Miyachi} that for $1<p\le 2$
the multiplier operators $m(D)$ with $m\in \mathrm{FM}(a, ad(\frac 1p-\frac 12))$ are bounded on $L^p$; this result is optimal.

In \cite{BRS}
it was shown that for $0<b < ad/2$, the operator
$m_{a,b}(D)$ belongs to ${\mathrm{Sp}}(p,p)$ in the open range for $\frac{ 2ad}{ad+2b}<p\le 2$.
For general multipliers in $\mathrm{FM}(a,b) $ it was shown in \cite{beltran-cladek} that the operators belong to ${\mathrm{Sp}}(p,2) $, in the same $p$-range.
We note that no nontrivial $(p_1,p_2)$ sparse bounds with $p_2<p_1'$ was obtained at the endpoint $p_1=\frac{2ad}{ad+2b} $, i.e. $b=ad(\frac 1{p_1}-\frac 12)$.

We provide
a full characterization of the sparse exponent set for the oscillatory multiplier operators $m_{a,b} (D)$ in the relevant parameter case $0<b<ad/2$, thereby settling the open endpoint problem.

\begin{thm} \label{thm:oscmult} Let $a\neq 1$ and $0<b<ad/2$.
Let $\triangle(a,b)$
the closed triangle with vertices
$Q_1=(\frac 12+\frac{b}{da}, \frac 12-\frac{b}{da})$,
$Q_2=(\frac 12-\frac{b}{da}, \frac 12+\frac{b}{da})$, $Q_3=(\frac 12+\frac{b}{da}, \frac 12+\frac{b}{da})$ and $m_{a,b} $ be the oscillatory multiplier in \eqref{def:mab}.
Then
\[ m_{a,b}(D) \in \mathrm{Sp}(p_1,p_2) \, \iff \, (\tfrac 1{p_1}, \tfrac 1{p_2}) \in \triangle(a,b).
\]
\end{thm}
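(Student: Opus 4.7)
My plan is to prove both implications of the equivalence. For the sufficiency direction, I will establish the sparse bound at the three vertices of $\triangle(a,b)$ and fill in the rest of the triangle by interpolation of sparse bounds. Since $m_{a,b}^{*}(D)=\overline{m_{a,b}}(D)$ belongs to the same Miyachi class, the duality relation $\mathrm{Sp}(p_1,p_2)\leftrightarrow\mathrm{Sp}(p_2,p_1)$ identifies $Q_1$ with $Q_2$, so only $Q_1=(1/p_0,1/p_0')$ and the diagonal vertex $Q_3=(1/p_0,1/p_0)$ need a direct treatment, where $p_0=2ad/(ad+2b)$. The vertex $Q_1$ is the endpoint $L^{p_0}$-type sparse bound and follows from Theorem \ref{thm:qle2} (applied with $p=p_0$ and $q$ slightly larger than $p_0$) combined with a standard dyadic decomposition and summation using Miyachi's sharp $L^{p_0}$-bound on each frequency annulus.

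The vertex $Q_3$ is the genuinely novel endpoint, the open problem flagged in the introduction. My approach is to apply Theorem \ref{thm:multq>2} at the boundary $(p,q)=(p_0,p_0')$ of its parameter range. The key estimate to verify is
\[
\sup_{t>0}\,\|\phi\, m_{a,b}(t\cdot)\|_{B^{2b/a}_1(M^{p_0\to p_0'})}\,<\,\infty,
\]
noting that $2b/a=d(1/p_0-1/p_0')$ is precisely the Besov index required. Using the nondegeneracy of the Hessian of $|\xi|^a$ (which holds precisely because $a\neq 1$), a standard stationary-phase analysis shows that the spatial kernel $\widecheck{\phi\, m_{a,b}(t\cdot)}$ is concentrated on the shell $\{|x|\sim t^a\}$ with amplitude $\sim t^{-b-ad/2}$. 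Riesz--Thorin interpolation between the trivial $L^2\to L^2$ bound of size $t^{-b}$ and the dispersive $L^1\to L^\infty$ bound of size $t^{-b-ad/2}$ yields an $M^{p_0\to p_0'}$-norm of size $\sim t^{-2b}$ on the dominant Besov piece at scale $2^{\ell}\sim t^a$, which the weight $2^{\ell\cdot 2b/a}\sim t^{2b}$ cancels exactly. The second Besov--multiplier norm in \eqref{condmultq>2} is handled similarly, via the standard Hörmander-type derivative estimates for $m\in\mathrm{FM}(a,b)$.

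For the necessity direction, I would argue that each of the three edges of $\triangle(a,b)$ corresponds to a sharp obstruction. The edges $1/p_1\le 1/p_0$ and $1/p_2\le 1/p_0$ through $Q_3$ reflect the sharp Fefferman--Stein--Miyachi characterization of $L^p$-boundedness of $m_{a,b}(D)$: any nontrivial $(p_1,p_2)$ sparse bound (with $p_2\le p_1'$) yields $L^{p_1}\to L^{p_1}$ boundedness and, applied to the adjoint $m_{a,b}^{*}$, also $L^{p_2'}\to L^{p_2'}$ boundedness, both of which fail for exponents below $p_0$. The remaining edge $1/p_1+1/p_2\ge 1$ is obtained from a scaling-based counterexample that tests the sparse form against bumps adapted to the shell of radius $\sim 2^{k(a-1)}$ on which the kernel of the $2^k$-frequency piece of $m_{a,b}(D)$ is concentrated.

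The main obstacle is that the vertex $Q_3$ sits precisely on the boundary of the parameter range of Theorem \ref{thm:multq>2}, which requires the strict inequality $q<p'$. Either an endpoint refinement of that theorem (obtainable by carefully re-running its iterative proof at the critical Besov exponent $2b/a$), or a direct argument specialized to the oscillatory phase, will be needed. Verifying the exact cancellation between the stationary-phase amplitude $t^{-b-ad/2}$ and the Besov weight $2^{\ell\cdot 2b/a}$ at the dominant scale $2^{\ell}\sim t^a$, uniformly in $t$, is the technical heart of the argument.
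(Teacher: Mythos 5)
Your proposal takes essentially the same route as the paper's \S 8.2: deduce the entire triangle from the diagonal vertex $Q_3$ and establish $Q_3$ by applying Theorem \ref{thm:multq>2}, with the Besov--multiplier norms verified via the stationary-phase and interpolation computation you describe. A few remarks on where your account deviates.

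First, the separate derivation of $Q_1$ via Theorem \ref{thm:qle2} is redundant. Once $m_{a,b}(D)\in\mathrm{Sp}(p_1^*,p_1^*)$ is established for $1/p_1^*=\tfrac12+\tfrac b{ad}$, H\"older's inequality gives $\Lambda^*_{p_1^*,p_1^*}\le\Lambda^*_{p_1,p_2}$ whenever $p_1,p_2\ge p_1^*$, which already covers all of $\triangle(a,b)$, including both dual vertices $Q_1$ and $Q_2$; the constraint $1/p_1+1/p_2\ge 1$ plays no role in the sufficiency direction.

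Second, your concern about the boundary $q=p'$ is a legitimate observation: Theorem \ref{thm:multq>2} is stated with the strict inequality $q<p'$, yet the application at $Q_3$ requires exactly $q=p_1'=p'$ (taking $p=p_1$, $q'=p_1$). No endpoint refinement or separate argument is actually needed, however. The ingredients in the inductive proof, Propositions \ref{prop:goodpartq>2} and \ref{prop:badpartq>2}, are stated and proved for $2<q\le p'$ and $2\le q\le p'$ respectively, and the only constraint used in Lemma \ref{lem:studia-verification} is $q'<r\le 2$, which is guaranteed by $p<2$ alone. So the machinery applies directly at $q=p'$, which is what the paper does, silently. The ``technical heart'' you identify — the exact cancellation between the amplitude $t^{-b-ad/2}$ at the stationary-phase scale $2^\ell\sim t^a$ and the Besov weight $2^{\ell\cdot 2b/a}$ — is indeed the decisive computation, and matches the paper's \eqref{eq:oscmult-curv}.

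Third, a small imprecision in the necessity direction: Proposition \ref{thm:nec-cond} shows that a $(p_1,p_2)$ sparse bound with $p_2<p_1'$ yields the \emph{weak-type} bound $L^{p_1}\to L^{p_1,\infty}$ and the Lorentz bound $L^{p_2',1}\to L^{p_2'}$, not strong $(p_1,p_1)$-boundedness as you stated. The conclusion is unaffected, since for $m_{a,b}$ even these weaker bounds fail below the Fefferman--Miyachi exponent, by the same stationary-phase lower bounds that give the third necessary condition $1/p_1+1/p_2\ge 1$ via the rescaled-kernel tests from Proposition \ref{thm:nec-cond}.
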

In particular for the oscillatory multipliers we get the $\mathrm{Sp} (p_1,p_2)$ bound for the endpoint $p_1=\frac{2ad}{ad+2b}$ in the optimal range $p_1\le p_2\le p_1'$. We also have a sharp result that applies to the full class $\mathrm{FM}(a,b)$.

\begin{thm} \label{miyachi-thm}
Let $a\neq 1$ and $0<b<ad/2$.
Let $\trapez(a,b) $ be
the closed trapezoid with vertices
$Q_1=(\frac 12+\frac{b}{da}, \frac 12-\frac{b}{da})$, $Q_2=(\frac 12-\frac{b}{da}, \frac 12+\frac{b}{da})$,
$P_3=(\frac 12, \frac 12+\frac{b}{da})$, $P_4= (\frac 12+\frac{b}{da},\frac 12)$. Then
\[ m(D)\in{\mathrm{Sp}}(p_1,p_2)\,\text{for all}\;m\in\mathrm{FM}(a,b)\,\iff\, (\tfrac1{p_1},\tfrac1{p_2})\in\trapez(a,b).\]
\end{thm}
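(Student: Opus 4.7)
The plan divides into sufficiency and necessity.

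For sufficiency, first note that the sparse exponent region of $m(D)$ is downward-closed in $(1/p_1,1/p_2)$: if $(1/p_1',1/p_2')\le(1/p_1,1/p_2)$ coordinatewise, then $\langle f\rangle_{Q,p}$ is nondecreasing in $p$ by Jensen, so $\mathrm{Sp}(p_1,p_2)\Rightarrow\mathrm{Sp}(p_1',p_2')$. It therefore suffices to establish $\mathrm{Sp}(p_1,p_2)$ along the upper-right edge $P_3P_4$ of the trapezoid, where $1/p_1+1/p_2=1+b/(da)$ and $p_1,p_2\in[p_0,2]$ with $p_0=2ad/(ad+2b)$. At the vertex $P_4=(1/p_0,1/2)$ I would apply Theorem \ref{thm:qge2} with $p=p_0$; the regularity exponent equals $d(1/p_0-1/2)=b/a$. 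The dual vertex $P_3$ follows from the closure of $\mathrm{FM}(a,b)$ under complex conjugation (adjoints of $m(D)$ have symbol $\overline m$). Interior points of $P_3P_4$ with $p_1<p_2$ are covered by Theorem \ref{thm:multq>2} with $p=p_1$, $q=p_2'$, $r=2$; the critical regularity index is again $d(1/p_1-1/p_2')=b/a$, while the auxiliary $B_1^{d(1/p_2-1/2)}(L^\infty)$ term is elementary. The half with $p_1>p_2$ follows by duality, and the self-dual midpoint ($p_1=p_2$) by a limiting/interpolation argument.

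The technical core is the uniform-in-$t$ Besov-multiplier estimate
\[\sup_{t>0}\|\phi\, m(t\cdot)\|_{B_1^{b/a}(M^{p\to q})}\lesssim 1\]
for $m\in\mathrm{FM}(a,b)$ and $(p,q)\in\{(p_0,2),(p_1,p_2')\}$. The derivative hypothesis $|m^{(\beta)}(\xi)|\lesssim|\xi|^{(a-1)|\beta|-b}$ implies that $\phi m(t\cdot)$ is supported in $\{|\xi|\sim 1\}$ with derivatives of size $t^{a|\beta|-b}$ for $t\ge 1$; integration by parts then localizes the kernel $K_t={\mathcal {F}}^{-1}[\phi m(t\cdot)]$ essentially to $|x|\lesssim t^a$ with quantitative off-diagonal decay. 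For each Besov-scale annulus $\{|x|\sim 2^\ell\}$ the piece $K_t\Psi_\ell(-\cdot)$ is estimated in the Young-inequality exponent $L^s$ (with $1/s=1+1/q-1/p$), and the weighted sum $\sum_\ell 2^{\ell b/a}\|K_t\Psi_\ell(-\cdot)\|_{L^s}$ is shown to be uniformly bounded in $t$ by separating the on-diagonal shells ($2^\ell\lesssim t^a$) from the rapidly-decaying off-diagonal contributions.

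For necessity, three exclusion regimes arise. If $p_1<p_0$ (or $p_2<p_0$ by duality), the standard fact that $\mathrm{Sp}(p_1,p_2)$ implies $L^p$-boundedness for $p\in(p_1,p_2')$ contradicts Miyachi's sharp $L^p$-unboundedness construction in $\mathrm{FM}(a,b)$. If $1/p_1+1/p_2>1+b/(da)$, counterexamples are obtained by testing the sparse form on characteristic functions of carefully chosen ball/shell pairs adapted to a Miyachi multiplier: the pointwise structure of $K_t$ forces $|\langle m(D)\chi_B,\chi_E\rangle|$ to exceed $\Lambda^*_{p_1,p_2}(\chi_B,\chi_E)$ precisely when the exponent sum surpasses the threshold $1+b/(da)$. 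The main obstacle throughout is the critical-regularity $B_1^{b/a}(M^{p\to q})$ estimate in the sufficiency direction: at the endpoint exponent, standard Young/Sobolev summation is insufficient, and one must exploit the sharp pointwise kernel behavior for the entire class $\mathrm{FM}(a,b)$, not merely for the explicit oscillatory multiplier $m_{a,b}$ of Theorem \ref{thm:oscmult}.
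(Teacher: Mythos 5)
Your sufficiency plan mirrors the paper's: reduce to the edge $P_3P_4$ by downward closure, treat $P_4$ via Theorem \ref{thm:qge2}, $P_3$ by conjugation, interior points via Theorem \ref{thm:multq>2} with $r=2$. However, the proposed technical core for verifying the Besov--multiplier hypothesis does not close. You bound $\|K_t\Psi_\ell\|_{L^s}$ in the Young exponent $1/s=1+1/q-1/p$ and sum with weight $2^{\ell b/a}$. Take $(p,q)=(p_0,2)$, so $1/s=1-b/(ad)$; the information available from $m\in\mathrm{FM}(a,b)$ is $|K_t|\lesssim t^{-b}$ and $\|K_t\|_{2}\lesssim t^{-b}$, so by H\"older $\|K_t\Psi_\ell\|_{L^s}\lesssim t^{-b}2^{\ell(d/2-b/a)}$, and the on-diagonal sum $\sum_{2^\ell\lesssim t^a}2^{\ell b/a}\|K_t\Psi_\ell\|_{L^s}$ is of order $t^{ad/2-b}$, which diverges precisely because $b<ad/2$. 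This is not an artifact of a crude bound: $m_{a,b}$ saturates it at the critical shell $2^\ell\approx t^a$. Young's inequality on the convolution kernel is intrinsically lossy here. The paper avoids kernel analysis: from the derivative hypotheses and Sobolev interpolation it extracts $\sup_t\|\phi m(t\cdot)\|_{B^{b/a}_{\infty,1}}\lesssim 1$, and then passes to $B^{b/a}_1(M^{p\to q})$ via the \emph{dual} Bernstein embedding $L^u\hookrightarrow M^{p_4\to 2}$ (with $1/u=1/p_4-1/2$, so $u=s'$), the compact support of $\phi m(t\cdot)$ with off-diagonal tail estimates, and interpolation of $M^{p_4\to 2}=M^{2\to p_4'}$. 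Only the multiplier-side exponent is efficient for bounded compactly supported symbols.

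On necessity your sketch is too vague to check and in fact gestures toward an approach that cannot work. You would test the sparse form on characteristic functions adapted to ``a Miyachi multiplier'', but the oscillatory symbol $m_{a,b}$ is the wrong choice: by Theorem \ref{thm:oscmult} the operator $m_{a,b}(D)$ belongs to $\mathrm{Sp}(p_1,p_2)$ on the strictly larger triangle $\triangle(a,b)$, so testing $m_{a,b}$ cannot produce the extra constraint $1/p_1+1/p_2\le 1+b/(ad)$. The paper instead builds an explicit $m\in\mathrm{FM}(a,b)$ whose key component is $m_1=\sum_k 2^{-kb}\varphi_\circ(2^{-k}\xi)e^{-i2^{-k(1-a)}\xi_1}$, with linear (translational) rather than curved phase; its rescaled truncated pieces $T_{R_n}^{\mathrm{resc}}$ are translated bumps with $L^{p_1}\to L^{p_2'}$ norm $\approx 2^{-n(b-ad(1/p_1-1/p_2'))}$, and Proposition \ref{thm:nec-cond} then forces $b\ge ad(1/p_1-1/p_2')$. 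This linear-phase construction is the new content of the necessity argument; without identifying it (or an equivalent) your necessity half is missing a step.
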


\begin{figure}

\begin{tikzpicture}[scale=2]

\begin{scope}[scale=1.8]

\draw[thick,->] (0,0) -- (1.1,0) node[below] {\small{$ \frac 1 p_1$}};
\draw[thick,->] (0,0) -- (0,1.1) node[left] {\small{$ \frac{1}{p_2}$}};

\draw[loosely dashed] (0,1) -- (1.,1.) -- (1.,0);
\draw[loosely dashed] (0,1) -- (1/6,5/6);
\draw[loosely dashed] (5/6,1/6) -- (1,0);
\draw (1/6,5/6) -- (0.5,0.5) -- (5/6,1/6);

\draw (.5,.02) -- (.5,-.02) node[below] {\small{$ \tfrac 12$}};
\draw (.02,.5) -- (-.02,.5) node[left] {\small{$ \tfrac 12$}};

\draw (5/6,.02) -- (5/6, -.02) node[below] {\small{$\tfrac{1}{2} + \tfrac{b}{ad}$}} ;

\draw[loosely dashed] (0.5,0.5) -- (1,1);

\draw (5/6,1/6) -- (5/6,1/2);
\draw (1/6,5/6) -- (1/2,5/6);
\draw (5/6,1/2) -- (1/2,5/6);
\fill[pattern=north west lines, pattern color=gray] (5/6,1/6) -- (5/6,1/2) -- (1/2, 5/6) -- (1/6,5/6) -- (5/6,1/6);

\begin{scope}[xshift=50]
\draw[thick,->] (0,0) -- (1.1,0) node[below] {\small{$ \frac 1 p_1$}};
\draw[thick,->] (0,0) -- (0,1.1) node[left] {\small{$ \frac{1}{p_2}$}};

\draw[loosely dashed] (0,1) -- (1.,1.) -- (1.,0);
\draw[loosely dashed] (0,1) -- (1/6,5/6);
\draw[loosely dashed] (5/6,1/6) -- (1,0);
\draw (1/6,5/6) -- (0.5,0.5) -- (5/6,1/6);

\draw (.5,.02) -- (.5,-.02) node[below] {\small{$ \tfrac 12$}};
\draw (.02,.5) -- (-.02,.5) node[left] {\small{$ \tfrac 12$}};

\draw (5/6,.02) -- (5/6, -.02) node[below] {\small{$\tfrac{1}{2} + \tfrac{b}{ad}$}} ;

\draw[loosely dashed] (0.5,0.5) -- (1,1);

\draw (5/6,1/6) -- (5/6,5/6)--(1/6,5/6);

\fill[pattern=north west lines, pattern color=gray] (5/6,1/6) -- (5/6,5/6) -- (1/6,5/6) -- (5/6,1/6);

\end{scope}
\end{scope}

\end{tikzpicture}

\caption{Sparse bounds for the general multiplier class $\mathrm{FM}(a,b)$ (left) and for the oscillatory multipliers $m_{a,b}$ (right) for given $a,b >0$ with $0<b<ad/2$.}
\label{fig:exponents}
\end{figure}
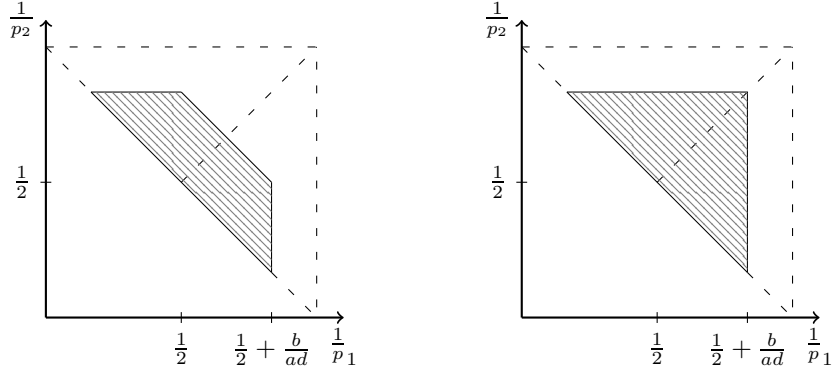

The results of Theorems \ref{thm:oscmult} and \ref{miyachi-thm} are illustrated in Figure \ref{fig:exponents}.
The positive results on the edges $({Q_1Q_3}]$ and $[{Q_3Q_2})$ of the triangle on the right are new for the oscillatory multipliers.
For the $\mathrm{FM}(a,b)$ class the positive results on the edges
$({Q_1P_3}]$, $[{P_3P_4} ]$, $[{P_4Q_2})$ of the trapezoid are new.

\begin{remarksa}
(i)
The general positive result about the multipliers in $\mathrm{FM}(a,b)$ can be derived from Corollary
\ref{cor:generalBesov}. Indeed, this corollary implies sharp results for the classes of subdyadic multipliers considered in
\cite{Beltran-Bennett}, see also a relevant discussion in \cite{BRS}.
For the extended region of the oscillatory multipliers we need to use Theorem \ref{thm:multq>2}.

(ii) The methods in this paper can also be used to strengthen results in \cite{beltran-cladek} on sparse bounds for pseudo-differential operators with symbols in the H\"ormander classes
$S^{\nu}_{\rho, \delta}$, for $0<\delta\le \rho <1$.
By \cite{Fefferman-IsrealPs, MichalowskiRuleStaubach} these operators are bounded on $L^p$ (here $1<p<\infty$) provided that $\nu\le -d(1-\rho)|\frac 1p-\frac 12|$.
In the range $-\frac d2(1-\rho)<\nu<0$
we now get the full endpoint sparse bounds, extending the results for the multiplier classes $\mathrm{FM}(1-\rho,-\nu)$,
that is, the operators belong to ${\mathrm{Sp}}(p_1,p_2)$ for $(1/p_1,1/p_2)\in\trapez(1-\rho, -\nu)$.

(iii) The multiplier class in Theorem \ref{thm:qge2} is also relevant
in the interesting recent work by
Bulj--Kova\v c \cite{BuljKovac22}
and by Stolyarov \cite{Stolyarov22} on lower bounds for other types of oscillatory multipliers; indeed the theorem allows to derive upper ${\mathrm{Sp}}(p,2)$ bounds in their setting.
\end{remarksa}

\subsection{Multiscale radial bump multipliers}
Let $\chi$ be a smooth bump function supported in $(-1/2,1/2)$ and set for small $\delta$
\begin{equation}\label{eq:defhdelta}h_\delta(t)=\chi(\delta^{-1} (1-|t|)).\end{equation} The multiplier $h_\delta(|\xi|)$
occurs naturally as a building block for the Bochner--Riesz multipliers. It is conjectured that in dimension $d\ge 2$ we have
\begin{equation}\label{BR-Lp}
\sup_{0<\delta<1/2} \delta^{ d(\frac 1{p}-\frac 12)-\frac 12}\|h_\delta(|\cdot|) \|_{M^{p\to p}} <\infty ,
\end{equation} for $1\le p<\frac{2d}{d+1}$. This conjecture is well known in two dimensions (the range is then $1\le p<4/3$, see \cite{CarlesonSjolin, FeffermanBR, CordobaBR, seeger-BRwt}), and there are partial results in higher dimensions.
More specifically, by Tao's arguments in \cite{Tao-Indiana1998} the bound \eqref{BR-Lp} for any fixed $p<\frac{2d}{d+1}$ follows from a slightly weaker bound with an additional factor of $c_\epsilon \delta^\epsilon$ for arbitrary $\epsilon>0$ on the left-hand side, and
such estimates with the $\epsilon$-loss have been verified on a partial range of $p$ (see \cite{GuoOhWangWuZhang} for the latest results and more references).

Here, we consider
the multiscale version
\begin{equation}\label{eq:BR-mult} m_\delta(\xi)= \sum_{k\in {\mathbb {Z}}} a_k h_\delta(2^k|\xi|).
\end{equation}
From \cite{SeegerStudia90} we know that if \eqref{BR-Lp} holds for some $p_\circ<\frac{2d}{d+1}$, then we have
\begin{equation}\label{BR-mult-Lp}
\sup_{0<\delta<1/2} \delta^{ d(\frac 1{p}-\frac 12)-\frac 12}\|m_\delta(D) \|_{L^p\to L^p} \lesssim_p \sup_{k\in \mathbb Z}|a_k|.
\end{equation}
for $1< p<p_\circ$.

Our purpose here is to illustrate how Theorems \ref{thm:qle2}, \ref{thm:qge2} and \ref{thm:multq>2} imply various sharp or essentially sharp sparse domination results in a $p$-range that will be optimal in two dimensions; in higher dimensions we limit ourselves to the range $1<p<\frac{2(d+2)}{d+4}$ (i.e. the range dual to Tao's bilinear Fourier extension theorem \cite{tao-bilinear}), as in this range the known sharp $L^p\to L^q$ estimates for $h_\delta(|D|)$ are well documented in the literature \cite{bak-negBR, Gutierrez2000, ChoKimLeeShim2005}.

\begin{thm} \label{cor:BR} Let $m_\delta$ be as in \eqref{eq:BR-mult} and let $d\ge 2$.
Then the inequality
\begin{equation} \label{eq:sparseBRineq} \sup_{0<\delta<1/2} \delta^{d(\frac{1}{p_1}-\frac 12)-\frac 12} \|m_\delta(D)\|_{{\mathrm{Sp}}(p_1,p_2)} \lesssim_{p_1,p_2} \sup_{k \in {\mathbb {Z}}}|a_k|
\end{equation} holds if
\begin{itemize}
\item[(a)] $1<p_1 \le
\frac{2(d+1)}{d+3}$ and $p_2\ge
\frac{(d-1)p_1}{d+1-2p_1} $, or

\item[(b)]
$
\frac{2(d+1)}{d+3} <p_1< \frac{2(d+2)}{d+4}$ and $p_2>\frac{(d-1)p_1}{d+1-2p_1} $.
\end{itemize}

\end{thm}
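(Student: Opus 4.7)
The plan is to apply the appropriate theorem from this paper to the radial multiplier $m_\delta$ and then estimate the resulting Besov-type multiplier norm using classical sharp $L^r\to L^q$ bounds for the single-bump operator $h_\delta(|D|)$. By the monotonicity $\|T\|_{\mathrm{Sp}(p_1,p_2)}\ge\|T\|_{\mathrm{Sp}(p_1,p_2')}$ for $p_2<p_2'$ (which follows from $\langle f\rangle_{Q,p_2}\le\langle f\rangle_{Q,p_2'}$), it suffices to prove the bound at the boundary $p_2=(d-1)p_1/(d+1-2p_1)$ in part~(a), and for any $p_2$ strictly above it in part~(b). Writing $q:=p_2'$, the cases to handle are: $p_1\le 2(d+1)/(d+3)$ with $p_2\ge 2$ (i.e.\ $q\le 2$), where Corollary \ref{cor:studiaradial} applies directly to the radial multiplier; $p_1<2(d+1)/(d+3)$ with boundary $p_2<2$ (so $q>2$), handled via Theorem \ref{thm:multq>2}; and part~(b) where $p_1>2(d+1)/(d+3)$ and $q<2$, handled via Theorem \ref{thm:qle2} with an appropriate $r\in(p_1,q]$.

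In each application, the task reduces to bounding
\[\sup_{t>0}\|\phi\,m_\delta(t\,\cdot\,)\|_{B_1^\alpha(\mathfrak X)}\lesssim\delta^{1/2-d(1/p_1-1/2)}\sup_k|a_k|,\]
with $\mathfrak X$ a multiplier space and $\alpha$ the Besov regularity index provided by the theorem. The key simplification uses the dyadic structure of $m_\delta$: since $\phi$ is supported in $|\xi|\sim 1$, the sum $\sum_k a_k h_\delta(2^k t|\xi|)$ contains only $O(1)$ nonzero summands (those with $2^k t\sim 1$). After a scaling change of variable, the problem reduces, up to the factor $\sup_k|a_k|$, to the single-scale estimate $\|\widetilde\phi(\cdot)h_\delta(|\cdot|)\|_{B_1^\alpha(\mathfrak X)}\lesssim \delta^{1/2-d(1/p_1-1/2)}$ for a fixed Schwartz cutoff $\widetilde\phi$ to $|\xi|\sim 1$.

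The single-scale Besov norm is then estimated through the spatial annular decomposition of the kernel $K_\delta:=\mathcal F^{-1}[\widetilde\phi\,h_\delta(|\cdot|)]$. Stationary phase yields $|K_\delta(y)|\lesssim \delta(1+|y|)^{-(d-1)/2}\langle\delta y\rangle^{-N}$, so pieces at spatial scales $2^\ell\gtrsim\delta^{-1}$ are rapidly negligible, leaving $O(\log\delta^{-1})$ relevant dyadic scales. For each such $\ell$, after the rescaling $y\mapsto 2^\ell y'$ the truncated convolution $(K_\delta\Psi_\ell)*\cdot$ has a Fourier multiplier supported in a $2^\ell\delta$-neighborhood of the sphere of radius $2^\ell$; its $M^{r\to q}$ norm is controlled by the sharp $L^r\to L^q$ estimates of Stein--Tomas, Bak, Gutierrez, and Cho--Kim--Lee--Shim \cite{bak-negBR,Gutierrez2000,ChoKimLeeShim2005} in part~(a), and by Tao's bilinear Fourier extension theorem \cite{tao-bilinear} (with an $\varepsilon$-loss) in the extended range of part~(b). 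Summing the weighted contributions $2^{\ell\alpha}\|K_\delta\Psi_\ell\|_{M^{r\to q}}$ over the relevant range of $\ell$ yields the target rate, with the dominant contribution coming from $\ell\sim\log_2\delta^{-1}$.

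The main obstacle is the delicate bookkeeping required to match the Besov exponent $\alpha$ against the $\delta$-decay rate of each annular piece so that the geometric sum is at worst logarithmically summable; on the boundary of part~(a) this is exactly the Stein--Tomas endpoint and forces the use of the critical $B_1^\alpha$ (rather than a coarser $B_\infty^\alpha$) norm, while in part~(b) the unavoidable $\varepsilon$-loss from bilinear Fourier extension accounts for and necessitates the strict inequality $p_2>(d-1)p_1/(d+1-2p_1)$.
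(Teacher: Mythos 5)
Your overall strategy matches the paper's: reduce by monotonicity of the sparse forms to the boundary of the exponent region, apply Theorem \ref{thm:qge2}, Theorem \ref{thm:multq>2}, or Theorem \ref{thm:qle2} according to where $q=p_2'$ sits relative to $2$, and verify the Besov-multiplier hypotheses for the pieces $u_\delta*\widehat{\Psi_\ell}$ by interpolating the stationary-phase $M^{1\to\infty}$ bound against sharp $L^r\to L^q$ estimates for the single bump $h_\delta(|D|)$. So the skeleton is sound. However, your explanation of \emph{why} the inequality in part (b) is strict contains a genuine error. You attribute it to an ``unavoidable $\varepsilon$-loss from bilinear Fourier extension.'' The $L^\rho\to L^q$ estimate \eqref{eq:choetal} of Cho--Kim--Lee--Shim, which is what the paper actually invokes in the range $\tfrac{2(d+1)}{d+3}\le\rho<\tfrac{2(d+2)}{d+4}$, is lossless: there is no $\delta^{-\varepsilon}$ factor on the boundary line $\tfrac1{q'}=\tfrac{d+1}{d-1}\tfrac1\rho-\tfrac2{d-1}$. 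The strictness in part (b) has a different source: Theorem \ref{thm:qle2} requires a strict inequality $p<r\le q$ in its hypothesis, and when $(1/p_1,1/p_2)$ is exactly on the boundary the constraints $q'\ge\frac{(d-1)r}{d+1-2r}$ and $r<\tfrac{2(d+2)}{d+4}$ force $r=p_1$. Only by moving strictly above the boundary does one create the room to choose some $r>p_1$. In other words the strictness is an artifact of the atomic-decomposition mechanism inside Theorem \ref{thm:qle2}, not of the underlying restriction estimate.

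Two smaller points. Your claim that after rescaling, the multiplier of $(K_\delta\Psi_\ell)*\cdot$ is supported in a $2^\ell\delta$-neighborhood of a sphere of radius $2^\ell$ is only correct for $2^\ell\delta\gtrsim1$; for $2^\ell\delta\lesssim1$ the relevant thickness saturates at $O(1)$, and the $M^{r\to q}$ bound for the truncated piece should instead be derived from the estimate for the full $u_\delta$ (which dominates each piece up to rapidly decaying tails, as in \eqref{eq:ST}, \eqref{eq:statphase}), not from a claim about a thinner annulus. Also, ``at worst logarithmically summable'' would not suffice to recover the sharp rate $\delta^{1/2-d(1/p_1-1/2)}$: the $\ell$-sums in \eqref{eq:first-cond} and \eqref{eq:second-cond} must converge geometrically, with contributions peaked at $\ell\sim\log_2\delta^{-1}$ and exponential decay on both sides, precisely as guaranteed by \eqref{eq:mdeltainterpol} with $\alpha(q)>0$. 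Finally, if you route the $p_2\ge2$ portion of region (a) through Corollary \ref{cor:studiaradial}, note that the corollary is stated qualitatively; you need the quantitative version tracked in the proof of \eqref{eq:Cor-redux} to get the uniform $\delta$-dependence.
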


\begin{remarksa} (i) Using the building block with $a_0=1$ and $a_k=0$ for $k\neq 0$
one sees that this result is sharp in the sense that inequality \eqref{eq:sparseBRineq} fails in general
if
$1<p_1<\frac{2d}{d+1}$ and $p_2<
\frac{(d-1)p_1}{d+1-2p_1} $. This can be deduced directly from a corresponding result for Bochner--Riesz operators in \cite[\S5]{lacey-mena-reguera}.

(ii) For the proof of sufficiency in
Theorem \ref{cor:BR} we rely on Theorems \ref{thm:qge2} and \ref{thm:multq>2} in the range $p_1\le \frac {2(d+1)}{d+3}$ and on
Theorem \ref{thm:qle2} in the range $
\frac{2(d+1)}{d+3} <p_1< \frac{2(d+2)}{d+4}$.
\end{remarksa}
\subsection{ Necessary conditions for sparse domination of convolution operators}
Necessary conditions for general sparse operators were discussed in Chapter 2 of \cite{BRS}; here we point out that they can be put in a simple form for scalar operators.
This result will be convenient for checking the sharpness of several of the results mentioned above.

Let $T: C^\infty_c({\mathbb {R}}^d) \to {\mathcal {D}}'({\mathbb {R}}^d)$ with Schwartz kernel $K\in {\mathcal {D}}'({\mathbb {R}}^d\times {\mathbb {R}}^d)$.
Let $\Psi\in C^\infty_c({\mathbb {R}}^d)$ be supported in $\{x\in {\mathbb {R}}^d:1\le|x|\le 2\}$.
Define the distribution $K_R$ as the multiplication of $K$ with the $C^\infty$ function
$\Psi( R^{-1}(x-y) ) $ and let $T_R$ denote the linear operator with Schwartz kernel $K_R$.
Define the rescaled kernels
\begin{align}
\label{Kjresc}K_R^{\mathrm{resc}}(x,y)&:= R^d K_R(Rx, Ry) =\Psi(x-y) R^d K(Rx, Ry),
\end{align}
interpreted in the sense of distributions, and let
$T_R^{{\mathrm{resc}}}$ be the rescaled version of $T_R$, with convolution kernel $K_R^{\mathrm{resc}}$.

\begin{prop}\label{thm:nec-cond}
Let $T:C^\infty_c({\mathbb {R}}^d)\to {\mathcal {D}}'({\mathbb {R}}^d)$ be a continuous linear operator with Schwartz kernel $K$ and let $T_R^{\mathrm{resc}}$ be the rescaled version defined
in \eqref{Kjresc}. Suppose $1<p_1,p_2<\infty$ and $T\in {\mathrm{Sp}}(p_1,p_2)$, with $p_2<p_1'$. Then $T$ extends to a bounded operator $L^{p_1}\to L^{p_1,\infty}$ and $L^{p_2',1}\to L^{p_2'}$; moreover the operators $T_R^{\mathrm{resc}}$ map $L^{p_1}$ to $L^{p_2'}$ with uniform operator norm and
\[ \|T\|_{L^{p_1}\to L^{p_1,\infty} }
+\|T\|_{L^{p_2',1}\to L^{p_2'} }+
\sup_{R>0} \|T_R^{\mathrm{resc}}\|_{L^{p_1}\to L^{p_2'}} \lesssim \|T\|_{{\mathrm{Sp}}(p_1,p_2)}.\]
\end{prop}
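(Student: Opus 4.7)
My plan is to extract all three estimates directly from the sparse hypothesis, using throughout that the sparse form depends only on $|f_1|$ and $|f_2|$.

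For the two weak-type bounds on $T$, the starting point is the pointwise domination $\Lambda^{\mathfrak{S}}_{p_1,p_2}(f_1,f_2)\le\gamma^{-1}\int M_{p_1}f_1\cdot M_{p_2}f_2\,\mathrm{d} x$, where $M_pf:=(M|f|^p)^{1/p}$; this is immediate from $|Q|\le\gamma^{-1}|E_Q|$ and the bound $\jp{f}_{Q,p}\le M_pf$ on $Q$. Since $p_2<p_1'$ gives $p_1'/p_2>1$, the Hardy--Littlewood maximal function is bounded on $L^{p_1'/p_2}$ and, by Marcinkiewicz interpolation, on each Lorentz space $L^{p_1'/p_2,q}$; this translates to $M_{p_2}:L^{p_1',1}\to L^{p_1',1}$. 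Combined with $M_{p_1}:L^{p_1}\to L^{p_1,\infty}$ (from the weak-$(1,1)$ bound for $M$) and the Lorentz H\"older inequality, one gets
\[\Lambda^*_{p_1,p_2}(f_1,f_2)\lesssim\|f_1\|_{p_1}\|f_2\|_{L^{p_1',1}},\]
which, together with the sparse hypothesis, is dual to $T:L^{p_1}\to L^{p_1,\infty}$. The other bound $T:L^{p_2',1}\to L^{p_2'}$ is the same statement applied to the adjoint $T^*\in\mathrm{Sp}(p_2,p_1)$ (with the same constant, by symmetry of the sparse form).

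For the rescaled bound, the first task is to show that the truncated operator $T_R$ (with kernel $\Psi(R^{-1}(x-y))K(x,y)$) also belongs to $\mathrm{Sp}(p_1,p_2)$ uniformly in $R$. The key identity is Fourier inversion,
\[\Psi(R^{-1}(x-y))=c_dR^d\int\widehat\Psi(R\xi)\,e^{-i\langle\xi,x-y\rangle}\,\mathrm{d}\xi,\]
which, substituted into the kernel, realizes $T_R$ as the continuous superposition
\[T_R=c_dR^d\int\widehat\Psi(R\xi)\,M_{-\xi}TM_{\xi}\,\mathrm{d}\xi\]
of modulated conjugates of $T$, where $M_\xi$ denotes multiplication by $e^{i\langle\xi,\cdot\rangle}$. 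Because $|M_{\pm\xi}f|=|f|$, each $M_{-\xi}TM_\xi$ lies in $\mathrm{Sp}(p_1,p_2)$ with the same constant as $T$; integrating in $\xi$ and substituting $R\xi\mapsto\xi$ costs only $\|\widehat\Psi\|_1$. This averaging is where I expect the main conceptual hurdle to be, since kernel truncations do not usually interact cleanly with sparse domination; the modulation identity decomposes $T_R$ into operations that manifestly preserve the sparse form, and uses no translation invariance of $T$.

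The last step exploits the annular support $\{R\le|x-y|\le 2R\}$ of the kernel of $T_R$ to convert this into a strong $L^{p_1}\to L^{p_2'}$ bound with quantitative $R$-dependence. Decomposing $f_1,f_2$ along a partition of $\mathbb{R}^d$ into cubes $\{Q_\ell\}\subset\mathfrak{Q}$ of scale $R$, only neighbor pairs $(\ell,n)$ with $\mathrm{dist}(Q_\ell,Q_n)\lesssim R$ contribute to $\langle T_Rf_1,f_2\rangle$, with $O(1)$ neighbors per $\ell$. For such a pair the disjoint supports force every cube $Q\in\mathfrak{Q}$ contributing to $\Lambda^*(f_1\mathbf{1}_{Q_\ell},f_2\mathbf{1}_{Q_n})$ to contain $Q_\ell\cup Q_n$, hence to have scale $\ge R$; summing the geometric series (which converges because $1-\tfrac1{p_1}-\tfrac1{p_2}<0$) gives
\[\Lambda^*(f_1\mathbf{1}_{Q_\ell},f_2\mathbf{1}_{Q_n})\lesssim R^{d(1-\frac1{p_1}-\frac1{p_2})}\|f_1\mathbf{1}_{Q_\ell}\|_{p_1}\|f_2\mathbf{1}_{Q_n}\|_{p_2}.\]
Summing over neighbor pairs via H\"older and the sequence embedding $\ell^{p_2}\hookrightarrow\ell^{p_1'}$ (valid because $p_2<p_1'$) collapses the right side to $R^{d(1-1/p_1-1/p_2)}\|T\|_{\mathrm{Sp}}\|f_1\|_{p_1}\|f_2\|_{p_2}$. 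Finally the rescaling relating $T_R^{\mathrm{resc}}$ to $T_R$ multiplies the operator norm by $R^{d(1/p_1-1/p_2')}$, and the identity $1/p_2+1/p_2'=1$ makes the total power of $R$ vanish, yielding the claimed $R$-uniform bound.
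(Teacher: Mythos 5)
Your proof is correct in essence and takes the same route as the paper at the one place where something new is needed: the modulation identity
\[\inn{T_R f_1}{f_2}=(2\pi)^{-d}\int \widehat \Psi(\omega)\,\inn{T f_{1,\omega}}{f_{2,\omega}}\,\mathrm{d}\omega,\qquad f_{i,\omega}=e^{\pm i R^{-1}\inn{\omega}{\cdot}}f_i,\]
combined with the modulation invariance of the sparse form, is exactly how the paper establishes $\|T_R\|_{\mathrm{Sp}(p_1,p_2)}\lesssim\|T\|_{\mathrm{Sp}(p_1,p_2)}$. For the remaining two ingredients the paper simply cites its companion memoir: the weak-type bounds $T\colon L^{p_1}\to L^{p_1,\infty}$ and $T\colon L^{p_2',1}\to L^{p_2'}$ are \cite[Theorem 2.5]{BRS}, and the passage from the $R$-uniform sparse bound on $T_R$ to the $R$-uniform $L^{p_1}\to L^{p_2'}$ bound on $T_R^{\mathrm{resc}}$ is \cite[Lemma 2.4]{BRS}. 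You instead re-derive both from scratch: for the weak-type bounds via the pointwise domination $\Lambda^{\mathfrak S}(f_1,f_2)\le\gamma^{-1}\int M_{p_1}f_1\cdot M_{p_2}f_2$ together with Lorentz H\"older and duality of $L^{p_1,\infty}$ with $L^{p_1',1}$; and for the localized bound via the dyadic-cube decomposition and the convergent geometric series arising from $1-1/p_1-1/p_2<0$. Both re-derivations are sound and are essentially the content of the cited lemmas, so nothing genuinely different is happening — you are just unpacking the references.

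One small gap that you should patch: in the last step you treat only pairs $(\ell,n)$ with ``disjoint supports,'' i.e.\ $\ell\neq n$, when bounding $\Lambda^*(f_1{\mathbbm 1}_{Q_\ell},f_2{\mathbbm 1}_{Q_n})$. If the side length of the $Q_\ell$ is comparable to $R$, the diagonal pair $\ell=n$ does contribute to $\inn{T_Rf_1}{f_2}$ in dimension $d\ge 2$ (two points of a cube of side $R$ can be at distance up to $\sqrt d\,R\ge R$, which lies in the kernel's annulus), and then your claim that every contributing $Q\in{\mathfrak Q}$ contains $Q_\ell\cup Q_n$ fails — small cubes inside $Q_\ell$ contribute and there is no geometric gain. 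The fix is to choose the tiling scale strictly smaller than $R/\sqrt d$, so that the annular support $\{R\le|x-y|\le 2R\}$ makes every contributing pair off-diagonal; the rest of the argument, including the $\ell^{p_2}\hookrightarrow\ell^{p_1'}$ step and the cancellation of the $R$-powers, then goes through as written.
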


The proof is based on more general results in \cite{BRS} and will be given in \S\ref{sec:necessary}.

\subsection*{Structure of the paper} In \S\ref{sec:induction scheme}, we present the induction scheme that proves the sparse domination Theorems \ref{thm:qle2}, \ref{thm:qge2} and \ref{thm:multq>2}. In \S\ref{sec:decompositions} we discuss the atomic decomposition, which is used in \S\ref{sec:base case} to verify the base case for the induction. In \S\ref{sec:CZ-dec} we present a Calder\'on--Zygmund decomposition based on the atomic decomposition. The plan for the proof of the induction step is outlined in \S\ref{sec:induction step}, with proofs presented in \S\ref{sec:goodpart} and \S\ref{sec:badpart}.
In \S\ref{sec:applications} we discuss the applications of the main theorems and, in particular, how they imply the positive results in Theorems \ref{thm:oscmult} and \ref{miyachi-thm}, and the positive results on radial multipliers in Corollary \ref{cor:studiaradial} and Theorem \ref{cor:BR}. Proposition \ref{thm:nec-cond} and the proof of the necessary conditions for Theorems \ref{thm:oscmult} and \ref{miyachi-thm} are presented in \S\ref{sec:necessary}. Finally, \S\ref{sec:appendix} contains the proofs of some technical facts which are included for the reader's convenience.

\subsection*{Acknowledgements}
\thanks{The authors would like to thank
the Hausdorff Research Institute of Mathematics and the organizers of the trimester program
{\it Interactions between Geometric Measure Theory, Singular Integrals, and PDE}
for a pleasant working environment during a visit in January 2022. The work was supported in part by National Science Foundation grants
DMS-1954479 (D.B.), DMS-2154835 (J.R.), DMS-2054220 (A.S.),
by a grant from the Simons Foundation (ID 855692, J.R.) and by the Agencia Estatal de Investigación through RYC2020-029151-I (D.B.).}

\section{Structure of the induction argument}\label{sec:induction scheme}

In this section we present the proof strategy for Theorems \ref{thm:qle2}, \ref{thm:qge2} and \ref{thm:multq>2}. We will see that sparse bounds for $m(D)$ can be deduced from sparse bounds for finite multi-scale sums of spatially (and frequency) localized pieces of $m(D)$. The proof of the latter is based on an induction on the number of pieces in the multi-scale sums, similarly to our previous work \cite{BRS}. As in \cite{BRS}, it is useful to work with a modified version of our maximal $(p_1,p_2)$-form $\Lambda^*_{p_1,p_2}$.

\begin{definition} Given a dyadic cube $S_0\in {\mathfrak {Q}}$
let
\[\Lambda^{**}_{S_0,p,q'}(f_1,f_2):= \sup\sum_{S \in {\mathfrak {S}}}|S|\jp{f_1}_{S,p} \jp{f_2}_{\tr{S},q'}\] where the supremum is taken over all $\gamma$-sparse
collections ${\mathfrak {S}}$ consisting of cubes in ${\mathfrak {Q}}(S_0)$, which denotes the subset of ${\mathfrak {Q}}$ of cubes contained in $S_0$.
\end{definition}

\subsection{Decomposition as a multi-scale sum}
Consider $\eta, \varphi \in \mathcal{S}$ facilitating the Calder\'on reproducing formula, i.e.,
\begin{subequations}\label{eq:Caldrepr}
\begin{align}\label{eq:suppwidechecketa}
&\text{$\widecheck{\eta}$ has compact support in $\{ |x| \leq 10^{-d}\}$ and $\eta(0)=0;$}
\\
\label{eq:suppvarphi}
&\text{$\varphi$ has compact support in $\{ 1/2< |\xi| < 2\}$;}
\\
&\label{eq:reproducing} \sum_{k \in {\mathbb {Z}}} \eta^2(2^{-k} \xi) \varphi(2^{-k} \xi) = 1, \quad\xi \neq 0.
\end{align}
\end{subequations}
For any $k \in {\mathbb {Z}}$, let $L_k$ and $P_k$ be defined by
\begin{align*}
\widehat{L_k f}(\xi) &= \varphi_k( \xi) \widehat{f}(\xi), \qquad \quad \text{where} \,\, \varphi_k(\xi):=\varphi(2^{-k}\xi), \\
\widehat{P_k f}(\xi) &= \eta_k( \xi) \widehat{f}(\xi), \qquad \quad \,\text{where} \,\, \eta_k(\xi):=\eta(2^{-k}\xi).
\end{align*}
Let $T_k=m(D) L_k$ and denote by $K_k$ its convolution kernel, that is $K_k=\mathcal{F}^{-1}[\varphi_k m]$. We next perform a spatial decomposition of $K_k$. Let $\Phi_0$ and $\Psi_j$
be as in \eqref{eqn:defofPsiell}.
Let
\begin{align*}
K_k^{(-k)}(x) &= {\mathcal {F}}^{-1} [\varphi_k m] (x) \Phi_0(2^{k} x)
\\
K_k^{(j)} (x)&={\mathcal {F}}^{-1} [\varphi_k m] (x) \Psi_{j} (x) \quad \text{ if $j>-k$}
\end{align*}
so that we get $K_k= \sum_{j=-k}^\infty K_k^{(j)} = K_k^{(-k)}+\sum_{\ell>0} K_k^{(\ell-k)}$.
Let $T_k^{(\ell-k)}$ denote the operator with convolution kernel $K_k^{(\ell-k)}$ for $\ell\geq 0$. Note that by \eqref{eq:reproducing} we have
\[
m(D)=\sum_{k \in {\mathbb {Z}}} T_k P_k P_k = \sum_{k \in {\mathbb {Z}}} \sum_{\ell \geq 0} T_k^{(\ell-k)} P_k P_k.
\]

It is also convenient to introduce some notation for the operator norm of $T_k^{(\ell-k)}$.
We first note that
\begin{equation}\label{eq:kernels in the scaled multiplier form}
\widehat{{K}_k^{(\ell-k)}} (2^k\xi)
= (\varphi m(2^k \cdot))\ast \widehat{\Psi_\ell}(\xi)
\end{equation}
and define the quantities
\begin{equation}\label{Aprq} A_{p,r,q}^{k,\ell}[m]\equiv A_{p,r,q}^{k,\ell}=
\begin{cases}
\| \varphi m(2^k \cdot) \ast \widehat{\Phi_0} \|_{M^{r\to q}}, &\text{ if }\ell=0,
\\
\| \varphi m(2^k \cdot) \ast \widehat{\Psi_\ell} \|_{M^{r\to q}}2^{\ell d (\frac{1}{p}-\frac{1}{q})}, &\text{ if }\ell>0,
\end{cases}
\end{equation}
and
\begin{equation}
\mathcal{A}_{p,r,q}[m] = \sup_{k \in {\mathbb {Z}}} \sum_{\ell\ge 0} A_{p,r,q}^{k,\ell}[m],
\end{equation}
assuming that $p<q$ and $p\le r\le q$. Then
\begin{equation}\label{eq:Besovmultnorm}
\mathcal{A}_{p,q,r}[m] \le
\sup_{t>0} \|\varphi m(t\cdot) \|_{ B_1^{d(\frac 1p-\frac 1q)}(M^{r\to q})}.
\end{equation}
Moreover we have for all $k\in {\mathbb {Z}}$,
\begin{equation} \label{eq:A-embeddings}
\sum_{\ell\ge 0} A_{p,r_1,q}^{k,\ell} \lesssim \sum_{\ell\ge 0} A_{p,r_2,q}^{k,\ell} , \quad p\le r_1\le r_2\le q.
\end{equation}
This inequality is an immediate consequence of a slightly stronger statement, Corollary \ref{cor:Aembeddings}.

The sparse bounds for sums of operators $T_k^{(-k)}$ are reduced to standard sparse bounds for singular integral operators. This only requires the assumption $m\in L^\infty$; note that
\begin{equation}\label{eq:LinftyvsBesov}\|m\|_\infty\lesssim \sup_{t>0}\|\phi m(t\cdot)\|_{B^0_1(M^{p\to p} )}
\lesssim \sup_{t>0}\|\phi m(t\cdot)\|_{B^{d(1/p-1/q)}_1(M^{p\to q} )}.
\end{equation}
\begin{lem} \label{lem:Kk-k}
For any $1 < p \leq q < \infty$, and for any finite subset $\digamma \subset {\mathbb {Z}}$
\begin{equation*}
\Big|\biginn{\sum_{k \in \digamma} T_k^{(-k)} P_k f_1}{f_2} \Big| \lesssim \| m \|_\infty \, \Lambda_{p,q'}^{*}(f_1,f_2),
\end{equation*}
uniformly in $\digamma\subset{\mathbb {Z}}$, for all $f_1,f_2\in C^\infty_c$.
\end{lem}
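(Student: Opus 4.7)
The strategy is to identify $S_\digamma := \sum_{k\in\digamma} T_k^{(-k)} P_k$ as a standard Calder\'on--Zygmund operator whose $L^2$-operator norm and kernel constants are controlled by $C\|m\|_\infty$ uniformly in the finite set $\digamma$. Granting this, the classical pointwise/bilinear sparse domination for Calder\'on--Zygmund operators \cite{LeCZ,lerner-nazarov} yields
\[|\langle S_\digamma f_1,f_2\rangle|\lesssim \|m\|_\infty\,\Lambda^*_{1,1}(f_1,f_2),\]
and H\"older's inequality $\langle f_i\rangle_{Q,1}\le \langle f_i\rangle_{Q,r}$ for any $r\ge 1$ upgrades this to $\Lambda^*_{1,1}\le \Lambda^*_{p,q'}$ since $p,q'\ge 1$, giving the desired $(p,q')$-sparse bound.

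For the kernel estimates, the convolution kernel of $T_k^{(-k)} P_k$ equals $K_k^{(-k)}\ast \kappa_k$ with $\kappa_k(x):=2^{kd}\widecheck{\eta}(2^k x)$. The compact supports (from \eqref{eq:suppwidechecketa} and from $\Phi_0$) place each such kernel in a ball of radius $\lesssim 2^{-k}$. The trivial bound $\|\varphi_k m\|_1\lesssim 2^{kd}\|m\|_\infty$ yields $\|K_k^{(-k)}\|_\infty\lesssim 2^{kd}\|m\|_\infty$ and therefore $\|K_k^{(-k)}\|_1\lesssim \|m\|_\infty$; combined with $\|\nabla\kappa_k\|_\infty\lesssim 2^{k(d+1)}$ this yields pointwise control on both $K_k^{(-k)}\ast\kappa_k$ and its gradient. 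Since at any fixed $x\neq 0$ only $O(1)$ scales contribute, summing over $k\in\digamma$ produces the standard CZ estimates
\[|K_\digamma(x)|\lesssim\|m\|_\infty|x|^{-d},\qquad |\nabla K_\digamma(x)|\lesssim\|m\|_\infty|x|^{-d-1},\]
uniformly in $\digamma$.

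For the $L^2$-bound, pass to the Fourier side: $S_\digamma$ is the multiplier with symbol $\widetilde m_\digamma(\xi):=\sum_{k\in\digamma}\widetilde m_k(\xi)\eta_k(\xi)$, where $\widetilde m_k=(\varphi_k m)\ast[c\,2^{-kd}\widehat{\Phi_0}(2^{-k}\cdot)]$. A Young-type estimate exploiting the compact support of $\varphi$ and the Schwartz decay of $\widehat{\Phi_0}$ gives $|\widetilde m_k(\xi)|\lesssim_N \|m\|_\infty(1+2^{-k}|\xi|)^{-N}$ for every $N$, while the vanishing $\eta(0)=0$ combined with Schwartz decay of $\eta$ yields the two-sided estimate $|\eta_k(\xi)|\lesssim_N \min(2^{-k}|\xi|,(1+2^{-k}|\xi|)^{-N})$. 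Summing the product over $k$ at any fixed $\xi\neq 0$ produces a geometric sum dominated by a constant (the dominant contribution coming from $k\sim\log_2|\xi|$), so $\|\widetilde m_\digamma\|_\infty\lesssim\|m\|_\infty$ uniformly in $\digamma$.

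The delicate point is maintaining uniformity in $\digamma$ in the $L^2$-estimate; this succeeds because the localized symbols $\widetilde m_k\eta_k$ overlap on only $O(1)$ scales around $2^k\sim|\xi|$. An equivalent route would be a Cotlar--Stein almost-orthogonality argument in physical space, based on the same bounded overlap of scales together with the uniform bound $\|T_k^{(-k)}\|_{L^2\to L^2}\lesssim \|m\|_\infty$ coming from $\|\widehat{K_k^{(-k)}}\|_\infty\le\|K_k^{(-k)}\|_1\lesssim\|m\|_\infty$.
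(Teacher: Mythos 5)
Your proof is correct, but it takes a genuinely different and more elementary route than the paper. The paper derives the lemma by verifying the hypotheses of the abstract multi-scale sparse domination theorem of \cite[Theorem 1.1]{BRS}: it introduces the symbol $m_\digamma=\sum_{k\in\digamma}\widehat{K}_k^{(-k)}\eta(2^{-k}\cdot)$, checks a H\"ormander-type derivative condition $\sum_{|\alpha|\le d+1}\sup_{t,\xi}|\partial^\alpha_\xi(\varphi m_\digamma(t\cdot))|\lesssim\|m\|_\infty$ (which yields the $L^p$-boundedness input), and verifies rescaled $L^p\to L^q$ and H\"older-type continuity conditions. You instead work directly in physical space: you observe that $S_\digamma$ has a kernel $K_\digamma=\sum_{k\in\digamma}K_k^{(-k)}*\kappa_k$ consisting of pieces of size $\lesssim 2^{kd}\|m\|_\infty$ and gradient $\lesssim 2^{k(d+1)}\|m\|_\infty$ supported in balls of radius $\sim 2^{-k}$, conclude the standard CZ kernel estimates $|K_\digamma(x)|\lesssim\|m\|_\infty|x|^{-d}$ and $|\nabla K_\digamma(x)|\lesssim\|m\|_\infty|x|^{-d-1}$ uniformly in $\digamma$, control $\|S_\digamma\|_{L^2\to L^2}\lesssim\|m\|_\infty$ from the pointwise bound on the symbol, and then invoke the classical $(1,1)$-sparse bound for Calder\'on--Zygmund operators, upgrading to $(p,q')$ via $\langle f\rangle_{Q,1}\le\langle f\rangle_{Q,r}$. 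This is cleaner and avoids the [BRS] machinery; conversely, the paper's route fits this lemma into the same abstract framework used throughout that work.

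One small imprecision worth flagging: the pieces $K_k^{(-k)}*\kappa_k$ are supported in balls $\{|x|\lesssim 2^{-k}\}$, not annuli, so at a fixed $x$ with $|x|\approx 2^{-k_0}$ \emph{every} coarser scale $k\le k_0+O(1)$ contributes, not just $O(1)$ of them. The bound $\sum_{k\le k_0+O(1)}2^{kd}\lesssim 2^{k_0 d}\approx|x|^{-d}$ (and likewise for the gradient) still goes through, because the sum is geometric with the dominant term at $k\sim k_0$; but the justification is geometric convergence, not finite overlap, and you should phrase it that way.
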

The proof is straightforward and will be given in the auxiliary \S\ref{app:Kk-k}.

We now introduce operators which are local at a fixed spatial scale. For a fixed finite set $\digamma \subseteq {\mathbb {Z}}$, let $k_{\min} := \min \digamma$ and $k_{\max} := \max \digamma$. Given $j \in {\mathbb {Z}}$, it is convenient to define
\begin{equation}\label{defn:Tj}
\mathcal{T}_j f \equiv \mathcal{T}_{j,\digamma} f := \sum_{\substack{k \in \digamma\\ k > -j}} T_k^{(j)}P_kP_k f
\end{equation}
and to note that
\begin{equation}\label{eq:from Tkl to Tj}
\sum_{k \in \digamma} \sum_{\ell=1}^N T_k^{(\ell-k)} P_k P_k= \sum_{-k_{\max} < j \leq -k_{\min} + N} \mathcal{T}_j.
\end{equation}
By construction, the operators ${\mathcal {T}}_j$ are local at scale $2^j$, in the sense that if $S$ is a cube of side length $2^j$,
\begin{equation}\label{supportin3S}
{\mathrm{supp}\,}(f)\subset S \implies {\mathrm{supp}\,} ({\mathcal {T}}_{j} f) \subset 3S.
\end{equation}
Indeed, by our definition of the $\Phi_0$ and the $\Psi_j$ in \eqref{eqn:defofPsiell}, $T_k^{(j)}P_k P_k[f{\mathbbm 1}_{S}]$ is supported in the set
$\{x: {\mathrm{dist}}(x,S) \le 2^{j-1} + 10^{-d} 2^{-k+1} \}$. Thus
${\mathcal {T}}_{j,\digamma} [f{\mathbbm 1}_{S} ]$
is supported where
$ {\mathrm{dist}}(x,S) \le 2^{j-1} (1+ 2\cdot 10^{-d} ) $, and hence in
$\{x: {\mathrm{dist}}(x,S) <2^{j} \} \subseteq 3S$.

The key estimate in proving the sparse bounds for $m(D)$ is the following modified sparse bound for sums of ${\mathcal {T}}_j$, uniformly in the number of terms in the $j$-sum. Throughout the paper we set \[L(Q)=\log_2(\mathrm{sidelength}(Q))\] so that $L(Q)=N$ for a dyadic cube of side length $2^N$.

\begin{thm}\label{thm:main}
Let $1<p<q<\infty$.
Given integers $N_1 \leq N_2$, a dyadic cube $S_0 \in {\mathfrak {Q}}$ such that $L(S_0)=N_2$ and a finite subset $\digamma \subseteq {\mathbb {Z}}$, the inequality
\begin{equation}\label{eq:main sparse triple}
\Big|\biginn{\sum_{j=N_1}^{N_2}\mathcal{T}_{j,\digamma} f_1}{f_2}\Big| \leq c \, {\mathcal {C}} \Lambda^{**}_{S_0,p,q'}(f_1,f_2)
\end{equation}
holds for all $f_1, f_2\in C^\infty_c$
uniformly in $N_1, N_2$, $\digamma$ and $S_0$, where ${\mathcal {C}}$ is given by
\begin{subequations} \label{eq:defofcC}
\begin{align} \label{eq:defofcCThma}
{\mathcal {C}}&:=\mathcal{A}_{p,r,q}[m]
&& \text{ if } 1<p\le q<2, \quad p<r\le q,
\\
\label{eq:defofcCThmb}
{\mathcal {C}}&:=\mathcal{A}_{p,p,2}[m]
&&
\text{ if } 1<p<2 \quad (\text{and } q=2),
\\ \label{eq:defofcCThmc}
{\mathcal {C}}&:=\mathcal{A}_{p,p,q}[m] + \mathcal{A}_{q',r,r}[m]
&& \text{ if }
1<p< 2< q< p', \quad q'<r\le 2,
\end{align}
\end{subequations}
and $c=c(p,q,r,\gamma,d)$ is a constant depending only on $p,q,r,\gamma,d$.
\end{thm}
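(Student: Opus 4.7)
The plan is to argue by induction on the number of scales $\mathcal{N} = N_2 - N_1$ appearing in the sum $\sum_{j=N_1}^{N_2}\mathcal{T}_{j,\digamma}$. The base case $\mathcal{N} = 0$ consists of a single operator $\mathcal{T}_{N_2}$, which is local at scale $2^{N_2} = \mathrm{sidelength}(S_0)$, so by \eqref{supportin3S} its action on input supported in $S_0$ sits inside $\tr{S_0}$. I would then use the atomic decomposition of Section \ref{sec:decompositions} to express $f_1\mathbbm{1}_{S_0}$ as a combination of $L^p$-atoms supported in sub-cubes, estimate each atom's image in $L^q$ by summing the frequency-piece norms $A^{k,\ell}_{p,r,q}[m]$ through \eqref{eq:kernels in the scaled multiplier form}, and pair against $f_2\mathbbm{1}_{\tr{S_0}}$ by Hölder to obtain the sparse bound $\mathcal{C}\,|S_0|\jp{f_1}_{S_0,p}\jp{f_2}_{\tr{S_0},q'}$ associated with the singleton family $\mathfrak{S} = \{S_0\}$.

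For the induction step, split $\sum_{j=N_1}^{N_2}\mathcal{T}_j = \mathcal{T}_{N_2} + \sum_{j=N_1}^{N_2-1}\mathcal{T}_j$ and dispose of the top-scale term by the base case. For the residual shorter sum, apply a Calderón--Zygmund stopping-time decomposition adapted to $S_0$: let $\{S_\nu\}$ be the maximal dyadic subcubes of $S_0$ on which $\jp{f_1}_{\cdot,p}$ exceeds a large multiple of $\jp{f_1}_{S_0,p}$ (and analogously for $\jp{f_2}_{\tr{\cdot},q'}$ in cases (b) and (c)), with the multiple chosen so that $\big|\bigcup_\nu S_\nu\big| \le (1-\gamma)|S_0|$. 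Writing $f_1 = g_1 + \sum_\nu b_{1,\nu}$ with $b_{1,\nu}$ supported in $S_\nu$ (and $f_2$ analogously), the pairing decomposes into a good-good contribution, bad-bad contributions localized on individual $S_\nu$, and mixed terms. On each $S_\nu$ the bad-bad piece is, by the locality \eqref{supportin3S} that restricts the effective scales to $j \le L(S_\nu) + O(1)$, a strictly shorter multi-scale sum to which the inductive hypothesis applies, producing $\mathcal{C}\,\Lambda^{**}_{S_\nu,p,q'}(f_1,f_2)$. Summing over $\nu$ and adding the top contribution assembles a single $\gamma$-sparse family in $\mathfrak{Q}(S_0)$ with $S_0$ at the top and the $S_\nu$ as its stopping children.

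The crucial remaining estimate is the good-good bound
$|\langle \sum_{j<N_2}\mathcal{T}_j g_1, g_2\rangle| \lesssim \mathcal{C}\,|S_0|\jp{f_1}_{S_0,p}\jp{f_2}_{\tr{S_0},q'}$,
extracted solely from $\|g_1\|_\infty \lesssim \jp{f_1}_{S_0,p}$ (with analogous control for $g_2$) together with the multiplier hypothesis $\mathcal{C}$. In cases (b) and (c) ($q \ge 2$), Littlewood--Paley theory on the $k$-frequency decomposition converts $\sum_k T_k^{(\ell-k)}P_kP_k$ into a square function, and the Besov-type quantity $\mathcal{A}$ then sums the individual $M^{r\to q}$ or $M^{r\to r}$ estimates provided by \eqref{Aprq}. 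In case (a) ($q < 2$), no square-function reduction is available; the main obstacle is to instead run an \emph{iterated} atomic decomposition of $g_1$ at each inductive level over dyadic sub-cubes whose scale is matched to the frequency piece $T_k^{(\ell-k)}$, then sum via $\mathcal{A}_{p,r,q}[m]$. This iterated atomic scheme, identified in Remark (iii) as the key novelty of the paper, must be organized so that $\mathcal{C}$ enters the final bound only once rather than being multiplied across the $\mathcal{N}$ inductive levels; achieving this is the core technical challenge.
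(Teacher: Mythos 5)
Your broad strategy is the right one: induct on the number of scales $N_2-N_1$, establish the base case $\mathbf{U}(0)\lesssim\mathcal{C}$ by single-scale estimates using the atomic decomposition, and in the inductive step combine a Calder\'on--Zygmund stopping-time decomposition with the inductive hypothesis on smaller cubes. This matches the paper's scheme (\S\ref{sec:induction scheme}, Corollary \ref{induction-hypothesis}, Proposition \ref{inductiveclaim}). However, there is a substantive gap in the middle of your argument, and it is not merely an unworked detail.

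The decomposition you propose for $f_1$ is the classical one: stopping cubes $S_\nu$ where $\jp{f_1}_{\cdot,p}$ is large, giving $\|g_1\|_\infty\lesssim\jp{f_1}_{S_0,p}$. The paper does something genuinely different: it builds the decomposition from the local martingale square function $\mathbb{G}_{S_0}f_1$ of \eqref{eq:local-maxsqfct}, with $g_1$ defined in \eqref{defn:good} as the sum of subatoms $e_R$ lying on low level sets $\{2^\mu\le U_1\alpha_1\}$, and the stopping cubes $\mathcal{Q}$ coming from a Whitney decomposition of the exceptional set $\widetilde{\mathcal{O}}$ in \eqref{defofO}--\eqref{defofO12}, which also involves $M_{\mathrm{HL}}(F_{1,p}^p)$. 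The resulting good function is \emph{not} $L^\infty$-bounded; what it satisfies is the pointwise square-function bound of Lemma \ref{lemma:Linfty good}, $\bigl(\sum_k|g_1^k|^2\bigr)^{1/2}\le 2U_1\alpha_1$, with $g_1^k$ aligned to the frequency projection $P_k$. This alignment is essential in the good-part proof (\S\ref{sec:goodpart}), and the analogous fine-grained parametrization of the bad part $b_1$ by scale $k$, Whitney cube $W$, and level $\mu$, with the $L^r$ gain in the spatial offset $n=L(W)-L(R)$ of Lemma \ref{lem:bkn} and the $\ell^p$ summation of Lemma \ref{lemma:key atomic} and \eqref{eq:key estimate}, is exactly what makes the endpoint estimate of Proposition \ref{prop:badpart} (case $q<2$) close. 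Your classical $b_{1,\nu}$, which only carry mean-zero and $L^p$-size information, do not supply this structure; you would end up needing to re-do a Littlewood--Paley or atomic decomposition of each $b_{1,\nu}$ from scratch, which destroys the locality and is precisely the obstruction the paper's construction is designed to circumvent. Relatedly, your statement that an iterated atomic decomposition must be run on $g_1$ misplaces where the atomic decomposition lives: in the paper it is performed on $f_1$ globally on $S_0$, and the good/bad split is defined through the atomic levels, not afterwards. Finally, you explicitly leave the good-part estimate as "the core technical challenge" without carrying it out; as written the proposal is an outline that identifies the main step but does not supply it, and the decomposition you start from is not the one that would make that step work in the crucial case $q<2$.
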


We note that by the definition of ${\mathcal {T}}_j$ in \eqref{defn:Tj}, we may assume that the set $\digamma$ featuring in the left-hand side of \eqref{eq:main sparse triple} has the property that $k > -N_2=-L(S_0)$ for $k \in \digamma$.

Also note by \eqref{supportin3S} that in order to prove the theorem we may assume without loss of generality that $f_1$ is supported in $S_0$ and $f_2$ is supported in $3S_0$.

We shall use standard arguments in the theory of sparse domination to make the following
\begin{observation} \label{obs:basicobs}
In order to prove Theorems \ref{thm:qle2}, \ref{thm:qge2} and \ref{thm:multq>2}, it suffices to prove Lemma \ref{lem:Kk-k} and Theorem \ref{thm:main}.
\end{observation}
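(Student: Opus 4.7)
The plan is a routine limiting argument: chain the Calderón reproducing decomposition with Lemma \ref{lem:Kk-k} and Theorem \ref{thm:main}, and then match the constants ${\mathcal {C}}$ from \eqref{eq:defofcC} with the Besov--multiplier norms on the right-hand sides of Theorems \ref{thm:qle2}, \ref{thm:qge2} and \ref{thm:multq>2}.

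For $f_1, f_2 \in C^\infty_c$, I first write $\inn{m(D) f_1}{f_2} = \sum_{k\in{\mathbb {Z}}}\inn{T_k P_k P_k f_1}{f_2}$ using the reproducing formula \eqref{eq:reproducing}; since $m\in L^\infty$ (via \eqref{eq:LinftyvsBesov}) we have $m(D) f_1\in L^2$, and the identity follows by dominated convergence on the Fourier side. Truncating to $k$ in a finite set $\digamma\subset{\mathbb {Z}}$ and to $\ell\le N$ in the spatial splitting $T_k = T_k^{(-k)} + \sum_{\ell\ge 1} T_k^{(\ell-k)}$, Lemma \ref{lem:Kk-k} bounds the diagonal contribution by $\|m\|_\infty\Lambda^*_{p,q'}(f_1,f_2)$, while the identity \eqref{eq:from Tkl to Tj} rewrites the off-diagonal contribution as $\inn{\sum_j {\mathcal {T}}_{j,\digamma} f_1}{f_2}$ with a finite $j$-sum. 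Choosing $S_0\in{\mathfrak {Q}}$ with $L(S_0)=N_2>-\min\digamma$ and containing (a neighborhood of) both supports, Theorem \ref{thm:main} yields a bound by $c\,{\mathcal {C}}\,\Lambda^{**}_{S_0,p,q'}(f_1,f_2)$. A standard shifted-lattice/enlargement argument absorbing the $3S$-average for $f_2$ into averages over slightly larger dyadic cubes gives $\Lambda^{**}_{S_0,p,q'}(f_1,f_2)\le C_{d,\gamma}\Lambda^*_{p,q'}(f_1,f_2)$ uniformly in $S_0$. Sending $N\to\infty$ and $\digamma\nearrow{\mathbb {Z}}$, the $L^2$-convergence on the left and uniformity on the right yield $|\inn{m(D)f_1}{f_2}|\lesssim (\|m\|_\infty+{\mathcal {C}})\,\Lambda^*_{p,q'}(f_1,f_2)$.

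Finally, the embedding \eqref{eq:Besovmultnorm} bounds ${\mathcal {C}}$ by the relevant Besov--multiplier norm: case \eqref{eq:defofcCThma} proves Theorem \ref{thm:qle2} for $q<2$, case \eqref{eq:defofcCThmb} proves Theorem \ref{thm:qge2} (and Theorem \ref{thm:qle2} with $q=2$, $r=p$), and case \eqref{eq:defofcCThmc} proves Theorem \ref{thm:multq>2}. For Theorem \ref{thm:qle2} at $q=2$ and $r>p$, the monotonicity \eqref{eq:A-embeddings} passes from $\mathcal{A}_{p,p,2}[m]$ to $\mathcal{A}_{p,r,2}[m]$ before applying \eqref{eq:Besovmultnorm}; the $\|m\|_\infty$ term is dominated by each of these Besov norms via \eqref{eq:LinftyvsBesov}. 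The only mildly delicate points in this reduction are the justification of the double limit in $\digamma$ and $N$ and the passage from $\Lambda^{**}_{S_0,p,q'}$ to $\Lambda^*_{p,q'}$; neither is a conceptual obstacle, as the genuine work of the proof lies entirely in Theorem \ref{thm:main} itself.
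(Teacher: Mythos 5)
Your argument is correct and mirrors the paper's own proof: truncate the Calder\'on reproducing decomposition, invoke Lemma~\ref{lem:Kk-k} for the $\ell=0$ terms and Theorem~\ref{thm:main} via \eqref{eq:from Tkl to Tj} for the rest, pass from $\Lambda^{**}_{S_0,p,q'}$ to $\Lambda^{*}_{p,q'}$ by the standard shifted-lattice argument (the paper cites \cite[\S4.2]{BRS} and \cite{lerner-nazarov} for this), take limits in $\digamma$ and $N$, and match ${\mathcal C}$ to the Besov--multiplier norms via \eqref{eq:Besovmultnorm} and \eqref{eq:A-embeddings}. The paper differs only in minor technical bookkeeping — it first records the tiling step that upgrades Theorem~\ref{thm:main} from $L(S_0)=N_2$ to $L(S_0)\ge N_2$ (which cleanly reconciles the choice of $S_0$ containing the supports with the prescribed $j$-range, a point you gloss over), and it runs the limiting argument through \cite[Lemma A.1]{BRS} with the dense subspace of functions whose Fourier transform is compactly supported away from the origin rather than via direct $L^2$ dominated convergence — but these are variations in execution, not in the argument.
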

The proof of this observation is included in the auxiliary \S\ref{observation-proof}.

\subsection{Induction scheme for the proof of Theorem \ref{thm:main}}

We will prove \eqref{eq:main sparse triple} by induction on $\mathbf n$ where $\mathbf n+1$ is the number of terms in the $j$-sum.

\begin{definition}
For $\mathbf n=0,1,2,\dots $, let $\mathbf U(\mathbf n)$ be the smallest nonnegative constant $U$ so that for all pairs $(N_1, N_2)$ with $0\le N_2-N_1\le \mathbf n$, for all finite sets $\digamma \subset {\mathbb {Z}}$ and for all dyadic cubes $S_0\in {\mathfrak {Q}}$ with $L(S_0)=N_2$
we have
\[
\big| \biginn{\sum_{j=N_1}^{N_2} \mathcal{T}_{j,\digamma} f_1}{f_2}\big|
\le U \Lambda^{**}_{S_0,p,q'}(f_1,f_2)
\]
whenever ${\mathrm{supp}\,}(f_1)\subset S_0$.
\end{definition}

For the inductive argument we first consider the base case
$\mathbf n=0.$
We distinguish two situations, $q\ge 2$ and $q<2$.
For fixed $j_0$ we let
\begin{equation}\label{defofcCjo}
{\mathcal {C}}_{p,r,q} (j_0)= \sup_{k>-j_0} A_{p,r,q}^{k,j_0+k}.
\end{equation}
It is immediate that
$\sup_{j_0} {\mathcal {C}}_{p,r,q}(j_0) $ is bounded by
${\mathcal {C}}_{\eqref{eq:defofcCThmc}}$ if $q\ge 2$, $r=p$, bounded by ${\mathcal {C}}_{\eqref{eq:defofcCThmb}}$ if $r=p$, $q=2$ and bounded by ${\mathcal {C}}_{\eqref{eq:defofcCThma}}$ if
$q<2$, $p<r\le q$.
\begin{lem}\label{lem:mult single spatial scale qge2}
Let $1<p\le 2\le q\le p'$ and $j_0\in{\mathbb {Z}}$. Let $S_0$ be a dyadic cube with $L(S_0)=j_0$ and let $f_1\in L^p$ be supported in $S_0$. Then we have for $f_2\in L^q_{\mathrm{loc}}$
\begin{equation}\label{singlescalesparseqge2}
|\inn{{\mathcal {T}}_{j_0} f_1}{f_2}|
\lesssim {\mathcal {C}}_{p,p,q}(j_0)
|S_0| \jp{f_1}_{S_0,p}\jp{f_2}_{3S_0,q'}.
\end{equation}

\end{lem}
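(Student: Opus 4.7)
The plan is to deduce the stated sparse estimate from the operator bound
\[\|\mathcal{T}_{j_0}\|_{L^p\to L^q}\lesssim \mathcal{C}_{p,p,q}(j_0)\cdot 2^{-j_0 d(1/p-1/q)}.\]
Indeed, since each kernel $K_k^{(j_0)}$ is supported in $\{|x|\le 2^{j_0-1}\}$, the operator $\mathcal{T}_{j_0}$ sends functions supported in $S_0$ to functions supported in $3S_0$, so applying H\"older (and using $|S_0|=2^{j_0 d}$ together with $1-\tfrac{1}{p}-\tfrac{1}{q'}=\tfrac{1}{q}-\tfrac{1}{p}$) turns the above operator bound into exactly \eqref{singlescalesparseqge2}.

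For the operator bound itself, I would first record that the rescaling identity \eqref{eq:kernels in the scaled multiplier form}, combined with the standard Fourier-multiplier scaling law, yields
\[\|T_k^{(j_0)}\|_{L^p\to L^q}= A^{k,j_0+k}_{p,p,q}\cdot 2^{-j_0 d(1/p-1/q)}\le \mathcal{C}_{p,p,q}(j_0)\cdot 2^{-j_0 d(1/p-1/q)}\]
uniformly in $k>-j_0$, so each individual summand already obeys the desired bound. To sum over $k$, I exploit that $p,q'\in(1,2]$ via Littlewood--Paley theory. Using the Calder\'on reproducing formula and the self-adjointness of $P_k$, I split
\[\langle \mathcal{T}_{j_0} f_1,f_2\rangle = \sum_k \langle T_k^{(j_0)} P_k f_1, P_k f_2\rangle,\]
apply pointwise Cauchy--Schwarz in $k$ inside the integral, H\"older in $L^q(\ell^2)\times L^{q'}(\ell^2)$, and finally Littlewood--Paley on $f_2$ (valid for $1<q'<\infty$), reducing the matter to proving the square function estimate
\[\Big\|\Big(\sum_k |T_k^{(j_0)} P_k f_1|^2\Big)^{1/2}\Big\|_q\lesssim \mathcal{C}_{p,p,q}(j_0)\cdot 2^{-j_0 d(1/p-1/q)}\|f_1\|_p.\]

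The main obstacle is this square function bound. Since $P_k$ has Fourier symbol essentially supported near $\{|\xi|\sim 2^k\}$, the summands $T_k^{(j_0)} P_k f_1$ have (up to Schwartz decay) Fourier-disjoint supports across dyadic scales. I would combine this disjointness with the uniform scalar $L^p\to L^q$ bounds on the pieces $T_k^{(j_0)} P_k$ via Khintchine's inequality: passing to $\mathbb{E}_\epsilon \|\sum_k \epsilon_k T_k^{(j_0)} P_k f_1\|_q^q$ and then exploiting the frequency disjointness together with the Littlewood--Paley inequality for $f_1$ (valid since $p\in(1,2]$) to handle the randomized multipliers annulus by annulus. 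A more naive approach, such as applying Cauchy--Schwarz in $k$ directly, would introduce the quantity $(\sum_k\|P_k f_1\|_p^2)^{1/2}$, which is not controlled by $\|f_1\|_p$ for $p<2$; carefully using Fourier disjointness is exactly what avoids this trap.
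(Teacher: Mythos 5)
Your reduction of the sparse bound to the rescaled $L^p\to L^q$ operator estimate via H\"older, the single--piece scaling identity, and the preliminary Littlewood--Paley reduction are all correct and match the paper. The genuine gap is in your treatment of the square function. You assert that $\big(\sum_k\|P_kf_1\|_p^2\big)^{1/2}$ ``is not controlled by $\|f_1\|_p$ for $p<2$'' and treat this as a trap which forces a detour through Khintchine's inequality and Fourier--disjointness. That assertion is false: for $1<p\le 2$ the Minkowski embedding $\ell^2(L^p)\hookrightarrow L^p(\ell^2)$ gives
\begin{equation*}
\Big(\sum_k \|P_kf_1\|_p^2\Big)^{1/2}\le\Big\|\Big(\sum_k|P_kf_1|^2\Big)^{1/2}\Big\|_p\lesssim\|f_1\|_p,
\end{equation*}
the last step being Littlewood--Paley. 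This is exactly where the hypothesis $p\le 2$ enters, and the paper's proof is the ``naive'' argument you reject: almost-orthogonality for the outer sum, Minkowski in $L^{q/2}$ (using $q\ge 2$) to pass to $\big(\sum_k\|T_k^{(j_0)}P_kf\|_q^2\big)^{1/2}$, the rescaled operator bound on each piece, then the Minkowski embedding above and Littlewood--Paley for $f_1$. The hypothesis $p\le 2\le q$ is used precisely through these two opposite Minkowski embeddings.

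The Khintchine detour you sketch is therefore unnecessary and, as stated, incomplete. After randomizing, you would still need a uniform (in the signs) $L^p\to L^q$ bound for $\sum_k\epsilon_kT_k^{(j_0)}P_k$, and any ``annulus by annulus'' execution of this eventually confronts $\big(\sum_k\|L_kf_1\|_p^2\big)^{1/2}$ again, so you cannot avoid the embedding. Moreover the ``frequency disjointness'' you invoke is only approximate: the kernels $K_k^{(j_0)}=K_k\Psi_{j_0}$ are spatial truncations of $\mathcal{F}^{-1}[\varphi_k m]$ and are not compactly supported in frequency, and the $P_k$ are built from $\eta$ with compactly supported \emph{inverse} Fourier transform, so $\eta_k$ is not compactly supported either. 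Quantifying the resulting Schwartz tails is additional work which the sketch does not supply, and which the direct Minkowski route makes superfluous.
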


\begin{lem}\label{lem:mult single spatial scale}
Let $1<p<r\le q\le 2$ and $j_0\in{\mathbb {Z}}$. Let $S_0$ be a dyadic cube with $L(S_0)=j_0$
and let $f_1\in L^p$ be supported in $S_0$. Then we have for $f_2\in L^q_{\mathrm{loc}}$
\begin{equation}\label{singlescalesparseq<2}
|\inn{{\mathcal {T}}_{j_0} f_1}{f_2}|
\lesssim \big({\mathcal {C}}_{p,p,q}(j_0) +{\mathcal {C}}_{p,r,q} (j_0) \big)
|S_0| \jp{f_1}_{S_0,p}\jp{f_2}_{3S_0,q'}.
\end{equation}
\end{lem}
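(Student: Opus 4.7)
\emph{Plan.} By H\"older's inequality and the support property \eqref{supportin3S}, the estimate reduces to the operator bound $\|\mathcal T_{j_0}f_1\|_{L^q}\lesssim (\mathcal C_{p,p,q}(j_0)+\mathcal C_{p,r,q}(j_0))|S_0|^{\frac1q-\frac1p}\|f_1\|_{L^p}$. A short scaling computation from \eqref{eq:kernels in the scaled multiplier form}, combined with the identity $\|F(\cdot/\lambda)\|_{M^{s\to q}}=\lambda^{d(\frac1s-\frac1q)}\|F\|_{M^{s\to q}}$ applied with $F=\varphi m(2^k\cdot)\ast\widehat{\Psi_{j_0+k}}$ and $\lambda=2^k$, yields
\[
\|T_k^{(j_0)}\|_{L^p\to L^q}\lesssim 2^{-j_0d(\frac1p-\frac1q)}A_{p,p,q}^{k,j_0+k},\quad \|T_k^{(j_0)}\|_{L^r\to L^q}\lesssim 2^{-j_0d(\frac1p-\frac1q)+kd(\frac1r-\frac1p)}A_{p,r,q}^{k,j_0+k},
\]
whose suprema over $k>-j_0$ are $\mathcal C_{p,p,q}(j_0)$ and $\mathcal C_{p,r,q}(j_0)$ respectively, up to the common scaling prefactor $2^{-j_0d(\frac1p-\frac1q)}\sim|S_0|^{\frac1q-\frac1p}$.

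Each piece $T_k^{(j_0)}P_k^2 f_1$ is frequency-localized in an annulus of scale $2^k$, so Littlewood--Paley theory gives $\|\mathcal T_{j_0}f_1\|_{L^q}\sim\|(\sum_{k>-j_0}|T_k^{(j_0)}P_k^2 f_1|^2)^{1/2}\|_{L^q}$. Since $q\le 2$, Minkowski's inequality further bounds this by $(\sum_k\|T_k^{(j_0)}P_k^2 f_1\|_q^2)^{1/2}$. If one simply applies the $L^p\to L^q$ operator estimate term-by-term, the remaining quantity is $(\sum_k\|P_k^2 f_1\|_p^2)^{1/2}$, which for $p<2$ is \emph{not} controlled by $\|f_1\|_p$: Minkowski's inequality gives the opposite bound $\|(\sum|P_kf_1|^2)^{1/2}\|_p\le(\sum\|P_kf_1\|_p^2)^{1/2}$. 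This is the obstruction that the single-parameter Littlewood--Paley argument cannot overcome when $p<2$.

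The remedy is an atomic decomposition of $f_1$ indexed by dyadic subcubes of $S_0$: one writes $f_1=\sum_Q f_{1,Q}$ with $f_{1,Q}$ supported in $Q\subseteq S_0$, having sufficient cancellation, satisfying the $L^r$-atomic normalization $\|f_{1,Q}\|_r\lesssim|Q|^{\frac1r-\frac1p}\|f_{1,Q}\|_p$, and admitting a square-function-type control in $L^p$. One then distinguishes a scale-matched regime $2^k\sim\ell(Q)^{-1}$, where the $L^r\to L^q$ bound is applied---the loss $2^{kd(\frac1r-\frac1p)}$ in the operator norm is exactly absorbed by the gain $|Q|^{\frac1r-\frac1p}\sim 2^{-kd(\frac1r-\frac1p)}$ from the atomic $L^r$-normalization, yielding the $\mathcal C_{p,r,q}(j_0)$ contribution after summation---from an off-scale regime, where cancellation of $f_{1,Q}$ together with the smoothness of $P_k$ (using that $\widecheck\eta$ is Schwartz with $\eta(0)=0$) produces Schur-type decay in $|k+\log_2\ell(Q)|$; pairing this with the $L^p\to L^q$ operator bound furnishes the $\mathcal C_{p,p,q}(j_0)$ contribution. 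The main obstacle is precisely setting up this atomic decomposition and closing the Schur estimate across the pairs $(k,Q)$: this ``iterated atomic decomposition'' is the new ingredient highlighted in Remark~(iii) following Theorem~\ref{thm:multq>2}, and is exactly what is avoided in the proofs of Theorems~\ref{thm:qge2} and \ref{thm:multq>2}, where $q\ge 2$ or the choice $r=p$ with $q=2$ permits a direct Littlewood--Paley treatment.
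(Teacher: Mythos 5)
Your reduction to the $L^p\to L^q$ operator bound via H\"older and \eqref{supportin3S} is correct and matches the paper's Lemma \ref{lem:mult-single-scale-pq}, and your diagnosis of the obstruction for $p<2$ --- that Minkowski's inequality runs the wrong way, so $(\sum_k\|P_kf_1\|_p^2)^{1/2}$ is not controlled by $\|f_1\|_p$ --- is exactly the right observation and correctly explains why the Littlewood--Paley route used for $q\ge 2$ (Lemma \ref{lem:mult-single-spatial-scale-qge2}) cannot close here. However, what you have written is a roadmap, not a proof: the passage from ``the remedy is an atomic decomposition'' to the conclusion is the entire content of the lemma, and you explicitly flag ``setting up this atomic decomposition and closing the Schur estimate across the pairs $(k,Q)$'' as the main obstacle without carrying it out. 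That is precisely where the work lies.

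Moreover, your sketch of how the two constants arise does not match the structure of the paper's argument. The atoms in \S\ref{sec:decompositions} are built from the martingale square function ${\mathbb {G}}_{S_0}$ (Chang--Fefferman style), with subatoms $e_R={\mathbb {D}}_k(f{\mathbbm 1}_R)$ that are martingale differences rather than Fourier-localized pieces, and the Schur gain comes from the $P_k{\mathbb {D}}_{k'}$ interaction \eqref{eq:Lpk1k2D} rather than from frequency-support separation. The resulting estimate is organized by two parameters: $m$ (subatom scale relative to the operator's frequency scale $k$) and $n=L(W)-L(R)$ (atom scale relative to subatom scale). In the proof in \S\ref{sec:PfofLemma}, the near-diagonal regime $II_1$ ($m\ge 0$, $0\le n\le 2m$) is handled with $r=p$, contributing $\mathcal C_{p,p,q}(j_0)$; the $r>p$ gain --- which, via Lemma \ref{lem:bkn}, produces the decay $2^{-nd(1/p-1/r)}$ needed to sum over the \emph{unbounded} range of $n$ --- enters in the off-diagonal regime $II_2$, contributing $\mathcal C_{p,r,q}(j_0)$. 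This is roughly the opposite of your assignment, which places the $\mathcal C_{p,r,q}$ contribution in the scale-matched regime. To turn your plan into a proof you would need to construct the decomposition, establish the analogues of Lemmata \ref{lemma:Lp norm aWk}--\ref{lemma:key atomic} (especially the $\ell^p$ summability of the coefficients $\beta_{S_0,p}^{k,n}$), verify the Schur decay from martingale cancellation, and execute the $(m,n,k)$-summations, none of which is done.
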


Both lemmata can be reduced to $L^p\to L^q$ estimates for the operators ${\mathcal {T}}_{j_0}$; these and the corresponding reduction are stated in \S\ref{sec:base case} (see \eqref{eq:sparse-singlescale0red}
for the reduction argument).
\begin{cor} \label{induction-hypothesis}
With ${\mathcal {C}}$ as in \eqref{eq:defofcC},
\begin{equation}\label{base-case}
\mathbf U(0) \le c_0(p,r,q,d)\, {\mathcal {C}}.
\end{equation}
\end{cor}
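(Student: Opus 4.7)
The plan is to derive Corollary \ref{induction-hypothesis} as a direct consequence of the single-scale Lemmas \ref{lem:mult single spatial scale qge2} and \ref{lem:mult single spatial scale}, together with the observation that the singleton collection $\{S_0\}$ is itself $\gamma$-sparse. First I would unwind the definition of $\mathbf U(0)$: the constraint $0 \le N_2 - N_1 \le 0$ forces $N_1 = N_2$, and the hypothesis $L(S_0) = N_2$ fixes this common value as $j_0 := L(S_0)$. Thus the inner sum $\sum_{j=N_1}^{N_2} \mathcal{T}_{j,\digamma}$ collapses to the single operator $\mathcal{T}_{j_0,\digamma}$, to which the single-scale lemmas directly apply.

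Next I would split into the three cases of \eqref{eq:defofcC}. If $q \ge 2$ (cases \eqref{eq:defofcCThmb} and \eqref{eq:defofcCThmc}) I invoke Lemma \ref{lem:mult single spatial scale qge2}; if $q < 2$ (case \eqref{eq:defofcCThma}) I invoke Lemma \ref{lem:mult single spatial scale}. In each case, the lemma supplies an inequality of the form
\[
|\inn{\mathcal{T}_{j_0,\digamma} f_1}{f_2}| \lesssim \mathcal{X}(j_0)\, |S_0|\, \jp{f_1}_{S_0,p} \jp{f_2}_{3S_0,q'},
\]
where $\mathcal{X}(j_0)$ equals ${\mathcal {C}}_{p,p,q}(j_0)$ in cases \eqref{eq:defofcCThmb}, \eqref{eq:defofcCThmc} and ${\mathcal {C}}_{p,p,q}(j_0) + {\mathcal {C}}_{p,r,q}(j_0)$ in case \eqref{eq:defofcCThma}.

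To replace $\mathcal{X}(j_0)$ by the global constants featured in \eqref{eq:defofcC}, I would use that by the definition \eqref{defofcCjo},
\[
{\mathcal {C}}_{p,s,q}(j_0) = \sup_{k > -j_0} A^{k, j_0+k}_{p,s,q} \;\le\; \sup_k \sum_{\ell \ge 0} A^{k,\ell}_{p,s,q} = \mathcal{A}_{p,s,q}[m].
\]
This handles cases \eqref{eq:defofcCThmb} and \eqref{eq:defofcCThmc} immediately. In case \eqref{eq:defofcCThma} the extra summand ${\mathcal {C}}_{p,p,q}(j_0)$ is further absorbed into $\mathcal{A}_{p,r,q}[m]$ via the pointwise-in-$(k,\ell)$ monotonicity underlying \eqref{eq:A-embeddings} (Corollary \ref{cor:Aembeddings}).

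Finally, the family $\{S_0\}$ is trivially $\gamma$-sparse with $E_{S_0} := S_0$, so the definition of $\Lambda^{**}_{S_0,p,q'}$ gives the lower bound
\[
\Lambda^{**}_{S_0,p,q'}(f_1,f_2) \;\ge\; |S_0|\, \jp{f_1}_{S_0,p} \jp{f_2}_{3S_0,q'}.
\]
Combining this with the estimate above yields $\mathbf U(0) \le c_0(p,r,q,d)\, {\mathcal {C}}$, as claimed. The argument is entirely mechanical once the single-scale lemmas are in hand; the only points requiring care are the bookkeeping across the three cases and the verification that the embedding among the $A^{k,\ell}_{p,s,q}$ is available at the level of individual $(k,\ell)$, rather than only after summation in $\ell$.
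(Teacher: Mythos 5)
Your argument is correct and matches the paper's own proof: collapse the case $\mathbf n=0$ to a single scale $j_0 = L(S_0)$, apply Lemma \ref{lem:mult single spatial scale qge2} or \ref{lem:mult single spatial scale}, use that $\{S_0\}$ is $\gamma$-sparse, and bound $\sup_{j_0}{\mathcal C}_{p,s,q}(j_0)$ by the appropriate $\mathcal{A}[m]$. The only cosmetic difference is that for case \eqref{eq:defofcCThma} the paper absorbs the extra ${\mathcal C}_{p,p,q}(j_0)$ term via the $\ell$-summed embedding \eqref{eq:A-embeddings} ($\sup_{j_0}{\mathcal C}_{p,p,q}(j_0)\le\mathcal{A}_{p,p,q}\lesssim\mathcal{A}_{p,r,q}$), whereas you invoke the pointwise Corollary \ref{cor:Aembeddings} directly — both work, but note that Corollary \ref{cor:Aembeddings} is not a strict monotonicity (there is an error term), so either route ultimately needs the $\ell$-summation to control it.
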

\begin{proof} This is immediate from Lemmata \ref{lem:mult single spatial scale qge2} and \ref{lem:mult single spatial scale}, in combination
with the inequality \eqref{eq:A-embeddings}.
\end{proof}

Corollary \ref{induction-hypothesis} is the base case for our induction. We note that the same argument implies $\mathbf U( \mathbf n) \le c(\mathbf n,p,r,q,d){\mathcal {C}}$ for any $\mathbf n \geq 0$, but one needs a uniform bound in $\mathbf n$.
The key is the verification of the induction step, formulated in the following claim.

\begin{prop}[Inductive claim] \label{inductiveclaim} There is a constant
$c=c(p,q,r,\gamma,d)$ such that for all
$\mathbf n>0$,
\[
\mathbf U(\mathbf n) \le \max \{ \mathbf U(\mathbf n-1), \, c \,\mathcal{C} \}, \]
with
${\mathcal {C}}$ as in \eqref{eq:defofcC}.
\end{prop}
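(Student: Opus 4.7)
The plan is to fix $\mathbf{n}>0$, $N_1\le N_2$ with $N_2-N_1=\mathbf{n}$, a finite set $\digamma\subset\mathbb{Z}$, a cube $S_0\in\mathfrak{Q}$ with $L(S_0)=N_2$, and $f_1$ supported in $S_0$ (with $f_2$ effectively restricted to $3S_0$ by the locality property \eqref{supportin3S}). I would perform a Calder\'on--Zygmund decomposition of $f_1$ inside $\mathfrak{Q}(S_0)$ at the $L^p$-threshold $\lambda=C_\gamma\langle f_1\rangle_{S_0,p}$, so that the resulting collection $\mathcal{B}$ of maximal bad dyadic subcubes $Q\subsetneq S_0$ satisfies $\sum_{Q\in\mathcal{B}}|Q|\le(1-\gamma)|S_0|$. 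Write $f_1=g+\sum_{Q\in\mathcal{B}}b_Q$, where $g$ is the good part and each $b_Q$ is an $L^p$-atom on $Q$ with the moment/cancellation properties dictated by the atomic decomposition of \S\ref{sec:decompositions}. The estimate $\sum_{Q}|Q|\le(1-\gamma)|S_0|$ is precisely what guarantees that, for any choice of $\gamma$-sparse families $\mathcal{S}_Q\subset\mathfrak{Q}(Q)$, the concatenation $\{S_0\}\cup\bigcup_{Q}\mathcal{S}_Q$ is again $\gamma$-sparse in $\mathfrak{Q}(S_0)$; this is the combinatorial device through which $\mathbf{U}(\mathbf{n}-1)$-bounds on the subcubes assemble into a $\mathbf{U}(\mathbf{n})$-bound on $S_0$.

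Next, I would split
\[\biginn{\sum\nolimits_{j=N_1}^{N_2}\mathcal{T}_{j,\digamma}f_1}{f_2}=G+B_{\mathrm{in}}+B_{\mathrm{out}},\]
where $G=\inn{\sum_{j}\mathcal{T}_j g}{f_2}$, $B_{\mathrm{in}}=\sum_{Q\in\mathcal{B}}\inn{\sum_{j=N_1}^{L(Q)}\mathcal{T}_j b_Q}{f_2}$ collects the scales local to $Q$, and $B_{\mathrm{out}}=\sum_{Q\in\mathcal{B}}\inn{\sum_{j=L(Q)+1}^{N_2}\mathcal{T}_j b_Q}{f_2}$ collects the larger scales. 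The local bad part $B_{\mathrm{in}}$ is where the induction hypothesis enters: for each $Q\in\mathcal{B}$ one has $L(Q)-N_1\le \mathbf{n}-1$ (since $Q\subsetneq S_0$), so applying $\mathbf{U}(\mathbf{n}-1)$ with base cube $Q$, input $b_Q$, and scale range $N_1\le j\le L(Q)$ gives $|\inn{\sum_{j=N_1}^{L(Q)}\mathcal{T}_j b_Q}{f_2}|\le \mathbf{U}(\mathbf{n}-1)\,\Lambda^{**}_{Q,p,q'}(b_Q,f_2)$. Standard atomic estimates control $\langle b_Q\rangle_{Q',p}\lesssim\langle f_1\rangle_{Q',p}$ for $Q'\subseteq Q$, so summing over the (disjoint) $Q\in\mathcal{B}$ and passing to the supremum over admissible sparse families yields $|B_{\mathrm{in}}|\le \mathbf{U}(\mathbf{n}-1)\Lambda^{**}_{S_0,p,q'}(f_1,f_2)$.

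The remaining $G+B_{\mathrm{out}}$ must then be bounded by $c\mathcal{C}\cdot|S_0|\langle f_1\rangle_{S_0,p}\langle f_2\rangle_{3S_0,q'}$; this single term will correspond to adding $S_0$ itself to the combined sparse family. For $B_{\mathrm{out}}$, the moment/cancellation on $b_Q$ combined with the smoothness of the kernel of $\mathcal{T}_j$ at scales $2^j>2^{L(Q)}$ produces decay in $j-L(Q)$ which, summed against the Besov-type quantity $\mathcal{A}_{p,r,q}[m]\le\mathcal{C}$, yields $|B_{\mathrm{out}}|\lesssim \mathcal{C}\sum_{Q\in\mathcal{B}}|Q|\langle f_1\rangle_{Q,p}\langle f_2\rangle_{3Q,q'}\lesssim \mathcal{C}\,|S_0|\langle f_1\rangle_{S_0,p}\langle f_2\rangle_{3S_0,q'}$ by the basic sparse comparison on the $\mathcal{B}$-cubes. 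Assembling the three contributions and invoking the $\gamma$-sparsity of $\{S_0\}\cup\bigcup_Q\mathcal{S}_Q$ gives the bound $|\inn{\sum_j\mathcal{T}_{j,\digamma}f_1}{f_2}|\le\max\{\mathbf{U}(\mathbf{n}-1),c\mathcal{C}\}\Lambda^{**}_{S_0,p,q'}(f_1,f_2)$ required by the proposition.

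The main obstacle — and the principal technical novelty flagged in the remarks after Theorem \ref{thm:multq>2} — lies in the good part $G$ in the regime $q\le 2$, where Littlewood--Paley/orthogonality arguments are not available. Here I would decompose $g$ itself atomically along the dyadic lattice so that each atom at spatial scale $\sim 2^k$ interacts non-negligibly with only $O(1)$ of the operators $\mathcal{T}_j$, and combine the resulting single-scale $L^p\to L^q$ estimates from Lemmata \ref{lem:mult single spatial scale qge2}--\ref{lem:mult single spatial scale} with the Besov summability of $\mathcal{A}_{p,r,q}[m]$ encoded in \eqref{eq:defofcCThma}--\eqref{eq:defofcCThmc}. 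The delicate point is that this decomposition must be iterated compatibly with the CZ step, so that the bound on $G$ is uniform in the number of scales $\mathbf{n}$; for $q>2$ this atomic argument can be bypassed via Littlewood--Paley theory, while the treatment of $B_{\mathrm{in}}$ and $B_{\mathrm{out}}$ is structurally identical across all three cases.
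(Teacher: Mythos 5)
Your high-level skeleton — split $f_1$ into a good and a bad part, apply the inductive hypothesis $\mathbf U(\mathbf n-1)$ on the bad cubes, add $S_0$ itself to the assembled sparse family, and isolate a single-scale/good contribution to be bounded by $c\,{\mathcal C}$ — is the right shape, and the observation that $\sum_Q|Q|\le(1-\gamma)|S_0|$ is what makes the concatenated family $\gamma$-sparse is exactly the combinatorial mechanism the paper uses. However, there are two concrete gaps. First, you only stop on $f_1$. The paper's Calder\'on--Zygmund step in \S\ref{sec:CZ-dec} builds the exceptional set ${\mathcal O}={\mathcal O}_1\cup{\mathcal O}_2$ where ${\mathcal O}_2$ is a level set of $M_{\mathrm{HL}}(|f_2|^{q'})$, and takes Whitney cubes of $\widetilde{\mathcal O}$, precisely so that the stopping-time estimates of Lemma \ref{f2CZdec} hold on each $Q\in{\mathcal Q}$. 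Those estimates on $f_2$ are indispensable for the bad-part bound when $q\ge 2$: Proposition \ref{prop:badpartq>2} decomposes $f_2=g_2+\sum_{Q'}b_{2,Q'}$ and needs $\jp{f_2}_{Q,q'}\lesssim\alpha_2$ and $\|g_2\|_\infty\lesssim\alpha_2$ on triples of Whitney cubes. Without building $f_2$ into the stopping time, those terms do not close. (The Whitney separation constant $\ge 5$ in \eqref{eq:Whitney Q}, which guarantees bounded overlap of triple dilates for Lemma \ref{lemma:lift lp to lq}, is also not something you get from a ``maximal bad cubes'' CZ stopping.)

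Second, your inductive step applies $\mathbf U(\mathbf n-1)$ to $b_Q$ and then asserts $\jp{b_Q}_{Q',p}\lesssim\jp{f_1}_{Q',p}$ for $Q'\subseteq Q$. For a standard CZ bad part $b_Q=(f-\mathrm{av}_Qf){\mathbbm 1}_Q$ this is routine, but here $b_{1,Q}$ is a sum of martingale-difference atoms $a_{W,\mu}$ with $\mu$ large, and there is no obvious pointwise or local-average domination of $b_{1,Q}$ by $f_1$ (the square-function bound of Lemma \ref{lemma:Linfty good} controls the pieces of $g_1$, not $g_1$ itself in $L^\infty$, so $b_{1,Q}=f_1{\mathbbm 1}_Q-g_1{\mathbbm 1}_Q$ is not obviously small). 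The paper sidesteps this entirely by the algebraic rearrangement in \eqref{eq:three-terms}: it writes $\mathcal{T}^Q b_1=\mathcal{T}^Qf_1-\mathcal{T}^Qg_1$, applies the induction hypothesis to $f_1{\mathbbm 1}_Q$ directly (so no estimate on $b_Q$ averages is needed), and folds the extra $\mathcal{T}^Qg_1$ term into the good-part sum $\sum_{Q\in{\mathcal Q}\cup\{S_0\}}|\inn{\mathcal{T}^Qg_1}{f_2}|$. You would either need to prove the missing average estimate for the atomic $b_{1,Q}$, or adopt this rearrangement. Beyond these two points, the estimates in Propositions \ref{prop:goodpart}--\ref{prop:badpartq>2} — in particular the gain from $r>p$ via Lemma \ref{lem:bkn}, the $\ell^p$ summability in Lemma \ref{lemma:key atomic}, and the $M^{q'\to q'}$ bound of Lemma \ref{lem:studia-verification} for the $q>2$ good part — constitute the actual work, and your proposal leaves them at the level of intent.
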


The proof structure for the inductive claim is presented in \S\ref{sec:induction step}, with proofs given in \S\ref{sec:goodpart} and
\S\ref{sec:badpart}.
They are based on
Calder\'on--Zygmund decompositions combined with the atomic decomposition outlined in \S\ref{sec:CZ-dec}.
By induction, the conclusion of Theorem \ref{thm:main} follows by combining Corollary \ref{induction-hypothesis} and the inductive claim Proposition \ref{inductiveclaim}.

\section{Atomic and subatomic decompositions}\label{sec:decompositions}
In this section we fix a dyadic reference cube $S_0$ and outline an atomic decomposition for a function $f$ supported in a dyadic cube $S_0$
based on estimates for a martingale square function on the cube $S_0$.

\subsection{Local square functions} \label{sec:atomic}
Let $\{{\mathbb {E}}_n\}_{n \in {\mathbb {Z}}}$ be the conditional expectation operators associated to the $\sigma$-algebra generated by the subfamily ${\mathfrak {Q}}_n$ of cubes in ${\mathfrak {Q}}$ with $L(Q)=-n$ (i.e of side length $2^{-n}$), that is, ${\mathbb {E}}_n f(x) = \mathrm{av}_Q f$ for every $x\in Q$ with $Q\in {\mathfrak {Q}}_n$.
Define the martingale difference operator ${\mathbb {D}}_n:={\mathbb {E}}_{n+1}-{\mathbb {E}}_{n}$ for $n \in {\mathbb {Z}}$.
We shall frequently use the familiar properties
\begin{equation}\label{eq:Dkorth}
{\mathbb {D}}_k^2={\mathbb {D}}_k, \qquad \text{and} \qquad {\mathbb {D}}_k{\mathbb {D}}_{k'}={\mathbb {D}}_{k'}{\mathbb {D}}_k=0 \text{ for } k\neq k',
\end{equation}
as well as
\begin{subequations}\label{eq:Lpk1k2}
\begin{align}\label{eq:k1>k2}
&\| P_{k_2} {\mathbb {D}}_{k_1} \|_{L^p\to L^p} \lesssim 2^{-(k_1-k_2)/p} \quad \text{ if $k_1 \ge k_2$},
\\
\label{eq:k2>k1}
& \| {\mathbb {E}}_{k_1} P_{k_2} \|_{L^p\to L^p}
\lesssim 2^{-(k_1-k_2)/p'} \quad \text{ if $k_2 \ge k_1$},
\end{align}
for $1\le p\le \infty$. From \eqref{eq:k1>k2}, \eqref{eq:k2>k1} and using duality,
\begin{equation}\label{eq:Lpk1k2D}
\|{\mathbb {D}}_{k_1} P_{k_2} \|_{L^p\to L^p} + \| P_{k_2} {\mathbb {D}}_{k_1} \|_{L^p \to L^p} \lesssim 2^{-|k_1-k_2|\min\{1/p,1/p'\}}
\end{equation}
for $1 \leq p \leq \infty$ and $k_1,k_2 \in {\mathbb {Z}}$.
\end{subequations}
The bounds \eqref{eq:Lpk1k2} follow by standard computations exploiting cancellation of $P_k$ and $\mathbb{D}_k$ (see e.g. \cite[Ch. 3]{BRS}). They will allow us for example to interchange ${\mathbb {D}}_k$ and $P_k$ in $L^p$ bounds for $1<p<\infty$.
We note the reproducing formula
\begin{equation}\label{eq:reproducing formula f}
f={\mathbb {E}}_{1-L(S_0)} f + \sum_{k > -L(S_0)} {\mathbb {D}}_k f.
\end{equation}
We start from ${\mathbb {E}}_{1-L(S_0)}$ rather than from ${\mathbb {E}}_{-L(S_0)}$ because that will be convenient in Section \ref{sec:CZ-dec}.
Consider the localized dyadic square function
\begin{equation}\label{eq:local-sqfct}
{\mathfrak {g}}_{S_0}f(x)= |{\mathbb {E}}_{1-L(S_0)}f(x)| + \Big(\sum_{k > -L(S_0) } |{\mathbb {D}}_k f(x)|^2 \Big)^{1/2}.
\end{equation}
Note that
${\mathfrak {g}}_{S_0}f$ is supported on $ S_0$, by definition.
By a trivial $L^2$-bound and standard Calder\'on--Zygmund decomposition (and using Khintchine's inequality), it is well-known that ${\mathfrak {g}}_{S_0}$ satisfies an $L^p$ bound
for all $1<p<\infty$ with constant only depending on $p,d$. This is also a special case of Burkholder's square-function estimate for more general martingales \cite{Burkholder66}.

It will also be convenient to work with a slightly larger and more robust square function. Let $Q_k(x)$ be the unique dyadic cube of sidelength $2^{-k}$ containing $x$ and define a dyadic square function in the spirit of Peetre \cite{Peetre1975},
\begin{equation}\label{eq:local-maxsqfct}
{\mathbb {G}}_{S_0}f(x)= |{\mathbb {E}}_{1-L(S_0)}f(x)| + \Big(\sum_{k > -L(S_0) } \sup_{y\in Q_k(x)} |{\mathbb {D}}_k f(y)|^2 \Big)^{1/2}.
\end{equation}
Since ${\mathbb {D}}_k f$ is constant on dyadic cubes of sidelength $2^{-k-1}$ it is easy to see that $\sup_{y\in Q_k(x)} |{\mathbb {D}}_k f(y)|\lesssim M_{\mathrm{HL}} {\mathbb {D}}_k f(x)$ where $M_{\mathrm{HL}}$ is the Hardy--Littlewood maximal operator. Using the Fefferman--Stein inequalities \cite{FeffermanStein} for the vector-valued Hardy--Littlewood maximal operator and the bounds for ${\mathfrak {g}}_{S_0}$ we obtain, for $1<p<\infty$, \begin{equation}\label{eq:Lp bound sq fn}
\| {\mathbb {G}}_{S_0} f\|_{L^p(S_0)} \le {\mathscr C_{\mathrm{sq},p}}\| f \|_{L^p(S_0)},
\end{equation}
where $\mathscr C_{\mathrm {sq},p}$ only depends on $p$ and $d$.

\subsection{Atomic decomposition}
We will now perform an atomic decomposition of $f$ using the local square function ${\mathbb {G}}_{S_0} f$, following ideas in \cite{Chang-Fefferman1982} (see also \cite{SeegerStudia90}). Given $\mu \in {\mathbb {Z}}$, consider the level sets
\begin{equation}\label{omega def}
\Omega_\mu\equiv \Omega_\mu[f] := \{ x\in S_0 \,:\, {\mathbb {G}}_{S_0} f(x) > 2^\mu\},
\end{equation}
and the open sets
\begin{equation}\label{omega tilde def} \widetilde \Omega_\mu\equiv
\widetilde{\Omega}_{\mu} [f]:= \{ x\in{\mathbb {R}}^d : M_{\mathrm{HL}} {\mathbbm 1}_{\Omega_\mu[f]}(x) > 2^{-1}(10\sqrt d)^{-d} \}.
\end{equation}
Note that $\widetilde{\Omega}_\mu$ is not necessarily contained in $S_0$. Of course, $\Omega_\mu \subseteq \widetilde{\Omega}_\mu$ and $\widetilde{\Omega}_{\mu_2} \subseteq \widetilde{\Omega}_{\mu_1}$ if $\mu_1 \leq \mu_2$.
By the Hardy--Littlewood theorem, one has
\begin{equation}\label{eq:sizes omegas}
|\widetilde{\Omega}_\mu| \le C_d |\Omega_\mu|,
\text{ with } C_d=5^d 2 (10 \sqrt{d})^d
\end{equation}
and Chebyshev's inequality and \eqref{eq:Lp bound sq fn} imply
\begin{equation}\label{eq:size omega}
|\Omega_\mu| \leq 2^{-\mu p} \| {\mathbb {G}}_{S_0} f \|_p^p \leq 2^{-\mu p} {\mathscr C^p_{\mathrm{sq},p}} \| f \|_{L^p(S_0)}^p
\end{equation}
for all $1 < p < \infty$. Let ${\mathcal {R}}_\mu\equiv {\mathcal {R}}_\mu[f]$ denote the family of all dyadic cubes $R \subsetneq S_0$ satisfying
\begin{align}
\label{big portion mu} |R \cap \Omega_{\mu}| &> |R|/2, \\
\label{small portion mu+1} |R \cap \Omega_{\mu+1}| &\leq |R|/2.
\end{align}

\begin{lemma}\label{lemma:R in Omega tilde}
For all $\mu\in {\mathbb {Z}}$ and all $R \in {\mathcal {R}}_\mu$ we have
$10\sqrt d R \subset \widetilde{\Omega}_\mu$.
\end{lemma}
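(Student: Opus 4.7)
The plan is a direct one-cube averaging argument. Given $x \in 10\sqrt{d}\, R$, I would simply take $Q := 10\sqrt{d}\, R$ itself as the test cube in the definition of $M_{\mathrm{HL}}\mathbbm{1}_{\Omega_\mu}(x)$. Since $M_{\mathrm{HL}}$ is an uncentered Hardy--Littlewood maximal function over cubes and $x \in Q$, the cube $Q$ is admissible at the point $x$.

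The second ingredient is the defining property \eqref{big portion mu} of membership in $\mathcal{R}_\mu$, namely $|R \cap \Omega_\mu| > |R|/2$ (strictly). Combined with the inclusion $R \subset Q$ and the identity $|Q| = (10\sqrt{d})^d |R|$, this yields
\[
\frac{|Q \cap \Omega_\mu|}{|Q|} \;\geq\; \frac{|R \cap \Omega_\mu|}{(10\sqrt{d})^d |R|} \;>\; \frac{1}{2(10\sqrt{d})^d},
\]
and therefore $M_{\mathrm{HL}}\mathbbm{1}_{\Omega_\mu}(x) > 2^{-1}(10\sqrt{d})^{-d}$. This is exactly the condition appearing in \eqref{omega tilde def}, so $x \in \widetilde{\Omega}_\mu$, proving the lemma.

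There is no serious obstacle here: the dilation factor $10\sqrt{d}$ and the threshold constant $2^{-1}(10\sqrt{d})^{-d}$ in \eqref{omega tilde def} are calibrated in tandem precisely so that this single-cube argument goes through. The only subtlety worth flagging is that the strict inequality $|R \cap \Omega_\mu| > |R|/2$ (rather than $\geq$) is essential, since it is exactly what propagates to the strict inequality needed for $x \in \widetilde{\Omega}_\mu$. Should $M_{\mathrm{HL}}$ be understood as a centered maximal function instead, one could replace $Q$ by a cube centered at $x$ that contains $10\sqrt{d}\, R$; this costs only a dimensional factor and the same conclusion persists after the threshold is reinterpreted up to that factor.
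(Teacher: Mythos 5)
Your proof is correct and is essentially identical to the paper's: both use the dilated cube $10\sqrt{d}\,R$ as the single test cube in the maximal-function average, combined with the strict inequality \eqref{big portion mu}. Nothing further to compare.
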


\begin{proof} Let $c=10\sqrt{d}$. For every $x\in cR$, $R\in {\mathcal {R}}_\mu$, we have by \eqref{big portion mu}
\[
\frac{1}{2 (10 \sqrt{d})^d} < \frac{|R \cap \Omega_\mu|}{c^d |R|} \leq \frac{|cR \cap \Omega_\mu|}{|cR|} = \frac{1}{|cR|} \int_{cR} {\mathbbm 1}_{\Omega_{\mu}} \leq M_{\mathrm{HL}} {\mathbbm 1}_{\Omega_\mu} (x).
\]
By the definition \eqref{omega tilde def}, this implies $cR \subset \widetilde{\Omega}_\mu$ for all $R \in {\mathcal {R}}_\mu$.
\end{proof}

The lemma implies in particular that $R \subseteq \widetilde{\Omega}_\mu$ for $R \in {\mathcal {R}}_\mu$. This and \eqref{small portion mu+1} further imply
\begin{equation}\label{big portion mu-mu+1}
|R \cap (\widetilde{\Omega}_\mu \backslash \Omega_{\mu+1})| = |R| - |R \cap \Omega_{\mu+1}| \geq |R|/2.
\end{equation}
We also note that for every dyadic cube $R\subsetneq S_0$ there exists a unique $\mu\in{\mathbb {Z}}$ such that $R\in{\mathcal {R}}_\mu$.

Fix $\mu\in{\mathbb {Z}}$. Let $\widetilde{\mathcal{W}}_\mu=\{W\}$ denote a family of standard dyadic Whitney cubes
\cite[\S VI.1]{Ste70}
whose union is the open set $\widetilde{\Omega}_\mu$, which satisfy
\begin{equation}\label{eq:Whitney W}
{\,\text{\rm diam}} (W) \leq {\mathrm{dist}} (W, \widetilde{\Omega}_\mu^\complement) \leq 4 {\,\text{\rm diam}} (W).
\end{equation}
If there exists $W\in\widetilde{\mathcal{W}}_\mu$ with $S_0\subseteq W$, then we set
\[\mathcal{W}_\mu = \{S_0\}. \]
Otherwise we have $W\subsetneq S_0$ for all $W\in\widetilde{\mathcal{W}}_\mu$ intersecting $S_0$ and we set
\[ \mathcal{W}_\mu = \{W\in \widetilde{\mathcal{W}}_\mu\,:\, W\cap S_0\not=\emptyset\}. \]
The cubes in ${\mathcal {R}}_\mu$ have a unique ancestor in ${\mathcal {W}}_\mu$.
\begin{lemma}\label{lem:RinW}
For each $R \in {\mathcal {R}}_\mu$, there exists a unique $W(R) \in {\mathcal {W}}_\mu$ containing $R$.
\end{lemma}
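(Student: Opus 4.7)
The plan is to identify $W(R)$ as the unique Whitney cube $W\in\widetilde{\mathcal{W}}_\mu$ containing $R$, with a cosmetic adjustment in the case $\mathcal{W}_\mu=\{S_0\}$. The key input is Lemma \ref{lemma:R in Omega tilde}: since $10\sqrt{d}R\subset\widetilde{\Omega}_\mu$, every point of $R$ sits deep inside $\widetilde{\Omega}_\mu$. The Whitney size condition \eqref{eq:Whitney W} then prevents any Whitney cube meeting $R$ from being strictly smaller than $R$ in dyadic side length, and the nesting property of dyadic cubes finishes the identification.

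Concretely, set $s:=2^{L(R)}$ and pick $x\in R$. Since $\widetilde{\Omega}_\mu^\complement\subset(10\sqrt{d}R)^\complement$, a direct computation comparing $x$ to the center of $R$ in either Euclidean or $\ell^\infty$ norm yields
\[
{\mathrm{dist}}(x,\widetilde{\Omega}_\mu^\complement)\;\ge\;{\mathrm{dist}}(x,(10\sqrt{d}R)^\complement)\;\ge\;(10\sqrt{d}-1)s/2.
\]
On the other hand, if $x\in W$ for some $W\in\widetilde{\mathcal{W}}_\mu$, the triangle inequality together with \eqref{eq:Whitney W} gives
\[
{\mathrm{dist}}(x,\widetilde{\Omega}_\mu^\complement)\;\le\;{\,\text{\rm diam}}(W)+{\mathrm{dist}}(W,\widetilde{\Omega}_\mu^\complement)\;\le\;5\,{\,\text{\rm diam}}(W)\;=\;5\sqrt{d}\cdot 2^{L(W)}.
\]
Combining the two bounds yields $2^{L(W)}\ge s(1-\frac{1}{10\sqrt{d}})>s/2$; as both sides are powers of $2$, this forces $2^{L(W)}\ge s$. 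By the standard nesting property of dyadic cubes, two dyadic cubes that intersect and have one at least as large as the other satisfy containment, hence $R\subseteq W$.

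Existence of some $W\in\widetilde{\mathcal{W}}_\mu$ containing $R$ is then clear: $R\subset\widetilde{\Omega}_\mu$ by Lemma \ref{lemma:R in Omega tilde}, and $\widetilde{\mathcal{W}}_\mu$ partitions $\widetilde{\Omega}_\mu$, so any $W\in\widetilde{\mathcal{W}}_\mu$ meeting $R$ actually contains it. Uniqueness follows from the pairwise disjointness of the interiors of Whitney cubes combined with the positive measure of $R$. It remains to place $W$ inside $\mathcal{W}_\mu$. If $\mathcal{W}_\mu=\{S_0\}$, then by construction some $W^*\in\widetilde{\mathcal{W}}_\mu$ satisfies $S_0\subseteq W^*$; since $R\subset S_0\subseteq W^*$, we set $W(R):=S_0$ and are done. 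Otherwise every Whitney cube meeting $S_0$ is strictly contained in $S_0$, so the unique $W$ containing $R$ intersects $S_0$ and hence belongs to $\mathcal{W}_\mu$; setting $W(R):=W$ completes the proof.

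This is essentially the standard Whitney covering computation. The only mildly delicate point---and effectively the only place the specific numerology matters---is verifying that the constant $10\sqrt{d}$ baked into the definition of $\widetilde{\Omega}_\mu$ is large enough to beat the factor $5$ coming from \eqref{eq:Whitney W} by a margin wide enough to survive the dyadic rounding of side lengths; this is precisely the role of the estimate $1-\frac{1}{10\sqrt{d}}>1/2$. No genuine obstacle arises.
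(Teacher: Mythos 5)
Your proof is correct and follows essentially the same route as the paper's: both lean on Lemma \ref{lemma:R in Omega tilde} to show that $R$ sits deep inside $\widetilde{\Omega}_\mu$, use the Whitney inequality \eqref{eq:Whitney W} to bound the side length of any Whitney cube meeting $R$ from below, and conclude containment from the dyadic nesting property. The only cosmetic difference is that the paper evaluates the distance exactly at the center $x_R$ of $R$, giving ${\,\text{\rm diam}}(R)\le{\,\text{\rm diam}}(W)$ without needing the dyadic-rounding step that you invoke after working with an arbitrary $x\in R$.
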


\begin{proof}
If $\mathcal{W}_\mu=\{S_0\}$, then there is nothing to prove. Otherwise, let $c=10 \sqrt{d}$ and $R \in {\mathcal {R}}_\mu$. By Lemma \ref{lemma:R in Omega tilde}, we have $cR \subseteq \widetilde{\Omega}_\mu$. Let $x_R$ denote the center of $R$ and let $W=W(R) \in {\mathcal {W}}_\mu$ such that $x_R \in W$. With this setup and \eqref{eq:Whitney W} we have
\begin{equation*}
{\mathrm{dist}}(x_R, (cR)^\complement) \leq {\mathrm{dist}} (x_R, \widetilde{\Omega}_\mu^\complement) \leq {\,\text{\rm diam}}(W) + {\mathrm{dist}}(W, \widetilde{\Omega}_\mu^\complement) \leq 5 {\,\text{\rm diam}} (W).
\end{equation*}
Noting that ${\mathrm{dist}}(x_R, (cR)^\complement)=\frac{c {\,\text{\rm diam}}(R)}{2 \sqrt{d}}= 5 {\,\text{\rm diam}}(R)$, we obtain that \[5 {\,\text{\rm diam}}(R) \leq 5 {\,\text{\rm diam}} (W).\] As $R$ and $W$ are dyadic cubes containing $x_R$, we conclude that $R \subseteq W$. The uniqueness follows from the disjointess of $W \in {\mathcal {W}}_\mu$.
\end{proof}

Given $\mu \in {\mathbb {Z}}$ and $W \in {\mathcal {W}}_\mu$, the sets
\begin{equation*}
\mathcal{R}_{W,\mu} := \{ R\in\mathcal{R}_\mu \,:\, R \subseteq W\}
\end{equation*}
are disjoint for different $W$, by disjointness of the $W$. Also, by Lemma \ref{lem:RinW}
\[{\mathcal {R}}_\mu = \bigcup_{W \in {\mathcal {W}}_\mu} {\mathcal {R}}_{W,\mu},\]
where the union is disjoint.
We are now ready to define the atoms.
First, for each dyadic cube $R \subsetneq S_0$ with $L(R)=-k$ let
\[ e_R\equiv e_R[f] := (\mathbb{D}_k f) {\mathbbm 1}_{R} = \mathbb{D}_k (f {\mathbbm 1}_R). \]
We refer to the $e_R$ as {\it subatoms}; they are pairwise orthogonal and $\int e_R=0$. The subatoms are building blocks of larger {\it atoms} which are associated to cubes $W$. Given $\mu \in {\mathbb {Z}}$ and $W \in {\mathcal {W}}_\mu$, these are defined as
\begin{equation}
\label{eq:atoms-combined}
a_{W,\mu} \equiv a_{W,\mu}[f] := \sum_{R\in {\mathcal {R}}_{W,\mu} } e_R[f].
\end{equation}
We refer to the $a_{W,\mu}$ as {\em atoms}, but note that they have a non-standard normalization with respect to other sources in the literature. Indeed, if we define the coefficients
\begin{equation}\label{defn:gammaWmu} \gamma_{W,\mu} \equiv
\gamma_{W,\mu}[f] := \Big( |W|^{-1} \sum_{R\in\mathcal{R}_{W,\mu}} \|e_R[f]\|_2^2 \Big)^{1/2},
\end{equation}
then using orthogonality,
\begin{equation}\label{eq:atom L2 norm}
\|a_{W,\mu} \|_2 = |W|^{1/2} \gamma_{W,\mu}.
\end{equation}
Note that
$|W|^{-1/p} (\gamma_{W,\mu})^{-1} a_{W,\mu} $ corresponds to a {\it $(p,2) $-atom} in the classical atomic Hardy-space theory developed for $p\le 1$ (see e.g. \cite{coifman-weiss-bams}). Note that for $p \leq 2$, H\"older's inequality and \eqref{eq:atom L2 norm} imply
\begin{equation}\label{eq:atom Lp norm}
\| a_{W,\mu} \|_p \le |W|^{1/p} \gamma_{W,\mu}.
\end{equation}

In view of \eqref{eq:reproducing formula f} and the above discussion, we can write the atomic decomposition as
\begin{equation}\label{eq:atomic dec}
f= {\mathbb {E}}_{1-L(S_0)}f + \sum_{\mu \in {\mathbb {Z}}} \sum_{W \in {\mathcal {W}}_\mu} a_{W,\mu}.
\end{equation}
In applications it will be useful to use the fine structure of the $a_{W,\mu} $ and further group subatoms that are at the same scale (see \eqref{eq:atomskatoms}).

\subsection{Properties of the atomic decomposition}

The square function ${\mathbb {G}}_{S_0}$ allows summation of the coefficients $|W|^{1/2}\gamma_{W,\mu}$ in $\ell^2$ over the collection ${\mathcal {W}}_\mu$.

\begin{lemma}\label{lemma:sum of atoms}
Let $\mu \in {\mathbb {Z}}$. Then
\begin{equation}\label{eq:sum of atoms}
\Big(\sum_{W \in {\mathcal {W}}_{\mu}} |W| (\gamma_{W,\mu})^2 \Big)^{1/2} = \Big(\sum_{R\in{\mathcal {R}}_\mu} \|e_R\|_2^2\Big)^{1/2} \leq 2^{\mu+3/2} |\widetilde{\Omega}_\mu|^{1/2}.
\end{equation}
\end{lemma}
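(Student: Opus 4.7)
The plan is to dispose of the equality immediately from the partition structure, and then to prove the inequality by pointwise-dominating each $\|e_R\|_2^2$ by the local square function ${\mathbb{G}}_{S_0} f$, which is bounded by $2^{\mu+1}$ outside $\Omega_{\mu+1}$.

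For the equality: by Lemma \ref{lem:RinW} the families ${\mathcal{R}}_{W,\mu}$ partition ${\mathcal{R}}_\mu$ as $W$ ranges over ${\mathcal{W}}_\mu$, and the definition \eqref{defn:gammaWmu} gives $|W|\gamma_{W,\mu}^2 = \sum_{R \in {\mathcal{R}}_{W,\mu}} \|e_R\|_2^2$. Summing over $W$ yields the identity.

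For the inequality, fix $R \in {\mathcal{R}}_\mu$ and let $k = -L(R)$. Set $E_R := R \cap (\widetilde\Omega_\mu \setminus \Omega_{\mu+1})$, which by \eqref{big portion mu-mu+1} satisfies $|E_R| \geq |R|/2$. Since ${\mathbb{D}}_k f$ is constant on each dyadic child of $R$, I bound $\|e_R\|_2^2 = \int_R |{\mathbb{D}}_k f|^2 \leq |R|\sup_{z\in R}|{\mathbb{D}}_k f(z)|^2 \leq 2|E_R|\sup_{z\in R}|{\mathbb{D}}_k f(z)|^2$. The key observation is that for any $y \in R$, one has $Q_k(y) = R$ (as $R$ has sidelength $2^{-k}$), so $\sup_{z\in R}|{\mathbb{D}}_k f(z)|^2 = \sup_{z \in Q_k(y)} |{\mathbb{D}}_k f(z)|^2$. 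Writing $|E_R|\sup_{z\in R}|{\mathbb{D}}_k f(z)|^2 = \int_{E_R}\sup_{z\in Q_k(y)}|{\mathbb{D}}_k f(z)|^2\,dy$ and summing over $R \in {\mathcal{R}}_\mu$, I swap sum and integral. At each fixed $y$, at most one $R \in {\mathcal{R}}_\mu$ of any given sidelength $2^{-k}$ contains $y$, and whenever such an $R$ contributes we have $y \in E_R \subseteq \widetilde\Omega_\mu \setminus \Omega_{\mu+1}$. Hence the integrand is dominated by ${\mathbbm 1}_{\widetilde\Omega_\mu\setminus\Omega_{\mu+1}}(y)\sum_{k > -L(S_0)}\sup_{z \in Q_k(y)}|{\mathbb{D}}_k f(z)|^2 \leq {\mathbbm 1}_{\widetilde\Omega_\mu\setminus\Omega_{\mu+1}}(y)\,{\mathbb{G}}_{S_0} f(y)^2$, and on $\widetilde\Omega_\mu \setminus \Omega_{\mu+1}$ the square function is at most $2^{\mu+1}$ by the defintion of $\Omega_{\mu+1}$. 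Combining yields $\sum_R \|e_R\|_2^2 \leq 2 \cdot 2^{2(\mu+1)}|\widetilde\Omega_\mu| = 2^{2\mu+3}|\widetilde\Omega_\mu|$, and taking square roots gives the claim.

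The only point requiring care is the bookkeeping: verifying that the scales $k = -L(R)$ for $R \in {\mathcal{R}}_\mu$ all satisfy $k > -L(S_0)$ (which follows from $R \subsetneq S_0$) so that they lie in the range summed over in the definition \eqref{eq:local-maxsqfct} of ${\mathbb{G}}_{S_0} f$, and matching the Peetre-style supremum in ${\mathbb{G}}_{S_0}$ with the supremum over $R$. Given the identity $Q_k(y) = R$, no substantive obstacle arises beyond this identification.
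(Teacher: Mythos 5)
Your proof is correct and follows essentially the same strategy as the paper: it uses \eqref{big portion mu-mu+1} to replace $|R|$ by the measure of $R\cap(\widetilde\Omega_\mu\setminus\Omega_{\mu+1})$, identifies the supremum over $R$ with the Peetre-type supremum $\sup_{z\in Q_k(y)}$ defining ${\mathbb{G}}_{S_0}$, and then integrates the pointwise bound ${\mathbb{G}}_{S_0}f\le 2^{\mu+1}$ over $\widetilde\Omega_\mu\setminus\Omega_{\mu+1}$. The bookkeeping details you flag (uniqueness of $R$ at each scale, $k>-L(S_0)$ from $R\subsetneq S_0$) are exactly the right points to check, and your treatment of them is sound.
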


\begin{proof} The first identity is by definition.
Using \eqref{big portion mu-mu+1},
\[ \|e_R\|_2^2 \le |R| \|e_R\|_\infty^2 \le 2 |R\cap (\widetilde{\Omega}_\mu\setminus \Omega_{\mu+1})|\,\|e_R\|_\infty^2. \]
Observe that
\[ \sum_{\substack{R\in\mathcal{R}_\mu,\\L(R)=-k}} \|e_R\|_\infty^2 {\mathbbm 1}_R(x) = \sup_{y\in Q_k(x)}|{\mathbb {D}}_k f (y)|^2. \]
Thus, the left-hand side of the square of \eqref{eq:sum of atoms}
is
\[ \sum_{k>-L(S_0)} \sum_{\substack{R\in\mathcal{R}_\mu,\\L(R)=-k}} \|e_R\|_2^2 \le 2 \int_{\widetilde{\Omega}_\mu\setminus \Omega_{\mu+1}} \sum_{k>-L(S_0)}
\sup_{y\in Q_k(x)}|{\mathbb {D}}_k f (y)|^2 \, \mathrm{d} x. \]
which by definition of $\Omega_{\mu+1}$ is bounded by
$2^{2\mu+3} |\widetilde{\Omega}_\mu|$, as claimed.
\end{proof}

Even though the coefficients $\gamma_{W,\mu}$ incorporate $\ell^2$ in their definition, there is an $\ell^p$-analogue of the above lemma for $1 < p < 2$. For notational convenience, define the auxiliary function
\begin{equation}\label{eq:F1 defn}
F_{p} (x):= \Big(\sum_{\mu \in {\mathbb {Z}}}\sum_{W \in {\mathcal {W}}_{\mu}} (\gamma_{W,\mu} [f])^p {\mathbbm 1}_{W}(x) \Big)^{1/p}.
\end{equation}
The following lemma says that $\|F_{p}\|_p$ is controlled by $\|f\|_p$.

\begin{lem}\label{lem:U1-Lp}
Let $1 < p \leq 2$ and $\mu \in {\mathbb {Z}}$. Then
\begin{equation}\label{eq:Lp sum coefficients}
\Big(\sum_{W \in {\mathcal {W}}_{\mu}} |W| (\gamma_{W,\mu})^p \Big)^{1/p} \le 2^{\mu+3/2} |\widetilde{\Omega}_\mu|^{1/p} \le C_d^{1/p} 2^{\mu+3/2} |\Omega_\mu|^{1/p}
\end{equation} with $C_d $ as in \eqref{eq:sizes omegas}.
Moreover,
\begin{equation}\label{eq:Lp sum Omega}
\Big(\sum_{\mu \in {\mathbb {Z}}} 2^{\mu p} |\Omega_\mu|\Big)^{1/p} \le (2C_d)^{1/p} {\mathscr C_{\mathrm{sq},p}} \|f\|_p
\end{equation}
and
\begin{equation}\label{eq:Lp sum coefficients function}
\| F_{p} \|_p
\le 2^{3/2} (2C_d)^{1/p} {\mathscr C_{\mathrm{sq},p}}
\| f \|_{p}.
\end{equation}
\end{lem}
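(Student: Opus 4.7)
The plan is to derive all three inequalities from two ingredients already available at this point of the paper: the $\ell^2$ estimate from Lemma \ref{lemma:sum of atoms} and the $L^p$ bound \eqref{eq:Lp bound sq fn} for the localized square function ${\mathbb {G}}_{S_0}$. Estimate \eqref{eq:Lp sum coefficients} is the main step and will feed into \eqref{eq:Lp sum coefficients function}, while \eqref{eq:Lp sum Omega} is a routine dyadic layer-cake estimate that also feeds into \eqref{eq:Lp sum coefficients function}.

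For \eqref{eq:Lp sum coefficients} I would view the left-hand side as a weighted $\ell^p$ norm of $(\gamma_{W,\mu})_W$ against the measure assigning mass $|W|$ to each $W\in{\mathcal {W}}_\mu$, and interpolate from the $\ell^2$ bound using H\"older's inequality on a finite measure space. Concretely, writing $|W|\gamma_{W,\mu}^p = (|W|\gamma_{W,\mu}^2)^{p/2}\,|W|^{1-p/2}$ and applying H\"older with exponents $2/p$ and $2/(2-p)$ gives
\[
\Big(\sum_{W\in{\mathcal {W}}_\mu} |W|\gamma_{W,\mu}^p\Big)^{1/p} \le \Big(\sum_{W\in{\mathcal {W}}_\mu} |W|\gamma_{W,\mu}^2\Big)^{1/2}\Big(\sum_{W\in{\mathcal {W}}_\mu} |W|\Big)^{\frac1p-\frac12}.
\]
Lemma \ref{lemma:sum of atoms} controls the first factor by $2^{\mu+3/2}|\widetilde{\Omega}_\mu|^{1/2}$. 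For the second factor I use that the cubes in ${\mathcal {W}}_\mu$ are pairwise disjoint and contained in $\widetilde{\Omega}_\mu$: in the second branch of the construction this is by definition of the Whitney decomposition, and in the exceptional branch ${\mathcal {W}}_\mu=\{S_0\}$ we still have $S_0\subseteq W'\subseteq \widetilde{\Omega}_\mu$ for some $W'\in\widetilde{\mathcal W}_\mu$, so $|S_0|\le |\widetilde{\Omega}_\mu|$. The second inequality in \eqref{eq:Lp sum coefficients} is then immediate from \eqref{eq:sizes omegas}.

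For \eqref{eq:Lp sum Omega} I would split the layer-cake identity $\|{\mathbb {G}}_{S_0}f\|_p^p=p\int_0^\infty t^{p-1}|\{{\mathbb {G}}_{S_0}f>t\}|\,\mathrm{d} t$ over dyadic intervals $[2^\mu,2^{\mu+1}]$; using $|\{{\mathbb {G}}_{S_0}f>t\}|\ge |\Omega_{\mu+1}|$ on each such interval and reindexing yields $\sum_\mu 2^{\mu p}|\Omega_\mu|\le \tfrac{2^p}{2^p-1}\|{\mathbb {G}}_{S_0}f\|_p^p$, and $\tfrac{2^p}{2^p-1}\le 2$ for $p\ge 1$, so \eqref{eq:Lp bound sq fn} and $2^{1/p}\le (2C_d)^{1/p}$ close the estimate. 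Finally \eqref{eq:Lp sum coefficients function} is obtained by integrating the definition \eqref{eq:F1 defn} of $F_p$ using the disjointness of the $W\in{\mathcal {W}}_\mu$ to rewrite $\|F_p\|_p^p=\sum_\mu\sum_W |W|\gamma_{W,\mu}^p$, applying \eqref{eq:Lp sum coefficients} for each $\mu$ to bound this by $2^{3p/2}C_d\sum_\mu 2^{\mu p}|\Omega_\mu|$, and then invoking \eqref{eq:Lp sum Omega}. No step looks genuinely hard here; the only technical point to track is the two-branch Whitney construction, and the $\ell^2\to\ell^p$ H\"older passage in \eqref{eq:Lp sum coefficients} is the only estimate that required any thought.
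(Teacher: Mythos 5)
Your proposal is correct and follows essentially the same route as the paper's proof: both derive \eqref{eq:Lp sum coefficients} by the H\"older passage from $\ell^2$ to $\ell^p$ against the measure assigning mass $|W|$ to each Whitney cube, both derive \eqref{eq:Lp sum Omega} by a dyadic layer-cake estimate using \eqref{eq:Lp bound sq fn}, and both combine these for \eqref{eq:Lp sum coefficients function} via the disjointness identity $\|F_p\|_p^p=\sum_\mu\sum_W|W|\gamma_{W,\mu}^p$. Your explicit handling of the exceptional branch ${\mathcal W}_\mu=\{S_0\}$ when bounding $\sum_{W\in{\mathcal W}_\mu}|W|$ by $|\widetilde\Omega_\mu|$ is a small but welcome point of care that the paper leaves implicit.
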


\begin{proof}
Fix $\mu \in {\mathbb {Z}}$. By H\"older's inequality, the definition of ${\mathcal {W}}_\mu$, Lemma \ref{lemma:sum of atoms} and the estimate \eqref{eq:sizes omegas} we have
\begin{align*}
\sum_{W \in {\mathcal {W}}_\mu} |W| (\gamma_{W,\mu})^p &= \sum_{W \in {\mathcal {W}}_\mu} |W|^{1-p/2} (|W|^{1/2}\gamma_{W,\mu})^p \\
& \leq \Big( \sum_{W \in {\mathcal {W}}_\mu} |W| \Big)^{1-p/2} \Big( \sum_{W \in {\mathcal {W}}_\mu} |W| (\gamma_{\mu,W})^2 \Big)^{p/2}
\\&\leq |\widetilde{\Omega}_\mu|^{1-p/2} (2^{2\mu+3} |\widetilde{\Omega}_\mu|)^{p/2} \le C_d |\Omega_\mu| 2^{\mu p+3p/2}.
\end{align*}
This proves \eqref{eq:Lp sum coefficients}. To prove \eqref{eq:Lp sum coefficients function} we use
the definition of $\Omega_\mu$ and \eqref{eq:Lp bound sq fn} to estimate
\begin{align*}
&\sum_{\mu \in {\mathbb {Z}}} 2^{\mu p} |\Omega_\mu|
= \sum_{\mu\in {\mathbb {Z}}} (1-2^{-p})^{-1} \int_{2^{\mu-1}}^{2^\mu } p\alpha^{p-1} \, \mathrm{d}\alpha \big |\{x:{\mathbb {G}}_{S_0} f(x)> 2^\mu\}\big|
\\
&\le (1-2^{-p})^{-1} \int_0^\infty p\alpha^{p-1} |\{{\mathbb {G}}_{S_0} f(x)>\alpha\}| \, \mathrm{d}\alpha
\le 2\|{\mathbb {G}}_{S_0} f\|_p^p\le 2C_d {\mathscr C^p_{\mathrm{sq},p}} \|f\|_p^p
\end{align*}
which gives \eqref{eq:Lp sum Omega} and further implies
\begin{align*}\label{eq:Lp-sum-coefficients function}
\|F_p\|_p^p &=\sum_{\mu\in {\mathbb {Z}}} \sum_{W\in{\mathcal {W}}_\mu} |W|(\gamma_{W,\mu})^p\le 2^{3p/2} C_d
\sum_{\mu \in {\mathbb {Z}}} 2^{\mu p} |\Omega_\mu|
\le 2C_d 2^{3p/2}
{\mathscr C^p_{\mathrm{sq},p}}\|f\|_p^p,
\end{align*}
as desired.
\end{proof}

\subsection{Fine structure analysis of atomic decompositions} \label{sec:fine-structure}
Note that by \eqref{eq:atom L2 norm}
\begin{equation}\label{eq:L2 identity F1}
\Big\| \sum_{\mu \in {\mathbb {Z}}} \sum_{W \in {\mathcal {W}}_\mu} a_{W,\mu} \Big\|_2 = \| F_{2} \|_2
\end{equation}
via \eqref{eq:atom L2 norm}, so for $p=2$ Lemma \ref{lem:U1-Lp} recovers the trivial inequality
\begin{equation}\label{eq:L2 trivial atoms}
\Big\| \sum_{\mu \in {\mathbb {Z}}} \sum_{W \in {\mathcal {W}}_\mu} a_{W,\mu} \Big\|_2 \lesssim \| f \|_2,
\end{equation}
which follows directly from \eqref{eq:atomic dec}. There does not seem to be an $L^p$ analogue of this inequality for $1<p<2$, because there appears to be no immediate relation between the $L^p$ norms of $\sum_{\mu \in {\mathbb {Z}}} \sum_{W \in {\mathcal {W}}_\mu} a_{W,\mu}$ and $F_{p}$ of the type \eqref{eq:L2 identity F1}.
However, we shall rely on other useful analogues where either the atoms or subatoms are of a fixed scale (see Lemmata \ref{lemma:l2 sum eff_k} and \ref{lemma:key atomic} below); these will then be used in conjunction with a weak orthogonality or Littlewood--Paley type argument based on properties of the operator that is estimated.

We first consider variants where the subatoms correspond to a fixed dyadic scale.
For $k\in{\mathbb {Z}}$, $\mu \in {\mathbb {Z}}$ and $W \in {\mathcal {W}}_\mu$, define the families of cubes
\begin{align*}
\mathcal{R}^k_\mu & := \{ R\in\mathcal{R}_\mu \,:\, L(R)=-k\}, \\
\mathcal{R}^k_{W,\mu} & := \{ R\in\mathcal{R}_\mu^k \,:\, R \subseteq W\},
\end{align*}
the coefficients
\begin{equation}\label{defn:gammakWmu}
\gamma^k_{W,\mu }\equiv
\gamma_{W,\mu}^k[f] := \Big( |W|^{-1} \sum_{R\in\mathcal{R}^k_{W,\mu}} \|e_R[f]\|_2^2 \Big)^{1/2},
\end{equation}
and
the fixed scale atoms
\begin{align}\label{eq:atomskatoms} a_{W,\mu}^k \equiv a_{W,\mu}^k[f] &:=\sum_{R\in {\mathcal {R}}^k_{W,\mu} } e_R[f].
\end{align}
Note that if $k\le -L(S_0)$, then $\mathcal{R}^k_\mu=\mathcal{R}^k_{W,\mu}=\emptyset$, $\gamma^k_{W,\mu}=0$ and $a^k_{W,\mu}=0$ by definition.
We have
\begin{equation*}
\gamma_{W,\mu} = \Big(\sum_{k > -L(S_0)}(\gamma_{W,\mu}^k)^2\Big)^{1/2}\qquad \text{and} \qquad
a_{W,\mu} =\sum_{k > - L(S_0)} a_{W,\mu}^k.
\end{equation*}

We observe that the inequality \eqref{eq:atom Lp norm} continues to hold when all subatoms are at a fixed scale.
\begin{lemma}\label{lemma:Lp norm aWk}
Let $k > - L(S_0)$, $\mu \in {\mathbb {Z}}$ and $W \in {\mathcal {W}}_\mu$. If $1 < p \leq 2$, then
\begin{equation*}
\| a_{W,\mu}^k \|_p \leq |W|^{1/p} \gamma_{W,\mu}^k.
\end{equation*}
\end{lemma}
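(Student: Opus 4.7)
The plan is to exploit the disjointness of the subatoms in $\mathcal{R}_{W,\mu}^k$ and then apply Hölder's inequality twice.

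First, note that the cubes $R \in \mathcal{R}_{W,\mu}^k$ all have the same sidelength $2^{-k}$ and are dyadic, hence pairwise disjoint; moreover they are all contained in $W$. Since $e_R = (\mathbb{D}_k f)\mathbbm{1}_R$ is supported on $R$, the collection $\{e_R\}_{R \in \mathcal{R}_{W,\mu}^k}$ consists of functions with pairwise disjoint supports. Consequently
\[
\|a_{W,\mu}^k\|_p^p = \sum_{R \in \mathcal{R}_{W,\mu}^k} \|e_R\|_p^p.
\]

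Next, on each individual cube $R$ of volume $|R|$ apply Hölder's inequality (equivalently, the embedding $L^2(R) \hookrightarrow L^p(R)$ for $p \leq 2$) to get $\|e_R\|_p \leq |R|^{1/p - 1/2}\|e_R\|_2$, so that $\|e_R\|_p^p \leq |R|^{1 - p/2}\|e_R\|_2^p$. Summing and applying Hölder's inequality with conjugate exponents $\tfrac{2}{2-p}$ and $\tfrac{2}{p}$ yields
\[
\sum_{R \in \mathcal{R}_{W,\mu}^k} |R|^{1-p/2}\|e_R\|_2^p \leq \Big(\sum_{R} |R|\Big)^{(2-p)/2} \Big(\sum_{R} \|e_R\|_2^2\Big)^{p/2}.
\]
Since the $R \in \mathcal{R}_{W,\mu}^k$ are disjoint subsets of $W$, we have $\sum_R |R| \leq |W|$. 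Combining this with the definition \eqref{defn:gammakWmu} of $\gamma_{W,\mu}^k$, namely $\sum_R \|e_R\|_2^2 = |W|(\gamma_{W,\mu}^k)^2$, gives
\[
\|a_{W,\mu}^k\|_p^p \leq |W|^{(2-p)/2}\bigl(|W|(\gamma_{W,\mu}^k)^2\bigr)^{p/2} = |W|(\gamma_{W,\mu}^k)^p,
\]
which is the required inequality upon taking $p$-th roots.

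I do not expect any serious obstacle: the assertion is purely a Hölder estimate that exploits (i) the disjointness of supports of the same-scale subatoms, and (ii) the $\ell^p$-$\ell^2$ interpolation built into the definition of $\gamma_{W,\mu}^k$. The proof mirrors the easy bound $\|a_{W,\mu}\|_p \leq |W|^{1/p}\gamma_{W,\mu}$ already recorded in \eqref{eq:atom Lp norm}, with the restriction to a single scale $k$ playing no essential role beyond keeping the supports inside $W$.
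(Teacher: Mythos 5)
Your proof is correct and follows essentially the same route as the paper's: pointwise Hölder on each cube $R$ to pass from $L^p$ to $L^2$, a discrete Hölder inequality over the sum with exponents $\tfrac2p$ and $\tfrac2{2-p}$, the bound $\sum_R|R|\le|W|$, and disjointness of the same-scale cubes to identify $\|a^k_{W,\mu}\|_p^p$ with $\sum_R\|e_R\|_p^p$. The only cosmetic difference is that you invoke disjointness at the outset whereas the paper records it at the end.
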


\begin{proof}
By two applications of H\"older's inequality,
\begin{align*}
\Big(\sum_{\substack{R \in {\mathcal {R}}_{W,\mu}^k }} \| e_R \|_{L^p}^p \Big)^{1/p} & \leq \Big(\sum_{\substack{R \in {\mathcal {R}}_{W,\mu}^k }} \| e_R \|_{L^2}^2 \Big)^{1/2} \Big( \sum_{\substack{R \in {\mathcal {R}}_{W,\mu}^k}} |R| \Big)^{\frac{1}{p}-\frac{1}{2}} \\ & \leq \Big(\sum_{\substack{R \in {\mathcal {R}}_{W,\mu}^k}} \| e_R \|_{L^2}^2 \Big)^{1/2} |W|^{\frac{1}{p}-\frac{1}{2}} \,= \gamma_{W,\mu}^k |W|^{1/p}.
\end{align*}
The result now follow by disjointness of the cubes in $R \in {\mathcal {R}}^k_{W,\mu}$.
\end{proof}

For the remainder of this section we fix parameters
\begin{equation}\label{eq:Q-mumin-def}Q\in\mathfrak{Q}(S_0)\quad\text{and}\quad\mu_{\mathrm{min}}\in {\mathbb {Z}}\cup \{-\infty\}.\end{equation}
For the time being the reader may pretend that $Q=S_0$ and $\mu_{\mathrm{min}}=-\infty$, but we will need the additional localization when combining the atomic decomposition with an appropriate Calder\'on--Zygmund decomposition in \S \ref{sec:CZ-dec}.
With this in mind, set $\mathcal{W}_{Q,\mu}=\{W\in \mathcal{W}_\mu\,:\,W\subseteq Q\}$ and
\begin{subequations}
\begin{align}\label{defn:bQ}
b_{Q} &= \sum_{\mu>\mu_{\mathrm{min}}} \sum_{\substack{W\in\mathcal{W}_{Q,\mu}}} a_{W,\mu},
\\
b^k_{Q} &= \sum_{\mu>\mu_{\mathrm{min}}} \sum_{\substack{W\in\mathcal{W}_{Q,\mu}}} a^k_{W,\mu}.
\end{align}
\end{subequations}
Define also
\begin{subequations}
\begin{align}\label{defn:betaQp}
\beta_{Q,p} &= \Big( \sum_{\mu\in{\mathbb {Z}}} \sum_{\substack{W\in\mathcal{W}_{Q,\mu}}} |W| (\gamma_{W,\mu}[f])^p\Big)^{1/p},\\
\beta^k_{Q,p} &= \Big( \sum_{\mu\in{\mathbb {Z}}} \sum_{\substack{W\in\mathcal{W}_{Q,\mu}}} |W| (\gamma_{W,\mu}^k[f])^p\Big)^{1/p}
\end{align}
\end{subequations}
and observe that $\|F_p\mathbf{1}_Q\|_p = \beta_{Q,p}$.
Note that $\mathcal{W}_{Q,\mu}$ and $b_Q$, $b_Q^k$, $\beta_{Q,p}$, $\beta_{Q,p}^k$ all depend on the function $f$. Also observe that the truncation in $\mu$ is omitted in the definitions of $\beta_{Q,p}, \beta^k_{Q,p}$.
Our first observation is a variant of \eqref{eq:L2 identity F1} in $L^p$ for a fixed scale $k$.

\begin{lemma}\label{lemma:f to eff}\label{lemma:b_Q to beta_Q}
Let $1 < p \leq 2$ and $k > - L(S_0)$.
Then
\begin{equation*}
\|b_Q^k\|_p \leq \beta_{Q,p}^k.
\end{equation*}
\end{lemma}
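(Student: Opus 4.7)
The plan is to exploit disjoint supports of the pieces making up $b_Q^k$ and then apply Lemma \ref{lemma:Lp norm aWk} termwise. Recall that $b_Q^k=\sum_{\mu>\mu_{\mathrm{min}}}\sum_{W\in\mathcal{W}_{Q,\mu}}a_{W,\mu}^k$ and each $a_{W,\mu}^k=\sum_{R\in\mathcal{R}_{W,\mu}^k}e_R[f]$, where $e_R=(\mathbb{D}_k f)\mathbf{1}_R$ is supported on $R$ (a dyadic cube of side $2^{-k}$).

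The first step is the disjointness observation. Every dyadic cube $R\subsetneq S_0$ belongs to a \emph{unique} $\mathcal{R}_\mu$ (by the definition \eqref{big portion mu}--\eqref{small portion mu+1}), and by Lemma \ref{lem:RinW} it has a unique ancestor $W(R)\in\mathcal{W}_\mu$. Hence as $(\mu,W)$ ranges over the index set of $b_Q^k$, the families $\mathcal{R}_{W,\mu}^k$ are pairwise disjoint, and within each such family the cubes $R$ are pairwise disjoint (they are dyadic cubes of a common sidelength $2^{-k}$). Consequently the functions $\{e_R\}$ occurring in $b_Q^k$ have pairwise disjoint supports.

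The second step converts this into a pointwise/$L^p$ identity. By disjoint supports,
\[
\|b_Q^k\|_p^p
=\sum_{\mu>\mu_{\mathrm{min}}}\sum_{W\in\mathcal{W}_{Q,\mu}}\sum_{R\in\mathcal{R}_{W,\mu}^k}\|e_R\|_p^p
=\sum_{\mu>\mu_{\mathrm{min}}}\sum_{W\in\mathcal{W}_{Q,\mu}}\|a_{W,\mu}^k\|_p^p,
\]
again by disjoint supports of the $e_R$ inside a single $a_{W,\mu}^k$. Now Lemma \ref{lemma:Lp norm aWk} gives $\|a_{W,\mu}^k\|_p^p\le|W|(\gamma_{W,\mu}^k)^p$, so
\[
\|b_Q^k\|_p^p\le \sum_{\mu>\mu_{\mathrm{min}}}\sum_{W\in\mathcal{W}_{Q,\mu}}|W|(\gamma_{W,\mu}^k)^p\le\sum_{\mu\in\mathbb{Z}}\sum_{W\in\mathcal{W}_{Q,\mu}}|W|(\gamma_{W,\mu}^k)^p=(\beta_{Q,p}^k)^p,
\]
where the second inequality just extends the $\mu$-sum from $\mu>\mu_{\mathrm{min}}$ to all of $\mathbb{Z}$ (the summands being nonnegative). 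Taking $p$-th roots yields the claim.

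There is no real obstacle here: the content of the lemma is essentially bookkeeping — verifying that the fixed-scale atoms $a_{W,\mu}^k$ appearing in $b_Q^k$ are pairwise disjointly supported so that the $L^p$ norm splits, after which the desired bound is immediate from Lemma \ref{lemma:Lp norm aWk}. The only mildly delicate point is the uniqueness of the pair $(\mu,W(R))$ attached to each $R$, which was already established in the previous subsection.
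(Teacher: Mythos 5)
Your proof is correct and matches the paper's argument essentially verbatim: exploit disjointness of the cubes $R$ occurring in $b_Q^k$ to split the $L^p$ norm as a sum of $\|e_R\|_p^p$, regroup into $\|a_{W,\mu}^k\|_p^p$, apply Lemma \ref{lemma:Lp norm aWk}, and enlarge the $\mu$-sum to all of $\mathbb{Z}$. The only difference is that you spell out the intermediate step via $\|a_{W,\mu}^k\|_p^p$ explicitly, whereas the paper's one-line proof compresses it.
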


\begin{proof}
Note that
\begin{equation*}
\|b_Q^k\|_{L^p}= \Big( \sum_{\mu>\mu_{\mathrm{min}}} \sum_{\substack{W \in {\mathcal {W}}_{\mu,Q}}} \sum_{\substack{R \in {\mathcal {R}}_{W,\mu}^k}} \| e_R \|_{L^p}^p \Big)^{1/p}
\end{equation*}
as all the cubes occurring in the definition of $b^k_Q$ are disjoint. By Lemma \ref{lemma:Lp norm aWk},
\begin{equation}
\| b^k_Q \|_{L^p} \leq \Big( \sum_{\mu \in {\mathbb {Z}}} \sum_{\substack{W \in {\mathcal {W}}_{Q,\mu}}} |W| (\gamma_{W,\mu}^k)^p \Big)^{1/p} = \beta_{Q,p}^k.
\qedhere
\end{equation}
\end{proof}
We can sum the coefficients $\{\beta^k_{Q,p}\}_{k\in {\mathbb {Z}}}$ in $\ell^2$.
\begin{lemma}\label{lemma:l2 sum eff_k}
If $1< p \leq 2$, then
\begin{equation*}
\Big(\sum_{k > -L(S_0)} (\beta_{Q,p}^{k})^2\Big)^{1/2} \le \|F_p {\mathbbm 1}_Q\|_p= \beta_{Q,p}.
\end{equation*}
\end{lemma}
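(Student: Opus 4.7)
The right-hand equality $\|F_p\mathbf{1}_Q\|_p=\beta_{Q,p}$ is immediate from \eqref{eq:F1 defn} and the definition of $\beta_{Q,p}$, so the task reduces to the inequality
\[
\Big(\sum_{k>-L(S_0)}(\beta_{Q,p}^k)^2\Big)^{1/2}\le\beta_{Q,p}.
\]
The plan is to view this as an embedding between mixed norm spaces $\ell^p_{(\mu,W)}\ell^2_k\hookrightarrow\ell^2_k\ell^p_{(\mu,W)}$, which holds precisely because $p\le 2$, and then pass to the $p$-th power so that the outer exponent becomes $2/p\ge 1$, a regime where Minkowski's inequality applies.

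First I would rewrite, using the definitions \eqref{defn:betaQp},
\[
(\beta_{Q,p}^k)^2=\Big(\sum_{\mu\in{\mathbb{Z}}}\sum_{W\in\mathcal{W}_{Q,\mu}}|W|(\gamma_{W,\mu}^k)^p\Big)^{2/p},
\]
and regard the expression inside the parenthesis as a nonnegative sum indexed by pairs $(\mu,W)$ of functions of $k$. Since $2/p\ge 1$, the Minkowski inequality for $\ell^{2/p}$ (the triangle inequality for the $\ell^{2/p}_k$ norm of a sum indexed by $(\mu,W)$) yields
\[
\Big(\sum_k(\beta_{Q,p}^k)^2\Big)^{p/2}=\Big\|\sum_{\mu,W}|W|(\gamma_{W,\mu}^\cdot)^p\Big\|_{\ell^{2/p}_k}\le\sum_{\mu,W}\Big(\sum_k\bigl(|W|(\gamma_{W,\mu}^k)^p\bigr)^{2/p}\Big)^{p/2}.
\]

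Next I would simplify the inner sum on the right. Pulling out $|W|^{2/p}$ and using the identity $\sum_{k>-L(S_0)}(\gamma_{W,\mu}^k)^2=(\gamma_{W,\mu})^2$ (immediate from \eqref{defn:gammaWmu} and \eqref{defn:gammakWmu}), the inner expression equals $|W|^{2/p}(\gamma_{W,\mu})^2$, so its $p/2$ power is $|W|(\gamma_{W,\mu})^p$. Substituting back gives
\[
\Big(\sum_k(\beta_{Q,p}^k)^2\Big)^{p/2}\le\sum_{\mu,W}|W|(\gamma_{W,\mu})^p=(\beta_{Q,p})^p,
\]
and taking $p$-th roots concludes the proof.

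I do not expect any serious obstacle here: the statement is essentially an instance of the Minkowski embedding $\ell^p(\ell^2)\subseteq\ell^2(\ell^p)$ for $p\le 2$, applied to the coefficient array $(|W|^{1/p}\gamma_{W,\mu}^k)_{(\mu,W),k}$, together with the scale-orthogonality $(\gamma_{W,\mu})^2=\sum_k(\gamma_{W,\mu}^k)^2$. The only mild care is that the sums over $(\mu,W)$ are taken over the whole family (with the truncation in $\mu$ omitted in $\beta_{Q,p}^k$ and $\beta_{Q,p}$, as noted after \eqref{defn:betaQp}), so the two collections match and the application of Minkowski is legitimate.
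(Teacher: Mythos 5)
Your proof is correct and matches the paper's argument exactly: both rewrite $(\beta_{Q,p}^k)^2$ as a $2/p$ power of a sum over $(\mu,W)$, apply Minkowski's inequality for $\ell^{2/p}_k$ (valid since $p\le 2$), and then use $(\gamma_{W,\mu})^2=\sum_k(\gamma^k_{W,\mu})^2$ to recover $\beta_{Q,p}$.
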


\begin{proof}
By the definitions and Minkowski's inequality,
\begin{align*}
\Big(\sum_{k > - L(S_0)} (\beta_{Q,p}^{k})^2\Big)^{1/2} & = \Big( \sum_{k > - L(S_0)} \Big( \sum_{\mu>\mu_{\mathrm{min}}} \sum_{\substack{W \in {\mathcal {W}}_{Q,\mu}}} (\gamma_{W,\mu}^k)^{p} |W| \Big)^{2/p}
\Big)^{1/2} \\
& \leq \Big( \sum_{\mu>\mu_{\mathrm{min}}} \sum_{\substack{W \in {\mathcal {W}}_{Q,\mu}}} \big(\sum_{k \in {\mathbb {Z}}} (\gamma_{W,\mu}^k)^2 \big)^{p/2} |W| \Big)^{1/p} \\
& = \Big( \sum_{\mu>\mu_{\mathrm{min}}} \sum_{\substack{W \in {\mathcal {W}}_{Q,\mu}}} (\gamma_{W,\mu} )^{p} |W| \Big)^{1/p} \leq \|F_p{\mathbbm 1}_Q\|_p= \beta_{Q,p},
\end{align*}
as desired.
\end{proof}

There is a second variant which consists in fixing the scale of the atoms rather in addition to that of the subatoms. Given integers $k\in{\mathbb {Z}}$ and $n \geq 0$, define
\begin{equation}\label{defn:f1kn}
b^{k,n}_Q = \sum_{\mu>\mu_\mathrm{min}} \sum_{\substack{W\in \mathcal{W}_{Q,\mu},\\L(W)=-k+n}} a_{W,\mu}^k,
\end{equation}
and
\begin{equation}\label{defn:effkn}
\beta^{k,n}_{Q,p} = \Big( \sum_{\mu\in {\mathbb {Z}}} \sum_{\substack{W\in\mathcal{W}_{Q,\mu},\\L(W)=-k+n}} |W| (\gamma_{W,\mu}^k[f])^p\Big)^{1/p}.
\end{equation}
Note that by definition, $b^{k,n}_Q=0$ and $\beta_{Q,p}^{k,n}=0$ unless $k>-L(S_0)$.
Lemma \ref{lemma:f to eff} continues to hold for these fixed-scale $W$ versions. A crucial observation is that we obtain a gain if we move to a larger Lebesgue exponent $r > p$. This will allow us to think of the case $W=R$ as the dominant contribution.
This observation will be crucial in later proofs.

\begin{lemma} \label{lem:bkn}
Let $1 < p \le r \leq 2$.
Then for
$k > -L(S_0)$ and $n \geq 0$,
\[ \|b^{k,n}_Q\|_r \le 2^{-nd(\frac1p-\frac1r)} 2^{kd(\frac1p-\frac1r)} \beta^{k,n}_{Q,p}. \]
\end{lemma}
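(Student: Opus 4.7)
The plan is to decompose $b^{k,n}_Q$ as a sum of pairwise disjointly supported subatoms $e_R$, apply H\"older's inequality twice (once in space, once across the counting index inside each $W$), and finish with the embedding $\ell^p\hookrightarrow\ell^r$.

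First I would use the uniqueness recorded in the text together with Lemma \ref{lem:RinW}: every dyadic cube $R\subsetneq S_0$ belongs to a unique ${\mathcal {R}}_\mu$ and, within that $\mu$, to a unique ancestor $W(R)\in{\mathcal {W}}_\mu$. Consequently, expanding
\[ b^{k,n}_Q \;=\; \sum_{\mu>\mu_{\mathrm{min}}}\sum_{\substack{W\in{\mathcal {W}}_{Q,\mu}\\ L(W)=-k+n}}\sum_{R\in{\mathcal {R}}^k_{W,\mu}} e_R \]
produces a family of pairwise disjointly supported terms, so $\|b^{k,n}_Q\|_r^r=\sum_R\|e_R\|_r^r$ where the sum runs over all $R$'s that appear.

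Since each $e_R$ is supported in $R$ with $|R|=2^{-kd}$ and $r\le 2$, H\"older in space yields $\|e_R\|_r\le |R|^{1/r-1/2}\|e_R\|_2$. Regrouping the right-hand side by the pairs $(\mu,W)$ and applying H\"older to the counting index (whose cardinality satisfies $|{\mathcal {R}}^k_{W,\mu}|\le |W|/|R|=2^{nd}$), I get
\[ \sum_{R\in{\mathcal {R}}^k_{W,\mu}}\|e_R\|_2^r \;\le\; 2^{nd(1-r/2)}\Big(\sum_{R\in{\mathcal {R}}^k_{W,\mu}}\|e_R\|_2^2\Big)^{r/2} \;=\; 2^{nd(1-r/2)}|W|^{r/2}(\gamma^k_{W,\mu})^r, \]
by the very definition \eqref{defn:gammakWmu} of $\gamma^k_{W,\mu}$. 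Writing $2^{nd}=|W|/|R|$ and combining with the $|R|^{1-r/2}$ produced by the $r$-th power of the spatial H\"older bound, the $|R|$-factors cancel exactly to leave
\[ \|b^{k,n}_Q\|_r^r \;\le\; \sum_{\mu>\mu_{\mathrm{min}}}\sum_{\substack{W\in{\mathcal {W}}_{Q,\mu}\\ L(W)=-k+n}} |W|(\gamma^k_{W,\mu})^r. \]

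Finally, every $W$ appearing here has the same measure $|W|=2^{(-k+n)d}$, so I factor this common $|W|$ out and invoke the standard embedding $\ell^p\hookrightarrow\ell^r$ (valid since $p\le r$) applied to the double-indexed sequence $\{\gamma^k_{W,\mu}\}_{(\mu,W)}$:
\[ \Big(\sum_{\mu,W}(\gamma^k_{W,\mu})^r\Big)^{1/r} \;\le\; \Big(\sum_{\mu,W}(\gamma^k_{W,\mu})^p\Big)^{1/p}. \]
This gives $\|b^{k,n}_Q\|_r\le |W|^{1/r-1/p}\,\beta^{k,n}_{Q,p}$, and since $|W|^{1/r-1/p}=2^{(n-k)d(1/r-1/p)}=2^{-nd(\frac1p-\frac1r)}2^{kd(\frac1p-\frac1r)}$, the claimed inequality follows. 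No single step is a real obstacle; the only care needed is the bookkeeping so that the two H\"older losses cancel exactly against the $|R|$-powers, leaving precisely the $|W|^{1/r-1/p}$ gain that $\ell^p\subset\ell^r$ provides.
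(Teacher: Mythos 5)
Your proof is correct and follows the same route as the paper: the paper applies Lemma \ref{lemma:Lp norm aWk} (with $r$ in place of $p$) together with the disjointness argument from Lemma \ref{lemma:f to eff} to reach the intermediate bound $\bigl(\sum_{\mu,W}|W|(\gamma^k_{W,\mu})^r\bigr)^{1/r}$, and then invokes $\ell^p\hookrightarrow\ell^r$ exactly as you do. You have merely unfolded those referenced lemmata explicitly (the two H\"older steps and the disjointness of the $e_R$), so the argument is identical in substance.
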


\begin{proof}
Arguing as in the proof of Lemma \ref{lemma:f to eff},
\begin{equation*}
\| b^{k,n}_Q \|_{L^r} \leq \Big( \sum_{\mu \in {\mathbb {Z}}} \sum_{\substack{W \in {\mathcal {W}}_{Q,\mu} \\ L(W)=-k+n }} |W| (\gamma_{W,\mu}^k)^r \Big)^{1/r}.
\end{equation*}
Using the embedding $\ell^p \subseteq \ell^r$ for $p < r$,
\begin{align*}
\| b^{k,n}_Q \|_{L^r} & \leq \Big( \sum_{\mu \in {\mathbb {Z}}} \sum_{\substack{W \in {\mathcal {W}}_{Q,\mu} \\ L(W)=-k+n}} (\gamma_{W,\mu}^k)^p |W| |W|^{\frac{p}{r}-1} \Big)^{1/p}
\\
& \leq 2^{(k-n)d(\frac{1}{p}-\frac{1}{r})} \beta^{k,n}_{Q,p}.
\qedhere
\end{align*}
\end{proof}
We also have the following variant of Lemma \ref{lemma:l2 sum eff_k}.
\begin{lemma}\label{lemma:key atomic}
For every $n\ge 0$,
\begin{equation*}
\Big(\sum_{k > -L(S_0)} (\beta_{Q,p}^{k,n})^{p}\Big)^{1/p} \le \| F_p{\mathbbm 1}_Q \|_p = \beta_{Q,p}.
\end{equation*}
\end{lemma}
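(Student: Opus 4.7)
The plan is to reduce the statement to a straightforward interchange of summation followed by a pointwise monotonicity estimate on the coefficients; no substantive obstacle is expected, as the inequality is essentially bookkeeping built into the definitions.

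First I would raise both sides to the power $p$ and unfold the definition of $\beta^{k,n}_{Q,p}$ in \eqref{defn:effkn}. The goal becomes to show
\[
\sum_{k>-L(S_0)} \sum_{\mu\in{\mathbb {Z}}} \sum_{\substack{W\in\mathcal{W}_{Q,\mu}\\L(W)=-k+n}} |W|\,(\gamma_{W,\mu}^{k}[f])^p \;\le\; \beta_{Q,p}^p.
\]
For fixed $n\ge 0$, the constraint $L(W)=-k+n$ determines $k$ uniquely as $k=n-L(W)$ once $W$ is chosen. Hence, interchanging the order of summation (all summands being nonnegative), the left-hand side rewrites as
\[
\sum_{\mu\in{\mathbb {Z}}} \sum_{W\in\mathcal{W}_{Q,\mu}} |W|\,\big(\gamma_{W,\mu}^{\,n-L(W)}[f]\big)^p.
\]

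Next I would invoke the trivial monotonicity
\[
\gamma_{W,\mu}^{k_0}[f] \;\le\; \Big(\sum_{k>-L(S_0)} (\gamma_{W,\mu}^{k}[f])^2\Big)^{1/2} \;=\; \gamma_{W,\mu}[f],
\]
which is immediate from \eqref{defn:gammakWmu} and the relation $\gamma_{W,\mu}=(\sum_k(\gamma_{W,\mu}^k)^2)^{1/2}$, together with nonnegativity of each $\gamma_{W,\mu}^k$. Applying this with $k_0=n-L(W)$ for each $W$, the above display is bounded by
\[
\sum_{\mu\in{\mathbb {Z}}} \sum_{W\in\mathcal{W}_{Q,\mu}} |W|\,(\gamma_{W,\mu}[f])^p \;=\; \beta_{Q,p}^p,
\]
which, by the identity $\beta_{Q,p}=\|F_p{\mathbbm 1}_Q\|_p$ recorded right after \eqref{defn:betaQp}, is exactly what is claimed. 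Taking $p$-th roots yields the lemma.

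The argument requires nothing more than Fubini (allowed since all terms are nonnegative) and the elementary observation that one term of a nonnegative $\ell^2$ sum is dominated by the full sum. There is no analytical difficulty; the only point worth a line of care is that, for each $W$ appearing on the left, exactly one value of $k$ contributes so that no additional combinatorial factor arises on interchange.
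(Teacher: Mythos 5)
Your proof is correct and is precisely the detailed version of the paper's one-line argument (``This follows since $k$ and $n$ are coupled''): you interchange summation, observe that the constraint $L(W)=-k+n$ selects exactly one $k$ for each $W$ (and that $\gamma_{W,\mu}^{k}=0$ when $k\le -L(S_0)$, so no stray terms arise), and then bound the single term $\gamma_{W,\mu}^{n-L(W)}$ by the full $\ell^2$ sum $\gamma_{W,\mu}$. Nothing more is needed.
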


\begin{proof}
This follows since $k$ and $n$ are coupled.
\end{proof}

This lemma allows us to sum in $\ell^p$ rather than $\ell^2$ provided that the quantity $L(W)-L(R)$ is constant. In applications, this constitutes a great advantage which permits us to prove endpoint bounds.

\section{The base case}\label{sec:base case}
Recalling \eqref{eq:Q-mumin-def} we set
\[ \mu_{\mathrm{min}} = -\infty \]
throughout this section.
We first note the following observations.
Let $p\le r\le q$. By rescaling (recall \eqref{eq:kernels in the scaled multiplier form} and \eqref{Aprq}), we have for $j_0>-k$,
\begin{align} \label{Tkjscaling} \|T_k^{(j_0)} \|_{L^r\to L^q} &= 2^{kd(\frac 1r-\frac 1q)} \|\varphi m(2^k\cdot)*\widehat {\Psi_{j_0+k}}\|_{M^{r\to q}} \\\notag&= 2^{-j_0d(\frac 1p-\frac 1q)} 2^{-kd(\frac 1p-\frac 1r)} A_{p,r,q}^{k,j_0+k}.\end{align}
It is well-known that sparse domination for single spatial scale operators follows from certain rescaled $L^p\to L^q$ estimates (see, for instance, \cite[\S 3.1]{BRS}). In our case, it suffices to verify the
$L^p\to L^q$ estimates in the following two lemmata; in both the implicit constants do not depend on $j_0$ and $\digamma$. Recall the definition ${\mathcal {C}}_{p,r,q}(j_0) =\sup_{k>-j_0} A_{p,r,q}^{k, j_0+k}$
(see \eqref{defofcCjo}).

\begin{lem}\label{lem:mult-single-spatial-scale-qge2}
Let $1<p\le 2\le q\le p'$ and $j_0\in{\mathbb {Z}}$. Let $S_0$ be a dyadic cube of side length $2^{j_0}$ and let $f\in L^p$ be supported in $S_0$. Then
\begin{equation}\label{singlescalesparse-qge2}
\|{\mathcal {T}}_{j_0} f\|_q
\lesssim 2^{-j_0 d(\frac 1p-\frac 1q)} {\mathcal {C}}_{p,p,q}(j_0) \|f\|_p.
\end{equation}
\end{lem}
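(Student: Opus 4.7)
The plan is to combine duality with Littlewood--Paley theory, exploiting the symmetric hypothesis $p\le 2\le q\le p'$. By duality,
\[
\|\mathcal{T}_{j_0}f\|_q = \sup_{\|h\|_{q'}\le 1}|\langle \mathcal{T}_{j_0}f,h\rangle|,
\]
and since the $T_k^{(j_0)}$ and $P_k$ are mutually commuting Fourier multipliers,
\[
\langle \mathcal{T}_{j_0}f,h\rangle = \sum_{\substack{k\in\digamma\\ k>-j_0}} \langle P_k f,\,(T_k^{(j_0)})^* P_k^* h\rangle.
\]
Cauchy--Schwarz in $k$ followed by H\"older's inequality in $L^p\times L^{p'}$ then bounds the latter by the product of square function norms
\[
\Big\|\Big(\sum_k |P_k f|^2\Big)^{1/2}\Big\|_p \cdot \Big\|\Big(\sum_k |(T_k^{(j_0)})^* P_k^* h|^2\Big)^{1/2}\Big\|_{p'}.
\]

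The first factor is bounded by $\|f\|_p$ via the standard Littlewood--Paley inequality for the smooth Fourier projections $P_k$, which is valid for $1<p<\infty$ because $\eta(0)=0$. For the second factor, since $p'\ge 2$ the Minkowski interchange gives $\|(\sum_k |g_k|^2)^{1/2}\|_{p'}\le (\sum_k \|g_k\|_{p'}^2)^{1/2}$. After commuting $(T_k^{(j_0)})^*$ past $P_k^*$, one then applies \eqref{Tkjscaling} with $r=p$ to obtain, uniformly in $k>-j_0$, the bound
\[
\|(T_k^{(j_0)})^* P_k^* h\|_{p'}\le \|T_k^{(j_0)}\|_{L^p\to L^q}\|P_k^* h\|_{q'}\le 2^{-j_0 d(\tfrac{1}{p}-\tfrac{1}{q})}\mathcal{C}_{p,p,q}(j_0)\|P_k^* h\|_{q'}.
\]
Finally, since $q'\le 2$, the Minkowski inequality runs in the reverse direction, so $(\sum_k \|P_k^* h\|_{q'}^2)^{1/2}\le \|(\sum_k |P_k^* h|^2)^{1/2}\|_{q'}$, and a second application of Littlewood--Paley yields $\lesssim \|h\|_{q'}$. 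Assembling the three estimates gives the claim, uniformly in the finite set $\digamma$ and in $j_0$.

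The essential feature is that the hypothesis $p\le 2\le q$ makes both applications of Minkowski's interchange inequality run in the favorable direction; no serious obstacle arises beyond keeping track of where each $P_k^*$ and $(T_k^{(j_0)})^*$ acts. This is precisely why the complementary range $q<2$, treated in the subsequent Lemma~\ref{lem:mult single spatial scale}, requires the more delicate atomic-decomposition argument from \S\ref{sec:decompositions} rather than a plain Littlewood--Paley reduction.
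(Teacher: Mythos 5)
Your proof is correct and is essentially the paper's own Littlewood–Paley argument in dualized form: the paper estimates $\|\mathcal{T}_{j_0}f\|_q$ directly by applying the synthesis Littlewood–Paley bound in $L^q$, Minkowski in $L^{q/2}$ (using $q\ge 2$), the rescaled single-scale bound \eqref{Tkjscaling} with $r=p$, Minkowski in $L^{2/p}$ (using $p\le 2$), and finally Littlewood–Paley in $L^p$. Pairing against $h\in L^{q'}$ and splitting via Cauchy–Schwarz in $k$ and Hölder reproduces exactly the same two Littlewood–Paley applications and the same two Minkowski interchanges, just distributed symmetrically over $f$ and $h$, so this is a cosmetic repackaging rather than a different route.
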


\begin{lem}\label{lem:mult-single-scale-pq}
Let $1<p<r\le q\le 2$ and $j_0\in{\mathbb {Z}}$. Let $S_0$ be a dyadic cube of side length $2^{j_0}$ and let $f\in L^p$ be supported in $S_0$. Then
\begin{equation}\label{single-scale-sparse}
\|{\mathcal {T}}_{j_0} f\|_q\lesssim 2^{-j_0 d(\frac 1p-\frac 1q)} \big({\mathcal {C}}_{p,p,q}(j_0)+ {\mathcal {C}}_{p,r,q}(j_0) \big)\|f\|_p.
\end{equation}
\end{lem}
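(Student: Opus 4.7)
The plan is to apply the atomic decomposition of Section \ref{sec:decompositions} to $f$, taking $Q = S_0$ and $\mu_{\min} = -\infty$, thereby writing $f = \mathbb{E}_{1 - L(S_0)}f + g$ with $g = \sum_{k > -L(S_0),\, n \geq 0} b^{k,n}_{S_0}[f]$. The contribution from $\mathbb{E}_{1-L(S_0)}f$ is straightforward: the composition $P_{k'}\mathbb{E}_{1-L(S_0)}$ has $L^p$-operator norm decaying exponentially in $k' + j_0$ for $k' > -j_0$ (by \eqref{eq:Lpk1k2}), so combining the triangle inequality with the trivial bound $\|T_{k'}^{(j_0)}\|_{L^p \to L^q} \lesssim 2^{-j_0 d(\frac{1}{p} - \frac{1}{q})} \mathcal{C}_{p,p,q}(j_0)$ and a geometric summation in $k'$ produces the required estimate for this piece.

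For the main term, expand $\mathcal{T}_{j_0} g = \sum_{k'} T_{k'}^{(j_0)} P_{k'}^2 g$ via the Calder\'on reproducing formula \eqref{eq:reproducing}, and split $P_{k'} g = \sum_k P_{k'} \mathbb{D}_k f$ into the diagonal piece ($k = k'$) and the off-diagonal piece ($k \neq k'$). For the diagonal contribution, each summand $T_k^{(j_0)} P_k^2 \mathbb{D}_k f$ is frequency-localized at scale $2^k$, so the frequency-localized Littlewood--Paley inequality combined with the pointwise embedding $\ell^2 \subseteq \ell^q$ (valid since $q \leq 2$) gives
\[
\Bigl\| \sum_k T_k^{(j_0)} P_k^2 \mathbb{D}_k f \Bigr\|_q \lesssim \Bigl( \sum_k \bigl\| T_k^{(j_0)} P_k^2 \mathbb{D}_k f \bigr\|_q^q \Bigr)^{1/q}.
\]
Applying \eqref{Tkjscaling} for the $L^r \to L^q$ operator norm of $T_k^{(j_0)}$ and the further decomposition $\mathbb{D}_k f = \sum_n b^{k,n}$, Lemma \ref{lem:bkn} yields the term-by-term bound $\|T_k^{(j_0)} P_k^2 b^{k,n}\|_q \lesssim 2^{-j_0 d(\frac{1}{p} - \frac{1}{q})} A_{p,r,q}^{k, j_0+k} \cdot 2^{-nd(\frac{1}{p} - \frac{1}{r})} \beta^{k,n}_{S_0, p}$, the $2^{kd(\frac{1}{p} - \frac{1}{r})}$-factors from the two estimates cancelling exactly. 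Summing in $k$ at fixed $n$ via Lemma \ref{lemma:key atomic} (which gives $\sum_k (\beta^{k,n}_{S_0,p})^p \leq \beta_{S_0,p}^p$) together with $\ell^p \subseteq \ell^q$ (since $p \leq q$), then summing over $n \geq 0$ using the geometric factor $2^{-nd(\frac{1}{p} - \frac{1}{r})}$, and finally applying Lemma \ref{lem:U1-Lp} ($\beta_{S_0,p} \lesssim \|f\|_p$), yields a contribution bounded by $2^{-j_0 d(\frac{1}{p} - \frac{1}{q})} \mathcal{C}_{p,r,q}(j_0) \|f\|_p$.

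For the off-diagonal piece $\sum_{k'} T_{k'}^{(j_0)} P_{k'}^2 (g - \mathbb{D}_{k'} f)$, apply the Littlewood--Paley inequality in $k'$ and pass to the vector-valued formulation: the resulting square function $\bigl(\sum_{k'} |T_{k'}^{(j_0)} P_{k'}^2 h_{k'}|^2\bigr)^{1/2}$ with $h_{k'} = P_{k'}(g - \mathbb{D}_{k'} f)$ is controlled in $L^q$ by the $\ell^2$-valued extension of the uniform scalar bound $\sup_{k'} \|T_{k'}^{(j_0)}\|_{L^p \to L^q} \lesssim 2^{-j_0 d(\frac{1}{p} - \frac{1}{q})} \mathcal{C}_{p,p,q}(j_0)$; this extension is available because the multipliers $\widehat{K_{k'}^{(j_0)}}$ are essentially frequency-disjoint, so a Khintchine/Marcinkiewicz-type argument reduces the $\ell^2$-valued bound to a scalar estimate on the randomized sum $\sum_{k'} \epsilon_{k'} T_{k'}^{(j_0)}$. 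The task then reduces to controlling $\bigl\|\bigl(\sum_{k'} |P_{k'}(g - \mathbb{D}_{k'} f)|^2\bigr)^{1/2}\bigr\|_p$, which follows from the scalar $L^p$-Littlewood--Paley inequality applied to $g$ together with vector-valued boundedness of $\{P_{k'}\}$ applied to the Burkholder martingale square function $\bigl\|\bigl(\sum_{k'} |\mathbb{D}_{k'} f|^2\bigr)^{1/2}\bigr\|_p \lesssim \|f\|_p$, giving the contribution $\lesssim 2^{-j_0 d(\frac{1}{p} - \frac{1}{q})} \mathcal{C}_{p,p,q}(j_0) \|f\|_p$.

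The main obstacle lies in the off-diagonal estimate: a naive triangle inequality on the $k'$-sum would produce the uncontrolled quantity $\bigl(\sum_{k'} (A_{p,p,q}^{k', j_0+k'})^q\bigr)^{1/q}$ rather than the supremum $\mathcal{C}_{p,p,q}(j_0)$. Overcoming this requires the vector-valued Littlewood--Paley passage sketched above, which replaces the divergent $\ell^q$-sum of operator norms by the uniform supremum acting on an $\ell^2$-valued expression that is then estimated by the combined $L^p$-Littlewood--Paley and martingale square function inequalities.
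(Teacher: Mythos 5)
Your handling of the expectation term and of the diagonal piece $k = k'$ (equivalently $m = 0$ in the paper's indexing $\mathbb{D}_{k+m}$) is correct and matches part of the paper's argument: the reduction to $\big(\sum_k \|T_k^{(j_0)}P_k^2 \mathbb{D}_k f\|_q^q\big)^{1/q}$, the use of Lemma \ref{lem:bkn} to exchange $2^{kd(1/p-1/r)}$ factors with $\|T_k^{(j_0)}\|_{L^r\to L^q}$, and the $\ell^p$ summation over $k$ at fixed $n$ via Lemma \ref{lemma:key atomic}, are all sound.

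The gap is in your off-diagonal step. You claim that the uniform scalar bound $\sup_{k'}\|T_{k'}^{(j_0)}\|_{L^p\to L^q}\lesssim 2^{-j_0 d(1/p-1/q)}\mathcal{C}_{p,p,q}(j_0)$ has an $\ell^2$-valued extension controlling $\big\|\big(\sum_{k'}|T_{k'}^{(j_0)}P_{k'}^2 h_{k'}|^2\big)^{1/2}\big\|_q$ by $\mathcal{C}_{p,p,q}(j_0)\big\|\big(\sum_{k'}|h_{k'}|^2\big)^{1/2}\big\|_p$, and that frequency-disjointness plus Khintchine make this available. That is not so when $p<q<2$. The clean argument $\big\|\big(\sum_k|M_kh_k|^2\big)^{1/2}\big\|_q\le\big(\sum_k\|M_kh_k\|_q^2\big)^{1/2}\le A\big(\sum_k\|h_k\|_p^2\big)^{1/2}\le A\big\|\big(\sum_k|h_k|^2\big)^{1/2}\big\|_p$ uses two Minkowski inequalities valid only for $q\ge2$ and $p\le2$ respectively; this is precisely the proof of Lemma \ref{lem:mult-single-spatial-scale-qge2} and it breaks down the moment $q<2$. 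Moreover, your Khintchine reduction lands on the $L^p\to L^q$ operator norm of the randomized multiplier $\sum_{k'}\epsilon_{k'}T_{k'}^{(j_0)}P_{k'}^2$, which (for $\epsilon\equiv1$) is essentially what Lemma \ref{lem:mult-single-scale-pq} asserts; the supremum $\sup_{k'}A_{p,p,q}^{k',j_0+k'}$ gives no a priori control on that randomized sum when $p<q$, so the reduction is circular. There is also no way around this by invoking \textquotedblleft frequency disjointness\textquotedblright: for a genuine $L^p\to L^q$ improving estimate with $p<q$, passing from individual annular pieces to their superposition is precisely the content of the lemma and cannot be obtained for free.

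The paper's proof avoids this obstruction entirely by routing every cross-term through the atomic ($n$-parameter) decomposition and the bound $\|\sum_k P_k F_k\|_q\lesssim\big(\sum_k\|F_k\|_q^q\big)^{1/q}$ followed by $\ell^q\supseteq\ell^p$ and Lemma \ref{lemma:key atomic}. The decisive point you are missing is the split of the $(m,n)$ indices into $II_1$ ($m\ge0$, $0\le n\le2m$) and $II_2$ ($m<0$ or $n>2m$): on $II_1$ one uses $r=p$ so the constant is $\mathcal{C}_{p,p,q}(j_0)$ and the geometric factor $2^{-m/p'}$ from $\|P_k\mathbb{D}_{k+m}\|_{L^p\to L^p}$ absorbs the at most $(1+2m)$ choices of $n$; on $II_2$ one takes $r>p$ so that the atomic decay $2^{(m-n)d(1/p-1/r)}$ compensates for the range of $n$. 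This way the summation over $k$ happens in $\ell^p$ at \emph{every} fixed $(m,n)$ (not only $m=0$), and no vector-valued extension of the $L^p\to L^q$ bound is ever needed.
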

\begin{proof}[Reduction of Lemmata \ref{lem:mult single spatial scale qge2}, \ref{lem:mult single spatial scale} to Lemmata
\ref{lem:mult-single-spatial-scale-qge2}, \ref{lem:mult-single-scale-pq}] \label{sec:Reduction-of-lemmas}Both reductions use the same argument; we therefore abbreviate by ${\mathcal {C}}(j_0)$ the respective constants
${\mathcal {C}}_{p,p,q}(j_0) $, for $q\le 2$, and ${\mathcal {C}}_{p,p,q}(j_0)+{\mathcal {C}}_{p,r,q}(j_0)$, for $q\ge 2$.
Keeping in mind that $f_1$ vanishes in $S_0^\complement$ we estimate
\begin{equation}\label{eq:sparse-singlescale0red}\begin{aligned}
&|\inn{{\mathcal {T}}_{j_0} f_1}{f_2} |=
|\inn{{\mathcal {T}}_{j_0} f_1}{f_2{\mathbbm 1}_{3S_0} } |\le
\|{\mathcal {T}}_{j_0} f_1\|_q \|f_2{\mathbbm 1}_{3S_0} \|_{q'}
\\&\lesssim {\mathcal {C}}({j_0}) 2^{-j_0 d(\frac 1p-\frac 1q)} \|f_1\|_p \|f_2{\mathbbm 1}_{3S_0} \|_{q'}
\lesssim {\mathcal {C}}({j_0}) \jp{f_1}_{S_0,p} \jp{f_2}_{3S_0,q'} |3S_0|,
\end{aligned}
\end{equation}
which gives the desired sparse bounds \eqref{singlescalesparseqge2} and \eqref{singlescalesparseq<2}.
\end{proof}
It remains to prove Lemmata \ref{lem:mult-single-spatial-scale-qge2} and
\ref{lem:mult-single-scale-pq}. The proof of the former is a short standard argument based on Littlewood--Paley inequalities. The proof of the latter is longer and relies on the atomic decomposition discussed in \S\ref{sec:decompositions}.

\subsection{The case \texorpdfstring{$q \geq 2$}{q>=2}: Proof of Lemma \ref
{lem:mult-single-spatial-scale-qge2}}

By the Littlewood-Paley inequality $\| \sum_k L_k P_k F_k \|_q \lesssim \|(\sum_k | F_k|^2)^{1/2} \|_q$
and setting $F_k =P_k f$
and applying
\eqref{eq:reproducing}, we get
\begin{equation}\label{eqn:lemsinglescaleqge2pf1} \|\mathcal{T}_{j_0} f\|_q \lesssim \Big\|\Big(\sum_{\substack{k\in \digamma}} | T^{(j_0)}_k P_k f|^2\Big)^{1/2} \Big\|_q
\lesssim \Big( \sum_{\substack{k\in \digamma}} \| T^{(j_0)}_k P_k f\|_q^2\Big)^{1/2}
\end{equation}
where we applied Minkowski's inequality in $L^{q/2}$.
By \eqref{Tkjscaling} with $r=p$ we get
\[ \|\mathcal{T}_{j_0} f\|_q \lesssim {\mathcal {C}}_{p,p,q}(j_0) 2^{-j_0 d (\frac 1p - \frac 1q)}\Big( \sum_{k\in{\mathbb {Z}}} \|P_k f\|_p^2\Big)^{1/2}. \]
Using $p\le 2$ and Minkowski's inequality in $L^{2/p}$ we obtain
\begin{align*} \|\mathcal{T}_{j_0} f\|_q &
\lesssim {\mathcal {C}}_{p,p,q}(j_0) 2^{-j_0 d (\frac 1p - \frac 1q)} \Big\|\Big( \sum_{k\in{\mathbb {Z}}} |P_k f|^2\Big)^{1/2}\Big\|_p
\\ &\lesssim
{\mathcal {C}}_{p,p,q}(j_0) 2^{-j_0 d (\frac 1p - \frac 1q)} \|f\|_p\end{align*}
and \eqref{singlescalesparse-qge2} is proved.
\qed

\subsection{The case \texorpdfstring{$q<2$}{q<2}: Proof of Lemma \ref{lem:mult-single-scale-pq}}\label{sec:PfofLemma}

Let $f\in L^p(S_0)$, with $L(S_0)=j_0$, and decompose using \eqref{eq:reproducing formula f}, for fixed $k \in {\mathbb {Z}}$,
\begin{align*}
f&={\mathbb {E}}_{1-j_0}f +\sum_{m > -j_0-k} {\mathbb {D}}_{k+m} f.
\end{align*}
Next we split
\[ {\mathcal {T}}_{j_0} f
= I+II_1+II_2,\]
where
\begin{align*}
I &= \sum_{\substack{k\in \digamma}} T_k^{(j_0)} P_kP_k
[{\mathbb {E}}_{1-j_0} f]
\end{align*}
and $II_1$ and $II_2$ are defined in terms of the additional decomposition ${\mathbb {D}}_{k+m} f=\sum_{n \geq0} b_{S_0}^{k+m,n}$ as
\[
II_1 = \sum_{m\ge 0} \sum_{0\le n \leq 2m} \sum_{\substack{k\in \digamma}}
II_{m,n,k},
\]
\[
II_2 =\sum_{m< 0} \sum_{n \geq 0} \sum_{\substack{k\in \digamma}} II_{m,n,k} \,+\, \sum_{m\ge 0} \sum_{n > 2m} \sum_{\substack{k\in \digamma}} II_{m,n,k},\,\text{where}
\]
\[
II_{m,n,k} = P_k
T_k^{(j_0)} P_k {\mathbb {D}}_{k+m} b_{S_0}^{k+m,n}.
\]

It is useful to keep in mind that $II_{m,n,k}=0$ unless $k>-j_0$ and $k+m>-j_0$.
Our goal is to control the $L^q$ norm of the three terms by a constant times ${\mathcal {C}}_{p,r,q}(j_0)2^{-j_0 d(1/p-1/q)}\|f\|_{L^p(S_0)}$. For the first two terms we get the better bound with $r=p$.

\subsubsection*{Estimation of $\|I\|_q$} For the term $I$ we take $r=p$ and estimate, using \eqref{eq:k2>k1} and \eqref{Tkjscaling} with $r=p$,
\begin{align*}\|I\|_q &\lesssim \sum_{k\in\digamma}\|T_k^{(j_0)} \|_{L^p\to L^q} \|P_k {\mathbb {E}}_{1-j_0} \|_{L^p\to L^p} \|f\|_p
\\&\lesssim 2^{-j_0 d(\frac 1p-\frac 1q)} \sum_{k\in\digamma}A_{p,p,q}^{k, j_0+k} 2^{-(k+j_0)/p'} \|f\|_p
\\&\lesssim 2^{-j_0d(\frac 1p-\frac 1q)} {\mathcal {C}}_{p,p,q} (j_0)\sum_{\ell>0} 2^{-\ell/p'}\|f\|_p
\end{align*}
and
we get the desired bound.

\subsubsection*{Estimation of $\|II_1\|_q$}
By the almost-orthogonality of the $P_k$ and the
resulting inequality $\|\sum_kP_k F_k\|_q\lesssim(\sum_k\|F_k\|_q^q)^{1/q} $ we get
\begin{align} \label{eq:IIIafterorth}\|II_1\|_q\lesssim&
\sum_{m\ge 0}\sum_{0\le n\le 2m} \Big(\sum_{\substack {k\in \digamma}} \|II_{m,n,k} \|_q^q\Big)^{1/q} .
\end{align}
Now, by \eqref{Tkjscaling}, \eqref{eq:Lpk1k2} and Lemma \ref{lem:bkn}, we have for all $p \leq r \leq q$ that
\begin{multline} \label{eq:III mnk}
\|II_{m,n,k}\|_q \lesssim \|T_k^{(j_0)} \|_{L^r\to L^q} \|P_k{\mathbb {D}}_{k+m}\|_{L^r\to L^r}
\| b_{S_0}^{k+m,n}\|_r
\\ \lesssim {\mathcal {C}}_{p,r,q}(j_0) 2^{-j_0d(\frac 1p-\frac 1q)} 2^{-kd(\frac 1p-\frac 1r)}
2^{-|m|/r'} 2^{-nd(\frac1p-\frac1r)} 2^{(k+m)d(\frac1p-\frac1r)} \beta_{S_0,p}^{k+m,n}.
\end{multline}
We use this with $r=p$ and estimate the right-hand side of \eqref{eq:IIIafterorth}
by a constant times
\begin{multline*}
2^{-j_0d(\frac 1p-\frac 1q)}{\mathcal {C}}_{p,p,q}(j_0) \sum_{m\ge 0} \sum_{0\le n\le 2m} 2^{-m/p'} \Big(\sum_{k> -j_0} (\beta_{S_0,p}^{k+m,n})^q\Big)^{1/q} \\
\lesssim
2^{-j_0d(\frac 1p-\frac 1q)} {\mathcal {C}}_{p,p,q}(j_0) \sum_{m\ge 0} \sum_{0\le n\le 2m} 2^{-m/p'} \| f \|_p
\end{multline*}
using $\ell^q \subseteq \ell^p$, Lemma \ref{lemma:key atomic} and \eqref{eq:Lp sum coefficients function}. Altogether \begin{align*}
\|II_1\|_{q} &\lesssim\sum_{m\ge 0}\sum_{0\le n\le 2m } \Big( \sum_{k> -j_0} \|II_{m,n,k} \|_q^q\Big)^{1/q}
\\
& \lesssim {\mathcal {C}}_{p,p,q} (j_0) 2^{-j_0d(\frac 1p-\frac 1q)}\sum_{m\ge 0} (1+2m)2^{-m/p'} \|f\|_p \\
& \lesssim {\mathcal {C}}_{p,p,q} (j_0) 2^{-j_0d(\frac 1p - \frac 1q)} \|f\|_p,
\end{align*}
which finishes the estimation of $\|II_1\|_q$.

\subsubsection*{Estimation of $\|II_2\|_q$}
By the almost orthogonality of the $P_k$ we have
\begin{align*} \Big \|\sum_{\substack{k\in \digamma}} P_k II_{m,n,k} \Big\|_q
&\lesssim \Big (\sum_{\substack{k\in \digamma}} \| II_{m,n,k} \|_q^q\Big)^{1/q}
\\ &\lesssim \Big (\sum_{k\in\digamma}\| II_{m,n,k} \|_q^r\Big)^{1/r}
\end{align*}
We use \eqref{eq:III mnk} with $r > p$, followed by the embedding $\ell^r \subseteq \ell^p$, Lemma \ref{lemma:key atomic} and \eqref{eq:Lp sum coefficients function} to deduce
\begin{align*}
& \Big (\sum_{k\in\digamma}\| II_{m,n,k} \Big\|_q^r\Big)^{1/r}
\\
& \lesssim {\mathcal {C}}_{p,r,q} (j_0) 2^{-j_0d(\frac 1p-\frac 1q) } 2^{-|m|/r'} 2^{(m-n)d(\frac 1p-\frac 1r)} \Big(\sum_{k\in\digamma} (\beta_{S_0,p}^{k+m,n})^r
\Big)^{1/r} \\
& \lesssim {\mathcal {C}}_{p,r,q} (j_0) 2^{-j_0 d(\frac 1p-\frac 1q) } 2^{-|m|/r'} 2^{(m-n)d(\frac 1p-\frac 1r)} \| f\|_p
\end{align*}
and therefore, using $r>p$,
\[ \|II_2\|_q\lesssim \sum_{m\in {\mathbb {Z}}} \sum_{n\ge \max\{0, 2m\}}
\Big(\sum_{k> -j_0} \|II_{m,n,k}\|_q ^q\Big)^{1/q}
\lesssim {\mathcal {C}}_{p,r,q} (j_0) 2^{-j_0d(\frac 1p-\frac 1q)}\|f\|_p .
\]
This finishes the estimation of $\|II_2\|_q$ and the proof of the lemma. \qed

\section{Combination of atomic and \texorpdfstring{\\}{} Calder\'on--Zygmund decompositions} \label{sec:CZ-dec}

Let $S_0$ be a dyadic cube, let $f_1$ and $f_2$ be given functions. Assume that $f_1$ is supported in $S_0$ and that $f_2$ is supported in $3S_0$.
In analogy to the Calder\'on--Zygmund decomposition we decompose the functions $f_1$, $f_2$ given some threshold parameters $\alpha_1, \alpha_2 >0$; these will be defined as
\begin{equation}\label{alpha-definition}\alpha_1= \langle f_1 \rangle_{S_0, p},
\qquad \alpha_2= \langle f_2 \rangle_{3S_0,q'}.
\end{equation}
The decomposition of $f_2$ will be essentially based on a Calder\'on--Zygmund decomposition at level $\alpha_2$, see \eqref{good2}, \eqref{b2Q} below.
We describe the decomposition of $f_1$ which is more involved and essentially based on the atomic decomposition introduced in \S \ref{sec:decompositions}. The idea of combining atomic and Calder\'on--Zygmund decompositions was previously used in \cite{SK}, and can be traced back to \cite{Christ-rough}, although here we need a different variant.

In the proof we will use two large constants $U_1$, $U_2$ which need to significantly
exceed various constants in standard maximal or square function inequalities, or combinations thereof; we shall see that any choice of $U_1$, $U_2$ with
\begin{subequations}
\begin{align}
\label{lwbdfA1}
U_1&\ge (1-\gamma)^{-1/p} (2^{100}d)^{d/p} {\mathscr C_{\mathrm{sq},p}}\,,
\\
\label{lwbdfA2}
U_2 &\ge (1-\gamma)^{-1/q'} (2^{100}d)^{d/q'}\,,
\end{align}
\end{subequations}
and ${\mathscr C_{\mathrm{sq},p}}$ as in \eqref{eq:Lp bound sq fn},
will work.

We start writing
\begin{equation}\label{eqn:goodbaddecomp}
f_1 = g_1 + b_1,
\end{equation}
with the ``good'' function defined as
\begin{subequations}\begin{equation}\label{eq:goodfct-expansion}
g_1=\mathbb{E}_{-L(S_0)+1} f_1 + \sum_{k> -L(S_0)} g^k_1,\end{equation} where
\begin{equation}\label{defn:good} g^k_1 = \sum_{\mu\in{\mathbb {Z}}\,:\,2^\mu\le U_1\alpha_1} \sum_{R\in\mathcal{R}_\mu^k} e_R
\end{equation} \end{subequations}
and the ``bad" function defined as $b_1 = f_1 - g_1$, i.e. $b_1=\sum_{k > -L(S_0)} b^k_1$ with
\begin{equation}\label{defn:bad}
b^k_1 = \sum_{\mu\in{\mathbb {Z}}\,:\, 2^\mu>U_1 \alpha_1} \sum_{R\in\mathcal{R}_\mu^k} e_R.
\end{equation}
Note that $b_1^k=g_1^k=0$ for $k\le -L(S_0)$.
Clearly, \[ |{\mathbb {E}}_{-L(S_0) +1}f_1(x)| \leq 2^{d/p}\alpha_1\le U_1\alpha_1 \quad \text{ for all $x \in S_0$.}
\] Furthermore, the square function associated with the $\{g_1^k\}_{k > -L(S_0)}$ is pointwise bounded by $2 U_1 \alpha_1$. This is analogous to the $L^\infty$ estimate for the ``good'' function in a standard Calder\'on--Zygmund decomposition.

\begin{lemma}\label{lemma:Linfty good} For almost every $x\in S_0$ we have
\[ \Big(\sum_{k > -L(S_0)} |g^k_1(x)|^2\Big)^{1/2} \le 2 U_1\alpha_1. \]
\end{lemma}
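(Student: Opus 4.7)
The plan is to rewrite $g_1^k(x)$ pointwise and then compare the resulting square sum to $\mathbb{G}_{S_0}f_1$ at a well-chosen auxiliary point. First I would note that at each fixed scale $k > -L(S_0)$, every dyadic cube $R \subsetneq S_0$ of sidelength $2^{-k}$ belongs to exactly one $\mathcal{R}_\mu^k$, and $e_R|_R = (\mathbb{D}_k f_1)|_R$ by the definition of $e_R$. Hence, writing $Q_k(x)$ as in \eqref{eq:local-maxsqfct} for the dyadic cube of sidelength $2^{-k}$ containing $x$, and $\mu_k(x)$ for the unique $\mu$ with $Q_k(x) \in \mathcal{R}_\mu$, the definition \eqref{defn:good} collapses to
\[ g_1^k(x) = \mathbb{D}_k f_1(x) \cdot {\mathbbm 1}[\mu_k(x) \le M_0], \]
where $M_0 \in \mathbb{Z}$ is the largest integer with $2^{M_0} \le U_1\alpha_1$; in particular $2^{M_0+1} \le 2U_1\alpha_1$. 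The defining inequality \eqref{small portion mu+1} of $\mathcal{R}_\mu$ renders the condition $\mu_k(x) \le M_0$ equivalent to $|Q_k(x) \cap \Omega_{M_0+1}| \le |Q_k(x)|/2$. Set $A(x) := \{k > -L(S_0) : \mu_k(x) \le M_0\}$.

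The estimate then splits into two cases. If $x \notin \Omega_{M_0+1}$, then $\mathbb{G}_{S_0}f_1(x) \le 2^{M_0+1}$ directly by \eqref{omega def}, and the conclusion follows at once from the definition \eqref{eq:local-maxsqfct}. If instead $x \in \Omega_{M_0+1}$, the idea is to transfer the bound to an auxiliary point $y \notin \Omega_{M_0+1}$. Applying the Lebesgue differentiation theorem to ${\mathbbm 1}_{\Omega_{M_0+1}}$, for almost every such $x$ one has $|Q_k(x) \cap \Omega_{M_0+1}|/|Q_k(x)| \to 1$ as $k \to \infty$, so $A(x)$ is bounded above and admits a maximum $k^\ast = k^\ast(x)$. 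Since $k^\ast \in A(x)$, the set $Q_{k^\ast}(x) \setminus \Omega_{M_0+1}$ has measure at least $|Q_{k^\ast}(x)|/2$, and I can pick $y$ in it. Nesting of dyadic cubes gives $Q_k(y) = Q_k(x)$ for every $k \le k^\ast$, so $|\mathbb{D}_k f_1(x)| \le \sup_{z \in Q_k(y)}|\mathbb{D}_k f_1(z)|$ for every $k \in A(x)$, and therefore
\[ \sum_{k \in A(x)} |\mathbb{D}_k f_1(x)|^2 \le \sum_{k > -L(S_0)}\sup_{z \in Q_k(y)}|\mathbb{D}_k f_1(z)|^2 \le \mathbb{G}_{S_0}f_1(y)^2 \le (2U_1\alpha_1)^2. \]

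The only real subtlety is the second case: $\mathbb{G}_{S_0}f_1(x)$ is too large, so the target bound has to come from $\mathbb{G}_{S_0}f_1(y)$ at a different point $y$. The transfer works thanks to two features of the setup: the supremum over $Q_k$ built into the definition of $\mathbb{G}_{S_0}$, which absorbs the discrepancy between $\mathbb{D}_k f_1(x)$ and $\mathbb{D}_k f_1(y)$ at each contributing scale, and the Calder\'on--Zygmund-type inequality \eqref{small portion mu+1} guaranteeing the existence of such a $y$ inside $Q_{k^\ast}(x)$. Lebesgue differentiation enters only to ensure that the stopping scale $k^\ast(x)$ is finite on a set of full measure inside $\Omega_{M_0+1}$, matching the ``almost every'' qualifier in the statement.
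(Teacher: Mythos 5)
Your argument is correct, and the core strategy is the same as the paper's: locate the deepest contributing scale, use the stopping condition \eqref{small portion mu+1} to extract a witness point in that cube lying outside the critical level set, and transfer the bound via the supremum built into ${\mathbb {G}}_{S_0}$. The real difference is only in how you guarantee that a maximal contributing scale exists. The paper first fixes an arbitrary finite subset $\digamma$ of scales, so maximality is automatic; it proves the bound uniformly in $\digamma$ and recovers the full sum by monotone convergence, an argument that works pointwise for every $x$ at which the ${\mathbb {D}}_k f_1$ are defined. You instead invoke the dyadic Lebesgue differentiation theorem to show that for a.e.\ $x\in\Omega_{M_0+1}$ the set $A(x)$ is bounded above, which is what actually uses the almost-everywhere qualifier in the statement. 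Your case split (handling $x\notin\Omega_{M_0+1}$ separately, versus the paper always passing to the witness $w_x$ obtained from the maximal cube and its associated index $\mu_x$, which yields the slightly sharper intermediate bound ${\mathbb {G}}_{S_0}f_1(w_x)\le 2^{\mu_x+1}$) is another cosmetic variation. Both routes are valid; the paper's finite-truncation device is the more elementary of the two and avoids any measure-theoretic appeal, while your Lebesgue-differentiation route makes the role of the ``almost every'' more transparent.
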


\begin{proof}

Fix $x \in S_0$. Let $\digamma$ be a finite family of indices with $k > -L(S_0)$. It suffices to show that
\begin{equation}\label{finitefamily-F}
\Big(\sum_{k\in \digamma} |g^k_1(x)|^2\Big)^{1/2} \le 2 U_1 \alpha_1. \end{equation}
Let \[{\mathfrak {R}}_x=\Big\{ R: R\in \bigcup_{2^\mu\le U_1 \alpha_1} {\mathcal {R}}_\mu, \,\, x\in R, \,\, L(R)=-k \text{ for some } k\in \digamma\Big\} .\] Note that these are the only cubes contributing to $\sum_{k \in \digamma} |g_1^k(x)|^2$. We can assume that ${\mathfrak {R}}_x \neq \emptyset$, as otherwise $\sum_{k \in \digamma} |g_1^k(x)|^2=0$ and the inequality is trivial. Next, let ${\mathfrak {R}}_{x,k}=\{R\in {\mathfrak {R}}_x, L(R)=-k\}$ and let $k_\circ(x) $ be the maximal integer $k\in\digamma$ for which ${\mathfrak {R}}_{x,k}$ is non-empty. Note $k_\circ(x)$ exists as ${\mathfrak {R}}_x$ is non-empty and $\digamma$ is finite. Moreover, observe that ${\mathfrak {R}}_{x,k}$ is either empty or consists only of one (half-open) cube. Let $R_{x, k_\circ(x)} \in {\mathfrak {R}}_{x,k_\circ(x)}$. By definition, there exists a unique $\mu_{x}$ with $2^{\mu_{x}} \leq U_1 \alpha_1$ and $R_{x,k_\circ(x)} \in {\mathcal {R}}_{\mu_{x}}$. Moreover, in view of \eqref{small portion mu+1}, there exists $w_{x} \in R_{x,k_\circ(x)} \backslash \Omega_{\mu_{x}+1}$. Thus, ${\mathbb {G}}_{S_0}f_1(w_x) \leq 2^{\mu_x+1} \leq 2 U_1\alpha_1$. Note that by the maximality of $k_\circ(x)$, we have $w_x \in R$ for all $R \in {\mathfrak {R}}_x$.

Consequently,
\[
\Big( \sum_{k \in \digamma} |g_1^k (x)|^2 \Big)^{\frac 12} = \Big( \sum_{k \in \digamma} \Big| \sum_{R \in {\mathfrak {R}}_{x,k}} e_R (x)\Big|^2 \Big)^{\frac 12} \le {\mathbb {G}}_{S_0} f_1(w_x) \le 2 \, U_1 \alpha_1. \qedhere
\]
\end{proof}

The above lemma will be used in the proof of the sparse bound for dealing with the term that involves $g_1$: see \S\ref{sec:goodpart q<2} for the case $q<2$ and \S\ref{sec:goodpart q>2} for the case $q \geq 2$.

We need a further refined decomposition of the bad parts $b^k_1$.
Recall that by Lemma \ref{lem:U1-Lp} the function $F_{1,p}=F_p$ satisfies
\begin{equation}\label{eq:norm B1}
\|F_{1,p}\|_{L^p(S_0)} \le 2^{3/2} (2C_d)^{1/p} {\mathscr C_{\mathrm{sq},p}}
|S_0|^{1/p} \alpha_1,
\end{equation}
where $C_d=5^d2(10\sqrt d)^d$ (defined in \eqref{eq:sizes omegas}).

Our next goal is to perform a Calder\'on--Zygmund decomposition so that this inequality continues to hold for smaller cubes.
We now bring in the second function $f_2$.
Let $\mu(\alpha_1)$ be the smallest integer $\mu$ such that $2^\mu> U_1 \alpha_1$.
Define
\begin{equation} \label{defofO}{\mathcal {O}}={\mathcal {O}}_1\cup{\mathcal {O}}_2, \end{equation}
where
\begin{subequations} \label{defofO12}
\begin{align}
{\mathcal {O}}_{1} &:= \widetilde \Omega_{\mu(\alpha_1)}[f_1] \,\cup \, \{ x : M_{\mathrm{HL}}(F_{1,p}^p)(x)>
U_1^p \alpha_1^p\},
\\
{\mathcal {O}}_2&:=\{x : M_{\mathrm{HL}}(|f_2|^{q'})(x) >
U_2^{q'}\alpha_2^{q'} \}.
\end{align}
\end{subequations}
Then set
\[\widetilde {{\mathcal {O}}}=\{x: M_{\mathrm{HL}} {\mathbbm 1}_{{\mathcal {O}}}(x)>2^{-10 d} (\sqrt{d})^{-d}\} \,.\]

The following relation between the sizes of $\widetilde{{\mathcal {O}}}$ and $S_0$ is key in order to prove sparse bounds.

\begin{lem}\label{lemma:O less than S_0}
If in the definitions \eqref{defofO}, \eqref{defofO12} we make the choices of $U_1, U_2$ as in \eqref{lwbdfA1}, \eqref{lwbdfA2}, then
\[|\widetilde {\mathcal {O}} | <(1-\gamma) |S_0|.
\]
\end{lem}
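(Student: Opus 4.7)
The strategy is a three-step reduction: first bound each of the three constituent pieces of $\mathcal{O}$ by a small multiple of $|S_0|$ using the weak-type $(1,1)$ inequality for $M_{\mathrm{HL}}$ together with the $L^p$-bounds from Section \ref{sec:decompositions}; then pass from $|\mathcal{O}|$ to $|\widetilde{\mathcal{O}}|$ by another application of weak-type $(1,1)$; and finally verify that the choices \eqref{lwbdfA1}, \eqref{lwbdfA2} make the resulting constant strictly less than $1-\gamma$.

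For the first piece $\widetilde{\Omega}_{\mu(\alpha_1)}[f_1]$, combine \eqref{eq:sizes omegas} with \eqref{eq:size omega} and use $2^{\mu(\alpha_1)}>U_1\alpha_1$ together with $\|f_1\|_p^p = \alpha_1^p|S_0|$ (this is just the definition \eqref{alpha-definition} of $\alpha_1$) to bound it by $C_d\,\mathscr{C}_{\mathrm{sq},p}^{p}\,U_1^{-p}|S_0|$. For the second piece, the Hardy--Littlewood weak-type inequality applied to $F_{1,p}^p$ combined with \eqref{eq:norm B1} gives a bound by $3^d \cdot 2^{3p/2+1}C_d\,\mathscr{C}_{\mathrm{sq},p}^{p}\,U_1^{-p}|S_0|$. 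For $\mathcal{O}_2$, weak-type $(1,1)$ and $\|f_2\|_{q'}^{q'}=\alpha_2^{q'}|3S_0|=3^d\alpha_2^{q'}|S_0|$ yield $|\mathcal{O}_2|\le 3^{2d}U_2^{-q'}|S_0|$. Summing, and using $C_d=5^d 2(10\sqrt d)^d$, one obtains
\[
|\mathcal{O}|\le K_1 d^d\,\mathscr{C}_{\mathrm{sq},p}^{p}\,U_1^{-p}|S_0| + K_2 d^d\,U_2^{-q'}|S_0|
\]
for explicit dimensional constants $K_1,K_2$ that are easily absorbed into a single factor of $2^{cd}$ for modest $c$.

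Next, since $\widetilde{\mathcal{O}}=\{M_{\mathrm{HL}}\mathbf 1_{\mathcal{O}}>2^{-10d}(\sqrt d)^{-d}\}$, the weak-type $(1,1)$ bound for $M_{\mathrm{HL}}$ (with constant $3^d$) gives
\[
|\widetilde{\mathcal{O}}|\le 3^d\cdot 2^{10d}(\sqrt d)^d\,|\mathcal{O}|.
\]
The combined factor on the right is dominated by $2^{Cd}d^{d}$ for some modest $C$ (say $C\le 20$ suffices). Plugging in the hypotheses \eqref{lwbdfA1}, \eqref{lwbdfA2}, which give
\[
\mathscr{C}_{\mathrm{sq},p}^{p}\,U_1^{-p}\le(1-\gamma)(2^{100}d)^{-d},\qquad U_2^{-q'}\le(1-\gamma)(2^{100}d)^{-d},
\]
the bound becomes $|\widetilde{\mathcal{O}}|\le (K_1+K_2)\,2^{Cd}d^{d}\cdot(1-\gamma)(2^{100}d)^{-d}|S_0|$, and the huge gap between $2^{C d}$ and $2^{100 d}$ makes the coefficient strictly less than $1-\gamma$, giving the desired conclusion.

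I expect no serious obstacle here; the only point requiring some care is bookkeeping of the dimensional constants to confirm that the exponent $100$ in \eqref{lwbdfA1}, \eqref{lwbdfA2} indeed dominates all factors $3^d$, $5^d$, $(10\sqrt d)^d$, $2^{10d}$, $(\sqrt d)^d$ produced along the way. This is a purely arithmetic check and does not affect the structure of the argument.
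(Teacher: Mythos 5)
Your proposal is correct and follows essentially the same route as the paper: bound $|\mathcal{O}_1|$ and $|\mathcal{O}_2|$ piece by piece via the weak-type $(1,1)$ inequality for $M_{\mathrm{HL}}$, Chebyshev/\eqref{eq:size omega}, and the $L^p$ bound \eqref{eq:norm B1} on $F_{1,p}$, pass from $|\mathcal{O}|$ to $|\widetilde{\mathcal{O}}|$ by another weak-type application, and observe that the exponent $100$ in \eqref{lwbdfA1}, \eqref{lwbdfA2} dwarfs the accumulated dimensional constants. The exact constants you quote for the maximal function (you use $3^d$ where the paper uses $5^d$) differ slightly, but as you correctly note this is immaterial to the conclusion.
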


\begin{proof}
By the weak type inequality for the Hardy--Littlewood maximal function
\[|\widetilde {\mathcal {O}}|\le 5^d 2^{10d} (\sqrt{d})^d |{\mathcal {O}}|.\] Moreover, by the definition of $\alpha_2$,
\[ |{\mathcal {O}}_2|\le \frac{5^d \|f_2\|_{q'}^{q'}}{ U_2^{q'} \alpha_2^{q'} } = \frac{5^d 3^d |S_0|}{U_2^{q'} }. \]
Furthermore, by \eqref{eq:sizes omegas} and \eqref{eq:size omega},
\begin{equation*}
|\widetilde{\Omega}_{\mu(\alpha_1)} [f_1]| \leq C_d 2^{-\mu(\alpha_1)p} {\mathscr C^p_{\mathrm{sq},p}} \| f_1 \|_{L^p(S_0)}^p <
C_dU_1^{-p} {\mathscr C^p_{\mathrm{sq},p}} |S_0|\,;
\end{equation*}
here we used that $2^{-\mu(\alpha_1) p } < U_1^{-p} \alpha_1^{-p}$.
Finally, using \eqref{eq:norm B1}, we obtain
\[\big|
\{ x: M_{\mathrm{HL}}(F_{1,p}^p)(x)>U_1^p \alpha_1^p\}\big| \leq \frac{5^d \| F_{1,p} \|_p^p}{U_1^p \alpha_1^p} \leq
\frac{5^d 2^{3p/2} 2C_d {\mathscr C^p_{\mathrm{sq},p}} |S_0|}{U_1^p}\,.
\]
Altogether,
\begin{equation}\label{eq:determ-of-constants}
|\widetilde{{\mathcal {O}}}| \leq |{\mathcal {O}}_1|+|{\mathcal {O}}_2|\leq
5^d 2^{10 d} d^{d/2}\Big(
\frac{C_d{\mathscr C^p_{\mathrm{sq},p}}}{U_1^p} + \frac{5^d 2^{1+3p/2} C_d {\mathscr C^p_{\mathrm{sq},p}}}{U_1^p} + \frac{15^d}{U_2^{q'}} \Big) |S_0|.
\end{equation}
For large choices of $U_1$, $U_2$ we get the conclusion of the lemma, and one checks that the choices of $U_1$, $U_2$ made in
\eqref{lwbdfA1}, \eqref{lwbdfA2} achieve this.
\end{proof}
Let $\mathcal{Q}:=\{Q\}$ denote the family of dyadic Whitney cubes whose union is the open set $\widetilde{{\mathcal {O}}}$, which satisfy
\begin{equation}\label{eq:Whitney Q}
5{\,\text{\rm diam}} (Q) \leq {\mathrm{dist}} (Q, \widetilde{{\mathcal {O}}}^\complement) \leq 12 {\,\text{\rm diam}} (Q).
\end{equation}
We note that here we adapt the standard Whitney decomposition with different constants - it will be important that the constant on the left-hand side is greater than $3$ which ensures the family of triple dilates of Whitney cubes has bounded overlap (see \cite{roberlin} and \cite[\S4.4]{BRS} for more details).
Note that by Lemma \ref{lemma:O less than S_0} we have $|Q| < |S_0|$ and thus either $Q \cap S_0 = \emptyset$ or $Q \subseteq S_0$, since $Q$ and $S_0$ are dyadic cubes.

We describe a decomposition of $f_2$ into a good and a bad part which is analogous to the usual Calder\'on--Zygmund decomposition at level $\alpha_2$. Define
\begin{equation}\label{good2} g_2(x) = f_2(x){\mathbbm 1}_{{\mathcal {O}}^\complement} (x)
+\sum_{Q\in {\mathcal {Q}}} \Big( \frac{1}{|Q|}\int_Qf_2(w)\, \mathrm{d} w \Big)\,{\mathbbm 1}_Q(x)
\end{equation}
and let $b_2=f_2-g_2$ which gives $b_2=\sum_{Q\in {\mathcal {Q}}} b_{2,Q}$ with
\begin{equation}\label{b2Q} b_{2,Q}(x) = \Big( f_2(x) - \frac{1}{|Q|}\int_Qf_2(w) \, \mathrm{d} w\Big){\mathbbm 1}_Q(x) .
\end{equation}
We have the standard Calder\'on--Zygmund properties.

\begin{lem} \label{f2CZdec}
(i) For all $Q\in {\mathcal {Q}}$,
$\big(\frac{1}{Q}\int_Q |f_2(x)|^{q'} \, \mathrm{d} x\big)^{1/q'} \lesssim \alpha_2.$

(ii) For almost every $x\in 3Q$, $ |g_2(x)|\lesssim \alpha_2.$
\end{lem}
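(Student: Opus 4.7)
\textbf{Proof plan for Lemma \ref{f2CZdec}.}

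For part (i), the plan is to exploit the upper Whitney bound $\mathrm{dist}(Q,\widetilde{{\mathcal {O}}}^\complement)\le 12\,{\,\text{\rm diam}}(Q)$ from \eqref{eq:Whitney Q} to locate a point $y\in\widetilde{{\mathcal {O}}}^\complement$ within distance $\lesssim {\,\text{\rm diam}}(Q)$ of $Q$. I choose a cube $B$ centered at the center of $Q$ of side length of order $50\,{\,\text{\rm diam}}(Q)$, so that $Q\subseteq B$ and $y\in B$ and $|B|\sim|Q|$. Since $y\notin\widetilde{{\mathcal {O}}}$, the definition gives $|B\cap{\mathcal {O}}|/|B|\le 2^{-10d}(\sqrt d)^{-d}<1$, hence $B\cap{\mathcal {O}}^\complement\neq\emptyset$. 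Pick any $z\in B\cap{\mathcal {O}}^\complement$; as ${\mathcal {O}}_2\subseteq{\mathcal {O}}$, we have $M_{\mathrm{HL}}(|f_2|^{q'})(z)\le U_2^{q'}\alpha_2^{q'}$, and taking $B$ itself as a test cube for the maximal function at $z$ yields
\[
\int_Q|f_2|^{q'}\le\int_B|f_2|^{q'}\le U_2^{q'}\alpha_2^{q'}|B|\lesssim U_2^{q'}\alpha_2^{q'}|Q|,
\]
which is (i).

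For part (ii), I split $3Q$ into $3Q\cap{\mathcal {O}}^\complement$ and $3Q\cap{\mathcal {O}}$. Since ${\mathcal {O}}_1$ and ${\mathcal {O}}_2$ are superlevel sets of lower-semicontinuous maximal functions, the set ${\mathcal {O}}$ is open, and the chain ${\mathcal {O}}\subseteq\widetilde{{\mathcal {O}}}$ (via Lebesgue differentiation applied to ${\mathbbm 1}_{{\mathcal {O}}}$) guarantees that every point of $3Q\cap{\mathcal {O}}$ lies in some Whitney cube from ${\mathcal {Q}}$. On $3Q\cap{\mathcal {O}}^\complement$ the definition \eqref{good2} gives $g_2=f_2$; since ${\mathcal {O}}_2\subseteq{\mathcal {O}}$, Lebesgue differentiation yields $|f_2(x)|^{q'}\le M_{\mathrm{HL}}(|f_2|^{q'})(x)\le U_2^{q'}\alpha_2^{q'}$ for a.e.\ such $x$. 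On each Whitney cube $Q'\in{\mathcal {Q}}$ intersecting $3Q$ (and in particular on $Q$ itself), $g_2$ equals the constant $|Q'|^{-1}\int_{Q'}f_2$, bounded by $\jp{f_2}_{Q',1}\le\jp{f_2}_{Q',q'}\lesssim\alpha_2$ by H\"older and part (i) applied to $Q'$.

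There is no real obstacle in this argument, as both statements are standard Calder\'on--Zygmund-type consequences of the Whitney and maximal function set-up; the only mild point is that in part (i) one must choose the auxiliary cube $B$ large enough to contain both $Q$ and the witness point $y\in\widetilde{{\mathcal {O}}}^\complement$ while still having $|B|\lesssim |Q|$, which is ensured by the quantitative Whitney separation in \eqref{eq:Whitney Q}.
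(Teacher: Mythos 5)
The paper states this lemma and explicitly omits the proof (``The proof is immediate from the definition of ${\mathcal {O}}_2$, by the standard reasoning from Calder\'on--Zygmund theory''), so there is nothing to compare against; your job is simply to fill in the standard details, and for the most part you do. Part (i) is correct, though slightly indirect: once you have $y\in\widetilde{{\mathcal {O}}}^\complement$ within distance $\lesssim {\,\text{\rm diam}}(Q)$ of $Q$, you can skip the extra step of producing a second point $z\in B\cap{\mathcal {O}}^\complement$, because $\widetilde{{\mathcal {O}}}^\complement\subseteq{\mathcal {O}}^\complement\subseteq{\mathcal {O}}_2^\complement$ already (as ${\mathcal {O}}$ is open, every point of ${\mathcal {O}}$ satisfies $M_{\mathrm{HL}}{\mathbbm 1}_{{\mathcal {O}}}=1>2^{-10d}(\sqrt d)^{-d}$, hence ${\mathcal {O}}\subseteq\widetilde{{\mathcal {O}}}$). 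This is precisely the shortcut the paper uses in the proof of Lemma \ref{lem:stopping-time-boundf1}, and it saves a line; but your longer route is valid.

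In part (ii) there is a genuine (if small) mismatch that you should address. The Whitney family ${\mathcal {Q}}$ tiles the larger set $\widetilde{{\mathcal {O}}}$, not ${\mathcal {O}}$. Consequently your partition of $3Q$ into $3Q\cap{\mathcal {O}}^\complement$ and $3Q\cap{\mathcal {O}}$ is the wrong partition: on the nonempty set $3Q\cap(\widetilde{{\mathcal {O}}}\setminus{\mathcal {O}})$ the formula \eqref{good2} as written gives $g_2(x)=f_2(x)+\mathrm{av}_{Q'}f_2$ (where $Q'\in{\mathcal {Q}}$ contains $x$), so neither of your two claims ``$g_2=f_2$ on $3Q\cap{\mathcal {O}}^\complement$'' nor ``$g_2=\mathrm{av}_{Q'}f_2$ on $Q'$'' is literally true there. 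In fact \eqref{good2} is almost certainly a typo for $f_2{\mathbbm 1}_{\widetilde{{\mathcal {O}}}^\complement}+\sum_{Q}\mathrm{av}_Qf_2\,{\mathbbm 1}_Q$, since otherwise the announced identity $b_2=\sum_Q b_{2,Q}$ fails; with that corrected definition your argument runs cleanly after replacing ${\mathcal {O}}$ by $\widetilde{{\mathcal {O}}}$ in the partition. If you insist on the literal \eqref{good2}, the conclusion still holds, but you must argue it on three pieces: on $\widetilde{{\mathcal {O}}}^\complement$, $g_2=f_2$ and the Lebesgue-point bound applies; on ${\mathcal {O}}$, $g_2=\mathrm{av}_{Q'}f_2\lesssim\alpha_2$ by part (i); and on $\widetilde{{\mathcal {O}}}\setminus{\mathcal {O}}$, $g_2=f_2+\mathrm{av}_{Q'}f_2$ and both summands are $\lesssim\alpha_2$ by the two previous bounds. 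As written, your proof silently elides this third region.
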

The proof is immediate from the definition of ${\mathcal {O}}_2$, by the standard reasoning from Calder\'on--Zygmund theory (see for example \cite{Ste70}). We omit the details.

Next we record the following relation between cubes in ${\mathcal {W}}_\mu$ for $2^\mu > U_1\alpha_1$ and cubes in ${\mathcal {Q}}$; note that the family ${\mathcal {Q}}$ does not depend on $\mu$.

\begin{lemma}\label{lemma:W contained in Q}
Let $\mu \in {\mathbb {Z}}$ such that $2^\mu > U_1 \alpha_1$.
For every $W\in \mathcal{W}_\mu[f_1]$ there exists a unique $Q\in \mathcal{Q}$ such that $W\subset Q$.
\end{lemma}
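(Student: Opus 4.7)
My plan is to combine three observations: the chain of inclusions linking $W$ to $\widetilde{\mathcal{O}}$, the dyadic partition structure of $\mathcal{Q}$, and the scale mismatch between the Whitney property of $\mathcal{Q}$ and the maximal-function threshold defining $\widetilde{\mathcal{O}}$. The hypothesis $2^\mu > U_1\alpha_1$ gives $\mu \ge \mu(\alpha_1)$, so by the $\mu$-monotonicity of the level sets
$$\widetilde\Omega_\mu[f_1] \subseteq \widetilde\Omega_{\mu(\alpha_1)}[f_1] \subseteq \mathcal{O}_1 \subseteq \mathcal{O} \subseteq \widetilde{\mathcal{O}}.$$
This immediately rules out the exceptional branch $\mathcal{W}_\mu=\{S_0\}$ in the definition of $\mathcal{W}_\mu$: that branch would require $S_0$ to be contained in a Whitney cube of $\widetilde\Omega_\mu$, hence $S_0\subseteq\widetilde{\mathcal{O}}$, contradicting $|\widetilde{\mathcal{O}}|<|S_0|$ from Lemma \ref{lemma:O less than S_0}. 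Thus every $W\in\mathcal{W}_\mu$ is a genuine dyadic Whitney cube of $\widetilde\Omega_\mu[f_1]$ obeying \eqref{eq:Whitney W}, and in particular $W\subseteq\widetilde{\mathcal{O}}$.

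Since $W$ is dyadic and $\widetilde{\mathcal{O}}=\bigcup_{Q\in\mathcal{Q}}Q$ is a partition into pairwise disjoint dyadic cubes from the same lattice $\mathfrak{Q}$, dyadic nesting yields a dichotomy: either $W\subseteq Q$ for a unique $Q\in\mathcal{Q}$ (the desired conclusion), or there exists $Q\in\mathcal{Q}$ with $Q\subsetneq W$. The crux of the proof is ruling out the second alternative. Assume $Q\in\mathcal{Q}$ satisfies $Q\subsetneq W$. Then $Q\subseteq W\subseteq\mathcal{O}$, so $|Q\cap\mathcal{O}|=|Q|$. The Whitney property \eqref{eq:Whitney Q} of $Q$ produces a point $y\in\widetilde{\mathcal{O}}^\complement$ with $\mathrm{dist}(Q,y)\le 12\,\mathrm{diam}(Q)=12\sqrt{d}\,\mathrm{side}(Q)$. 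A coordinatewise calculation shows that the smallest axis-aligned cube $C$ containing both $Q$ and $y$ has side at most $(1+12\sqrt{d})\,\mathrm{side}(Q)$. Since $y\in C$ and $M_{\mathrm{HL}}$ is the uncentered maximal function,
$$M_{\mathrm{HL}}\mathbf{1}_{\mathcal{O}}(y)\ge \frac{|C\cap\mathcal{O}|}{|C|}\ge\frac{|Q|}{|C|}\ge(1+12\sqrt{d})^{-d}.$$
The elementary bound $1+12\sqrt{d}<1024\sqrt{d}$ (valid for $d\ge 1$) gives $(1+12\sqrt{d})^{-d}>2^{-10d}(\sqrt{d})^{-d}$, contradicting $y\in\widetilde{\mathcal{O}}^\complement$.

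The main obstacle is the bookkeeping of constants in the last step: one must verify that the Whitney constant $12$ in \eqref{eq:Whitney Q} is small enough relative to the maximal-function threshold $2^{-10d}(\sqrt{d})^{-d}$ built into the definition of $\widetilde{\mathcal{O}}$. The deliberately generous power $2^{10d}$ in that threshold is tailored precisely so that the trivial volume lower bound $(1+12\sqrt{d})^{-d}$ beats it, which is the quantitative engine of the argument.
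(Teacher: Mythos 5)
Your proof is correct, and it takes a genuinely different route from the paper's. The paper picks a large dilate $cW$ with $26\sqrt d\le c<2^{10}\sqrt d$, shows $cW\subseteq\widetilde{\mathcal O}$ via the maximal-function threshold applied to $W\subseteq\mathcal O$, deduces a lower bound $\mathrm{dist}(x_W,\widetilde{\mathcal O}^\complement)\ge\frac{c}{2\sqrt d}\mathrm{diam}(W)$, and compares it against the Whitney upper bound $\mathrm{dist}(x_W,\widetilde{\mathcal O}^\complement)\le 13\,\mathrm{diam}(Q)$ for the cube $Q\ni x_W$, forcing $\mathrm{diam}(W)\le\mathrm{diam}(Q)$ and hence $W\subseteq Q$ by dyadicity. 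You instead invoke the dyadic nesting dichotomy directly and rule out the alternative $Q\subsetneq W$ by evaluating the maximal function at a Whitney witness point $y$ of $Q$ in $\widetilde{\mathcal O}^\complement$, using $Q\subseteq W\subseteq\mathcal O$ and the cube $C\supseteq Q\cup\{y\}$ of side $\le(1+12\sqrt d)\,\mathrm{side}(Q)$ to contradict $y\notin\widetilde{\mathcal O}$. Both arguments lean on the same deliberately generous threshold $2^{-10d}(\sqrt d)^{-d}$, but they apply the volume lower bound at opposite ends of the inclusion: the paper grows $W$, you localize at $Q$. Your version is a bit more self-contained (no magic constant $c$ to choose) and has the additional merit of explicitly eliminating the pathological branch $\mathcal W_\mu=\{S_0\}$, which the paper leaves implicit (it would contradict Lemma \ref{lemma:O less than S_0} as you observe). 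Your constant check is right: $1+12\sqrt d\le 13\sqrt d<2^{10}\sqrt d$ for $d\ge 1$.
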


\begin{proof}
We first note that if $W \in {\mathcal {W}}_\mu$, then $W \subseteq {\mathcal {O}}_1$. This follows from the definition of ${\mathcal {W}}_\mu$, as $W \subseteq \widetilde{\Omega}_{\mu} \subseteq \widetilde{\Omega}_{\mu(\alpha_1)}$ for any $\mu \in {\mathbb {Z}}$ with $2^\mu > U_1 \alpha_1$. Furthermore, $cW \subseteq \widetilde{{\mathcal {O}}}$ for sufficiently small $c \geq 1$. This follows because if $y \in cW$, then
\begin{equation*}
M_{\mathrm{HL}}{\mathbbm 1}_{{\mathcal {O}}}(y) \geq \frac{1}{|cW|} \int_{cW} {\mathbbm 1}_{{\mathcal {O}}}(w) \, \mathrm{d} w \geq \frac{1}{|cW|} \int_{W} {\mathbbm 1}_{{\mathcal {O}}} (w) \, \mathrm{d} w = \frac{|W|}{|cW|}=c^{-d}
\end{equation*}
where we used that $W \subseteq {\mathcal {O}}$. Thus, the claim holds provided $1 \leq c < 2^{10} \sqrt{d}$. This claim implies that if $x_W$ denotes the center of $W$, then
\begin{equation*}
{\mathrm{dist}}(x_W, \widetilde{{\mathcal {O}}}^\complement) \geq {\mathrm{dist}} (x_W, (cW)^\complement) = \frac{{\,\text{\rm diam}}(cW)}{2\sqrt{d}} = \frac{c}{2\sqrt{d}} {\,\text{\rm diam}}(W).
\end{equation*}
Furthermore, as $x_W \in W \subseteq \widetilde{{\mathcal {O}}}$, there exists $Q \in {\mathcal {Q}}$ such that $x_W \in Q$. As $Q \subseteq \widetilde{{\mathcal {O}}}$,
\begin{equation*}
{\mathrm{dist}}(x_W, \widetilde{{\mathcal {O}}}^\complement) \leq {\,\text{\rm diam}}(Q) + {\mathrm{dist}}(Q, \widetilde{{\mathcal {O}}}^\complement) \leq 13 {\,\text{\rm diam}}(Q)
\end{equation*}
where in the last inequality we have used \eqref{eq:Whitney Q}. Consequently,
\begin{equation*}
\frac{c}{2\sqrt{d}} {\,\text{\rm diam}} (W) \leq 13 {\,\text{\rm diam}} (Q).
\end{equation*}
As long as $13 \leq \frac{c}{2\sqrt{d}}$, we have that ${\,\text{\rm diam}}(W) \subseteq {\,\text{\rm diam}} (Q)$. Thus, we require a choice of $c$ such that $26 \sqrt{d} \leq c < 2^{10} \sqrt{d}$. Since $W$ and $Q$ are dyadic, this implies that $W \subseteq Q$, and as the cubes in ${\mathcal {Q}}$ have disjoint interior, the cube $Q$ is unique.
\end{proof}

At this point we set once and for all (throughout the proof of Proposition \ref{inductiveclaim} in \S \ref{sec:induction step}-\S \ref{sec:badpart}),
\[ \mu_{\mathrm{min}} = \log_2(U_1 \alpha_1) \]
and recalling the definitions of $ b_{1,Q}^k=b_{Q}^k$, $b_{1,Q}^{k,n}=b_{Q}^{k,n}$ from \S \ref{sec:fine-structure} (with $f=f_1$) we then have
\begin{equation}\label{eq:bk decomp}
b_1^k = \sum_{Q\in\mathcal{Q}} b_{1,Q}^k = \sum_{n\ge 0} \sum_{Q\in\mathcal{Q}} b_{1,Q}^{k,n}. \end{equation}
Note that the families \[\mathcal{W}_{Q,\mu}=\{ W\in\mathcal{W}_\mu\,:\, W\subset Q\}\] are disjoint for different $Q$. For the cubes $Q \in {\mathcal {Q}}$, we have a standard stopping time condition for the function $F_{1,p}$.

\begin{lem}\label{lem:stopping-time-boundf1}
For every $Q\in {\mathcal {Q}}$, we have
\begin{equation}\label{eq:stopping time condition} \beta_{1,Q,p}=\| F_{1,p} {\mathbbm 1}_{Q} \|_{p}=\Big( \sum_{\mu\in{\mathbb {Z}}} \sum_{\substack{ W\in {\mathcal {W}}_{Q,\mu}} } (\gamma_{W,\mu}[f_1] )^p|W| \Big)^{1/p} \lesssim |Q|^{1/p} \alpha_1.
\end{equation}
\end{lem}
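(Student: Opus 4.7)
The plan is to reduce the estimate to a standard Hardy--Littlewood maximal function bound via the Whitney geometry of $Q \in \mathcal{Q}$. The key observation is that $\mathcal{O}_1$ in \eqref{defofO12} contains the superlevel set $\{M_{\mathrm{HL}}(F_{1,p}^p) > U_1^p\alpha_1^p\}$, so any Lebesgue point outside $\widetilde{\mathcal{O}}$ is automatically a point where the averages of $F_{1,p}^p$ are controlled by $U_1^p\alpha_1^p$.

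First I would replace the sum defining $\beta_{1,Q,p}^p$ by an integral of $F_{1,p}^p$ over $Q$. Since $\mathcal{W}_{Q,\mu} \subseteq \mathcal{W}_\mu$ and each $W \in \mathcal{W}_{Q,\mu}$ satisfies $W \subseteq Q$ (so $|W| = |W\cap Q|$), the definitions in \eqref{eq:F1 defn} and \eqref{defn:betaQp} immediately yield
\[
\beta_{1,Q,p}^p = \sum_{\mu\in\mathbb{Z}}\sum_{W \in \mathcal{W}_{Q,\mu}} (\gamma_{W,\mu}[f_1])^p |W| \;\le\; \sum_{\mu\in\mathbb{Z}}\sum_{W \in \mathcal{W}_\mu}(\gamma_{W,\mu}[f_1])^p |W\cap Q| \;=\; \int_Q F_{1,p}^p.
\]
It therefore suffices to prove $\int_Q F_{1,p}^p \lesssim \alpha_1^p |Q|$.

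Next I would exploit the Whitney property \eqref{eq:Whitney Q}: there exists $y \in \widetilde{\mathcal{O}}^\complement$ with $\mathrm{dist}(y,Q) \le 12\,\mathrm{diam}(Q)$. Choose a cube $Q^*$ containing both $Q$ and $y$ whose side length is at most a dimensional multiple of $\mathrm{side}(Q)$, so that $|Q^*| \le C_d|Q|$. Since $y \notin \widetilde{\mathcal{O}}$ we have $M_{\mathrm{HL}}\mathbbm{1}_\mathcal{O}(y) \le 2^{-10d}(\sqrt{d})^{-d} < 1$, and the Lebesgue differentiation theorem applied to $\mathbbm{1}_\mathcal{O}$ allows us to assume $y \notin \mathcal{O}$ (a density point of $\mathcal{O}$ would have maximal function at least $1$). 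In particular $y \notin \mathcal{O}_1$, which by the definition of $\mathcal{O}_1$ yields $M_{\mathrm{HL}}(F_{1,p}^p)(y) \le U_1^p\alpha_1^p$. Since $y \in Q^*$, averaging over $Q^*$ gives
\[
\int_Q F_{1,p}^p \;\le\; \int_{Q^*} F_{1,p}^p \;\le\; |Q^*|\,M_{\mathrm{HL}}(F_{1,p}^p)(y) \;\le\; C_d U_1^p \alpha_1^p |Q|,
\]
which is the required bound.

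The main obstacle is not conceptual but purely geometric bookkeeping: the enlargement factor used to build $Q^*$ must be large enough (relative to the Whitney constant in \eqref{eq:Whitney Q}) to reach a point in $\widetilde{\mathcal{O}}^\complement$, while still keeping $|Q^*|/|Q|$ dimensionally bounded. The $12\,\mathrm{diam}(Q)$ distance estimate makes any dilation factor of order $\sqrt{d}$ more than sufficient, so after this calibration the three steps combine to give $\beta_{1,Q,p} \lesssim \alpha_1 |Q|^{1/p}$.
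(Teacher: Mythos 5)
Your proof takes exactly the paper's route: locate a point $x^*$ in a bounded dilate of $Q$ lying outside $\widetilde{{\mathcal {O}}}$ via the Whitney property \eqref{eq:Whitney Q}, pass to ${\mathcal {O}}^\complement$, and read off $M_{\mathrm{HL}}(F_{1,p}^p)(x^*)\le U_1^p\alpha_1^p$ from the definition of ${\mathcal {O}}_1$ in \eqref{defofO12}. The opening reduction $\beta_{1,Q,p}^p\le\int_Q F_{1,p}^p$ is correct (and is what the paper uses, stated there as an identity).

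One step, however, is not quite justified as written. You appeal to the Lebesgue differentiation theorem to pass from $y\in\widetilde{{\mathcal {O}}}^\complement$ to $y\notin{\mathcal {O}}$, saying the theorem ``allows us to assume'' $y\notin{\mathcal {O}}$. But the differentiation theorem only says that \emph{almost every} point of ${\mathcal {O}}$ is a density point; the particular $y$ produced by \eqref{eq:Whitney Q} need not be a Lebesgue point of $\mathbbm{1}_{{\mathcal {O}}}$, and you cannot perturb $y$ inside $\widetilde{{\mathcal {O}}}^\complement\cap\{x:\,{\mathrm{dist}}(x,Q)\lesssim{\,\text{\rm diam}}(Q)\}$ without first knowing this set has positive measure (Whitney only gives you one point). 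The correct and shorter observation, which the paper uses implicitly, is that ${\mathcal {O}}$ is \emph{open}: it is a union of superlevel sets of the lower semicontinuous maximal function together with the open set $\widetilde\Omega_{\mu(\alpha_1)}$. If $y\in{\mathcal {O}}$, a whole ball around $y$ lies in ${\mathcal {O}}$, so $M_{\mathrm{HL}}\mathbbm{1}_{{\mathcal {O}}}(y)=1>2^{-10d}(\sqrt d)^{-d}$ and thus $y\in\widetilde{{\mathcal {O}}}$, a contradiction. Hence $\widetilde{{\mathcal {O}}}^\complement\subseteq{\mathcal {O}}^\complement$ for every point, not merely almost every point. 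With that one-line substitution your argument is complete and coincides with the paper's.
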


\begin{proof}
Let $Q \in {\mathcal {Q}}$. By \eqref{eq:Whitney Q} we have that $c\, Q \cap \widetilde{{\mathcal {O}}}^\complement \neq \emptyset$ provided that $c$ is sufficiently large, say $c=100\sqrt{d}$. Let $x^* \in c\,Q \cap \widetilde{O}^\complement \subseteq c\,Q \cap O^\complement$. Then we have
\begin{equation*}
\frac{1}{c^d|Q|} \int_Q |F_{1,p}|^p \leq \frac{1}{|c\,Q|}\int_{c\,Q} |F_{1,p}|^p \leq M_{\mathrm{HL}}(F_{1,p}^p)(x^*) \leq U_1^p \alpha_1^p,
\end{equation*}
as desired.
\end{proof}

Combining this with Lemma \ref{lemma:l2 sum eff_k} and Lemma \ref{lemma:key atomic} we obtain the key estimates
\begin{equation}\label{eq:l2 sum betakQ}
\Big(\sum_{k> - L(S_0)} (\beta_{1,Q,p}^{k})^{2}\Big)^{1/2} \le \beta_{1,Q,p} \lesssim |Q|^{1/p} \alpha_1.
\end{equation}
and
\begin{equation}\label{eq:key estimate}
\Big(\sum_{k> - L(S_0)} (\beta_{1,Q,p}^{k,n})^{p}\Big)^{1/p} \le \beta_{1,Q,p} \lesssim |Q|^{1/p} \alpha_1.
\end{equation}

In the proof of the sparse bounds, the case $q\geq 2$ will only require the decomposition in $k$ but not in $n$. Correspondingly, Lemma \ref{lemma:b_Q to beta_Q} and \eqref{eq:l2 sum betakQ} will be essential in the proof of Proposition \ref{prop:badpartq>2} in \S\ref{sec:badpart q>2}. The case $q < 2$ is more subtle and requires decomposition in the $n$-parameter. It will be essential in our argument that for $r>p$ the $L^r$ norms of $b^{k,n}_{1,Q}$ exhibit exponential decay in $n$. Correspondingly, Lemma \ref{lem:bkn} and \eqref{eq:key estimate} will be of central importance in the proof of Proposition \ref{prop:badpart} in \S\ref{sec:badpart q<2}.

\section{The induction step}\label{sec:induction step}
Let $\mathbf n\ge 1$. This section is devoted to reducing the proof of the inductive claim (Proposition \ref{inductiveclaim}) to a couple of main estimates.

Recall from \eqref {defn:Tj} that $\mathcal{T}_j f = \sum_{\substack{k \in \digamma\\ k > -j}} T_k^{(j)}P_k^2 f$ and
define
\begin{equation}\label{eq:cTQdef} \mathcal{T} = \sum_{j=N_1}^{N_2} \mathcal{T}_j \qquad \text{and} \qquad \mathcal{T}^Q f = \sum_{j=N_1}^{L(Q)} \mathcal{T}_j[f {\mathbbm 1}_Q]\end{equation}
for $Q\in\mathcal{Q}\cup \{S_0\}$, and note that $\mathcal{T}^{S_0}=\mathcal{T}$. Note that by Lemma \ref{lemma:O less than S_0}, if $Q \in {\mathcal {Q}}$ is such that $Q\cap S_0\not=\emptyset$ then $Q\subsetneq S_0$. In particular $L(Q)<L(S_0)=N_2$, so $L(Q)-N_1<\mathbf{n}$ which puts us in the position to apply the induction hypothesis to the operators ${\mathcal {T}}^Q.$

Next, the decomposition $f_1=g_1+b_1$ as in \eqref{eqn:goodbaddecomp} and \eqref{defn:bQ} give
\begin{align}
\notag &|\langle \mathcal{T} f_1, f_2\rangle| \le |\langle \mathcal{T} g_1, f_2\rangle| + |\langle \mathcal{T} b_1, f_2\rangle| \\
\notag
& \,\,\leq |\langle \mathcal{T}^{S_0} g_1, f_2\rangle|+ \Big|\big\langle \sum_{Q\in\mathcal{Q}} \mathcal{T}^Q b_1, f_2\big\rangle\Big| + \Big|\big\langle \sum_{N_1\le j\le N_2} \sum_{\substack{Q\in\mathcal{Q},\\ L(Q)<j}} \mathcal{T}_j b_{1,Q}, f_2\big\rangle\Big|
\\ \label{eq:three-terms}& \,\,\leq \sum_{Q\in\mathcal{Q}\cup \{S_0\}} |\langle\mathcal{T}^Q g_1, f_2\rangle| + \sum_{Q\in\mathcal{Q}}|\langle \mathcal{T}^Q f_1, f_2\rangle| +\Big |\big\langle \sum_{N_1\le j\le N_2} \sum_{\substack{Q\in\mathcal{Q},\\ L(Q)<j}} \mathcal{T}_j b_{1,Q}, f_2\big\rangle\Big|,
\end{align}
where in the last step we used again $b_1=f_1-g_1$.
We state four propositions that will be proved in the four subsequent sections.
For the good part, i.e. the first term in \eqref{eq:three-terms}
we have the following propositions. Here we use the notation $j_0=L(S_0)$ as in \S\ref{sec:base case}.

\begin{prop}\label{prop:goodpart}
Let $1<p\le q\le 2$.
For all $Q \in {\mathcal {Q}} \cup \{S_0\}$,
\[ |\langle \mathcal{T}^Q g_1, f_2\rangle| \lesssim \|m\|_\infty \,
|Q| \langle f_1\rangle_{S_0,p} \langle f_2\rangle_{3S_0,q'} \,.
\]
\end{prop}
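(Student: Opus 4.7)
The strategy is to combine the spatial support property of $\mathcal{T}^Q g_1$ with an $L^2$-based bound on $\mathcal{T}^Q$, using $q\le 2$ to interpolate from $L^2$ down to $L^q$. Since each $\mathcal{T}_j$ is local at scale $2^j$ by \eqref{supportin3S} and the sum in $\mathcal{T}^Q g_1=\sum_{j\le L(Q)}\mathcal{T}_j[g_1\mathbbm{1}_Q]$ involves only $j\le L(Q)$, the function $\mathcal{T}^Q g_1$ is supported in $3Q$. Hölder's inequality then gives $|\langle \mathcal{T}^Q g_1,f_2\rangle|\le \|\mathcal{T}^Q g_1\|_q\|f_2\mathbbm{1}_{3Q}\|_{q'}$, and the Whitney property of $Q\in\mathcal{Q}$ (or the trivial case $Q=S_0$) combined with the definition of $\mathcal{O}_2$ yields $\|f_2\mathbbm{1}_{3Q}\|_{q'}\lesssim \alpha_2 |Q|^{1/q'}$.

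Next I interpolate: since $\mathcal{T}^Q g_1$ is supported in $3Q$ and $q\le 2$, Hölder gives $\|\mathcal{T}^Q g_1\|_q\le |3Q|^{1/q-1/2}\|\mathcal{T}^Q g_1\|_2$. To bound the latter I view $\mathcal{T}^Q$ as multiplication by $\mathbbm{1}_Q$ followed by a Fourier multiplier whose symbol, obtained by interchanging the $j$- and $k$-summations, reads
\[ m^{(Q)}(\xi)=\sum_{k\in\digamma}\eta_k^2(\xi)\,\big[(\varphi_k m)*\widehat{\chi_{k,J}}\big](\xi),\]
with $\chi_{k,J}$ a smooth spatial cutoff arising from the telescoping of the $\Psi_j$'s over $j\in[N_1,L(Q)]\cap(-k,\infty)$. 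Since $\|\widehat{\chi_{k,J}}\|_1\lesssim 1$ uniformly in $k$ and the $\{\eta_k^2\}$ have finite overlap in frequency, one gets $\|m^{(Q)}\|_\infty\lesssim \|m\|_\infty$, hence $\|\mathcal{T}^Q g_1\|_2\lesssim \|m\|_\infty\|g_1\mathbbm{1}_Q\|_2$.

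The core step is then the local estimate $\|g_1\mathbbm{1}_Q\|_2\lesssim \alpha_1 |Q|^{1/2}$. Writing $g_1\mathbbm{1}_Q=(g_1)_Q\mathbbm{1}_Q+(g_1-(g_1)_Q)\mathbbm{1}_Q$, I observe that because $\mathbb{D}_k g_1=g_1^k$ and $\mathbb{D}_k h$ integrates to zero on dyadic cubes at scales $\ge -L(Q)$, the deviation on $Q$ coincides with $\sum_{k\ge -L(Q)}g_1^k$. By $L^2$-orthogonality of the martingale differences restricted to $Q$ and the pointwise square-function bound of Lemma~\ref{lemma:Linfty good},
\[ \|g_1-(g_1)_Q\|_{L^2(Q)}^2 = \int_Q\sum_{k\ge -L(Q)}|g_1^k|^2 \lesssim U_1^2\alpha_1^2|Q|. \]
The average $(g_1)_Q$ is a stopped partial sum $\mathbb{E}_{-L(S_0)+1}f_1(x_Q)+\sum_{k<-L(Q)}g_1^k(x_Q)$ of martingale differences of $f_1$ truncated at the level $U_1\alpha_1$; the stopping-time construction in Lemma~\ref{lemma:Linfty good}, via a nearby point $w_{x_Q}$ at which $\mathbb{G}_{S_0}f_1\le 2U_1\alpha_1$, gives $|(g_1)_Q|\lesssim U_1\alpha_1$.

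Assembling the estimates,
\[ |\langle \mathcal{T}^Q g_1,f_2\rangle|\lesssim \alpha_2|Q|^{1/q'}\cdot |Q|^{1/q-1/2}\cdot\|m\|_\infty\cdot \alpha_1|Q|^{1/2} \lesssim \|m\|_\infty|Q|\alpha_1\alpha_2. \]
The principal obstacle is justifying the pointwise bound on the average $(g_1)_Q$: while Lemma~\ref{lemma:Linfty good} directly controls the square function of $g_1$, the $L^\infty$ control of its stopped partial-sum average requires extracting more from the stopping-time construction than the $\ell^2$-square-function estimate alone.
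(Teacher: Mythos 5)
Your proof takes a genuinely different route from the paper's. You factor $\mathcal T^Q$ as a bounded $L^2$ multiplier applied to $g_1\mathbbm 1_Q$ and then use compact support to interpolate down to $L^q$, which reduces everything to the single estimate $\|g_1\mathbbm 1_Q\|_2\lesssim\alpha_1|Q|^{1/2}$. The paper instead keeps the frequency decomposition alive: it splits $g_1\mathbbm 1_Q=(\mathbb E_{1-j_0}f_1)\mathbbm 1_Q+\sum_{k'}g_1^{k'}\mathbbm 1_Q$ and pairs each $g_1^{k'}$ against the $P_k$-filtered operator, exploiting the almost-orthogonality $\|P_k\mathbb D_{k'}\|_{L^2\to L^2}\lesssim 2^{-|k-k'|/2}$, so that the relevant quantity is the square-function norm $\big\|(\sum_{k'}|g_1^{k'}\mathbbm 1_Q|^2)^{1/2}\big\|_{L^2}$ rather than the plain $\|g_1\mathbbm 1_Q\|_{L^2}$. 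That distinction is exactly where your argument breaks down.

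The gap you flag at the end is genuine and is the whole ballgame. Your reduction requires $|(g_1)_Q|\lesssim\alpha_1$. Since $b_1\mathbbm 1_Q=b_{1,Q}$ has mean zero on $Q$, this is precisely the statement $|\mathrm{av}_Q f_1|\lesssim\alpha_1$, and this does not follow from Lemma \ref{lemma:Linfty good}: that lemma controls the square function $\big(\sum_{k}|g_1^k(x)|^2\big)^{1/2}$, which bounds the $\ell^2$ norm of the coefficients $\big(\mathbb D_k f_1(x_Q)\big)_{-j_0<k<-L(Q)}$ but not the unsigned telescoping sum $\mathbb E_{-L(Q)}f_1(x_Q)-\mathbb E_{1-j_0}f_1(x_Q)$. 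The most it yields, via Cauchy--Schwarz in $k$, is $|(g_1)_Q|\lesssim\alpha_1\,(L(S_0)-L(Q))^{1/2}$, with a logarithmic loss in the scale gap. Nor do the stopping sets save you: $\mathcal O_1$ is built out of $\widetilde\Omega_{\mu(\alpha_1)}$ and $\{M_{\mathrm{HL}}(F_{1,p}^p)>U_1^p\alpha_1^p\}$, neither of which involves $|f_1|$ itself, so a Whitney cube $Q\in\mathcal Q$ is not a Calder\'on--Zygmund stopping cube for $f_1$ and its average of $f_1$ is not controlled. (For $Q=S_0$ there is no issue, since $|\mathrm{av}_{S_0} f_1|\leq\alpha_1$.)

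The paper never needs the $L^2$ norm of $g_1\mathbbm 1_Q$: the only place an $L^\infty$-type bound enters is for the single coarse piece $\mathbb E_{1-j_0}f_1$, which genuinely satisfies $\|\mathbb E_{1-j_0}f_1\|_\infty\lesssim\alpha_1$ because it averages $f_1$ over children of $S_0$ rather than over the small cube $Q$; all the remaining mass lives in the square function, which Lemma \ref{lemma:Linfty good} handles. Your step passing to $\|m^{(Q)}\|_\infty\|g_1\mathbbm 1_Q\|_2$ collapses the square-function structure and throws away the $P_k$-localization that makes the paper's argument close, and there is no evident way to recover the lost information without reintroducing that localization.
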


\begin{prop} \label{prop:goodpartq>2}
Let $2<q\le p'<\infty$, $q'<r\le 2$. For all $Q \in {\mathcal {Q}} \cup \{S_0\}$, \[ |\langle \mathcal{T}^Q g_1, f_2\rangle| \lesssim \mathcal{A}_{q',r,r}[m]
\,|Q| \langle f_1\rangle_{S_0,p} \langle f_2\rangle_{3S_0,q'} \,.\]
\end{prop}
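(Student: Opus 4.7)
The plan is to reduce, via duality and the Whitney-cube geometry of $Q$, to a uniform $L^{q'}\to L^{q'}$ bound on the adjoint $(\mathcal{T}^Q)^*$ derived from the hypothesis on $\mathcal{A}_{q',r,r}[m]$, with the correct $|Q|^{1/q}$-scaling supplied by the pointwise martingale square function bound for $g_1$ from Lemma~\ref{lemma:Linfty good}.

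First, $\mathcal{T}^Q g_1$ is supported in $3Q$ by the locality property \eqref{supportin3S}, hence $\langle\mathcal{T}^Q g_1,f_2\rangle=\langle g_1\mathbf{1}_Q,H\rangle$ with $H:=(\mathcal{T}^Q)^*(f_2\mathbf{1}_{3Q})$. I would then decompose
\[
g_1\mathbf{1}_Q=c_Q\mathbf{1}_Q+\sum_{k'>-L(Q)}g_1^{k'}\mathbf{1}_Q
\]
using the martingale reproducing formula at the scale of $Q$, noting that the identity $\mathbb{D}_{k'}[g_1\mathbf{1}_Q]=g_1^{k'}\mathbf{1}_Q$ for $k'>-L(Q)$ relies on the fact that $\mathbb{D}_{k'}$ commutes with multiplication by $\mathbf{1}_Q$ at these scales. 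By the self-adjointness of $\mathbb{D}_{k'}$ and Cauchy--Schwarz,
\[
\Big|\sum_{k'>-L(Q)}\langle g_1^{k'}\mathbf{1}_Q,H\rangle\Big|\le\Big\|\Big(\sum_{k'}|g_1^{k'}\mathbf{1}_Q|^2\Big)^{1/2}\Big\|_q\Big\|\Big(\sum_{k'>-L(Q)}|\mathbb{D}_{k'}H|^2\Big)^{1/2}\Big\|_{q'}\lesssim\alpha_1|Q|^{1/q}\|H\|_{q'},
\]
the first factor being $\le 2U_1\alpha_1|Q|^{1/q}$ by Lemma~\ref{lemma:Linfty good} and the second being $\lesssim\|H\|_{q'}$ by Burkholder's inequality for $1<q'<\infty$. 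For the constant piece, the fact that $\int_Q b_1=0$ (since $b_{1,Q}$ has mean zero on $Q$ and the other $b_{1,Q'}$ vanish on $Q$) forces $c_Q=\langle f_1\rangle_Q$; the Whitney property of $Q$ combined with the stopping-time estimate of Lemma~\ref{lem:stopping-time-boundf1} then gives $|c_Q|\lesssim\alpha_1$, whence $|c_Q\langle\mathbf{1}_Q,H\rangle|\lesssim\alpha_1|Q|^{1/q}\|H\|_{q'}$.

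The hard part will be the uniform operator bound $\|(\mathcal{T}^Q)^*\|_{L^{q'}\to L^{q'}}\lesssim\mathcal{A}_{q',r,r}[m]$. Since $q'\le 2$, I would apply the Littlewood--Paley square function characterization of $L^{q'}$ to the decomposition $(\mathcal{T}^Q)^*f=\mathbf{1}_Q\sum_j\sum_k P_k^2(T_k^{(j)})^*f$ into frequency annuli $\{|\xi|\sim 2^k\}$, and use the rescaling identity \eqref{Tkjscaling} together with the hypothesis $q'<r$ (which absorbs the exponential factor $2^{-\ell d(1/q'-1/r)}\le 1$) to sum
\[
\sum_{\ell\ge 0}\|T_k^{(\ell-k)}\|_{L^r\to L^r}\lesssim\mathcal{A}_{q',r,r}[m].
\]
The frequency pieces are then reassembled into an $L^{q'}$-bound via vector-valued inequalities and the spatial support of $f$ in $3Q$ to close the $L^r$--$L^{q'}$ discrepancy. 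Combined with the Whitney bound $\|f_2\mathbf{1}_{3Q}\|_{q'}\lesssim|Q|^{1/q'}\alpha_2$ (Lemma~\ref{f2CZdec}), this yields $|\langle\mathcal{T}^Q g_1,f_2\rangle|\lesssim\mathcal{A}_{q',r,r}[m]\,|Q|\langle f_1\rangle_{S_0,p}\langle f_2\rangle_{3S_0,q'}$, as claimed.
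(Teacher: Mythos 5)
Your plan of reducing to a uniform operator bound $\|(\mathcal{T}^Q)^*\|_{L^{q'}\to L^{q'}}\lesssim \mathcal{A}_{q',r,r}[m]$ is a reasonable reorganization (the paper effectively lands on the same target: after Rademacher averaging, the quantity $\mathcal{E}^2_{q'}$ in the paper's proof is controlled by an $M^{q'\to q'}$ bound on the combined truncated multiplier), and your handling of the first factor via the pointwise martingale square-function bound of Lemma~\ref{lemma:Linfty good} plus Burkholder is fine. But your sketch of the ``hard part'' has a genuine gap.

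You propose to prove the $L^{q'}\to L^{q'}$ bound from the single-$k$, single-$\ell$ estimates $\sum_{\ell}\|T_k^{(\ell-k)}\|_{L^r\to L^r}\lesssim\mathcal{A}_{q',r,r}$ by ``vector-valued inequalities and the spatial support of $f$ in $3Q$''. This does not close the gap. The spatial localization lets you trade $\|\cdot\|_{L^{q'}(CQ)}\le |Q|^{1/q'-1/r}\|\cdot\|_{L^r(CQ)}$ for the output, but then applying the $L^r\to L^r$ bound produces $\|f_2\mathbf 1_{3Q}\|_r$ on the input side, and since $r>q'$ this is a higher average than $\langle f_2\rangle_{3Q,q'}$; the Calder\'on--Zygmund stopping time (Lemma~\ref{f2CZdec}) only controls the $q'$-average, not the $r$-average, so there is no uniform bound. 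No simple Littlewood--Paley/vector-valued argument can recover an $L^{q'}\to L^{q'}$ operator norm from $L^r\to L^r$ building blocks in this regime — this is precisely what makes the $q>2$ case delicate. The actual proof in the paper invokes the endpoint ($p$-sensitive) multiplier theorem \eqref{studia-estimate} from \cite{SeegerStudia90}, applied with $p$ there replaced by $q'$, to the combined multiplier $h$ in \eqref{eq:h-multipliers}; the technical content is Lemma~\ref{lem:studia-verification}, which verifies that $h$ satisfies the localized Besov-$M^{r\to r}$ hypothesis with constant $\lesssim\mathcal{A}_{q',r,r}[m]$, \emph{uniformly} in $\digamma$, the truncation sets $\Lambda(k)$, and the signs $a_k$. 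That uniformity over truncations is crucial because $\mathcal{T}^Q$ involves $\ell$-truncations depending on $Q$ and $N_1$, and it is not a formal consequence of the hypothesis on $m$ alone. Your proof proposal does not mention this endpoint multiplier result or the need to verify its hypothesis for the truncated multipliers, which is the heart of the argument.

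Two smaller points. First, the constant piece: you write $c_Q=\langle f_1\rangle_Q$ and then cite Lemma~\ref{lem:stopping-time-boundf1}, but that lemma controls $\|F_{1,p}\mathbf 1_Q\|_p$, not the average of $f_1$ over $Q$; the paper instead uses $\mathbb{E}_{1-j_0}f_1$ restricted to $Q$, whose value is directly controlled by $\langle f_1\rangle_{S_0,1}$. Second, you should double-check the decomposition of $g_1\mathbf 1_Q$: $\mathbb{D}_{k'}$ commutes with multiplication by $\mathbf 1_Q$ only for $k'>-L(Q)$, and the scales $-L(S_0)<k'\le -L(Q)$ need to be accounted for (they do not produce constants on $Q$), though this is a point the paper's exposition also glosses over.
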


Propositions \ref{prop:goodpart} and \ref{prop:goodpartq>2} will be proved in \S \ref{sec:goodpart q<2} and \S \ref{sec:goodpart q>2}, respectively. Note that disjointness of the cubes in $Q \in {\mathcal {Q}}$ implies $\sum_{Q \in {\mathcal {Q}}} |Q| \leq |S_0|$ and thus Propositions \ref{prop:goodpart} and \ref{prop:goodpartq>2} yield
\begin{equation*}
\sum_{Q\in\mathcal{Q}\cup \{S_0\}} |\langle\mathcal{T}^Q g_1, f_2\rangle| \lesssim |S_0| \langle f_1\rangle_{S_0,p} \langle f_2\rangle_{3S_0,q'}.
\end{equation*}
The terms involving ${\mathcal {T}}^Q f_1$ for $Q \in {\mathcal {Q}}$ are estimated using the inductive hypothesis exactly as described in \cite[\S 4.4]{BRS}, as $L(Q)-N_1 < n$. More precisely, given any $\epsilon>0$, for each $Q \in {\mathcal {Q}}$, there exists a $\gamma$-sparse family of cubes ${\mathcal {S}}_Q^\epsilon \subseteq {\mathcal {D}}(Q)$ such that
\begin{equation*}
|\langle\mathcal{T}^Q f_1, f_2\rangle| \leq ( \mathbf U( \mathbf{n}-1) + \epsilon ) \sum_{\tilde{Q} \in {\mathcal {S}}_Q^\epsilon} |\tilde{Q}|\jp{f_1}_{\tilde{Q},p} \jp{f_2}_{3\tilde{Q},q'}
\end{equation*}
holds. By disjointness of the $Q \in {\mathcal {Q}}$ and Lemma \ref{lemma:O less than S_0}, the resulting family \[{\mathcal {S}}^\epsilon = \{S_0\} \cup \bigcup_{Q \in {\mathcal {Q}}: Q \subseteq S_0} {\mathcal {S}}^\epsilon_Q\] is $\gamma$-sparse.
We then get the desired result from the following propositions which take care of the third term in
\eqref{eq:three-terms}.

\begin{prop}\label{prop:badpart}
Let $1<p<r\le q\le 2$. Then
\begin{equation*}
\Big|\Biginn{ \sum_{N_1\le j\le N_2} \sum_{\substack{Q\in\mathcal{Q},\\ L(Q)<j}} \mathcal{T}_j [b_{1,Q}]}{ f_2}\Big| \lesssim
\mathcal{A}_{p,r,q}[m]
|S_0| \jp{f_1}_{S_0,p} \jp{f_2}_{3S_0,q'}.
\end{equation*}
\end{prop}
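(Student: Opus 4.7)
The plan is to adapt the single-scale argument from the proof of Lemma~\ref{lem:mult-single-scale-pq} in \S\ref{sec:PfofLemma} to the multi-scale setting, handling the new sums over spatial scales $j$ and Whitney cubes $Q\in\mathcal{Q}$ via the spatial localization of the kernels $K_{k'}^{(j)}$ (supported in annuli $|x|\sim 2^j$) together with the stopping-time bounds from \S\ref{sec:CZ-dec}. Expanding $\mathcal{T}_j=\sum_{k'\in\digamma,\,k'>-j} T_{k'}^{(j)} P_{k'}^2$ and using the atomic decomposition $b_{1,Q}=\sum_{k>-L(S_0)}\sum_{n\ge 0} b_{1,Q}^{k,n}$ with the substitution $m=k-k'$, I first aim for the per-piece bound
\[
\|T_{k'}^{(j)} P_{k'}^2 b_{1,Q}^{k'+m,n}\|_q \lesssim 2^{-jd(\frac{1}{p}-\frac{1}{q})}\, A_{p,r,q}^{k',j+k'}\, 2^{-|m|\min(\frac{1}{r},\frac{1}{r'})}\, 2^{(m-n)d(\frac{1}{p}-\frac{1}{r})}\, \beta_{1,Q,p}^{k'+m,n},
\]
which follows from combining \eqref{Tkjscaling} for $\|T_{k'}^{(j)}\|_{L^r\to L^q}$, the cancellation estimate \eqref{eq:Lpk1k2D} for $P_{k'}^2\mathbb{D}_{k'+m}$ (applied to $b_{1,Q}^{k'+m,n}$, which lies in the range of $\mathbb{D}_{k'+m}$), and Lemma~\ref{lem:bkn}.

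As in the base case, the $(m,n)$-sum is split into two regimes: $\{m\ge 0,\,0\le n\le 2m\}$, where $r=p$ removes the $(m-n)$-exponent, and its complement, where $p<r\le q$ is taken so that the factor $2^{(m-n)d(\frac{1}{p}-\frac{1}{r})}$ provides summable decay. For fixed $(j,Q,m,n)$, I sum over $k'$ using Littlewood--Paley almost-orthogonality (valid for $q\le 2$) combined with the $\ell^p$-summability \eqref{eq:key estimate} and the embedding $\ell^p\subseteq\ell^q$ (for $p\le q$); this replaces $(\sum_{k'}(\beta_{1,Q,p}^{k'+m,n})^q)^{1/q}$ by $\beta_{1,Q,p}$. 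The pairing with $f_2$ then uses $\mathrm{supp}(\mathcal{T}_j[b_{1,Q}])\subset 3Q_j$ from \eqref{supportin3S} (with $Q_j$ the dyadic ancestor of $Q$ of side $2^j$), together with the bound $\|f_2\mathbbm{1}_{3Q_j}\|_{q'}\lesssim |Q_j|^{1/q'}\alpha_2$, which follows from the Whitney property of $Q$ and the definition of $\mathcal{O}_2$.

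The main obstacle lies in the coordinated summation in $(j,k')$ and $Q$. The na\"{i}ve approach yields a factor $\sum_{j>L(Q)} 2^{jd/p'} \sup_{k'} A_{p,r,q}^{k',j+k'}$, which is not controlled by the Besov hypothesis alone. The remedy is to pass to the variable $\ell=j+k'$ so as to invoke the bound $\sum_\ell A_{p,r,q}^{k',\ell}\le\mathcal{A}_{p,r,q}[m]$, while exploiting the spatial almost-disjointness of $\{\mathcal{T}_j b_{1,Q}\}_j$ (supported in disjoint $2^j$-annuli around $Q$) and the bounded overlap of $\{3Q_j\}_Q$ at each fixed scale $j$ to avoid additional losses in the $L^q$-$L^{q'}$ pairing. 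Finally, the stopping-time bound $\beta_{1,Q,p}\lesssim |Q|^{1/p}\alpha_1$ from Lemma~\ref{lem:stopping-time-boundf1} together with $\sum_Q|Q|\le|S_0|$ completes the summation, yielding the target bound $\mathcal{A}_{p,r,q}[m]\,|S_0|\,\jp{f_1}_{S_0,p}\jp{f_2}_{3S_0,q'}$.
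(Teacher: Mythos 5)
Your outline matches the paper's general strategy: expand $\mathcal{T}_j = \sum_{k'} T_{k'}^{(j)} P_{k'}^2$, substitute $\ell = j+k'$ and $m = k - k'$, use the atomic decomposition into $b_{1,Q}^{k,n}$, Lemma~\ref{lem:bkn}, the martingale/Littlewood–Paley cancellations \eqref{eq:Lpk1k2}, the $\ell^p$-summability \eqref{eq:key estimate}, and the stopping-time bound $\beta_{1,Q,p}\lesssim |Q|^{1/p}\alpha_1$. Your per-piece bound (combining \eqref{Tkjscaling}, \eqref{eq:Lpk1k2D}, and Lemma~\ref{lem:bkn}) is correct, and your two-regime split in $(m,n)$ (with $r=p$ in $\{m\ge 0,\ 0\le n\le 2m\}$, absorbing $\mathcal{A}_{p,p,q}\lesssim\mathcal{A}_{p,r,q}$ via \eqref{eq:A-embeddings}) is an acceptable alternative to the paper's choice of $r$ close to $p$ with \eqref{eq:convenience}.

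The gap is in the coordination of the $Q$-sum, precisely the step you flag as ``the main obstacle.'' Bounding $\|\mathcal{T}_j b_{1,Q}\|_q$ separately for each $Q$ and then invoking ``bounded overlap of $\{3Q_j\}_Q$'' does not close the argument: for fixed $j$, all the cubes $Q$ contained in a given scale-$j$ cube $B$ have $\mathcal{T}_j b_{1,Q}$ supported in the \emph{same} $3B$, so the outputs overlap fully and there is no spatial almost-disjointness to exploit on the output side. Summing $\|\mathcal{T}_j b_{1,Q}\|_q$ in $Q$ by the triangle inequality loses a factor growing with the number of $Q\subsetneq B$. The disjointness that can actually be used is on the \emph{input} side, and the paper exploits it by aggregating $\sum_{Q\subsetneq B} b_{1,Q}^{k+\nu,n}$ before applying the operator bound: since the $b_{1,Q}$ have disjoint supports, Lemma~\ref{lemma:lift lp to lq} gives
\[
\Big\|\sum_{\substack{Q\in\mathcal{Q}\\Q\subsetneq B}} b_{1,Q}^{k+\nu,n}\Big\|_r \lesssim |B|^{\frac1r-\frac1q}\Big(\sum_{Q\subsetneq B}|Q|^{1-\frac qr}\|b_{1,Q}^{k+\nu,n}\|_r^q\Big)^{1/q},
\]
and the weight $|Q|^{1-q/r}$ together with the crucial cancellation $(|Q|/|B|)^{1/p-1/r}\le 1$ produces exactly the weighted $\ell^q$ quantity $\sum_Q |Q|^{1-q/p}(\beta_{1,Q,p}^{k+\nu,n})^q$ that can then be summed in $k$ via \eqref{eq:key estimate} and in $Q$ via the stopping time and $\sum_Q|Q|\le|S_0|$. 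A second, related imprecision: estimating $\|f_2\mathbbm{1}_{3Q_j}\|_{q'}\lesssim|Q_j|^{1/q'}\alpha_2$ at the outset discards the $P_k$-cancellation on the $f_2$ side, which the paper's argument actually uses (it keeps $\|(P_kf_2)\mathbbm{1}_{3B}\|_{q'}$, applies H\"older over $B$ and then over $k$, and finally uses $q'\ge 2$ with Littlewood--Paley to get $(\sum_k\|P_kf_2\|_{q'}^{q'})^{1/q'}\lesssim |S_0|^{1/q'}\alpha_2$); without tracking this $k$-cancellation the $k$-sum would not close. In short, you identify the right obstacle but the proposed remedy (bounded overlap plus annular disjointness in $j$) does not supply what is needed; the actual mechanism is input-side disjointness via Lemma~\ref{lemma:lift lp to lq}, the weight $|Q|^{1-q/r}$, and a double H\"older over $B$ and $k$.
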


\begin{prop} \label{prop:badpartq>2} Let $2\le q\le p'<\infty$. Then
\begin{equation*}
\Big|\Biginn{ \sum_{N_1\le j\le N_2} \sum_{\substack{Q\in\mathcal{Q},\\ L(Q)<j}} \mathcal{T}_j [b_{1,Q}]} { f_2}\Big| \lesssim
\mathcal{A}_{p,p,q}[m]
\,
|S_0| \jp{f_1}_{S_0,p} \jp{f_2}_{3S_0,q'}.
\end{equation*}
\end{prop}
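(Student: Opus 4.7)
The proof parallels that of Proposition \ref{prop:badpart} but substitutes Littlewood--Paley square function techniques for the subatomic $(k,n)$-decomposition used in the $q<2$ case, exploiting the Fourier tools available when $q\ge 2$. I begin by reorganizing the sum, using $\mathrm{supp}(b_{1,Q})\subset Q$, as
\[ \Lambda := \Big|\Big\langle \sum_{j=N_1}^{N_2}\sum_{\substack{Q\in\mathcal{Q}\\L(Q)<j}}\mathcal{T}_j b_{1,Q},\, f_2\Big\rangle\Big| = \Big|\sum_{Q\in\mathcal{Q}} \langle b_{1,Q},\, (\mathcal{T}^{>Q})^* f_2 \cdot \mathbf{1}_Q\rangle\Big|, \]
with $\mathcal{T}^{>Q}:=\sum_{j=L(Q)+1}^{N_2}\mathcal{T}_j$, and then apply H\"older in the $Q$-index with exponents $(p,p')$. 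The first factor $\big(\sum_Q\|b_{1,Q}\|_p^p\big)^{1/p}$ is $\lesssim |S_0|^{1/p}\alpha_1$ by Lemma \ref{lemma:b_Q to beta_Q}, the stopping time bound \eqref{eq:stopping time condition}, and disjointness of the Whitney cubes: $\sum_Q\|b_{1,Q}\|_p^p\le\sum_Q\beta_{1,Q,p}^p\lesssim\alpha_1^p\sum_Q|Q|\le|S_0|\alpha_1^p$.

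The core analytic step is a tail operator bound
\[ \|\mathcal{T}^{>Q}\|_{L^p\to L^q}\lesssim 2^{-L(Q)d(1/p-1/q)}\mathcal{A}_{p,p,q}[m]. \]
For fixed $k$, summing $\|T_k^{(j)}\|_{L^p\to L^q}\lesssim 2^{-jd(1/p-1/q)}A_{p,p,q}^{k,j+k}$ from \eqref{Tkjscaling} over $j>\max(L(Q),-k)$ with $\ell=j+k$ yields $\|T_k^{>Q}\|_{L^p\to L^q}\lesssim 2^{-L(Q)d(1/p-1/q)}\sum_\ell A_{p,p,q}^{k,\ell}\lesssim 2^{-L(Q)d(1/p-1/q)}\mathcal{A}_{p,p,q}[m]$, uniformly in $k$. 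The $k$-sum is then absorbed through the vector-valued Littlewood--Paley inequality
\[ \Big\|\sum_k T_k^{>Q} P_k^2 F\Big\|_q \lesssim \Big(\sum_k\|T_k^{>Q} P_k^2 F\|_q^2\Big)^{1/2}\quad(q\ge 2), \]
valid by the approximate Fourier localization of $T_k^{>Q}P_k^2$ near $|\xi|\sim 2^k$, combined with the square-function estimate $\big(\sum_k\|P_k^2 F\|_p^2\big)^{1/2}\lesssim \|F\|_p$, which follows from the standard LP theorem for $\{P_k\}$ and integral Minkowski for $p\le 2$. Dualizing gives $\|(\mathcal{T}^{>Q})^*\|_{L^{q'}\to L^{p'}}\lesssim 2^{-L(Q)d(1/p-1/q)}\mathcal{A}_{p,p,q}[m]$.

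For the second H\"older factor, using the global bound $\|(\mathcal{T}^{>Q})^* f_2\|_{p'}\lesssim 2^{-L(Q)d(1/p-1/q)}\mathcal{A}|S_0|^{1/q'}\alpha_2$ directly fails, since $\sum_Q 2^{-L(Q)d(1/p-1/q)p'}$ diverges over Whitney cubes of small scale. Instead one decomposes $(\mathcal{T}^{>Q})^* = \sum_{j>L(Q)}\mathcal{T}_j^*$ and exploits the scale-$j$ locality $\mathbf{1}_Q\mathcal{T}_j^* f_2=\mathbf{1}_Q\mathcal{T}_j^*[f_2\mathbf{1}_{3Q^{(j)}}]$, where $Q^{(j)}$ is the dyadic ancestor of $Q$ of side $2^j$. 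The Whitney property ensures $3Q^{(j)}\cap\widetilde{\mathcal{O}}^{\complement}\neq\emptyset$ for $j\ge L(Q)+c_d$, yielding the Calder\'on--Zygmund control $\|f_2\mathbf{1}_{3Q^{(j)}}\|_{q'}\lesssim 2^{jd/q'}\alpha_2$; combined with the bounded overlap of $\{3Q^{(j)}\}_{Q\in\mathcal{Q}}$ at each fixed $j$ and the embedding $\ell^{q'}\subset\ell^{p'}$ (since $p\le q$), this gives
\[ \sum_{Q:L(Q)<j}\|\mathbf{1}_Q\mathcal{T}_j^* f_2\|_{p'}^{p'}\lesssim \|\mathcal{T}_j^*\|_{L^{q'}\to L^{p'}}^{p'}\,|S_0|^{p'/q'}\alpha_2^{p'}. \]
A weighted Minkowski inequality in $j$, together with the same reorganization $\ell=j+k$ used for the tail operator (which keeps the geometric factor $2^{-jd(1/p-1/q)}$ paired with the summable-in-$\ell$ quantities $A_{p,p,q}^{k,\ell}$ defining $\mathcal{A}_{p,p,q}[m]$), then produces $\big(\sum_Q\|(\mathcal{T}^{>Q})^* f_2\mathbf{1}_Q\|_{p'}^{p'}\big)^{1/p'}\lesssim \mathcal{A}_{p,p,q}[m]|S_0|^{1/p'}\alpha_2$. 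The main obstacle is carrying out this last summation so that the constant is $\mathcal{A}_{p,p,q}[m]$ rather than $\sup_j\mathcal{C}_{p,p,q}(j)$; this forces one to weight the $j$-sum and keep track of the $\ell=j+k$ substitution through the weighted H\"older step rather than reducing prematurely to single-scale $L^{q'}\to L^{p'}$ norms.
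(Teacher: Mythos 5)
Your strategy is genuinely different from the paper's: you apply H\"older with exponents $(p,p')$ in the $Q$-variable to decouple $b_{1,Q}$ from the tail operator acting on $f_2$, whereas the paper keeps the bilinear structure throughout, running a Cauchy--Schwarz over scale-$j$ tiles $B$ (together with a Calder\'on--Zygmund decomposition $f_2=g_2+\sum_{Q'}b_{2,Q'}$) so that the geometric factor $2^{-jd(1/p-1/q)}$ from \eqref{eq:Tkl scaling} cancels against $|B|^{1/p-1/q}$ via the weights $|Q|^{1-2/p}$, and the $\ell$-sum is paired directly with square-summable sequences to produce $\mathcal{A}_{p,p,q}$. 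Unfortunately your decoupled version has two genuine gaps.

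First, the bound $\|b_{1,Q}\|_p\le\beta_{1,Q,p}$ is not what Lemma \ref{lemma:b_Q to beta_Q} gives: that lemma controls only the single-scale piece, $\|b_Q^k\|_p\le\beta_{Q,p}^k$. The text after \eqref{eq:L2 trivial atoms} explicitly cautions that no such relation between the $L^p$ norm of $\sum_\mu\sum_W a_{W,\mu}$ and $\|F_p\|_p=\beta_{Q,p}$ is available for $p<2$; trying to go through the martingale square function and sum over $k$ gives $\sum_k(\beta_{Q,p}^k)^p$, which by $p/2<1$ is an \emph{upper} bound for $\beta_{Q,p}^p$, not a lower one. (The aggregate $\sum_Q\|b_{1,Q}\|_p^p\lesssim|S_0|\alpha_1^p$ can still be rescued, since the $Q$ are disjoint so $\sum_Q\|b_{1,Q}\|_p^p=\|b_1\|_p^p$, and $\|b_1\|_p\le\|f_1\|_p+\|g_1\|_p\lesssim\|f_1\|_p$ by Burkholder and the pointwise domination $S(g_1)\le\mathfrak g_{S_0}f_1$; but that is a different argument from the one you wrote.)

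Second, and more seriously, the second H\"older factor does not close. After disjointness and the $3Q^{(j)}$ localization you correctly reach
\[
\Big(\sum_{Q:\,L(Q)<j}\|\mathbf 1_Q\mathcal T_j^*f_2\|_{p'}^{p'}\Big)^{1/p'}\lesssim \|\mathcal T_j^*\|_{L^{q'}\to L^{p'}}\,|S_0|^{1/q'}\alpha_2,
\]
and Minkowski in $j$ then leaves you with $\sum_j\|\mathcal T_j^*\|_{L^{q'}\to L^{p'}}\approx\sum_j 2^{-jd(1/p-1/q)}\mathcal C_{p,p,q}(j)$. Since the Whitney cubes can have arbitrarily small side length, $j$ ranges over an essentially unbounded interval below $L(S_0)$, and $2^{-jd(1/p-1/q)}$ grows geometrically as $j$ decreases; the sum is not controlled by $\mathcal{A}_{p,p,q}[m]$, nor does the target $|S_0|^{1/p'}\alpha_2$ appear: instead you get a divergent constant times $|S_0|^{1/q'}\alpha_2$. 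You flag exactly this as ``the main obstacle'' and gesture at a ``weighted Minkowski'' and the $\ell=j+k$ reorganization, but no weight is exhibited and it is not clear one exists: once the two H\"older factors are decoupled, the favorable cancellation of $2^{-jd(1/p-1/q)}$ against the cube size is lost. In the paper's argument this cancellation happens \emph{inside} the bilinear pairing, via the inequality $(|Q|/|B|)^{1/p-1/r}\le 1$ (here $(|Q|/|B|)^{1/p-1/2}$ in the $q\ge2$ case) and the Cauchy--Schwarz over $B\in\mathcal B_{\ell-k}$ against the $L^2$-local pieces of $g_2$ and $b_{2,Q'}$. Your proof therefore needs either a new mechanism to effect that cancellation after decoupling, or a return to a bilinear Cauchy--Schwarz argument of the paper's type.
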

Propositions \ref{prop:badpart} and \ref{prop:badpartq>2} will be proved in \S \ref{sec:badpart q<2} and \S \ref{sec:badpart q>2}, respectively.
Notice that the main induction step for Theorem \ref{thm:qle2} follows from Propositions \ref{prop:goodpart} and \ref{prop:badpart}.
For Theorem \ref{thm:qge2} it follows from Propositions \ref{prop:goodpart} and \ref{prop:badpartq>2} and for Theorem \ref{thm:multq>2} it follows from Propositions
\ref{prop:goodpartq>2} and \ref{prop:badpartq>2}.

\section{The good part}\label{sec:goodpart}

Here we prove Proposition \ref{prop:goodpart} and Proposition \ref{prop:goodpartq>2}.
By the definition of $\mathcal{T}_j$ in \eqref{defn:Tj}, using \eqref{supportin3S} and substituting $j$ by $\ell-k$,
\[ \langle \mathcal{T}^Q g_1,f_2\rangle =\sum_{\substack{k\in\digamma}} \sum_{\substack{\ell>0\\N_1\le\ell-k\le L(Q)}} \langle T_k^{(\ell-k)} P_k P_k [g_1 {\mathbbm 1}_Q], f_2{\mathbbm 1}_{3Q}\rangle. \]
Next, using the decomposition $g_1 {\mathbbm 1}_Q= ({\mathbb {E}}_{1-j_0} f_1){\mathbbm 1}_Q + \sum_{k' >-L(Q)} g_1^{k'} {\mathbbm 1}_Q$
as in \eqref{eq:goodfct-expansion},
\[
|\langle \mathcal{T}^Q g_1, f_2\rangle|\le I+II,
\]
where
\begin{equation}\label{goodparttoestimate1}
I= \sum_{\substack{k\in\digamma}} \Big|\sum_{\substack{\ell>0\\N_1\le\ell-k\le L(Q)}} \langle P_kT_k^{(\ell-k)} P_k [{\mathbb {E}}_{1-j_0} (f_1 {\mathbbm 1}_Q)],f_2 {\mathbbm 1}_{3Q} \rangle \Big|,
\end{equation}
\begin{equation}\label{goodparttoestimate}
II=\Big| \sum_{\substack{k\in\digamma}} \sum_{k'>-L(Q)} \sum_{\substack{\ell>0\\N_1\le\ell-k\le L(Q)}} \langle P_kT_k^{(\ell-k)} P_k [g_1^{k'} {\mathbbm 1}_{Q}], f_2{\mathbbm 1}_{3Q} \rangle \Big|
\end{equation}
and $g_1^{k'}$ is as in \eqref{defn:good}.
The main contribution to $II$ is given by the terms with $|k-k'|\lesssim 1$. Therefore we substitute $k'=k+\mathrm{\nu}$ with $\mathrm{\nu}\in{\mathbb {Z}}$ and estimate
\[
II \le \sum_{\mathrm{\nu}\in{\mathbb {Z}}} \Big| \sum_{\substack{k\in\digamma}} \sum_{\substack{\ell>0\\N_1\le\ell-k\le L(Q)}} \langle P_kT_k^{(\ell-k)} P_k [g_1^{k+\mathrm{\nu}} {\mathbbm 1}_{Q}], f_2{\mathbbm 1}_{3Q} \rangle \Big|.
\]

From here on the terms $I$ and $II$ will each be estimated differently depending on whether $q\le 2$ or $q>2$.

\subsection{The case \texorpdfstring{$q\le 2$}{q<=2}: Proof of Proposition \ref{prop:goodpart}}\label{sec:goodpart q<2}
By \eqref{eq:kernels in the scaled multiplier form} and \eqref{eqn:defofPsiell} we have for each fixed $k\in\digamma$ and integers $0<L_1\le L_2$ that
\begin{equation}\label{eq:sum ell telescoping} \Big\|\sum_{\substack{L_1<\ell\le L_2}} T_k^{(\ell-k)} \Big\|_{2\to 2} \le 2
\sup_{\ell>0} \|\varphi m(2^k\cdot)*2^{\ell}\widehat{\Phi_0}(2^{\ell}\cdot)\|_\infty \lesssim \|m\|_\infty, \end{equation}
uniformly in $k,L_1,L_2$.
Using this and the Cauchy--Schwarz inequality
we bound
\[ II\lesssim \|m\|_\infty \sum_{\mathrm{\nu}\in{\mathbb {Z}}}\sum_{\substack{k\in\digamma}} \|P_k {\mathbb {D}}_{k+\mathrm{\nu}} [g_1^{k+\mathrm{\nu}} {\mathbbm 1}_{Q}]\|_2 \|P_k [f_2{\mathbbm 1}_{3Q}]\|_2, \]
which by another application of the Cauchy--Schwarz inequality and \eqref{eq:Lpk1k2} is
\[ \lesssim \|m\|_\infty \sum_{\mathrm{\nu}\in{\mathbb {Z}}} 2^{-|\mathrm{\nu}|/2} \Big\|\Big(\sum_{k>-j_0} |g_1^{k}|^2\Big)^{1/2}\Big\|_{L^2(Q)}
\Big\| \Big( \sum_{k\in{\mathbb {Z}}} |P_k[f_2 \mathbf{1}_{3Q}]|^2 \Big)^{1/2} \Big\|_2.
\]
By Lemma \ref{lemma:Linfty good} we have
\[ \Big\|\Big(\sum_{k>-j_0} |g_1^{k}|^2\Big)^{1/2}\Big\|_{L^2(Q)}\lesssim \alpha_1 |Q|^{\frac12}. \]
Moreover using $q'\ge 2$,
\begin{equation}\label{eq:sum Pkf2Q}
\Big\| \Big( \sum_{k\in{\mathbb {Z}}} |P_k[f_2 \mathbf{1}_{3Q}]|^2 \Big)^{1/2} \Big\|_2\lesssim \|f_2\|_{L^2(3Q)} \lesssim |Q|^{\frac12} \langle f_2\rangle_{3Q,q'} \lesssim |Q|^{\frac12} \alpha_2,
\end{equation}
where the last step follows from Lemma \ref{f2CZdec} in the case $Q\in{\mathcal {Q}}$ (and is void if $Q=S_0$).
Thus we obtain $II\lesssim \|m\|_\infty |Q| \langle f_1\rangle_{S_0,p} \langle f_2\rangle_{3S_0,q'}$, as desired.

To bound $I$ we again use \eqref{eq:sum ell telescoping}, the Cauchy--Schwarz inequality and \eqref{eq:Lpk1k2} to arrive at
\begin{align*}
I &\lesssim \|m\|_\infty \sum_{\substack{k\in\digamma}} \|P_k [({\mathbb {E}}_{1-j_0} f_1){\mathbbm 1}_Q]\|_2 \|P_k [f_2{\mathbbm 1}_{3Q}]\|_2\\
&\le \|m\|_\infty \Big\|\Big(\sum_{k\in{\mathbb {Z}}} |P_k({\mathbb {E}}_{1-L(Q)} f_1){\mathbbm 1}_Q|^2\Big)^{1/2}\Big\|_{L^2(Q)}
\Big\| \Big( \sum_{k\in{\mathbb {Z}}} |P_k[f_2 \mathbf{1}_{3Q}]|^2 \Big)^{1/2} \Big\|_2\\
&\lesssim \|m\|_\infty \|{\mathbb {E}}_{1-j_0} f_1\|_{L^2(Q)} |Q|^{\frac12} \alpha_2 \lesssim \|m\|_\infty |Q| \langle f_1\rangle_{S_0,p} \langle f_2\rangle_{3S_0,q'},
\end{align*}
where in the last two steps we have used \eqref{eq:sum Pkf2Q} and that
\begin{equation*}
\|{\mathbb {E}}_{1-j_0} f_1\|_{L^2(Q)}\lesssim |Q|^{\frac12} \langle f_1\rangle_{S_0,1}\le |Q|^{\frac12} \langle f_1\rangle_{S_0,p}.
\end{equation*}
This concludes the proof of Proposition \ref{prop:goodpart}. \qed

\subsection{The case \texorpdfstring{$q>2$}{q>2}: Proof of Proposition \ref{prop:goodpartq>2}}\label{sec:goodpart q>2}
Here we assume $2<q\le p'<\infty$ and let $r\in (q',2]$.
We begin with estimating the term $II$. Note that by Fubini's theorem,
\[ II\le \sum_{\mathrm{\nu}\in{\mathbb {Z}}} \Big| \int \sum_{\substack{k\in\digamma}} P_k [g_1^{k+\mathrm{\nu}} {\mathbbm 1}_{Q}](x) \sum_{\substack{\ell>0\\N_1\le\ell-k\le L(Q)}}
(T^{(\ell-k)})^* P_k[f_2{\mathbbm 1}_{3Q}](x)\, \, \mathrm{d} x\Big|. \]
By the Cauchy--Schwarz inequality applied to the summation in $k$ and H\"older's inequality applied to the integration in $x$, we obtain that the previous display is no greater than
\[ \sum_{\mathrm{\nu}\in{\mathbb {Z}}} {\mathcal {E}}^1_{q,\mathrm{\nu}} \cdot {\mathcal {E}}_{q'}^2, \]
where
\begin{subequations}
\begin{align}
{\mathcal {E}}^1_{q,\mathrm{\nu}}=&\,\Big\| \Big( \sum_{k\in \digamma} \big| P_k [g_1^{k+\mathrm{\nu}}{\mathbbm 1}_Q]\big|^2\Big)^{1/2} \Big\|_q, \\
{\mathcal {E}}^2_{q'}=&\,\Big\| \Big(\sum_{k\in\digamma} \Big|\sum_{\substack{\ell>0\\N_1\le\ell-k\le L(Q)}} (T_k^{(\ell-k)})^* P_k[f_2 {\mathbbm 1}_{3Q}]\Big|^2 \Big)^{1/2} \Big\|_{q'}.
\end{align}
\end{subequations}
By Lemma \ref{lemma:Linfty good} we have ${\mathcal {E}}^1_{q,\mathrm{\nu}} \le C_q \alpha_1|Q|^{1/q}$, uniformly in $m$. Moreover for $q=2$ we can use Fubini's theorem and \eqref{eq:Lpk1k2} to see that
${\mathcal {E}}^1_{2,\mathrm{\nu}} \lesssim 2^{-|\mathrm{\nu}|/2} \alpha_1|Q|^{1/2}$. By log-convexity of the $L^q$-norm, we deduce
${\mathcal {E}}^1_{q,\mathrm{\nu}} \lesssim_q 2^{-|\mathrm{\nu}|\varepsilon(q)} \alpha_1|Q|^{1/q}$ with $\varepsilon(q)<1/q$
for $2<q<\infty$ and thus
\begin{equation}\label{eq:bound Eq1}
\sum_{\mathrm{\nu}\in{\mathbb {Z}}} {\mathcal {E}}^1_{q,\mathrm{\nu}} \lesssim_q \alpha_1|Q|^{1/q},\quad 2\le q<\infty .
\end{equation}

We shall now prove that
\begin{equation}\label{eq:E2q' bd}
{\mathcal {E}}^2_{q'} \lesssim {\mathcal {A}}_{q',r,r} \alpha_2|Q|^{1/q'}.
\end{equation}
By averaging with Rademacher functions the desired bound will follow if we show that for any sequence $\{a_k\}_{k\in {\mathbb {Z}}}$ with $\sup_k |a_k|\le 1$ and subsets $\Lambda(k)$ of nonnegative integers
we can show that
\begin{align*}
\Big\|\sum_{k\in\digamma} a_k \sum_{ \ell \in \Lambda(k)} (T_k^{(\ell-k)})^* P_k\Big\|_{q'\to q'} \lesssim {\mathcal {A}}_{q',r,r}.
\end{align*}
This can be established by showing that the associated multipliers
\begin{equation}\label{eq:h-multipliers} h(\xi) =\sum_{k\in \digamma} a_k \sum_{\ell\in \Lambda(k)} \big([\phi m(2^k\cdot)] *\widehat{\Psi_\ell} \big) (2^{-k} \xi) \eta(2^{-k}\xi)\end{equation} have $M^{q'\to q'} $
norm bounded by a constant times ${\mathcal {A}}_{q',r,r}$, which by \eqref{studia-estimate} (with $p$ replaced by $q'$) follows from the following lemma.
\begin{lem} \label{lem:studia-verification} Suppose $q'<r\le 2$.
Then the multipliers in \eqref{eq:h-multipliers} satisfy
\begin{equation} \label{mult-verification}
\sup_{t>0} \|\phi h(t\cdot) \|_{B^{d(\frac 1{q'}-\frac 1r)}_1(M^{r\to r} )} \lesssim {\mathcal {A}}_{q',r,r} ,
\end{equation}
with the implicit constant independent of $\digamma$, of the sets $\Lambda(k)\subset {\mathbb {N}}_0$ and of $\{a_k\}$ in the unit ball of $c_0$.
\end{lem}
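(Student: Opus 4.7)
The plan is to verify \eqref{mult-verification} by exploiting the multi-scale structure of $h$ together with the dilation invariance of the Besov-multiplier norm. By Remark (i) following Theorem \ref{thm:multq>2}, the space $B_1^\alpha(M^{r\to r})$ with $\alpha:=d(\tfrac1{q'}-\tfrac1r)>0$ is independent of the radial cutoff, so we may take $\phi=\varphi$ throughout. In view of \eqref{eq:kernels in the scaled multiplier form}, $h(\xi)=\sum_{k\in\digamma} a_k\,\eta(2^{-k}\xi)\sum_{\ell\in\Lambda(k)}\widehat{K_k^{(\ell-k)}}(\xi)$, and by scaling it suffices to estimate $\|\varphi\, h(2^{k_0}\cdot)\|_{B_1^\alpha(M^{r\to r})}$ uniformly in $k_0\in{\mathbb {Z}}$. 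Writing $k=k_0+\kappa$,
\[ \varphi(\zeta)\,h(2^{k_0}\zeta)\,=\,\sum_{\kappa\in{\mathbb {Z}}} a_{k_0+\kappa}\,\varphi(\zeta)\,\eta(2^{-\kappa}\zeta)\!\!\sum_{\ell\in\Lambda(k_0+\kappa)}\!\!G_{k_0+\kappa,\ell}(2^{-\kappa}\zeta), \]
where $G_{k,\ell}(\zeta):=(\varphi m(2^k\cdot))*\widehat{\Psi_\ell}(\zeta)$, so that $\|G_{k,\ell}\|_{M^{r\to r}}=2^{-\ell\alpha}\,A^{k,\ell}_{q',r,r}[m]$ by \eqref{Aprq}.

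The crucial observation is that the Fourier-side factor $\varphi(\zeta)\eta(2^{-\kappa}\zeta)$ decays rapidly in $|\kappa|$: for $\kappa>0$ because $\eta(0)=0$ and $\eta$ is smooth (giving an $O(2^{-\kappa})$ pointwise bound on the annular support of $\varphi$), and for $\kappa<0$ because $\eta$ is Schwartz. Concretely, $\Theta_\kappa:=\mathcal{F}^{-1}[\varphi(\cdot)\eta(2^{-\kappa}\cdot)]$ is Schwartz with $\|\Theta_\kappa\|_{L^1}\lesssim c_\kappa$ for a rapidly decreasing sequence $(c_\kappa)$. On the other hand, the physical-side kernel of $G_{k_0+\kappa,\ell}(2^{-\kappa}\cdot)$, namely $2^{\kappa d}(\mathcal{F}^{-1}[\varphi m(2^{k_0+\kappa}\cdot)]\cdot\Psi_\ell)(2^\kappa x)$, is supported in $\{|x|\sim 2^{\ell-\kappa}\}$. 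After multiplying by $\varphi\,\eta(2^{-\kappa}\cdot)$ on the Fourier side (i.e.\ convolving by $\Theta_\kappa$ on the physical side), this support is blurred at scale one but retains Schwartz decay away from the annulus. Since Fourier-side convolution by $\widehat{\Psi_j}$ corresponds to multiplication by $\Psi_j$ (supported in $\{|x|\sim 2^j\}$) on the physical side, the $j$-term in the Besov sum is nontrivial only for $|j-\ell+\kappa|=O(1)$. Combining the dilation invariance $\|G(2^{-\kappa}\cdot)\|_{M^{r\to r}}=\|G\|_{M^{r\to r}}$, Young's inequality for the physical-side convolution by $\Theta_\kappa$, and Schwartz decay for mismatched $(j,\ell-\kappa)$, one obtains
\[ \sum_{j\ge 0}2^{j\alpha}\big\|\varphi\,\eta(2^{-\kappa}\cdot)\,G_{k_0+\kappa,\ell}(2^{-\kappa}\cdot)*\widehat{\Psi_j}\big\|_{M^{r\to r}}\lesssim c_\kappa\,2^{(\ell-\kappa)\alpha}\,\|G_{k_0+\kappa,\ell}\|_{M^{r\to r}}. \]
Summing in $\ell$ gives $c_\kappa 2^{-\kappa\alpha}\sum_\ell A^{k_0+\kappa,\ell}_{q',r,r}[m]$; the prefactor $c_\kappa 2^{-\kappa\alpha}$ is summable in $\kappa$ (for $\kappa<0$, the Schwartz decay of $c_\kappa$ absorbs the growth $2^{|\kappa|\alpha}$), so taking the supremum in $k_0$ yields the desired bound by $\mathcal{A}_{q',r,r}[m]$.

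The main technical obstacle is to make the almost-orthogonality in the displayed estimate precise: because the convolution by $\Theta_\kappa$ blurs the annular physical-side support, the gain for $|j-\ell+\kappa|$ large is of Schwartz type rather than exact cancellation. This forces a careful split into matched and mismatched regimes, with the submultiplicativity $\|m_1\cdot m_2\|_{M^{r\to r}}\le\|m_1\|_{M^{r\to r}}\|m_2\|_{M^{r\to r}}$ handling Fourier-side products and Young's inequality handling physical-side convolutions. Once these bookkeeping details are in place, the rapidly decreasing nature of $(c_\kappa)$ and the summability of the Besov weights $2^{\ell\alpha}$ against $\|G_{k,\ell}\|_{M^{r\to r}}$ combine to give the sharp bound, uniform in $\digamma$, $(\Lambda(k))_{k}$, and $(a_k)$ in the unit ball of $c_0$.
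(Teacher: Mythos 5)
Your proposal follows essentially the same strategy as the paper's proof in \S\ref{app:hmultipliers}: your $\Theta_\kappa = \mathcal{F}^{-1}[\varphi\,\eta(2^{-\kappa}\cdot)]$ is the paper's $u_\lambda$ with $\lambda = t2^{-k}$; the geometric decay for $\kappa>0$ coming from $\eta(0)=0$ and the Schwartz decay for $\kappa<0$ are exactly the paper's bound $\|\mathcal F^{-1}[u_\lambda]\|_1\lesssim\min\{\lambda,\lambda^{-N}\}$; and the matched/mismatched $(j,\ell-\kappa)$ split is the paper's $\Lambda^*/\Lambda_*$ dichotomy (with physical-side annular support of $\mathcal{F}^{-1}[\rho_{\ell,k,t}]$ giving the rapid off-diagonal decay). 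Two minor caveats: the reduction ``by scaling it suffices to estimate $\|\varphi h(2^{k_0}\cdot)\|$'' is asserted without justification --- the $B^\alpha_1(M^{r\to r})$-norm is not exactly invariant under non-dyadic dilations, and the paper avoids this by keeping $t>0$ general throughout --- and the displayed bound should read $c_\kappa\max\{1,2^{(\ell-\kappa)\alpha}\}\|G_{k_0+\kappa,\ell}\|_{M^{r\to r}}$ (for $\kappa>\ell$ the annulus $|x|\sim 2^{\ell-\kappa}$ is blurred below scale one, so the trivial $j=0$ term cannot be beaten), though neither affects the final summability in $\kappa$.
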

The proof is straightforward but somewhat technical, and therefore postponed to \S \ref{app:hmultipliers}.
This finishes the proof of \eqref{eq:E2q' bd} and therefore we obtain the desired estimate for the term $II$.
The bound for the term $I$ is slightly simpler.
We again use Fubini's theorem, the Cauchy--Schwarz inequality and H\"older's inequality to obtain that
\[ I \le {\mathcal {E}}_{q}^1\cdot {\mathcal {E}}_{q'}^2, \]
where
\begin{equation*}
{\mathcal {E}}_{q}^1 = \Big\| \Big( \sum_{k\in \digamma} | P_k[({\mathbb {E}}_{1-j_0} f_1){\mathbbm 1}_Q]|^2\Big)^{1/2} \Big\|_q.
\end{equation*}
By Littlewood--Paley theory and H\"older's inequality,
\[ {\mathcal {E}}_q^1 \lesssim \|{\mathbb {E}}_{1-j_0} f_1\|_{L^q(Q)} \lesssim \langle f_1\rangle_{S_0,1} |Q|^{1/q} \le \langle f_1\rangle_{S_0,p} |Q|^{1/q}.\]
Combined with \eqref{eq:E2q' bd} we obtain the desired bound for $I$ which concludes the proof of Proposition \ref{prop:goodpartq>2}. \qed

\section{The bad part} \label{sec:badpart}
Here we prove Proposition \ref{prop:badpart} and Proposition \ref{prop:badpartq>2}.
By the definition of $\mathcal{T}_j$ in \eqref{defn:Tj}, using $b_1=\sum_{k'>-j_0} \sum_{Q\in\mathcal{Q}} b_{1,Q}^{k'}$ and substituting $j$ by $\ell-k$ and $k'$ by $k+\mathrm{\nu}$,
\begin{multline*}
\Big|\Biginn{ \sum_{N_1\le j\le N_2} \sum_{\substack{Q\in\mathcal{Q},\\ L(Q)<j}} \mathcal{T}_j [b_{1,Q}]}{ f_2}\Big|\le \\
\sum_{\mathrm{\nu}\in{\mathbb {Z}}} \sum_{\ell>0} \sum_{k\in\digamma}\Big| \sum_{\substack{Q\in\mathcal{Q}\\L(Q)< \ell-k\le N_2}} \langle T_k^{(\ell-k)} P_k b_{1,Q}^{k+\mathrm{\nu}}, P_kf_2 \rangle \Big|.
\end{multline*}
For fixed $j\le j_0$ we tile $S_0$ with a family ${\mathcal {B}}_{j}=\{B\}$ of dyadic cubes $B$ such that $L(B)=j$. For convenience we also set ${\mathcal {B}}_{j}=\emptyset$ if $j>j_0$.
Then the previous display is
\begin{equation}\label{eq:badpart pre}
\le \sum_{\mathrm{\nu}\in{\mathbb {Z}}} \sum_{\ell>0} \sum_{k\in\digamma} \sum_{B\in{\mathcal {B}}_{\ell-k}} \Big| \Big\langle T_k^{(\ell-k)} P_k \Big[ \sum_{\substack{Q\in\mathcal{Q},\\ Q\subsetneq B}} b_{1,Q}^{k+\mathrm{\nu}} \Big], (P_kf_2){\mathbbm 1}_{3B} \Big\rangle \Big|.
\end{equation}
Let us also recall the scaling relation
\begin{align}\label{eq:Tkl scaling}
\|T_k^{(\ell-k)}\|_{r\to q} &= 2^{kd(\frac1r-\frac1q)}\|\varphi m(2^k\cdot)*\widehat{\Psi_\ell}\|_{M^{r\to q}}\\
\notag &= 2^{-\ell d(\frac1p-\frac1q)} 2^{kd(\frac1r-\frac1q)} A_{p,r,q}^{k,\ell}.
\end{align}

\subsection{The case \texorpdfstring{$q\le 2$}{q<=2}: Proof of Proposition \ref{prop:badpart}}\label{sec:badpart q<2}
In view of \eqref{eq:A-embeddings}
it is no loss of generality to assume that $r>p>1$ is chosen very close to $p$; indeed, it will be convenient to assume
\begin{equation}\label{eq:convenience} d(1/p-1/r)<1-1/r,\end{equation}
which is admissible since $p>1$.
By H\"older's inequality and \eqref{eq:Tkl scaling} we can bound \eqref{eq:badpart pre} by
\begin{multline*}\sum_{\mathrm{\nu}\in{\mathbb {Z}}} \sum_{\ell>0} \sum_{k\in\digamma} \sum_{B\in{\mathcal {B}}_{\ell-k}} A_{p,r,q}^{k,\ell} 2^{-\ell d(\frac1p-\frac1q)} 2^{kd(\frac1r-\frac1q)} \times\\
\Big \|P_k {\mathbb {D}}_{k+\mathrm{\nu}} \Big[\sum_{\substack{Q\in\mathcal{Q},\\ Q\subsetneq B}} b_{1,Q}^{k+\mathrm{\nu}}\Big] \Big \|_r \|(P_k f_2){\mathbbm 1}_{3B}\|_{q'}. \end{multline*}
We next use \eqref{eq:Lpk1k2} and
write $b_{1,Q}^{k+\mathrm{\nu}}=\sum_{n \geq 0} b_{1,Q}^{k+\mathrm{\nu},n}$ so that the above is
\begin{multline}\label{eqn:badpartpf1}
\lesssim \sum_{n\ge 0} \sum_{\mathrm{\nu} \in {\mathbb {Z}}} \sum_{\ell>0} \sum_{k\in\digamma} \sum_{\substack{B\in{\mathcal {B}}_{\ell-k}}} A_{p,r,q}^{k,\ell} 2^{-\ell d(\frac1p-\frac1q)} 2^{kd(\frac1r-\frac1q)} 2^{-|\mathrm{\nu}|/r'}
\\ \times \Big \|\sum_{\substack{Q\in\mathcal{Q},\\ Q\subsetneq B}} b_{1,Q}^{k+\mathrm{\nu},n} \Big \|_r \|(P_k f_2){\mathbbm 1}_{3B}\|_{q'}. \end{multline}
By Lemma \ref{lemma:lift lp to lq} and disjointness of the $Q\in\mathcal{Q}$, we have for $B\in {\mathcal {B}}_{\ell-k}$
\begin{equation*}
\Big\| \sum_{\substack{Q\in\mathcal{Q},\\ Q\subsetneq B}} b_{1,Q}^{k+\mathrm{\nu},n} \Big \|_r \lesssim |B|^{\frac{1}{r}-\frac{1}{q}} \Big( \sum_{Q \in {\mathcal {Q}}} |Q|^{1-\frac{q}{r}} \| b_{1,Q}^{k+\mathrm{\nu},n} \|_r^q \Big)^{1/q}.
\end{equation*}
By Lemma \ref{lemma:b_Q to beta_Q},
$\|b_{1,Q}^{k+\mathrm{\nu},n}\|_r \le 2^{(k+\mathrm{\nu}-n)d(\frac1p-\frac1r)} \beta^{k+m,n}_{1,Q,p}$.
Noting that
\[ 2^{-\ell d(\frac1p-\frac1q)} 2^{kd(\frac1r-\frac1q)} |B|^{\frac{1}{r}-\frac{1}{q}}\times 2^{kd(\frac1p-\frac1r)} |Q|^{\frac1p}
|Q|^{-\frac{1}{r}}= (|Q|/|B|)^{\frac{1}{p}-\frac{1}{r}} \leq 1 \]
for $Q \subseteq B$ and $r>p$, we obtain that \eqref{eqn:badpartpf1} is bounded by a constant times
\begin{flalign*}& \sum_{n\ge 0} \sum_{\mathrm{\nu} \in {\mathbb {Z}}} 2^{-(n-\mathrm{\nu})d(\frac1p-\frac1r)} 2^{-|\mathrm{\nu}|/r'}
\times &
\end{flalign*}
\begin{equation}
\sum_{k\in\digamma} \sum_{\ell>0} \sum_{\substack{B\in\mathcal{B}_{\ell-k}}} A_{p,r,q}^{k,\ell}
\Big( \sum_{\substack{Q\in\mathcal{Q},\\
Q\subsetneq B}} |Q|^{1-\frac qp} (\beta^{k+\mathrm{\nu},n}_{1,Q,p})^q \Big)^{1/q} \|(P_k f_2){\mathbbm 1}_{3B}\|_{q'}.
\label{eq:bad fixed n nu}
\end{equation}
In view of \eqref{eq:convenience} we now fix $n\ge 0$ and $\mathrm{\nu}\in{\mathbb {Z}}$.
Using H\"older's inequality with exponents $(\tfrac1q,\tfrac1{q'})$ on the summation over $B$, \eqref{eq:bad fixed n nu} is estimated by
\[ {\mathcal {A}}_{p,r,q} \sum_{k\in\digamma} \Big( \sum_{Q\in\mathcal{Q}} |Q|^{1-\frac qp} (\beta^{k+\mathrm{\nu},n}_{1,Q,p})^q \Big)^{1/q} \|P_k f_2\|_{q'}. \]
Using H\"older's inequality again with exponents $(\tfrac1q,\tfrac1{q'})$ on the summation over $k$, this is then estimated by
\[{\mathcal {A}}_{p,r,q}\Big(\sum_{k\in\digamma} \sum_{Q\in\mathcal{Q}}
|Q|^{1-\frac qp} (\beta^{k+\mathrm{\nu},n}_{1,Q,p})^q \Big)^{1/q} \Big(\sum_{k\in{\mathbb {Z}}}\|P_k f_2\|_{q'}^{q'}\Big)^{1/q'}.\]
Since $q'\ge 2$ we have
\[ \Big(\sum_{k\in{\mathbb {Z}}}\|P_k f_2\|_{q'}^{q'}\Big)^{1/q'} \le \Big\|\Big(\sum_{k\in{\mathbb {Z}}}|P_k f_2|^{2}\Big)^{1/2}\Big\|_{q'} \lesssim |S_0|^{\frac1{q'}} \langle f_2\rangle_{3S_0,q'}.\]
By the embedding $\ell^p \subseteq \ell^q$ for $p \leq q$ in the $k$-sum and \eqref{eq:key estimate}, we estimate
\[\Big( \sum_{\substack{k\in\digamma}} \sum_{Q\in\mathcal{Q}} |Q|^{1-\frac qp} (\beta^{k+\mathrm{\nu},n}_{1,Q,p})^q \Big)^{1/q} \\ \le \Big( \sum_{Q\in\mathcal{Q}} |Q|^{1-\frac qp} (\sum_{\substack{k\in\digamma}} (\beta^{k+\mathrm{\nu},n}_{1,Q,p})^p)^{q/p} \Big)^{1/q}
\lesssim |S_0|^{\frac1q} \alpha_1.
\]
Summing over $n\ge 0$ and $\mathrm{\nu}\in{\mathbb {Z}}$ using \eqref{eq:convenience} concludes the argument. \qed

\subsection{The case \texorpdfstring{$q\ge 2$}{q>=2}: Proof of Proposition \ref{prop:badpartq>2}}\label{sec:badpart q>2}

We decompose $f_2=g_2+\sum_{Q' \in {\mathcal {Q}}} b_{2,Q'}$, as in \eqref{good2}, \eqref{b2Q},
so that \eqref{eq:badpart pre} is bounded by $I+II$, where
\begin{align*}
I &= \sum_{\mathrm{\nu}\in{\mathbb {Z}}} \sum_{\ell>0} \sum_{k\in\digamma} \sum_{B\in{\mathcal {B}}_{\ell-k}} \Big| \Big\langle T_k^{(\ell-k)} P_k \Big[ \sum_{\substack{Q\in\mathcal{Q},\\ Q\subsetneq B}} b_{1,Q}^{k+\mathrm{\nu}} \Big], (P_k g_2){\mathbbm 1}_{3B} \Big\rangle \Big|,
\\
II &= \sum_{\mathrm{\nu}\in{\mathbb {Z}}} \sum_{\ell>0} \sum_{k\in\digamma} \sum_{B\in{\mathcal {B}}_{\ell-k}} \Big| \Big\langle T_k^{(\ell-k)} P_k \Big[ \sum_{\substack{Q\in\mathcal{Q},\\ Q\subsetneq B}} b_{1,Q}^{k+\mathrm{\nu}} \Big], {\mathbbm 1}_{3B} \sum_{\substack{Q' \in {\mathcal {Q}}}} P_kb_{2,Q'} \Big\rangle \Big|.
\end{align*}
\subsubsection{The term \texorpdfstring{$I$}{I}}
By the Cauchy--Schwarz inequality,
\begin{equation*}
|I| \leq \sum_{\mathrm{\nu}\in {\mathbb {Z}}} \sum_{\ell > 0}\sum_{\substack{k \in \digamma}} \sum_{B \in {\mathcal {B}}_{\ell-k}}
\Big\| T_k^{(\ell-k)} \big[ P_k{\mathbb {D}}_{k+\mathrm{\nu}} \sum_{\substack{Q \in {\mathcal {Q}}, \\ Q \subsetneq B}}b_{1,Q}^{k+\mathrm{\nu}} \big] \Big\|_{2} \| (P_k g_{2} ){\mathbbm 1}_{3B} \|_{2}.
\end{equation*}
By a standard localization argument, for $p \leq 2 \leq q$,
\begin{equation*}
\| \varphi m(2^k \cdot) \ast \widehat{\Psi_\ell} \|_{M^{p \to 2}} \lesssim 2^{\ell d (\frac{1}{2}-\frac{1}{q})} \| \varphi m(2^k \cdot) \ast \widehat{\Psi_\ell} \|_{M^{p \to q}} = 2^{-\ell d (\frac{1}{p}-\frac{1}{2})} A_{p,p,q}^{k,\ell}
\end{equation*}
and thus by the scaling relation \eqref{eq:Tkl scaling},
\begin{equation} \label{eq:Tkellminusk}
\|T_k^{(\ell-k)} \|_{p\to 2} \leq 2^{-(\ell-k) d (\frac{1}{p}-\frac{1}{2})} A_{p,p,q}^{k, \ell}.
\end{equation}
By \eqref{eq:Lpk1k2} and Lemma \ref{lemma:lift lp to lq} with the exponent pair $(p,2)$,
\begin{align}
\notag &\Big\| P_k{\mathbb {D}}_{k+\mathrm{\nu}}\Big[\sum_{\substack{Q \in {\mathcal {Q}}, \\ Q \subsetneq B}} b_{1,Q}^{k+\mathrm{\nu}}\Big] \Big\|_p
\lesssim 2^{-|\mathrm{\nu}|/p' } \Big\| \sum_{\substack{Q \in {\mathcal {Q}}, \\ Q \subsetneq B}} b_{1,Q}^{k+\mathrm{\nu}} \Big\|_p
\\
\notag &\lesssim 2^{-|\mathrm{\nu}|/p'}
|B|^{\frac{1}{p}-\frac{1}{2}} \Big( \sum_{\substack{Q \in {\mathcal {Q}}, \\ Q \subsetneq B}} |Q|^{1-\frac 2p} \|b_{1,Q}^{k+\mathrm{\nu}} \|_{p}^2 \Big)^{1/2}
\\ \label{eq:fixedkplusm} &\lesssim 2^{-|\mathrm{\nu}|/p'} 2^{(\ell-k)d(\frac 1p-\frac 12)}\Big( \sum_{\substack{Q \in {\mathcal {Q}}, \\ Q \subsetneq B}} |Q|^{1-\frac 2p} (\beta_{1,Q,p}^{k+\mathrm{\nu}})^2 \Big)^{1/2},
\end{align}
where for the last line we used that $|B|=2^{(\ell-k)d}$ and Lemma \ref{lemma:b_Q to beta_Q}.
Combining \eqref{eq:Tkellminusk} and \eqref{eq:fixedkplusm}
we obtain
\begin{multline*}
|I| \leq \sum_{\mathrm{\nu}\in {\mathbb {Z}}} 2^{-|\mathrm{\nu}|/p'} \sum_{\ell>0} \sum_{\substack{k \in \digamma}} A_{p,p,q}^{k,\ell}
\quad \times \\
\sum_{B \in \mathcal{B}_{\ell-k}} \Big( \sum_{\substack{Q \in {\mathcal {Q}}, \\ Q \subsetneq B}} |Q|^{1-\frac 2p} (\beta_{1,Q,p}^{k+\mathrm{\nu}})^2 \Big)^{1/2} \| (P_kg_2) {\mathbbm 1}_{3B} \|_2
\end{multline*}
and after applying the Cauchy--Schwarz inequality to the sum over $B$ we get
\[
| I| \leq {\mathcal {A}}_{p,p,q} \sum_{\mathrm{\nu}\in {\mathbb {Z}}} 2^{-|\mathrm{\nu}|/p'} \sum_{\substack{k \in \digamma}}
\Big(\sum_{\substack{Q \in {\mathcal {Q}}}} |Q|^{1-\frac 2p} (\beta_{1,Q,p}^{k+\mathrm{\nu}})^2 \Big)^{1/2} \| P_kg_2 \|_2.
\]
Applying the Cauchy--Schwarz inequality in $k$ and using $\sum_{\mathrm{\nu}\in{\mathbb {Z}}} 2^{-|\mathrm{\nu}|/p'}\lesssim 1$,
\[
\le {\mathcal {A}}_{p,p,q}
\Big(\sum_{\substack{Q \in {\mathcal {Q}}}} \sum_{k>-j_0} |Q|^{1-\frac 2p} (\beta_{1,Q,p}^{k})^2 \Big)^{1/2} \Big\|\Big( \sum_{k\in{\mathbb {Z}}} |P_kg_2|^2\Big)^{1/2}\Big\|_2.
\]
We apply \eqref{eq:l2 sum betakQ} to get
\begin{equation}\label{eq:bad sumQk beta2}
\Big(\sum_{\substack{Q \in {\mathcal {Q}} }} |Q|^{1-\frac 2p} \sum_{k>-j_0} (\beta_{1,Q,p}^{k})^2 \Big)^{1/2} \le
\Big(\sum_{\substack{Q \in {\mathcal {Q}} }} |Q|^{1-\frac 2p} |Q|^{\frac{2}{p}} (\alpha_1)^2 \Big)^{1/2}\lesssim \alpha_1|S_0|^{\frac12}.
\end{equation}
By the almost orthogonality of the $P_k$ and Lemma \ref{f2CZdec} (ii),
\[\Big(\sum_{k\in{\mathbb {Z}}} \|P_k g_2\|_2^2\Big)^{1/2} \lesssim \|g_2 \|_{L^2(3S_0)}\lesssim |S_0|^{\frac12} \alpha_2.\]
In summary we obtain
$|I|\lesssim {\mathcal {A}}_{p,p,q} |S_0| \alpha_1\alpha_2$ as claimed.

\subsubsection{The term \texorpdfstring{$II$}{II}}
By the scaling relation \eqref{eq:Tkl scaling} with $r=p$, \eqref{eq:Lpk1k2} and using $|B|=2^{d(\ell-k)}$,
\begin{flalign*}&
\Big|\Big\langle T_k^{(\ell-k)} P_k \Big[ \sum_{\substack{Q\in\mathcal{Q},\\ Q\subsetneq B}} b_{1,Q}^{k+\mathrm{\nu}} \Big], {\mathbbm 1}_{3B} \sum_{\substack{Q' \in {\mathcal {Q}}}} P_kb_{2,Q'} \Big\rangle\Big| &
\end{flalign*}
\[
\lesssim 2^{-|\mathrm{\nu}|/p'} |B|^{-(\frac1p-\frac1q)} A_{p,p,q}^{k,\ell} \Big\|\sum_{\substack{Q\in\mathcal{Q},\\ Q\subsetneq B}} b_{1,Q}^{k+\mathrm{\nu}}\Big\|_p
\Big\|\sum_{Q' \in {\mathcal {Q}}} P_k b_{2,Q'}\Big\|_{L^{q'}(3B)}.
\]
By disjointness of the cubes in ${\mathcal {Q}}$, Lemma \ref{lemma:lift lp to lq} and Lemma \ref{lemma:b_Q to beta_Q},
\begin{equation}\label{eq:q>2 Holder in sum 1}
\Big\| \sum_{\substack{Q\in {\mathcal {Q}},\\ Q \subsetneq B}} b_{1,Q}^{k+\mathrm{\nu}} \Big\|_{p} \lesssim |B|^{\frac{1}{p}-\frac{1}{2}} \Big( \sum_{\substack{Q\in {\mathcal {Q}},\\ Q \subsetneq B}} |Q|^{1-\frac 2p}\, (\beta_{1,Q,p}^{k+\mathrm{\nu}})^2 \Big)^{1/2}
\end{equation}
and similarly, applying H\"older's inequality as in the proof of Lemma \ref{lemma:lift lp to lq} we also get
\begin{align*} &\Big\| \sum_{\substack{Q'\in {\mathcal {Q}}}} P_kb_{2,Q'}\Big\|_{L^{q'}(3B)}
\lesssim |B|^{\frac1{q'}-\frac 12}
\Big( \sum_{\substack{Q'\in {\mathcal {Q}},\\Q'\subseteq 3B}} |Q'|^{1-\frac{2}{q'}}
\|P_kb_{2,Q'}\|_{q'}^{2} \Big)^{1/2}.
\end{align*}
Combining these estimates we get
\begin{multline}
|II|\lesssim \sum_{\mathrm{\nu}\in {\mathbb {Z}}} 2^{-|\mathrm{\nu}|/p'} \sum_{\substack{k\in \digamma}} \sum_{\ell>0} A_{p,p,q}^{k,\ell} \,\times\\\sum_{B\in {\mathcal {B}}_{\ell-k}}
\Big( \sum_{\substack{Q\in {\mathcal {Q}},\\ Q \subsetneq B}} |Q|^{1-\frac 2p}\, (\beta_{1,Q,p}^{k+\mathrm{\nu}})^2 \Big)^{1/2}
\Big( \sum_{\substack{Q'\in {\mathcal {Q}}:\\Q'\subseteq 3B} }|Q'|^{1-\frac{2}{q'}}
\|P_kb_{2,Q'}\|_{q'}^{2} \Big)^{1/2}
\end{multline}
and by the Cauchy--Schwarz inequality applied to the sums over $B$ and $k$,
\[
|II|\lesssim {\mathcal {A}}_{p,p,q} \Big( \sum_{\substack{Q\in {\mathcal {Q}}}} |Q|^{1-\frac 2p}\,\sum_{k>-j_0} (\beta_{1,Q,p}^{k})^2 \Big)^{1/2} \Big( \sum_{\substack{Q'\in {\mathcal {Q}}} } \sum_{k\in{\mathbb {Z}}} |Q'|^{1-\frac{2}{q'}}
\|P_kb_{2,Q'}\|_{q'}^{2} \Big)^{1/2}
\]
Using $q'\le 2$ and Lemma \ref{f2CZdec} (i),
\[ \Big(\sum_{k\in{\mathbb {Z}}}
\|P_kb_{2,Q'}\|_{q'}^{2}\Big)^{1/2} \lesssim \Big\| \Big(\sum_{k\in{\mathbb {Z}}}|P_k b_{2,Q' }|^2\Big)^{1/2}\Big\|_{q'} \lesssim \|b_{2,Q'}\|_{q'} \lesssim |Q'|^{\frac1{q'}}\alpha_2 \]
and thus
\[\Big( \sum_{\substack{Q'\in {\mathcal {Q}}}}\sum_{k\in{\mathbb {Z}}} |Q'|^{1-\frac{2}{q'}}
\|P_kb_{2,Q'}\|_{q'}^{2} \Big)^{1/2} \lesssim
\Big( \sum_{\substack{Q'\in {\mathcal {Q}}}} |Q'|
\alpha_2^{2} \Big)^{1/2} \lesssim\alpha_2|S_0|^{\frac12} .
\]
Together with \eqref{eq:bad sumQk beta2} this gives
$|II|\lesssim {\mathcal {A}}_{p,p,q} |S_0|\alpha_1\alpha_2$ as desired. \qed

\section{Applications} \label{sec:applications}

\subsection{The classes \texorpdfstring{$\mathrm{FM}(a,b)$}{Miy(a,b)}} {We prove the positive result in Theorem \ref{miyachi-thm}.}
Condition \eqref{Miyachi-deriv-cond} can be reformulated as
\[ \sum_{|\alpha|\le n} \|\partial^\alpha[\phi m(t\cdot) ]\|_\infty\lesssim_n t^{-b+na}
\] for all $n\in {\mathbb {N}}$ and for all $t> 1/8$ (and we have $\phi m(t\cdot)=0$ for small $t$).
Using a standard interpolation result for Sobolev spaces (\cite[Chapter 5.4]{bennett-sharpley}) we also get for any $s\ge 0$
\begin{equation}\label{eq:mlocalBsinf1 bd}
\|\phi m(t\cdot) \|_{B^s_{\infty,1}}\lesssim_s t^{-b+sa}\,,
\end{equation}
for all $t>1/8$,
with the implicit constant independent of $t$.

We now verify $m(D)\in {\mathrm{Sp}}(\rho_1,\rho_2)$ for $(1/\rho_1,1/\rho_2)$ on the edge $[P_3,P_4]$, thus satisfying $1/\rho_1-1/\rho_2'=b/da$. Here $P_4=(1/p_4,1/2)$ with $1/p_4=1/2+b/ad$.
The results in the remaining parts of the trapezoid then follow by H\"older's inequality.

Observe that $\tfrac12\le\tfrac1{\rho_1}\le \tfrac1{p_4}$.
First, we claim that the cases $\rho_1=2$, $\rho_1=p_4$ follow from Theorem \ref{thm:qge2}. By duality we only need to discuss the case $\rho_1=p_4$.
Then, taking $b=ad(1/p_4-1/2)$ and $s=d(1/p_4-1/2)$ in \eqref{eq:mlocalBsinf1 bd} we get
\begin{equation}\label{eq:FMendpt cond}
\sup_{t>0} \|\phi m(t\cdot)\|_{B^{d(1/p_4-1/2)}_{\infty,1}} < \infty.
\end{equation}
Next, using the compact support of $\phi$ and some calculations about the contributions away from the support of $\phi$ we have
\begin{equation}\label{eq:FMendpt cond2}
\sup_{t>0}\|\phi m(t\cdot)\|_{B^{d(1/p_4-1/2)}_{1} (M^{p_4\to 2}) }
\lesssim \|m\|_\infty+\sup_{t>0}\|\phi m(t\cdot)\|_{B^{d(1/p_4-1/2)}_{\infty, 1} },
\end{equation}
which establishes that $m\in {\mathrm{Sp}}(p_4,2)$, as desired.
In the remaining case $\tfrac12<\tfrac1{\rho_1}<\tfrac1{p_4}$ we use
Theorem
\ref{thm:multq>2}
(with the parameters $p=\rho_1$,
$q'=\rho_2$; note that $\rho_2< 2$).
To this end it suffices to verify that
\begin{align}
\label{rho1rho2'}
&\sup_{t>0}\|\phi m(t\cdot)\|_{B_1^{d(1/\rho_1-1/\rho_2')}(M^{\rho_1\to \rho_2'})}<\infty,
\\
\label{rho1-impr} &\sup_{t>0}\|\phi m(t\cdot)\|_{B^{d(1/\rho_2-1/2)}_{\infty,1}} <\infty.
\end{align}
Note that \eqref{rho1-impr} follows from \eqref{eq:FMendpt cond} since $B^{d(1/p_4-1/2)}_{\infty,1}\hookrightarrow B^{d(1/\rho_2-1/2)}_{\infty,1} $.
Finally, \eqref{rho1rho2'} follows from \eqref{eq:FMendpt cond2} and \eqref{eq:FMendpt cond}. This is because $1/\rho_1-1/\rho_2'=1/p_4-1/2$ and
since $M^{p_4\to 2}=M^{2\to p_4'} \hookrightarrow M^{\rho_1\to \rho_2'}$ by interpolation
(observe that $(1/\rho_1, 1/\rho_2')$ lies on the line between $(1/p_4,1/2)$ and $(1/2, 1/p_4')$).

\subsection{{\it Oscillatory multipliers}}
\label{sec:oscmultsec}
{We next turn to the proof of the positive result in Theorem \ref{thm:oscmult}.} Consider the oscillatory multipliers $m_{a,b}$. It is our goal to establish the endpoint ${\mathrm{Sp}}(p_1,p_1)$ bound, for $1/p_1=1/2+b/(ad)$, {as the remaining bounds then follow by H\"older's inequality}. Since
$m_{a,b}$ belongs to $\mathrm{FM}(a,b)$ we have
\[\sup_{t>0} \|\phi m_{a,b} (t\cdot)
\|_{B^{d(1/p_1-1/2)}_1(M^{2\to 2})}<\infty\] and in order to apply Theorem \ref{thm:multq>2} it remains to verify that
\begin{equation} \label{eq:oscmult-curv}
\sup_{t>0}\big[\| \phi m_{a,b}(t\cdot)*\widehat{\Phi_0}\|_{M^{p_1\to p_1'}}+ \sum_{\ell>0} 2^{\ell d(1/p_1-1/p_1')}\|\phi m_{a,b}(t\cdot)*\widehat{\Psi_\ell} \|_{M^{p_1\to p_1'}}\big]<\infty.
\end{equation}

We sketch the argument; a similar calculation appears in \cite[Chapter 7.2.2]{BRS}.
Note that $\phi m_{a,b}(t\cdot)=0$ for $t\ll 1$ and that the inequality is trivial for $t\approx 1$. For $t\gg 1$ we have
\[K_t(x):={\mathcal {F}}^{-1} [ \phi m_{a,b}(t\cdot)](x)=(2\pi)^{-d}\int \frac{\phi(\xi)}{t^b|\xi|^b} e^{i t^a|\xi|^a-i\inn x\xi} \, \mathrm{d} \xi\,.
\]
For $a\ne 1$ the Hessian of $\xi\mapsto |\xi|^a$ has full rank. Thus we get by stationary phase the bound
$|K_t(x)| \lesssim t^{-b-ad/2} $ if $|x|\approx t^a$, moreover $|K_t(x)|\lesssim_N t^{-N-b} $ if $|x|\ll t^a$ and $|K_t(x)|\lesssim_N t^{-b} |x|^{-N} $ for $|x|\gg t^a$ for all $N \geq 0$. These estimates give $M^{1\to\infty}$ bounds for $\phi m_{a,b}(t\cdot) *\widehat {\Psi_\ell}$ while we also have the trivial bound $O(t^{-b})$ for the $M^2$ norm. Interpolation shows that (for suitable constants $c(a)<C(a)$)
\[\|(\phi m_{a,b}(t\cdot))*\widehat{\Psi_\ell} \|_{M^{p\to p'} } \lesssim_N \begin{cases} t^{-b-da(\frac 1p-\frac 12)}
&\text{ if } c(a) t^a \le 2^{\ell} \le C(a) t^a
\\
t^{-b-Na (\frac 1p-\frac 12)} &\text { if } 2^\ell \le c(a) t^a
\\
t^{-b} 2^{ -\ell N(\frac 1p-\frac 12)} &\text { if } 2^\ell \ge C(a) t^a
\end{cases}
\] and since $1/p_1= 1/2+b/(ad)$ the inequality \eqref{eq:oscmult-curv} follows.

\subsection{The results for radial multipliers}
\label{sec:cor-radial}
We use the Fefferman--Stein argument \cite{fefferman69} based on the $L^2$ Stein--Tomas restriction theorem. For radial $m(\xi)=a(|\xi|)$, $1\le p\le \frac{2(d+1)}{d+3}$, by Plancherel's theorem we may write $\|m(D)f\|_2^2$ as a constant times
\[ \int |m(\xi)\widehat{f}(\xi)|^2 \, \mathrm{d}\xi = \int_0^\infty |a(\rho)|^2 \rho^{d} \int_{\mathbb{S}^{d-1}} |\widehat{f}(\rho\theta)|^2 \, \mathrm{d}\sigma(\theta) \frac{\, \mathrm{d} \rho}\rho. \]
Therefore, by the Stein--Tomas theorem applied to the integral over $\mathbb{S}^{d-1}$,
\begin{equation}\label{eq:Feff-reduction}\|m\|_{M^{p\to 2}} \lesssim \Big(\int_0^\infty |a(\rho)|^2 \rho^{2d(\frac 1p-\frac 12) } \frac{\, \mathrm{d} \rho}{\rho}\Big)^{1/2}=
c_d \Big(\int |m(\xi)|^2 |\xi|^{2d(\frac 1p-1)}\, \mathrm{d} \xi \Big)^{1/2}
\end{equation}
where $c_d$ is the surface measure of $\mathbb{S}^{d-1}$ raised to the power $-\frac12$.

\subsubsection{Proof of Corollary \ref{cor:studiaradial}}
We use Theorem \ref{thm:qge2} in conjunction with \eqref{eq:Feff-reduction}. We set $m_t(\xi)=\phi(|\xi|)h(t|\xi|)
$.
We let $\, \mathrm{d}\mu_p(\xi)= |\xi|^{2d(\frac 1p-1) } \, \mathrm{d}\xi$.
It suffices to prove the estimate
\begin{equation}\label{eq:Cor-redux}
\sup_{t>0} \|\phi(\cdot)h(t|\cdot|) \|_{B^{\alpha}_{1} (L^2(\, \mathrm{d}\mu_p))} \lesssim \sup_{t>0} \|\phi h(t\cdot)\|_{B^{\alpha }_{2,1}({\mathbb {R}})}
\end{equation}
for $\alpha>0$, $1\le p<2$ and apply it for $\alpha=d(1/p-1/2)$ and $1< p\le \frac{2(d+1)}{d+3}$.

We are assuming that $\phi$ is supported in $(1/2,2)$ and it is convenient to choose $\chi$ to be a radial function supported in $\{\xi: 1/4<|\xi|<4\}$ such that $0\le\chi\le 1$ and $\chi(\xi)=1$ for $1/3\le |\xi|\le 3.$
For $\ell>0$, let
\begin{align*} I_{\ell,t}&= \Big(\int \big|(\phi(|\cdot|) h(t|\cdot|)) *\widehat{\Psi_\ell} (\xi) \big |^2 \chi(\xi) |\xi|^{2d(\frac 1p-1)}\, \mathrm{d}\xi \Big)^{1/2},
\\
II_{\ell,t}&= \Big(\int \big|(\phi(|\cdot|) h(t|\cdot|)) *\widehat{\Psi_\ell} (\xi)\big |^2 (1-\chi(\xi)) |\xi|^{2d(\frac 1p-1)}\, \mathrm{d}\xi \Big)^{1/2}
\end{align*}
and let $I_{0,t}$, $II_{0,t}$ be the analogous expressions with $\Phi_0$ in place of $\Psi_\ell$.

First note that $|\xi|^{2d(1/p-1)}\approx 1$ on the support of $\chi$, and therefore
\[\sum_{\ell\ge 0} 2^{\ell \alpha} I_{\ell,t} \le C_\alpha
\|\phi(|\cdot|)h(t|\cdot|) \|_{B^\alpha_{2,1} ({\mathbb {R}}^d)}.\]
We observe the inequality
\begin{equation}\label{eq:norm Rd to R}
\|\chi g(|\cdot|) \|_{W^m_2({\mathbb {R}}^d)} \le C_m\|g\|_{W^m_2({\mathbb {R}})}
\end{equation}
which follows by application of the product and chain rules; by real interpolation we get for all $\alpha>0$
\[\|\chi g(|\cdot|) \|_{B^\alpha_{2,1}({\mathbb {R}}^d)} \lesssim_\alpha C_\alpha \|g\|_{B^\alpha_{2,1}({\mathbb {R}})}\] and hence,
\[
\sum_{\ell\ge 0} 2^{\ell \alpha} I_{\ell,t} \lesssim_\alpha \|\phi h(t\cdot) \|_{B^\alpha_{2,1} ({\mathbb {R}})}.\]
It remains to estimate the term $II_{\ell,t}$.
Note that for $|\xi|\ge 3$ and $N>d$,
\begin{align*} \big|\phi(|\cdot|)h(t&|\cdot|) )*\widehat{\Psi_\ell}(\xi) \big|
\le \int_{|y|\ge |\xi|-2} |\phi(|\xi-y|)h(t|\xi-y|)| 2^{\ell d } (2^\ell|y|)^{-N} \, \mathrm{d} y
\\
&\lesssim \Big(\int_{|y|\ge |\xi|-2} |\phi( |\xi-y |)h(t|\xi-y|)|^2\, \mathrm{d} y\Big)^{1/2} 2^{\ell (d-N)} |\xi|^{\frac{d-2N}{2}}
\end{align*}
by the Cauchy--Schwarz inequality. Since $2d(\tfrac1p-1)$ is negative, we get, for $N>d$,
\begin{multline*}\Big(\int_{|\xi|\ge 3} \big| (\phi(|\cdot|)h(t|\cdot|) )*\widehat{\Psi_\ell}(\xi) \big| ^2|\xi|^{2d(\frac1p-1)} \, \mathrm{d}\xi\Big)^{1/2}
\\
\lesssim_N 2^{\ell(d-N)} \|\phi(|\cdot|) h(t\cdot)\|_{L^2({\mathbb {R}}^d)}
\lesssim_N 2^{\ell(d-N)} \|\phi h(t\cdot)\|_{L^2({\mathbb {R}})}.
\end{multline*}
Similarly, we have
\begin{align*}
&\Big(\int_{|\xi|\le \frac 13} \big| (\phi(|\cdot|)h(t|\cdot|) )*\widehat{\Psi_\ell}(\xi) \big|^2 |\xi|^{2d(\frac1p-1)} \, \mathrm{d}\xi\Big)^{\frac 12}\\
&\lesssim_N \|\phi h(t\cdot)\|_{L^2({\mathbb {R}})}
\Big(\int_{|\xi|\le \frac 13} \int_{|y|\ge \frac 16}
2^{2\ell d} |y2^\ell|^{-2N} \, \mathrm{d} y\, |\xi|^{-\frac{2d}{p'} }\, \mathrm{d}\xi\Big)^{\frac 12}
\\&\lesssim_N 2^{\ell(d-N)} \|\phi h(t\cdot)\|_{L^2({\mathbb {R}})}.
\end{align*}
Altogether we get for $p<2$ (i.e. $-2d/p'>-d$),
\[ 2^{\ell\alpha} II_{\ell,t} \lesssim 2^{\ell(\alpha+d-N)} \|\phi h(t\cdot)\|_{L^2({\mathbb {R}})}.\]
The same applies for the term $II_{0,t}$. Combining the estimates we obtain \eqref{eq:Cor-redux} which concludes the proof of the corollary.
\qed

\subsubsection{Proof of Theorem \ref{cor:BR}}

Let $u_\delta(\xi)=h_\delta(|\xi|)=\chi(\delta^{-1}(1-|\xi|))$ in what follows.
To use the notation in our main theorems we are setting
$p_1=p$ and $p_2=q'$.

\subsubsection*{The case $p\le \frac{2(d+1)}{d+3}$ } We are seeking to prove a ${\mathrm{Sp}}(p, q')$ bound, under the assumption $1<p\le \frac{2(d+1)}{d+3}$ and
$\frac 1{q'}\le \frac{d+1}{d-1}\frac 1p- \frac 2{d-1}$
(which is equivalent with the condition
$\frac 1q\ge \frac{d+1}{d-1} \frac{1}{p'}$). Since ${\mathrm{Sp}}(p_1,p_2) \subset {\mathrm{Sp}}(p_1,p_3)$ for $p_3\ge p_2$ we just need to consider the endpoint line with
$\frac 1{q'}= \frac{d+1}{d-1}\frac 1p- \frac 2{d-1}$; note that under this assumption we have
$2\le q < \infty$ for $1 < p\le \frac{2(d+1)}{d+3}$ and, moreover, $q=2$ if and only $p=\frac{2(d+1)}{d+3}$. In the latter case we use Theorem \ref{thm:qge2} while for $1<p<\frac{2(d+1)}{d+3}$ we have $q>2$ and use Theorem \ref{thm:multq>2}.

In order to establish the assertion we have to prove, for $1<p\le \frac{2(d+1)}{d+3}$ and
$\frac 1{q'}=\frac{d+1}{d-1}\frac 1p- \frac 2{d-1}$
the inequality
\begin{equation} \label{eq:first-cond}
\sum_{\ell>0} 2^{\ell d(\frac 1p-\frac 1q)}
\|u_\delta*\widehat {\Psi_\ell} \|_{M^{p\to q}} \lesssim \delta^{-d(\frac 1p-\frac 12)+\frac 12}.
\end{equation}
Furthermore for $p$ in the open range $1<p<\frac{2(d+1)}{d+3}$ (where $q'<2$) we use Theorem \ref{thm:multq>2} and also have to prove,
for suitable $q'<r\le 2$,
\begin{equation}\label{eq:second-cond}
\sum_{\ell>0} 2^{\ell d(\frac{1}{q'}-\frac 1r)} \|u_{\delta}*\widehat{\Psi_\ell}\|_{M^{r\to r} }\lesssim \delta^{-d(\frac 1p-\frac 12)+\frac 12}.
\end{equation}

From the $L^2$ restriction theorem and \eqref{eq:Feff-reduction} we get
\begin{equation}\label{eq:ST} \|u_\delta*\widehat {\Psi_\ell}\|_{M^{p_\circ\to 2}} \lesssim_N \delta^{1/2} \min\{1, (2^\ell \delta)^{-N}\}, \quad
p_\circ= \tfrac{2(d+1)}{d+3}.
\end{equation}
By stationary phase and integration by parts arguments we get
\begin{equation}\label{eq:statphase}\|u_\delta*\widehat {\Psi_\ell}\|_{M^{1\to \infty}} \lesssim 2^{-\ell\frac{d-1}{2} } \delta
\min\{1, (2^\ell \delta)^{-N}\}.
\end{equation}
This implies
\begin{multline} \label{eq:mdeltainterpol} 2^{\ell d(\frac 1p-\frac 1q)} \delta^{d(\frac 1p-\frac 12)-\frac 12}
\|u_\delta*\widehat{\Psi_\ell}(D)\|_{L^p\to L^q}\lesssim (2^\ell\delta)^{\alpha(q)} \min\{1, (2^\ell \delta)^{-N}\}, \\ \text{ for }
2\le q\le \infty, \,\, \tfrac 1q=\tfrac {d+1}{d-1} \tfrac{1}{p'}, \,\, \alpha(q)>0.
\end{multline}
The relation between $p$ and $q$ can be rewritten as
$\frac 1{q'}=\frac{d+1}{d-1}\frac 1p- \frac 2{d-1}$.
Thus, given
\eqref{eq:mdeltainterpol}
we obtain \eqref{eq:first-cond} for $1\le p< \frac{2(d+1)}{d+3}$ after summing in $\ell$.

To verify \eqref{eq:mdeltainterpol} observe that
by \eqref{eq:ST} we have \eqref{eq:mdeltainterpol}
for $(\frac 1p, \frac 1q) $ equal to $(\frac{d+2}{2(d+1)}, \frac 12) $, with $\alpha (2)=d(\frac 1{p_\circ}-\frac 12)=\frac{d}{d+1} $.
By \eqref{eq:statphase} we have \eqref{eq:mdeltainterpol}
for $(\frac 1p, \frac 1q) $ equal to $(1,0)$, with $\alpha(\infty)=\frac{d+1}{2}$.
Thus by interpolation we get
\eqref{eq:mdeltainterpol} for all $2\le q\le \infty$, with $\frac{1}{q}=\frac{d+1}{d-1}\frac{1}{p'}$, and $\alpha(q)>0$; more precisely
$\alpha(q)=\frac {2d}{q(d+1)}+ (d+1) (\frac 12-\frac 1q)$.

We still have to verify \eqref{eq:second-cond}, but only when $p<\frac{2(d+1)}{d+3} $.
Observe that $q'<2$ for $p<\frac{2(d+1)}{d+3}$. Choose $r$ with
$q'<r\le 2$, and $r$ very close to $q'$.
Here it suffices to use classical non-endpoint estimates which give
\begin{multline} \label{eq:L-est}
2^{\ell d(\frac{1}{q'}-\frac 1r)} \|u_\delta*\widehat{\Psi_{\ell}} (D)\|_{L^r\to L^r} \le\\ C_{N,\varepsilon} 2^{\ell d(\frac{1}{q'}-\frac 1r)} \min\{1, (2^\ell \delta)^{-N} \}\begin{cases}
\delta^{-d(\frac 1r-\frac 12)+\frac 12-\varepsilon} &\text{ if } 1\le r\le \tfrac {2(d+1)}{d+3}
\\
\delta^{-(d+1)(\tfrac 1r-\tfrac 12) -\varepsilon} &\text{ if $\tfrac {2(d+1)}{d+3} \le r\le 2$}
\end{cases},
\end{multline}
with a better result in two dimensions:
\begin{multline} \label{eq:Lr-est2d}
2^{\ell 2(\frac{1}{q'}-\frac 1r)} \|u_\delta*\widehat{\Psi_{\ell}} (D)\|_{L^r({\mathbb {R}}^2)\to L^r({\mathbb {R}}^2)} \le\\ C_N 2^{\ell 2(\frac{1}{q'}-\frac 1r)} \min\{1, (2^\ell \delta)^{-N} \}\begin{cases}
\delta^{-2(\frac 1r-\frac 12)+\frac 12-\varepsilon} &\text{ if } 1\le r<4/3
\\
\delta^{-\varepsilon} &\text{ if $4/3 \le r\le 2$}
\end{cases}.
\end{multline}

In dimension $d\ge 3$ we have to show that for $r$ sufficiently close to $q'$ the right-hand side of \eqref{eq:L-est}
is
dominated by \begin{equation}\label{eq:Thm15bdsec}
\min\{(2^{\ell}\delta )^{-\epsilon_1},
(2^{\ell}\delta)^{\epsilon_1} \} \delta^{-d(\frac 1p-\frac 12)+\frac 12}
\end{equation}
under the assumption that $\frac 1{q'}=\frac{d+1}{d-1}\frac 1p- \frac 2{d-1}$, for some $\epsilon_1>0$ depending on $p,q $. Since $p<q'$ this is immediate for $q'<\frac{2(d+1)}{d+3}$.
When $2>q'\ge \frac{2(d+1)}{d+3}$ the goal is accomplished once \[d(\tfrac 1p-\tfrac 12)-\tfrac 12-(d+1)(\tfrac{1}{q'}-\tfrac 12)>0 , \quad \text{ for
$\tfrac 1p=\tfrac{d-1}{(d+1)q'}+\tfrac 2{d+1}$}
\] and the given range of $q'$. The displayed inequality holds for all $q'\in [\frac{2(d+1)}{d+3},2)$ if and only if $q'>\frac{6d+2}{5d+1}$. Since $\frac{2(d+1)}{d+3} >
\frac{6d+2}{5d+1}$ for $d\ge 3$ we get the bound \eqref{eq:Thm15bdsec}
for $r$ close to $q'$ in dimension $d\ge 3$.

In dimension $d=2$, the previous argument is not strong enough to cover the range $\frac 65\le q'\le \frac{14}{11}$. We need to use the better estimate \eqref{eq:Lr-est2d}. Now the bound \eqref{eq:Thm15bdsec} is immediate for $q'<4/3$ and if $4/3\le q'<2$ and $\frac 1p=\frac{1}{3q'}+\frac 23$ we have
$2(\frac 1p-\frac 12)-\frac 12= \frac{2}{3q'}-\frac 16>0$ so that the desired bound follows in this case as well.

We have thus checked the assumptions of Theorem \ref{thm:qge2} (when $p=\frac{2(d+1)}{d+3}$) and Theorem \ref{thm:multq>2} (when $p<\frac{2(d+1)}{d+3} $) and the theorem is proved for the case $p\le \frac{2(d+1)}{d+3}$.

\subsubsection*{The case $\frac{2(d+1)}{d+3}<p<\frac{2(d+2)}{d+4}$}
The proof relies on an inequality in \cite[Proposition 2.4]{ChoKimLeeShim2005}, which in its dual formulation says
\begin{equation}\label{eq:choetal}
\|u_\delta\|_{M^{\rho\to q}} \lesssim \delta^{-d(\frac 1q-\frac 12)+\frac 12}, \,\,\tfrac 1{q'}=\tfrac{d+1}{d-1}\tfrac 1\rho-\tfrac{2}{d-1},\,\, \tfrac{2(d+1)}{d+3} \le \rho<\tfrac{2(d+2)}{d+4}.
\end{equation}
By averaging we can replace $u_\delta$ with $u_\delta*\widehat{\Psi_\ell} $ or $u_\delta*\widehat {\Phi_0}$ in \eqref{eq:choetal}, and after an additional interpolation with \eqref{eq:ST} (the case with $r=\frac{2(d+1)}{d+3}$ and $q=2$) we also get
\begin{equation}\label{eq:choetalPsi}
\|u_\delta*\widehat{\Psi_\ell}\|_{M^{r\to q}} \lesssim_{N,q}
\min\{1,(2^\ell \delta)^{-N} \}\delta^{-d(\frac 1q-\frac 12)+\frac 12},
\end{equation}
first for $ \tfrac 1{q'}=\tfrac{d+1}{d-1}\tfrac 1r-\tfrac{2}{d-1}$, $\tfrac{2(d+1)}{d+3} \le r<\tfrac{2(d+2)}{d+4}.$ Then, using the compact support of $u_\delta$ and the auxiliary Lemma \ref{lem:r1r2} we also get \eqref{eq:choetalPsi}
in the range $\tfrac 1{q'}\le \tfrac{d+1}{d-1}\tfrac 1r-\tfrac{2}{d-1} $ and again $\tfrac{2(d+1)}{d+3} \le r<\tfrac{2(d+2)}{d+4}$.

Now assuming $q'>\frac{(d-1)p}{d+1-2p}$ we can find
$r>p$ such that $r<\frac{2(d+2)}{d+4} $ and $q'\ge \frac{(d-1)r}{d+1-2r}$
(which is equivalent with $\tfrac 1{q'}\le \tfrac{d+1}{d-1}\tfrac 1r-\tfrac{2}{d-1} $) and then (choosing $N$ large enough)
\[\sum_{\ell>0} 2^{\ell d(\frac 1p-\frac 1q)}
\|u_\delta*\widehat{\Psi_\ell}\|_{M^{\rho\to q} } \lesssim \delta^{-d(\frac 1p-\frac 12)+\frac 12}.
\]

This leads to
\[\sup_{t>0} \|\phi m_\delta (t\cdot) \|_{B^{d/p-d/q}_1(M^{r\to q} )} \lesssim \delta^{-d(\frac 1p-\frac 12) +\frac 12} \sup_{k \in {\mathbb {Z}}}|a_k|
\]
and thus Theorem \ref{thm:qle2} can be applied to complete the proof of part (i) in Theorem \ref{cor:BR}.
\qed
\section{Necessary conditions}\label{sec:necessary}
In this section we first give a proof of Proposition \ref{thm:nec-cond} and then discuss the sharpness of the results on oscillatory multipliers and the classes $\mathrm{FM}(a,b)$.

\subsection{Proof of Proposition \ref{thm:nec-cond}}

The statements about the $L^{p_1}\to L^{p_1,\infty}$ and $L^{p_2',1} \to L^{p_2'}$ operator norms are already proved in \cite[Theorem 2.5]{BRS}. Moreover by \cite[Lemma 2.4]{BRS} we have the bound
$\|T_R\|_{L^{p_1} \to L^{p_2'} } \lesssim
\|T_R\|_{{\mathrm{Sp}}(p_1,p_2)},$
with the implicit constant independent of $R$. It thus suffices to prove
\begin{equation} \label{Sparse-localized-necessity}
\|T_R\|_{{\mathrm{Sp}}(p_1,p_2)} \le (2\pi)^{-d}\|\widehat \Psi\|_1 \|T\|_{{\mathrm{Sp}}(p_1,p_2)}.
\end{equation}
We have
$
|\inn{Tf_1} {f_2}|\le \|T\|_{{\mathrm{Sp}}(p_1,p_2)}
\Lambda^*_{p_1,p_2}(f_1,f_2) $
for all $f_1,f_2 \in C^\infty_c$, and the same inequality holds with $T$ replaced by $T_R$.
To see \eqref{Sparse-localized-necessity} we write \[\Psi(R^{-1}(x-y))= (2\pi)^{-d} \int\widehat \Psi(\omega) e^{iR^{-1}\inn{\omega}x}e^{ -i R^{-1}\inn {\omega}{ y}} \, \mathrm{d}\omega\] and thus we have for $f_1, f_2\in C^\infty_c$
\[\inn{T_R f_1}{f_2}=(2\pi)^{-d}
\int \widehat \Psi(\omega)
\inn{T[f_{1,\omega}]}{f_{2,\omega}} \, \mathrm{d} \omega\] where
$f_{1,\omega}(y)= f_1(y) e^{-iR^{-1}\inn{\omega}{ y}}$ and $f_{2,\omega} (x)= f_2(x) e^{iR^{-1}\inn \omega x}$. Since $\jp{f_{i, \omega}}_{p_i}=\jp{f_i}_{p_i}$ we get
\[|\inn{T_R f_1}{f_2}| \le (2\pi)^{-d}\|\widehat\Psi\|_1 \|T\|_{{\mathrm{Sp}}(p_1,p_2)} \Lambda^*_{p_1,p_2} (f_1,f_2)
\]
which
shows
\eqref{Sparse-localized-necessity}.
\qed

\subsection{Sharpness of results on Miyachi and oscillatory multipliers}\label{sec: sharpness-Miy-osc}
Proposition \ref{thm:nec-cond} and stationary phase calculations such as in \S\ref{sec:oscmultsec}
can be used to show that the condition $(1/p_1,1/p_2) \in \Delta(a,b) $ in Theorem \ref{thm:oscmult} is necessary for $m_{a,b}(D)\in {\mathrm{Sp}}(p_1,p_2)$. See also calculations in the proof of \cite[Prop. 7.10] {BRS} and related arguments for the multiplier in \eqref{def2ofex} below.
We now construct an example completing the proof of Theorem
\ref{miyachi-thm}.

\begin{prop} Let $0<b<ad/2, a\not=1$. There is $m\in \mathrm{FM}(a,b) $ such that $m(D) \in {\mathrm{Sp}}(p_1,p_2)$ if and only if $(1/p_1, 1/p_2) \in \trapez(a,b)$.
\end{prop}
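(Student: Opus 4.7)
The region $\triangle(a,b)\setminus\trapez(a,b)$ is the open triangle with vertices $Q_3,P_3,P_4$, consisting of pairs $(1/p_1,1/p_2)\in\triangle(a,b)$ satisfying $\tfrac1{p_1}+\tfrac1{p_2}>1+\tfrac{b}{ad}$. We produce a single $m\in\mathrm{FM}(a,b)$ such that $m(D)\notin\mathrm{Sp}(p_1,p_2)$ at every such pair; the monotonicity ${\mathrm{Sp}}(p_1,p_2)\subseteq{\mathrm{Sp}}(\tilde p_1,\tilde p_2)$ whenever $\tilde p_i\ge p_i$ then allows us to propagate the failure from the countable dense subset we treat directly to the full open triangle. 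The mechanism for ruling out sparse bounds is the necessity in Proposition \ref{thm:nec-cond}: if $m(D)\in\mathrm{Sp}(p_1,p_2)$ then the rescaled operators $T^{\mathrm{resc}}_R$ satisfy a uniform $L^{p_1}\to L^{p_2'}$ bound as $R\to\infty$, so it suffices to exhibit $m$ for which this uniform bound fails.

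Following a standard Miyachi-type sharpness template, we take a lacunary superposition of modulated oscillatory blocks with signed coefficients,
\begin{equation*}
m(\xi)\;=\;\chi_\infty(\xi)\sum_{k\ge1}\varepsilon_k\,e^{i|\xi|^a}|\xi|^{-b}\phi(2^{-k}\xi),
\end{equation*}
where $\phi\in C^\infty_c$ has support in $\{1/2<|\xi|<2\}$ and $\sum_{k\ge 1}\phi(2^{-k}\xi)\equiv1$ on $\{|\xi|\ge1\}$, and $\{\varepsilon_k\}\subset\{-1,+1\}$ are signs to be chosen. The summands have essentially disjoint frequency supports and each satisfies \eqref{Miyachi-deriv-cond} with constants independent of $k$ and of the sign, so $m\in\mathrm{FM}(a,b)$ for any choice of signs. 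Note that the constant choice $\varepsilon_k\equiv 1$ recovers $m_{a,b}$ up to a smooth cutoff, which by Theorem \ref{thm:oscmult} satisfies sparse bounds throughout $\triangle(a,b)$; it is via a generic sign pattern that we will preclude sparse bounds outside the trapezoid.

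The heart of the argument is a Knapp-type lower bound for $T^{\mathrm{resc}}_R$ along a sequence of scales $R=R_k\to\infty$ dictated by the stationary-phase geometry of the phase $|\xi|^a$: a stationary-phase computation in the spirit of \S\ref{sec:oscmultsec} shows that the inverse Fourier transform of the $k$-th block is concentrated on an annulus at spatial scale $R_k$ of unit thickness, with amplitude governed by the Hessian of $|\xi|^a$, and after rescaling $T^{\mathrm{resc}}_{R_k}$ reduces, modulo cross-scale interference, to a unit-scale oscillatory convolution carrying Knapp-type tangential structure and amplitude $\varepsilon_k$. Testing against a tangent-tube bump of transverse size $R_k^{-1/2}$ yields a lower bound of the form $\|T^{\mathrm{resc}}_{R_k}\|_{L^{p_1}\to L^{p_2'}}\gtrsim R_k^{\eta(p_1,p_2)}$, whose exponent $\eta(p_1,p_2)$ is linear in $\tfrac1{p_1}+\tfrac1{p_2}-1-\tfrac{b}{ad}$ and strictly positive precisely for $(1/p_1,1/p_2)$ in $\triangle(a,b)\setminus\trapez(a,b)$. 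The main technical obstacle is to control the interference between the $k$-th block and its neighbors in the multiscale sum; this we handle via the disjointness of the frequency supports, the rapid decay of each block's kernel off its critical sphere, and a Rademacher--Khintchine randomization on $\{\varepsilon_k\}$ combined with Fubini, which fixes a deterministic sign pattern for which the desired lower bound holds at $R=R_k$ simultaneously for every $(p_1,p_2)$ in a countable dense subset of the open triangle. Together with Proposition \ref{thm:nec-cond} and the sparse-class monotonicity, this rules out $m(D)\in\mathrm{Sp}(p_1,p_2)$ throughout $\triangle(a,b)\setminus\trapez(a,b)$.
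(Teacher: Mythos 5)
There is a genuine gap: your construction cannot work, because the sign--randomized version of $m_{a,b}$ has the \emph{same} sparse class as $m_{a,b}$ itself, and therefore satisfies sparse bounds throughout $\triangle(a,b)$, not just $\trapez(a,b)$. Indeed, the positive sparse bounds for $m_{a,b}$ in $\triangle(a,b)$ (Theorem \ref{thm:oscmult}) are obtained by verifying the Besov-type hypotheses of Theorems \ref{thm:qge2} and \ref{thm:multq>2}, which only involve the quantities $\sup_{t>0}\|\phi\, m(t\cdot)\|_{B^s_1(M^{r\to q})}$; these are norms of the localized pieces $\phi\, m(t\cdot)$ and are manifestly insensitive to multiplying each dyadic block by a unimodular constant $\varepsilon_k$ (up to an $O(1)$ factor coming from adjacent overlapping blocks). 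Equivalently, and at the level of your proposed lower bound, the rescaled operator $T^{\mathrm{resc}}_{R_k}$ essentially sees a single dyadic block of $m$, so $T^{\mathrm{resc}}_{R_k}$ for your $m$ is $\pm$ the corresponding operator for $m_{a,b}$; since operator norms are unaffected by sign, Proposition \ref{thm:nec-cond} gives the \emph{same} necessary conditions for your $m$ as for $m_{a,b}$, namely only $(1/p_1,1/p_2)\in\triangle(a,b)$. The Rademacher/Fubini randomization does not buy you anything here, because the failure mechanism you need is not a cross-scale resonance but an intrinsically worse $L^{p_1}\to L^{p_2'}$ behavior of each single-scale piece.

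What is actually required is a multiplier in $\mathrm{FM}(a,b)$ whose dyadic blocks are more singular than the blocks of $m_{a,b}$: you must exploit the fact that $\mathrm{FM}(a,b)$ is defined only by the symbol estimates \eqref{Miyachi-deriv-cond}, so it contains blocks with qualitatively different phase geometry. The paper's proof constructs $m=m_1+m_2$ along a well-separated lacunary set of scales, where each block of $m_1$ carries a \emph{linear} phase $e^{-i2^{-k(1-a)}\xi_1}$ (so that the rescaled kernel is a translated bump, with no curvature gain; this forces $b\ge ad(1/p-1/q)$, i.e.\ the edge $[P_3P_4]$) and each block of $m_2$ carries a \emph{quadratic} phase $e^{i2^{-k(2-a)}|\xi|^2/2}$ (a chirp with the curvature scale tuned to force $b\ge ad(1/p-1/2)$, i.e.\ the edge $[Q_1P_4]$). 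Both blocks satisfy \eqref{Miyachi-deriv-cond} with uniform constants, and the lacunary separation makes the cross-scale interference negligible. The key conceptual step your proposal misses is that one must leave the family of $m_{a,b}$-like curved phases entirely and use degenerate/translated phases, not merely perturb $m_{a,b}$ by signs.
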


\begin{proof}
Let $\varphi_\circ\in C^\infty_c$ be supported in $\{\xi: \tfrac 34<|\xi|< \tfrac 54\}$ such that $\varphi_\circ(\xi)=1$ for $\tfrac 78<|\xi|<\tfrac 98$. Let $\eta_\circ={\mathcal {F}}^{-1}[\varphi_\circ]$.
Let ${\mathcal {N}}_1$, ${\mathcal {N}}_2$ be two infinite disjoint subsets of ${\mathbb {N}}$ such that ${\mathcal {N}}:={\mathcal {N}}_1\cup {\mathcal {N}}_2$ is well separated in the sense that
$n\ge 1+\frac{10}{|1-a|}$ and $|n-\tilde n|> 1+\frac{10}{|1-a|}$ if $n,\tilde n\in {\mathcal {N}}$ and $n\neq \tilde n$.
Define \[m=m_1+m_2,\]
where
\begin{align}\label{def1ofex}
m_1(\xi)&=\sum_{k\in {\mathcal {N}}_1} 2^{-kb} \varphi_\circ(2^{-k}\xi) e^{-i2^{-k(1-a)}\xi_1},\\ \label{def2ofex}
m_2(\xi)&= \sum_{k\in {\mathcal {N}}_2} 2^{-kb} \varphi_\circ(2^{-k}\xi)e^{ i2^{-k(2-a)}|\xi|^2/2}
\end{align}
and note that $m\in \mathrm{FM}(a,b).$
We remark that for the purpose of $L^p\to L^p$ inequalities $m_1$ behaves better than the oscillatory multipliers $m_{a,b}$, indeed if $b>0$ then $m_1(D)$ maps $L^p\to L^p$ for all $1<p<\infty$; yet $m_1(D)$ provides an example for the sharpness of the line through $P_3$ and $P_4$ in Theorem \ref{miyachi-thm}.

Let $K={\mathcal {F}}^{-1}[m]$. Assuming that $m$ belongs to $ {\mathrm{Sp}}(p,q') $ we must show that $(1/p, 1/q')$ are on or below the line connecting $P_3=(\tfrac 12, \tfrac 12+\tfrac{b}{da})$, and
$P_4=(\tfrac 12+\tfrac{b}{da}, \tfrac 12) $, that is
$1/p+1/q'\le 1+b/(da)$
or equivalently, $b\ge da(1/p-1/q)$. Moreover, we must show that $(1/p, 1/q')$ lies on or to the left of the segment $\overline {Q_1P_4}$ i.e. satisfies $b\ge da(\frac 1p-\frac 12)$.

Let $T_R^{\mathrm{resc}}$ be the convolution operator with kernel $\Psi(x) R^dK(Rx)$ then by Proposition \ref{thm:nec-cond} we get that
$T_R^{\mathrm{resc}}$
is bounded from $L^p\to L^q$ with operator norm uniformly bounded in $R$. Here we may use a suitable $\Psi\in C^\infty_c$ supported in $\{x: 1/2<|x|<2\} $ such that $\Psi(x)=1$ for $2^{-1/2}\le |x|\le 2^{1/2} $.
We shall use this for the parameters
\begin{equation}\label{choiceofR} R_n=2^{-n(1-a)}.\end{equation}
We also let $\kappa_n$ be the convolution kernel of $T_{R_n}^{\mathrm {resc}}$.
We shall show the following lower bounds:
\begin{align}\label{kappanlowerbd1}
\text{For $n\in {\mathcal {N}}_1$:} \quad \|T_{R_n}^{\mathrm {resc}} \|_{L^p\to L^q}
&\gtrsim 2^{ -n
(b-ad(\frac 1p-\frac 1q))}.
\\ \label{kappanlowerbd2} \text{For $n\in {\mathcal {N}}_2$:} \quad \|T_{R_n}^{\mathrm {resc}} \|_{L^p\to L^q}
&\gtrsim 2^{ -n
(b-ad(\frac 1p-\frac 12))}.
\end{align}
These imply after letting $n\to \infty$ within ${\mathcal {N}}_1$, ${\mathcal {N}}_2$, that the conditions
$b\ge ad(1/p-1/q)$, $b\ge ad(1/p-1/2)$, are indeed necessary for $m(D)\in {\mathrm{Sp}}(p,q')$.
Since for convolution operators the ${\mathrm{Sp}}(p_1,p_2)$ and ${\mathrm{Sp}}(p_2, p_1) $ norms coincide we get that $(1/p_1, 1/p_2) \in \trapez(a,b)$ is necessary for $m(D)$ to belong to the class ${\mathrm{Sp}}(p_1,p_2)$.
A calculation yields
\[ \kappa_n(x)= \Psi(x)
\sum_{k\in {\mathcal {N}}} K_{n,k}(x), \]
where $K_{n,k}$ is defined by the following:
\begin{align} \label{Kjn-auxiliarynew1}
&\text{ For } k\in {\mathcal {N}}_1: \,\, K_{n,k} (x)= 2^{-kb} 2^{ (k-n(1-a) )d} \eta_\circ(2^{-n(1-a)} 2^k x-2^{ka} e_1).
\\
\label{Kjn-auxiliarynew2}
&\text{ For } k\in {\mathcal {N}}_2: \,\, \widehat{ K_{n,k} }(\xi) = 2^{-kb} \varphi_\circ(2^{n(1-a)-k }\xi)
e^{i (2^{2n(1-a) -k(2-a) })|\xi|^2/2}.
\end{align}
We let $n\in {\mathcal {N}}$ and decompose
\[\kappa_n = \kappa_{n}^{\mathrm{main}}
+u_{n} +\sum_{k\neq n} \kappa_{n,k}, \]
where
\begin{subequations}
\begin{align}
\kappa_{n}^{\mathrm{main}} (x)&= K_{n,n}(x),
\\
u_{n} (x)&= (\Psi(x)-1)K_{n,n}(x),
\\
\kappa_{n,k}(x)&=
\Psi(x) K_{n,k} (x).
\end{align}
\end{subequations}

We first consider the case $n\in {\mathcal {N}}_1$ and show a lower bound for the $L^p\to L^q$ norm of the operator $T^{\mathrm{resc}}_{R_n, \mathrm{main}}$ with convolution kernel
$\kappa_{n}^{\mathrm{main}}: =K_{n,n}$.
By scaling and translation we have
\begin{equation}\label{mainkappan1} \|T^{\mathrm{resc}}_{R_n, \mathrm{main}} \|_{L^p\to L^q}
= \|\varphi_\circ\|_{M^{p\to q} } 2^{ -n
(b-ad(\frac 1p-\frac 1q))} , \quad n\in {\mathcal {N}}_1.
\end{equation}
Moreover, for $n\in {\mathcal {N}}_2$,
\begin{align*}
\|T^{\mathrm{resc}}_{R_n, \mathrm{main}} \|_{L^p\to L^q}
&=\|2^{-nb} \varphi_\circ(2^{-na}\cdot) e^{-i2^{-na}|\cdot|^2/2} \|_{M^{p\to q}}
\\&= 2^{-nb} 2^{na d(\frac 1p-\frac 1q) } \|\varphi_\circ e^{i 2^{na} |\cdot|^2/2}\|_{M^{p\to q}} .
\end{align*}
Applying the method of stationary phase we get
\[ |{\mathcal {F}}^{-1}[ \varphi_\circ e^{i 2^{na} |\cdot|^2/2}] (x) |\approx 2^{-nad/2} \quad \text{ for
$||x|-2^{na}|\le 2^{na/4}$.}
\]
Hence
\begin{align*} \|\varphi_\circ e^{i 2^{na} |\cdot|^2/2}\|_{M^{p\to q}} &\gtrsim
\Big(\int_{||x|-2^{na}|\le 2^{-na/4}} \big|{\mathcal {F}}^{-1}[ \varphi_\circ e^{i 2^{na} |\cdot|^2/2}] \big|^q \, \mathrm{d} x\Big)^{1/q} \\&\gtrsim
2^{na d(\frac 1q-\frac 12)}
\end{align*}
and combining the above we get for $q\ge p$,
\begin{equation}\label{mainkappan2} \|T^{\mathrm{resc}}_{R_n, \mathrm{main}} \|_{L^p\to L^q}
\gtrsim 2^{ -n
(b-ad(\frac 1p-\frac 12))} , \quad n\in {\mathcal {N}}_2.
\end{equation}
In order to deduce \eqref{kappanlowerbd1}, \eqref{kappanlowerbd2} from \eqref{mainkappan1}, \eqref{mainkappan2} we show error bounds for the convolution operators with kernels $u_n$ and $\Psi K_{n,k}$.

The contributions for $K_{n,k}$ are negligible for $k,n\in {\mathcal {N}}$ with $k\neq n.$
Indeed from \eqref{Kjn-auxiliarynew1} it is immediate that for $k\in {\mathcal {N}}_1$, $n\in {\mathcal {N}}$, $k\neq n$,
\[
|K_{n,k}(x)|\lesssim_N 2^{-kb} \frac{2^{ (k-n(1-a))d}}{
(1+ 2^{k-n(1-a)} |x-2^{(n-k)(1-a) }e_1|)^N}.
\]
Now consider $k\in {\mathcal {N}}_2$ and if $n\neq k$ for $n\in {\mathcal {N}}$, then
$(n-k)(1-a)\notin [-10,10].$ We have
then
\[K_{n,k}(x)=(2\pi)^{-d} 2^{-kb}\int \varphi_\circ(2^{n(1-a)-k}\xi) e^{i\phi_{n,k}(x,\xi)} \, \mathrm{d} \xi,\]
where $\phi_{n,k}(x,\xi)= 2^{2n(1-a)-k(2-a)} |\xi|^2/2-\inn x\xi$.
Compute that for $x\in {\mathrm{supp}\,} \Psi$, $|\xi|\approx 2^{k-n(1-a)}$
\[ |\nabla_\xi \phi_{n,k} (x,\xi) | \approx \begin{cases} 2^{(n-k) (1-a) } &\text{ if }(n-k)(1-a)\ge 10 \\
1 &\text{ if } (n-k)(1-a) \le -10.
\end{cases}
\]
This implies after an $N$-fold integration by parts for $|x|\approx 1$, $k\in {\mathcal {N}}_2$, $n\in {\mathcal {N}}$, $|(n-k)(1-a)|\ge 10$
\[ |K_{n,k}(x) |\lesssim_N
\begin{cases} 2^{-kb} 2^{-(n-k)(1-a) d} 2^{-ka (N-d)}
&\text{ for } (n-k)(1-a)\ge 10,
\\
2^{-kb} 2^{-ka(N-d)} 2^{(n-k)(1-a) (N-d)} &\text{ for } (n-k)(1-a)\le -10.
\end{cases}
\]
Finally, by the support properties of $(1-\Psi)$ and $\varphi_\circ$, an integration by parts also yields
\[|u_n(x) |\lesssim_N 2^{-nb+na(N-d)} (1+|x|)^{-N}\]
for all $n\in {\mathcal {N}}$.

The above estimates and the resulting consequences for upper bounds for the corresponding $L^p\to L^q$ operator norms
(obtained via Young's inequality) show that those terms are small compared to the lower bounds in \eqref{mainkappan1}, \eqref{mainkappan2} and as a consequence we obtain \eqref{kappanlowerbd1}, \eqref{kappanlowerbd2}.
\end{proof}

\section{Proofs of some auxiliary facts}
\label{sec:appendix}

\subsection{Proof of Observation \ref{obs:basicobs}}
\label{observation-proof}
We first note that the estimate \eqref{eq:main sparse triple} immediately implies the analogous estimate with $L(S_0) \geq N_2$, by writing $S_0$ as a disjoint union of cubes $Q\in {\mathfrak {Q}}(S_0)$ with $L(Q)=N_2$, applying the estimate on each such $Q$, and noting that by the disjointness of such $Q$,
\begin{equation}
{\mathfrak {S}}_{S_0}:=\bigcup_{\substack{Q \in {\mathfrak {Q}}(S_0) \\ L(Q)=N_2}} \bigcup_{\substack{{\mathfrak {S}}_{Q} \subseteq {\mathfrak {Q}}(Q) \\ {\mathfrak {S}}_Q : \gamma-\text{sparse}}} {\mathfrak {S}}_{Q}
\end{equation}
is a $\gamma$-sparse collection of cubes in ${\mathfrak {Q}}(S_0)$.

Secondly, following the argument in \cite[\S 4.2]{BRS} (based on results from
\cite{lerner-nazarov}), one can replace $\Lambda^{**}_{S_0,p,q'}$ in \eqref{eq:main sparse triple} by an actual maximal sparse form $\Lambda^*_{S_0,p,q'}$ as in \eqref{eq:maxLambda}; we omit the details.
Lastly, in view of \eqref{eq:from Tkl to Tj}, the sums $\sum_{j=N_1}^{N_2} {\mathcal {T}}_j$ in \eqref{eq:main sparse triple} can be replaced by $\sum_{k \in \digamma} \sum_{\ell=1}^N P_kT_{k}^{(\ell-k)} P_k$.

To summarize the above reductions we see that
Theorem \ref{thm:main} implies
that the inequality
\begin{equation} \label{eq:finite-simple}
\Big|\biginn{\sum_{k \in \digamma} \sum_{\ell=1}^N P_kT_{k}^{(\ell-k)} P_k f_1}{f_2}\Big| \lesssim {\mathcal {C}}\, \Lambda^*_{p,q'}(f_1,f_2)
\end{equation}
holds uniformly in $N$ and $\digamma$, for all
$C^\infty_c$ functions $f_1$, $f_2$.

We now use a limiting argument from \cite{BRS} together with Lemma \ref{lem:Kk-k} to show that \eqref{eq:finite-simple} can be upgraded to
\begin{equation}\label{eq:finalgoal}
\big|\inn{m(D) f_1}{f_2}\big| \lesssim\,{\mathcal {C}}\, \Lambda^*_{p,q'}(f_1,f_2),
\end{equation}
which in conjunction with \eqref{eq:Besovmultnorm} leads to the statements of Theorems \ref{thm:qle2}, \ref{thm:qge2} and \ref{thm:multq>2}.

To this end we use \eqref{eq:reproducing}
to decompose, for $f\in {\mathcal {S}}$,
\begin{equation} \label{eq:weak-op-conv}m(D) f = \sum_{k\in {\mathbb {Z}}} P_k m(D) L_k P_kf\end{equation} with convergence in the sense of tempered distributions.
We now apply \cite[Lemma A.1]{BRS} for the subspace ${\mathcal {V}}\equiv {\mathcal {V}}_1={\mathcal {V}}_2$ consisting of all $f\in{\mathcal {S}}$ for which $\widehat f$ is compactly supported in ${\mathbb {R}}^d\setminus\{0\}$ (and use that these are dense in $L^\rho$ for $1<\rho<\infty$).
This lemma tells us that it suffices to prove the inequality
\begin{equation} \label{eq:sparsemultiplier} |\inn {m(D)f_1}{f_2}|\lesssim {\mathcal {C}}\, \Lambda^*_{p,q'} (f_1,f_2)\end{equation} for all $f_1,f_2\in {\mathcal {V}}$.
For fixed $f_1\in {\mathcal {V}}$ the
$k$-sum in \eqref{eq:weak-op-conv} reduces to a sum over indices in a finite set $\digamma(f_1).$
It therefore suffices to prove the inequality
\begin{equation} \label{eq:sp-fin-multiplier} \Big|\biginn {\sum_{k\in \digamma} P_k m(D) L_k P_k f_1}{f_2} \Big|\lesssim {\mathcal {C}} \,\Lambda^*_{p,q'} (f_1,f_2)\end{equation}
for all finite families $\digamma\subset {\mathbb {Z}}$ and for all $f_1, f_2\in {\mathcal {V}}$. Again, by \cite[Lemma A.1]{BRS} it follows that
\eqref{eq:sp-fin-multiplier} for all $f_1, f_2\in {\mathcal {V}}$ is equivalent
to \eqref{eq:sp-fin-multiplier} for all $f_1\in \widetilde{\mathcal {V}}_1$, $f_2\in \widetilde {\mathcal {V}}_2$ where, for given $\rho$ with $p<\rho<q'$, $\widetilde{\mathcal {V}}_1$ is any dense subspace of $L^\rho$, and $\widetilde {\mathcal {V}}_2$ is any dense subspace of $L^{\rho'}$. It thus suffices to prove \eqref{eq:sp-fin-multiplier} for all $f_1, f_2\in C^\infty_c$ and all finite families $\digamma\subset{\mathbb {Z}}$.

Let $f_1, f_2\in C^\infty_c$ so that the union of the supports of $f_1$ and $f_2$ is contained in a set of diameter $R$. Let $k_{\mathrm {max}}=\max \digamma$.
Observe that for all $k\in \digamma$,
\[ \inn {T_k^{(\ell-k)} f_1}{f_2}=0 \quad \text{ if $\,\,2^{\ell-k_{\mathrm max}-3}>R\,\,$ and $\,\,\ell>0$ }.
\]
Then, we have
$\sum_{k\in \digamma} P_k m(D) L_k P_k f_1=\lim_{N\to\infty} \sum_{k \in \digamma} \sum_{\ell=0}^N P_k T_k^{(\ell-k)} P_k f_1.$
The terms for $\ell=0$ are taken care of by Lemma \ref{lem:Kk-k}; note that
by \eqref{eq:LinftyvsBesov} ${\mathcal {C}}$ can be used both in Lemma \ref{lem:Kk-k} and Theorem \ref{thm:main}. We have thus shown that \eqref{eq:sp-fin-multiplier} follows from \eqref{eq:finite-simple} for $f_1,f_2\in C^\infty_c$ and the proof is complete. \qed

\subsection{Proof of Lemma \ref{lem:Kk-k}} \label{app:Kk-k}
We are proving that for any
$1 < p \leq q < \infty$ the inequality
\begin{equation}
|\inn{\sum_{k \in \digamma} T_k^{(-k)} P_k f_1}{f_2}| \leq C \| m \|_\infty \, \Lambda_{p,q'}^*(f_1,f_2)
\end{equation} holds
uniformly in $\digamma$.

The assertion can be derived for example from
\cite[Theorem 1.1]{BRS}.
The verification of the hypotheses on that theorem is similar to the computations in \cite[\S 6.3]{BRS} and it is included for completeness. Let $m_{\digamma}=\sum_{k \in \digamma} \widehat{K}_k^{(-k)}\eta(2^{-k} \cdot)$, so that $T_{m_\digamma}=\sum_{k \in \digamma} T_k^{(-k)}P_k$. The support condition \cite[(1.6)]{BRS} clearly holds. The boundedness conditions \cite[(1.7)]{BRS}
follow
from the standard H\"ormander multiplier theorem after verifying that
\begin{equation}\label{eq:mult cond l=0 2}
\sum_{|\alpha| \leq d+1} \sup_{t>0} \sup_{\xi \in {\mathbb {R}}^d} |\partial_\xi^\alpha (\varphi m_\digamma(t \cdot))(\xi)| \lesssim \| m \|_{\infty}
\end{equation}
uniformly in $\digamma$. We need to analyze the derivatives of
\[\varphi m_\digamma(t\xi)= \sum_{\substack {k\in \digamma\\ 2^{-k}t\sim 1} } \varphi(\xi) \int \varphi (\omega) m(2^k\omega) \widehat{\Phi_0} (2^{-k}t\xi-\omega) \, \mathrm{d}\omega
\]
and \eqref{eq:mult cond l=0 2} follows after straightforward computation.

For the hypothesis \cite[(1.8)]{BRS}, we have
\begin{align*}
\| \mathrm{Dil}_{2^{-k}} T_k^{(-k)}P_k \|_{L^p \to L^q} = \| \eta \big( [\varphi m(2^k \cdot)] \ast \widehat{\Phi_0} \big) \|_{M^{p \to q}} &\\
\lesssim \| \eta \|_1 \| \Phi_0 \|_{\infty} \| \mathcal{F}^{-1}[\varphi m(2^k \cdot) ] \|_\infty \lesssim \| m \|_\infty&,
\end{align*}
for all $1 < p \leq q < \infty$, using the compact support of $\Phi_0$ and $\varphi$. Finally, the hypothesis \cite[(1.9)]{BRS} also follows from noting that
\begin{equation*}
\| \mathrm{Dil}_{2^{-k}} T_k^{(-k)}P_k \|_{L^p \to L^q} \leq \|\eta [e^{i \inn{\cdot}{h}}-1] \|_{M^p} \| [\varphi m(2^k \cdot)] \ast \widehat{\Phi_0} \|_{M^{p \to q}},
\end{equation*}
the previous bound $\| [\varphi m(2^k \cdot)] \ast \widehat{\Phi_0} \|_{M^{p \to q}} \lesssim \| m \|_\infty$ and that
\begin{equation*}
\|\eta [e^{i \inn{\cdot}{h}}-1] \|_{M^p} \leq \| \widecheck{\eta}(\cdot + h) - \widecheck{\eta} \|_1 \lesssim |h|
\end{equation*}
for any $1 < p < \infty$.
\qed

\subsection{Some embeddings for multiplier classes}
\label{sec:multiplier-embeddings}
We begin with a simple observation for compactly supported multipliers.

\begin{lemma} \label{lem:M1infty}Let $1\le p\le q\le\infty$ and $m\in M^{p\to q} $ be supported in a compact set $E$.
Then $m\in M^{1\to\infty} $ and $\|m\|_{M^{1\to\infty}}\lesssim_E \|m\|_{M^{p\to q}}$.
\end{lemma}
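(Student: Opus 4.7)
The plan is to reduce everything to the fact that $\|m\|_{M^{1\to\infty}}=\|\widecheck m\|_\infty$, and then bound $\|\widecheck m\|_\infty$ by first controlling $\|\widecheck m\|_q$ (using the hypothesis) and then passing from $L^q$ to $L^\infty$ via Bernstein's inequality, which applies because $\widecheck m$ has Fourier transform supported in the compact set $E$. The compactness of $E$ is precisely what allows both Bernstein's inequality and the localization trick below; all constants will depend on $E$ only through a fixed cutoff choice.

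First, I would pick a Schwartz function $\phi$ with $\widehat\phi\in C_c^\infty$ and $\widehat\phi\equiv 1$ on an open neighborhood of $E$. Since $m$ is supported in $E$, one has $m=m\widehat\phi$, which on inverting the Fourier transform becomes
\[
\widecheck m \,=\, \widecheck m * \phi \,=\, m(D)\phi.
\]
The hypothesis $m\in M^{p\to q}$ then yields
\[
\|\widecheck m\|_q \,=\, \|m(D)\phi\|_q \,\le\, \|m\|_{M^{p\to q}}\|\phi\|_p \,\lesssim_E\, \|m\|_{M^{p\to q}},
\]
since $\phi\in{\mathcal {S}}$ depends only on $E$.

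Second, I would invoke Bernstein's inequality for functions with Fourier transform supported in $E$: choosing another Schwartz function $\psi$ with $\widehat\psi\in C_c^\infty$ and $\widehat\psi\equiv 1$ on $E$ gives $\widecheck m = \widecheck m * \psi$, so Young's convolution inequality produces
\[
\|\widecheck m\|_\infty \,\le\, \|\widecheck m\|_q\,\|\psi\|_{q'} \,\lesssim_E\, \|\widecheck m\|_q.
\]
Combining the two displayed estimates gives $\|\widecheck m\|_\infty\lesssim_E\|m\|_{M^{p\to q}}$, which is the asserted bound. There is no real obstacle here; the only mild point is the case $q=\infty$ (where $L^\infty$ is interpreted as $C_0$ in the definition of $M^{p\to\infty}$), but in that case the first step already delivers the conclusion and the Bernstein step is unnecessary.
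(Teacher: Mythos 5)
Your proof is correct and rests on the same idea as the paper's: fix a smooth compactly supported cutoff that equals $1$ on a neighborhood of $E$, and use Young's inequality to shift Lebesgue exponents for free. The paper packages this as the operator factorization $m(D)=\chi(D)\,m(D)\,\chi(D)$ and chains the three bounds $L^1\to L^p\to L^q\to L^\infty$, whereas you work with the convolution kernel $\widecheck m$ via the identity $\|m\|_{M^{1\to\infty}}=\|\widecheck m\|_\infty$; your estimate $\|\widecheck m\|_q\le\|m\|_{M^{p\to q}}\|\phi\|_p$ is the composite $m(D)\chi(D)\colon L^1\to L^q$ in disguise, and your Bernstein step is the $\chi(D)\colon L^q\to L^\infty$ step. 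The two formulations are interchangeable, so this is essentially the paper's argument.
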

\begin{proof}
Let $\chi\in C^\infty_c$ be such that $\chi$ is supported on a compact subset of diameter less than twice the diameter of $E$, such that $\chi(\xi)=1$ on a neighborhood of $E$.
Since $\chi \in M^{1\to p}\cap M^{q\to \infty}$ we get
\begin{align*}
\|{\mathcal {F}}^{-1} [m\widehat f]\|_\infty&\le \|\chi\|_{M^{q\to \infty} }
\|{\mathcal {F}}^{-1} [m\widehat f]\|_q
\\
\|{\mathcal {F}}^{-1} [m\widehat f]\|_q&\le \|m\|_{M^{p\to q}} \|
\|{\mathcal {F}}^{-1} [\chi \widehat f]\|_p
\\
\|{\mathcal {F}}^{-1} [\chi\widehat f]\|_p&\le \|\chi\|_{M^{1\to p}}
\|f\|_1
\end{align*}
and putting the three inequality together we deduce the assertion.
\end{proof}
\begin{lemma}\label{lem:r1r2}
For $r_1\le r_2\le q$ let $g\in M^{1\to \infty} $ be supported in a compact set $E$. Let $\Phi$ be a Schwartz function and $\Phi_\ell(x)=\Phi(2^{-\ell} x). $ Then
\[\|g*\widehat{\Phi_\ell}\|_{M^{r_1\to q}} \lesssim
\|g*\widehat{\Phi_\ell}\|_{M^{r_2\to q}} + C_N 2^{-\ell N} \|g\|_{M^{1\to \infty}}.
\]
\end{lemma}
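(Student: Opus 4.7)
The plan is to decompose $g*\widehat{\Phi_\ell}$ in frequency into a main piece localized near $E$ and a rapidly decaying tail. Fix $\chi \in C^\infty_c({\mathbb {R}}^d)$ with $\chi \equiv 1$ on a neighborhood of $E$ and write
\[g*\widehat{\Phi_\ell} = \chi\cdot(g*\widehat{\Phi_\ell}) + (1-\chi)\cdot(g*\widehat{\Phi_\ell}) =: h_1+h_2.\]
For $h_1$, which is supported in the fixed compact set ${\mathrm{supp}\,}\chi$, the hypothesis $r_1\le r_2$ enters as follows. Picking an auxiliary $\tilde\chi\in C^\infty_c$ with $\tilde\chi\equiv 1$ on ${\mathrm{supp}\,}\chi$, the identity $h_1(D)=h_1(D)\tilde\chi(D)$ gives $\|h_1\|_{M^{r_1\to q}}\le \|h_1\|_{M^{r_2\to q}}\|\tilde\chi\|_{M^{r_1\to r_2}}$, with the last factor finite by Young's inequality (since $\widecheck{\tilde\chi}\in{\mathcal {S}}$ and $r_1\le r_2$). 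The operator factorization $h_1(D)=\chi(D)\circ(g*\widehat{\Phi_\ell})(D)$ then yields $\|h_1\|_{M^{r_2\to q}}\le\|\chi\|_{M^{q\to q}}\|g*\widehat{\Phi_\ell}\|_{M^{r_2\to q}}$, which is the first term in the asserted inequality.

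For the tail $h_2$ the target is $\|h_2\|_{M^{r_1\to q}}\lesssim_N 2^{-\ell N}\|g\|_{M^{1\to\infty}}$. Since $r_1\le q$, Young reduces this to bounding $\|\widecheck{h_2}\|_{L^s}$ with $1/s=1+1/q-1/r_1\in[0,1]$, and by interpolation together with the standard Fourier estimates $\|\widecheck{h_2}\|_\infty\lesssim\|h_2\|_1$ and $\|\widecheck{h_2}\|_1\lesssim \sum_{|\alpha|\le d+1}\|\partial^\alpha h_2\|_1$, everything reduces to controlling $\|\partial^\alpha h_2\|_{L^1}$ for $|\alpha|\le d+1$. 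Choose $\rho\in C^\infty_c$ with $\rho\equiv 1$ on $E$, so that $g=\rho g$; duality then gives, for $\xi\in{\mathrm{supp}\,}(1-\chi)$,
\[(g*\widehat{\Phi_\ell})(\xi)=\langle g,\rho\cdot\widehat{\Phi_\ell}(\xi-\cdot)\rangle=\langle\widecheck g,{\mathcal {F}}[\rho\cdot\widehat{\Phi_\ell}(\xi-\cdot)]\rangle,\]
whence $|(g*\widehat{\Phi_\ell})(\xi)|\le\|g\|_{M^{1\to\infty}}\|{\mathcal {F}}[\rho\cdot\widehat{\Phi_\ell}(\xi-\cdot)]\|_{L^1}$. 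On ${\mathrm{supp}\,}(1-\chi)$ one has ${\mathrm{dist}}(\xi,E)\ge c>0$, so $|\xi-\eta|\ge c/2$ uniformly for $\eta\in{\mathrm{supp}\,}\rho$, and the rapid decay $|\widehat{\Phi_\ell}(\zeta)|=2^{\ell d}|\widehat\Phi(2^\ell\zeta)|\lesssim_N 2^{\ell(d-N)}|\zeta|^{-N}$ for $|\zeta|\gtrsim 1$ (together with the analogous bounds for $\eta$-derivatives) yields, via integration by parts in $\eta$, $\|{\mathcal {F}}[\rho\widehat{\Phi_\ell}(\xi-\cdot)]\|_{L^1}\lesssim_{N,M} 2^{-\ell N}(1+|\xi|)^{-M}$ for arbitrary $N,M$. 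Expanding $\partial^\alpha h_2$ via Leibniz and repeating the argument (extra factors $2^{\ell|\alpha|}$ from differentiating $\widehat{\Phi_\ell}$ are absorbed by enlarging $N$) gives $\|\partial^\alpha h_2\|_{L^1}\lesssim_N 2^{-\ell N}\|g\|_{M^{1\to\infty}}$ for every $N$, completing the estimate.

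The main obstacle is the rapid-decay estimate for $h_2$, which requires carefully exploiting the two-scale nature of the problem: the cutoff $\chi$ separates $\xi$ from $E$ at order-one distance, while $\widehat{\Phi_\ell}$ concentrates on scale $2^{-\ell}$ near the origin, so integration by parts produces factors of $2^{-\ell N}$ for every $N$. The remaining steps (the compact-support embedding used for $h_1$ and Young's inequality) are routine.
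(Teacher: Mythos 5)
Your proposal is correct and follows essentially the same route as the paper's proof: the same decomposition via a cutoff $\chi$ equal to $1$ near the support of $g$, with the main term handled by the composition $\chi\in M^{r_1\to r_2}$ and the tail controlled by exploiting the frequency separation to extract $2^{-\ell N}$ decay. The only real divergence is in how you quantify the tail: the paper pairs $g$ against $\partial_\xi^\gamma\widehat{\Phi_\ell}(\xi-\cdot)$ in Sobolev duality via the embedding $M^{1\to\infty}\hookrightarrow L^2_{-N}$, whereas you use $M^{1\to\infty}=\mathcal{F}L^\infty$ directly together with $L^\infty\times L^1$ duality and integration by parts in the auxiliary variable; both produce the same pointwise derivative bounds and then conclude by Young. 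One small point to tighten: when you assert $|\xi-\eta|\ge c/2$ for $\xi\in{\mathrm{supp}\,}(1-\chi)$ and $\eta\in{\mathrm{supp}\,}\rho$, you must choose $\rho$ supported inside $\{\chi=1\}$ (not merely $\rho\equiv 1$ on $E$), which is of course freely arranged; the paper avoids this by explicitly interposing the compact set $E_\circ$ between $E$ and the transition region of $\chi$.
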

\begin{proof}
Let $E_\circ$ be a compact set which contains a neighborhood of $E$ and let $\chi\in C^\infty_c$ such that $\chi(\xi)=1$ for $\xi$ in a neighborhood of $E_\circ$.
Clearly $\chi\in M^{r_1\to r_2}$ and therefore
\begin{equation}\label{eq:chi}\big\|\chi (g*\widehat{\Phi_\ell})\big\|_{M^{r_1\to q}} \lesssim
\|g*\widehat{\Phi_\ell}\|_{M^{r_2\to q}}. \end{equation}
Next we will examine the multiplier
\[ (1-\chi(\xi) ) \big(g*\widehat{\Phi_\ell}(\xi)\big)= (1-\chi(\xi)) \biginn{g }{\widehat{\Phi_\ell} (\xi-\cdot)}
\]
where $\inn{g}{\cdot}$ refers to the standard pairing of a tempered distribution $g$ with a Schwartz function.
We have $M^{1\to \infty} = {\mathcal {F}} L^\infty \hookrightarrow L^2_{-N}$ with any $N>d/2$, where $\|f\|_{L^2_{-N}}^2=\int (1+|\xi|^2)^{-N}|\widehat f(\xi)|^2\, \mathrm{d} \xi$. Since $g$ is supported in an open subset of $E_\circ$ we have for $\xi\notin {\mathrm{supp}\,} (\chi)$
\begin{align*} &\Big|\partial_\xi^\gamma \big[(1-\chi(\xi))
\biginn{g }{\widehat{\Phi_\ell} (\xi-\cdot)} \big] \Big| \\
&\lesssim_N\|g\|_{L^2_{-N} }
\sum_{\beta+\beta'=\gamma} |\partial_\xi^{\beta'} (1-\chi(\xi)) |
\big\| \partial_\xi^\beta \widehat{\Phi_\ell} (\xi-\cdot) \big\|_{L^2_N (E_\circ)}
\\
&\lesssim_{N,\gamma,N_1} \|g\|_{L^2_{-N} }
2^{\ell (d+|\gamma|+N)}\sup_{\eta\in K_\circ}
|2^\ell {\mathrm{dist}} (\xi, \eta) |^{-N_1}
\\
&\lesssim_{N,|\gamma|,N_1} \|g\|_{M^{1\to \infty}} 2^{\ell (d+|\gamma|+N)-N_1}
\sup_{\eta\in K_\circ}
|{\mathrm{dist}} (\xi, \eta) |^{-N_1}.
\end{align*}
We use that ${\mathrm{dist}}({\mathrm{supp}\,}(1-\chi), E_\circ)>0$. We apply the displayed inequality for any $|\gamma|\le 2d$ and then choose $N_1> 3d+N+N_2$ to get (for all $1\le r_1\le q\le\infty$)
\begin{equation}\label{eq:1-chi} \big\|(1-\chi) \big( g*\widehat{\Phi_\ell} \big)\big\|_{M^{r_1\to q} }\lesssim_N 2^{-\ell N_2} \|g\|_{M^{1\to \infty} }.
\end{equation}
The desired estimate follows from \eqref{eq:chi} and \eqref{eq:1-chi}.
\end{proof}

\begin{cor} \label{cor:Aembeddings} Let $A_{p,r,q}^{k,\ell}$ be as in \eqref{Aprq}. For $p\le r_1\le r_2\le q$ we have
\begin{equation} \label{eq:A-embeddings-app}
A_{p,r_1,q}^{k,\ell} \lesssim A_{p,r_2,q}^{k,\ell} + C_N2^{-\ell N} \sum_{\tilde \ell\ge 0} 2^{-\tilde \ell d(\frac 1p-\frac 1q)} A_{p, r_2,q}^{k,\tilde\ell}.
\end{equation}
\end{cor}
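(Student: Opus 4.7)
The proof proceeds by applying Lemma \ref{lem:r1r2} to the compactly supported function $g := \varphi m(2^k\cdot)$. Since $\varphi$ is supported in $\{\tfrac12 < |\xi| < 2\}$, $g$ is supported in the compact set $E = \{\tfrac12 \le |\xi| \le 2\}$. The decisive observation is that if we set $\widetilde\Phi(x) := \Phi_0(x) - \Phi_0(2x)$, then $\widetilde\Phi$ is Schwartz and $\widetilde\Phi(2^{-\ell} x) = \Psi_\ell(x)$ for every $\ell \ge 1$ by the definition \eqref{eqn:defofPsiell}. Applying Lemma \ref{lem:r1r2} with this choice of $\Phi$ (and with $\Phi = \Phi_0$ together with index zero for the $\ell=0$ case) yields, for every $\ell \ge 0$,
\[
\|g * \widehat{\Psi_\ell}\|_{M^{r_1 \to q}} \lesssim \|g * \widehat{\Psi_\ell}\|_{M^{r_2 \to q}} + C_{N'} 2^{-\ell N'} \|g\|_{M^{1 \to \infty}},
\]
with $N'$ arbitrary and the convention $\Psi_0 := \Phi_0$.

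Next, I multiply both sides by the weight $2^{\ell d(1/p - 1/q)}$ and read off the definition \eqref{Aprq}: the left-hand side becomes $A_{p,r_1,q}^{k,\ell}$ and the first term on the right becomes $A_{p,r_2,q}^{k,\ell}$. The weight is absorbed into the rapid decay factor on the error term, at the cost of replacing $N'$ by $N := N' - d(1/p - 1/q)$, which remains arbitrarily large. Hence
\[
A_{p,r_1,q}^{k,\ell} \lesssim A_{p,r_2,q}^{k,\ell} + C_N 2^{-\ell N} \|g\|_{M^{1 \to \infty}}.
\]

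It remains to bound $\|g\|_{M^{1\to\infty}}$ by the sum on the right-hand side of the target inequality. Since $g$ has compact support, Lemma \ref{lem:M1infty} applied with the exponent pair $(r_2,q)$ gives $\|g\|_{M^{1 \to \infty}} \lesssim \|g\|_{M^{r_2 \to q}}$. To decompose $g$ into the required pieces, I invoke the pointwise telescoping identity $\Phi_0(x) + \sum_{\tilde\ell \ge 1} \Psi_{\tilde\ell}(x) = \lim_{L \to \infty} \Phi_0(2^{-L} x) = 1$, which after multiplying $\widecheck g$ by this partition and taking Fourier transforms yields the Fourier-side decomposition $g = g * \widehat{\Phi_0} + \sum_{\tilde\ell \ge 1} g * \widehat{\Psi_{\tilde\ell}}$ (with convergence justified by $\widecheck g \in L^\infty$, a consequence of $g \in M^{1\to\infty}$). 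The triangle inequality and \eqref{Aprq} then give
\[
\|g\|_{M^{r_2 \to q}} \le \sum_{\tilde\ell \ge 0} \|g * \widehat{\Psi_{\tilde\ell}}\|_{M^{r_2 \to q}} = \sum_{\tilde\ell \ge 0} 2^{-\tilde\ell d(\frac1p - \frac1q)} A_{p,r_2,q}^{k,\tilde\ell},
\]
and inserting this into the previous display produces the desired inequality \eqref{eq:A-embeddings-app}. There is no essential obstacle in the argument; the only technical point is the convergence of the partition-of-unity decomposition of $g$, which is routine given the compact support of $g$ and the rapid decay of $\widehat{\Psi_{\tilde\ell}}$.
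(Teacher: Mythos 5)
Your proof is correct and follows essentially the same route as the paper's one-sentence proof: apply Lemma \ref{lem:r1r2} with $g=\varphi m(2^k\cdot)$, bound the error term $\|g\|_{M^{1\to\infty}}$ by $\|g\|_{M^{r_2\to q}}$ via Lemma \ref{lem:M1infty} (using compact support of $g$), and expand $\|g\|_{M^{r_2\to q}}$ via $g=g*\widehat{\Phi_0}+\sum_{\tilde\ell>0}g*\widehat{\Psi_{\tilde\ell}}$ and the triangle inequality. You have usefully made explicit the points the paper elides, in particular that $\Psi_\ell(x)=\widetilde\Phi(2^{-\ell}x)$ for the fixed Schwartz function $\widetilde\Phi=\Phi_0-\Phi_0(2\cdot)$ so that Lemma \ref{lem:r1r2} applies uniformly in $\ell$, and that the exponential weight $2^{\ell d(1/p-1/q)}$ is absorbed into the arbitrary rapid decay $2^{-\ell N'}$.
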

\begin{proof}
Use Lemma \ref{lem:r1r2} with $g=\varphi m(2^k\cdot)$, expand for the error term $g=g*\widehat{\Phi_0}+\sum_{\tilde \ell>0} g*\widehat{\Psi_{\tilde\ell}}$ and invoke Lemma \ref{lem:M1infty}.
\end{proof}

\subsection{Proof of Lemma \ref{lem:studia-verification}} \label{app:hmultipliers}
Define
\begin{align*}
u_\lambda(\xi) &= \phi(\xi) \eta(\lambda\xi)
\\
\rho_{\ell,k} (\xi)&= [\phi m(2^k\cdot) ]*\widehat{\Psi_\ell} (\xi)
\\
\rho_{\ell,k,t} (\xi)& = \rho_{\ell, k} (t2^{-k} \xi)
\end{align*}
and verify that
\[\phi(\xi) h(t\xi) = \sum_{k\in \digamma} a_k u_{t2^{-k}}(\xi) \sum_{\ell\in \Lambda(k)} \rho_{\ell,k,t} (\xi).\]

We shall use that $\eta$ is a Schwartz function which vanishes at the origin and thus get the estimate
\begin{subequations}
\begin{equation}\label{eq:ulambdaptw} | {\mathcal {F}}^{-1}[u_\lambda](x)|\le C_N \begin{cases} \lambda (1+|x|)^{-N} &\text{ if }\lambda\le 1
\\
\lambda^{-N} (1+|x|)^{-N} &\text{ if }\lambda\ge 1
\end{cases}
\end{equation} for all $N$. Consequently,
\begin{equation}\label{eq:ulambdaL1}
\|{\mathcal {F}}^{-1} [u_{\lambda}]\|_1\lesssim_N \min \{ \lambda, \lambda^{-N} \}.
\end{equation}
\end{subequations}

For any $t>0$ we have
\begin{multline}\label{eq:phihcdot}
\|\phi h(t\cdot) \|_{B^{d(\frac 1{q'}-\frac 1r)}_1(M^{r\to r} )} \\
\le \sum_{n\ge 0} 2^{nd(\frac{1}{q'}-\frac 1r)} \sum_{k\in \digamma} \sum_{\ell\in \Lambda(k) } \| [u_{t2^{-k}} \rho_{\ell,k,t} ] *\widehat{\Psi_n}\|_{M^{r\to r}} .
\end{multline}
We split the sets $\Lambda(k)= \Lambda^*(k,t,n) \cup \Lambda_*(k,t,n) $ where
\begin{align*}
\Lambda^*(k,t,n) &=\big \{\ell \in \Lambda(k): 2^{\ell}\ge 2^{n-5} \min\{1, 2^k t^{-1}\}
\big\}
\\
\Lambda_*(k,t,n) &=\Lambda(k)\setminus \Lambda^*(k,t,n).
\end{align*}

We first argue that in \eqref{eq:phihcdot} the terms with $\ell \in \Lambda_*(k,t,n)$
are negligible. For $\ell\in \Lambda_*(k,t,n)$ we have
$2^n\ge 2^{\ell+5} \max\{ 1, 2^{-k} t\} $.
We use crude estimates for ${\mathcal {F}}^{-1} [\rho_{\ell,k,t}]$ and take advantage of support properties.
Write $\rho_{\ell,k} (\xi) =\int \widehat{\Psi_\ell}(\xi-\omega) \phi(\omega)m(2^k\omega) \, \mathrm{d} \omega$ and by a $(d+1)$-fold integration by parts we get
\[ |{\mathcal {F}}^{-1} [\rho_{\ell, k} ](w) |\le 2^{\ell(d+1)} (1+|w|)^{-d-1} \|\phi m(2^k\cdot)\|_\infty, \] and we have ${\mathcal {F}}^{-1}[\rho_{\ell,k}](w)=0$ for $|w|>2^{\ell-1}$. Hence we obtain
\begin{align*}\label{eq:smallellpointwise}
& |{\mathcal {F}}^{-1} [(u_{t2^{-k}} \rho_{\ell,k,t} )*\widehat{ \Psi_n} ](x) |
\\ \notag
&\le |\Psi_n(x)| \int|{\mathcal {F}}^{-1} [u_{t2^{-k} } ](y)| (2^k t^{-1} )^d |{\mathcal {F}}^{-1}[\rho_{\ell,k} ] (2^k t^{-1} (x-y) ) | \, \mathrm{d} y
\\
\notag&\le |\Psi_n(x) |\int_{|x-y|\le 2^{-k} t 2^{\ell-1}} \frac{\min\{t2^{-k} , (t2^{-k} )^{-N}\}}
{(1+|y|)^N} \frac{2^{\ell(d+1)} (2^{k}t^{-1})^d}{(1+ 2^k t^{-1} |x-y|)^{d+1} }\, \mathrm{d} y.
\end{align*}
We invoke the condition
$2^{\ell}\le 2^{n-5} \min\{1, 2^k t^{-1}\}$ for $\ell\in \Lambda_*(k,t,n)$.
For $x\in {\mathrm{supp}\,} \Psi_n$ we have $|x|\ge 2^{n-3} $. Thus in the above integral we can use
\[|y|\ge |x|-|x-y|\ge 2^{n-3} - 2^{-k} t2^{\ell-1} \ge 2^{n-3}-
2^{n-6} \, 2^{-k } t \min\{1, 2^k t^{-1} \} \ge 2^{n-4}\] and hence
\begin{align*} &\|{\mathcal {F}}^{-1} [(u_{t2^{-k}} \rho_{\ell,k,t} )*\widehat{\Psi_n}] \|_1 \\ &\lesssim 2^{\ell(d+1)}
\int_{|y|\ge 2^{n-4} } \frac{\min\{t2^{-k}, (t2^{-k} )^{-N}\}}
{(1+|y|)^N}
\int \frac{(2^{k}t^{-1})^d}{(1+ 2^k t^{-1} |x-y|)^{d+1} } \, \mathrm{d} x
\, \mathrm{d} y
\\
&\lesssim 2^{\ell (d+1)} 2^{-n (N-d)} \min\{t2^{-k}, (t2^{-k} )^{-N}\}.
\end{align*}
Consequently, using
$\| [u_{t2^{-k}} \rho_{\ell,k,t} ] *\widehat{ \Psi_n}\|_{M^{r\to r}} \le
\|{\mathcal {F}}^{-1} [(u_{t2^{-k}} \rho_{\ell,k,t} )*\widehat{\Psi_n}] \|_1$ in \eqref{eq:phihcdot} we get (assuming $N>3d$)
\begin{align} \label{Lambdalowerstar}
&\sum_{n\ge 0} 2^{nd(\frac{1}{q'}-\frac 1r)} \sum_{k\in \digamma} \sum_{\ell\in \Lambda_*(k,t,n) } \| [u_{t2^{-k}} \rho_{\ell,k,t} ] *\widehat{\Psi_n}\|_{M^{r\to r}}
\\ \notag
&\lesssim
\sum_{n\ge 0} 2^{nd(\frac{1}{q'}-\frac 1r)} 2^{-n(N-d)} \sum_{k}
\min\{t2^{-k}, (t2^{-k} )^{-N}\} \sum_{\ell\le{n-5} }2^{\ell(d+1)}
\\ \notag
&\lesssim\sum_{n\ge 0} 2^{n(3d-N) } \sum_{k}
\min\{t2^{-k}, (t2^{-k} )^{-N}\} \lesssim 1.
\end{align}

We now turn to the main terms with $\ell \in \Lambda^*(k,t,n)$ in \eqref{eq:phihcdot}, i.e. the terms with $2^n< 2^{\ell+5} \max\{ 1, 2^{-k} t\} $.
Notice that
\begin{align*} &\| [u_{t2^{-k}} \rho_{\ell,k,t} ] *\widehat{\Psi_n}\|_{M^{r\to r}}
\lesssim \|u_{t2^{-k}} \|_1\| \rho_{\ell,k,t} \|_{M^{r\to r}}
= \|u_{t2^{-k}} \|_1 \|\rho_{\ell,k} \|_{M^{r\to r}}
\\&
\lesssim
\min \{ t2^{-k}, (t 2^{-k})^{-N} \} A_{q',r,r}^{k,\ell} 2^{-\ell d(\frac 1{q'}-\frac 1r)} .
\end{align*}
Hence in \eqref{eq:phihcdot} we can estimate the terms with $\ell\in \Lambda^*(k,t,n)$ as
\begin{align*}
&\sum_{n\ge 0} 2^{nd(\frac{1}{q'}-\frac 1r)} \sum_{k\in \digamma} \sum_{\ell\in \Lambda^*(k,t) } \| [u_{t2^{-k}} \rho_{\ell,k,t} ] *\widehat{\Psi_n}\|_{M^{r\to r}}
\\
&\lesssim \sum_{\substack{k\in \digamma\\ 2^{-k} t\le 1}} 2^{-k} t \sum_{\ell>0} A_{q',r,r}^{k,\ell} 2^{-\ell d(\frac 1{q'}-\frac 1r)}
\sum_{ \substack{1\le 2^n\le 2^{\ell+5} }} 2^{nd(\frac{1}{q'}-\frac 1r)}
\\&\quad +
\sum_{\substack{k\in \digamma\\ 2^{-k}t >1}} (2^{-k} t)^{-N} \sum_{\ell\ge 0} A_{q',r,r}^{k,\ell} 2^{-\ell d(\frac 1{q'}-\frac 1r)}
\sum_{ \substack{1\le 2^n\\ \le 2^{\ell+2} 2^{-k} t }} 2^{nd(\frac{1}{q'}-\frac 1r)}
\\
&\lesssim \sup_k\sum_{\ell\ge 0}
A_{q',r,r}^{k,\ell}.
\end{align*}
This finishes the proof of \eqref{mult-verification} and thus the proof of the lemma. \qed

\subsection{An elementary lemma} \label{sec:elementary-lemma}

The following elementary lemma is repeatedly used in the induction step for constructing sparse families of cubes.

\begin{lem}\label{lemma:lift lp to lq}
Let ${\mathcal {Q}}$ be a family of cubes with bounded overlap and let $\{f_Q\}_{Q \in {\mathcal {Q}}}$ be a family of functions such that ${\mathrm{supp}\,} f_Q \subseteq Q$. Then, for all $1 \leq p \leq q < \infty$,
\begin{equation*}
\Big\| \sum_{Q \in {\mathcal {Q}}} f_Q \Big\|_p \lesssim \big(\sum_{Q \in {\mathcal {Q}}} |Q| \big)^{\frac{1}{p}-\frac{1}{q}} \Big( \sum_{Q \in {\mathcal {Q}}} |Q|^{1-\frac qp} \|f_Q\|_{p}^q \Big)^{1/q}.
\end{equation*}
\end{lem}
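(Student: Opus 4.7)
The proof divides into two elementary steps: a bounded-overlap reduction to an $\ell^p$ sum of norms, followed by an application of Hölder's inequality to interpolate between the $\ell^p$ and the weighted $\ell^q$ quantity on the right-hand side.

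First I would exploit the bounded overlap. Since the family $\{Q\}$ has overlap at most some constant $C$, at each point $x$ at most $C$ of the indicator functions $\mathbf{1}_Q(x)$ are nonzero, so
\[\Big|\sum_{Q\in{\mathcal {Q}}} f_Q(x)\Big|^p \leq \Big(\sum_{Q\in{\mathcal {Q}}} |f_Q(x)|\mathbf{1}_Q(x)\Big)^p \leq C^{p-1} \sum_{Q\in{\mathcal {Q}}} |f_Q(x)|^p,\]
where the last step uses Hölder on the sum of at most $C$ nonzero terms (and requires $p\ge 1$). Integrating gives
\[\Big\|\sum_{Q\in{\mathcal {Q}}} f_Q\Big\|_p \lesssim \Big(\sum_{Q\in{\mathcal {Q}}} \|f_Q\|_p^p\Big)^{1/p}.\]

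Next I would interpolate the $\ell^p$ sum. Writing $\|f_Q\|_p^p = \big(\|f_Q\|_p^p |Q|^{(p-q)/q}\big)\cdot |Q|^{(q-p)/q}$ and applying Hölder with exponents $q/p$ and $q/(q-p)$ (which is valid since $p\le q$; the case $p=q$ is trivial and handled separately), I obtain
\[\sum_{Q\in{\mathcal {Q}}} \|f_Q\|_p^p \le \Big(\sum_{Q\in{\mathcal {Q}}} \|f_Q\|_p^q |Q|^{1-q/p}\Big)^{p/q} \Big(\sum_{Q\in{\mathcal {Q}}} |Q|\Big)^{(q-p)/q}.\]
Taking $p$-th roots and noting that $(q-p)/(pq)=1/p-1/q$ yields exactly the claimed inequality.

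There is no real obstacle here; the only minor point to verify is that the exponent of $|Q|$ in the chosen splitting is self-consistent (choosing $\alpha$ with $\alpha q/p = 1-q/p$ forces $-\alpha q/(q-p)=1$, which is what makes the second Hölder factor simplify to $\sum_Q|Q|$). The case $p=q$ is degenerate and follows directly from the bounded-overlap step alone.
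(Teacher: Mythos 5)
Your proof is correct, and it differs from the paper's argument only in how the bounded-overlap hypothesis is exploited; the Hölder step afterwards is identical. The paper first partitions ${\mathcal {Q}}$ into $O(C)$ pairwise disjoint subfamilies ${\mathcal {Q}}_\nu$ and then uses the exact identity $\|\sum_{Q\in{\mathcal {Q}}_\nu}f_Q\|_p=(\sum_{Q\in{\mathcal {Q}}_\nu}\|f_Q\|_p^p)^{1/p}$ for each disjoint block. You instead argue pointwise: at each $x$ at most $C$ terms are nonzero, so $|\sum_Q f_Q(x)|^p\le C^{p-1}\sum_Q|f_Q(x)|^p$, and then integrate. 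Your version is arguably the cleaner of the two, since it bypasses the implicit coloring claim (that a bounded-overlap family of cubes admits an $O(C)$-coloring into disjoint subfamilies), which, while true, is a nontrivial geometric fact the paper takes for granted; the pointwise Hölder inequality needs no such geometry. One small remark: the case $p=q$ is not actually degenerate in the Hölder step — the exponent pair $(q/p,\,q/(q-p))$ formally becomes $(1,\infty)$ and the inequality holds trivially, so you do not really need to treat it separately, although doing so costs nothing.
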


\begin{proof}
By assumption, there is a constant $C$ such that every $x$ is contained in at most $C$ of the cubes in ${\mathcal {Q}}$. We may split ${\mathcal {Q}}$ into $O(C)$ disjoint families ${\mathcal {Q}}_\nu$ and it suffices to prove the inequality for each ${\mathcal {Q}}_\nu$.
From H\"older's inequality,
\begin{align*}
\Big\| \sum_{Q \in {\mathcal {Q}}_\nu} f_{Q} \Big\|_p &=\Big (\sum_{Q \in {\mathcal {Q}}_\nu}
\| f_{Q} \|_p^p \Big )^{1/p}
= \Big(\sum_{Q\in {\mathcal {Q}}_\nu}
|Q|^{1-p/q} |Q|^{p/q-1} \|f_Q\|_{p}^p \Big)^{1/p}
\\&\le
\Big(\sum_{Q\in {\mathcal {Q}}_\nu} |Q|\Big )^{1/p-1/q} \Big (\sum_{Q\in {\mathcal {Q}}_\nu} |Q|^{1-q/p} \|f_Q\|_{p}^q \Big)^{1/q} . \qedhere
\end{align*}
\end{proof}

\def\MR#1{}

\end{document}